
\documentclass[12pt]{amsart}
\usepackage{amsmath,amssymb,amsthm,hyperref,color,euscript,enumerate}

\vfuzz2pt 
\hfuzz2pt 
\setlength{\topmargin}{0cm} 
\setlength{\oddsidemargin}{0cm}
\setlength{\evensidemargin}{0cm} 
\setlength{\textheight}{22cm}
\setlength{\textwidth}{16cm}



\newcommand{\cds}{\cdots}

\newcommand{\1}{{\bf 1}}

\newcommand{\vsb}{\vspace{2mm}}

\newcommand{\q}{\quad}


\newcommand{\maru}[1]{{\ooalign{\hfil#1\/\hfil\crcr
\raise.167ex\hbox{\mathhexbox20D}}}}

\newcommand{\ruby}[2]{%
 \leavevmode
 \setbox0=\hbox{#1}%
 \setbox1=\hbox{\tiny #2}%
 \ifdim\wd0>\wd1 \dimen0=\wd0 \end{lemma}se \dimen0=\wd1 \fi
 \hbox{%
   \kanjiskip=0pt plus 2fil
   \xkanjiskip=0pt plus 2fil
   \vbox{%
     \hbox to \dimen0{%
       \tiny \hfil#2\hfil}%
     \nointerlineskip
     \hbox to \dimen0{\mathstrut\hfil#1\hfil}}}}


\DeclareMathOperator*{\fusion}{\boxtimes}


\newcommand{\Z}{\mathbb{Z}}
\newcommand{\C}{\mathbb{C}}
\newcommand{\R}{\mathbb{R}}

\newcommand{\RM}{\mathrm{RM}}



\newcommand{\F}{\mathbb{F}}

\newcommand{\Sym}{{\rm Sym}}


\newcommand{\vir}{\mathrm{Vir}}
\newcommand{\aut}{\mathrm{Aut}}
\newcommand{\Aut}{\mathrm{Aut}}
\newcommand{\wt}{\mathrm{wt}}


\newcommand{\be}{\beta}
\newcommand{\al}{\alpha}

\newcommand{\Span}{\mathrm{Span}}
\newcommand{\EuD}{\EuScript{D}}


\makeatletter \@addtoreset{equation}{section}

\theoremstyle{plain}
\newtheorem{theorem}{Theorem}[section]
\newtheorem{proposition}[theorem]{Proposition}
\newtheorem{lemma}[theorem]{Lemma}
\newtheorem{corollary}[theorem]{Corollary}

\theoremstyle{definition}
\newtheorem{definition}[theorem]{Definition}

\newtheorem{notation}[theorem]{Notation}
\theoremstyle{remark}
\newtheorem{remark}[theorem]{Remark}
\numberwithin{equation}{section}

\title[Holomorphic framed VOAs]{Quadratic spaces and
holomorphic framed vertex operator algebras of central charge 24}
\author{Ching Hung Lam} %
  \address[C. H. Lam] {Institute of Mathematics, Academia Sinica, Taipei 10617, Taiwan}
  \email{chlam@math.sinica.edu.tw}
  \subjclass[2000]{Primary  17B69}
\author[H. Shimakura]{Hiroki Shimakura}%
\address[H. Shimakura]{Department of Mathematics,
Aichi University of Education,
1 Hirosawa, Igaya-cho, Kariya-city, Aichi, 448-8542 Japan}%
\email {shima@auecc.aichi-edu.ac.jp}%
\date{}
\thanks{C.\,H. Lam was partially supported by NSC grant
  97-2115-M-006-015-MY3 and National Center for Theoretical Sciences, Taiwan.}
\thanks{H.\ Shimakura was partially supported by Grants-in-Aid for Scientific Research (No. 20549004, No. 23540013) and by Excellent Young Researcher Overseas Visit Program, Japan Society for the Promotion of Science.}

\newcommand{\sfr}[2]{\leavevmode\kern-.1em
  \raise.5ex\hbox{\the\scriptfont0 #1}\kern-.1em
  /\kern-.15em\lower.25ex\hbox{\the\scriptfont0 #2}}

\newcommand{\shf}{\sfr{1}{2}}

\pagestyle{plain}
\begin{document}

\begin{abstract}
In 1993, Schellekens \cite{Sc93}
obtained a list of possible $71$ Lie algebras of holomorphic vertex
operator algebras with central charge $24$. However, not all cases are
known to exist. The aim of this article is to construct new
holomorphic vertex operator algebras using the theory of framed vertex
operator algebras  and to determine the Lie algebra structures of their
weight one subspaces. In particular, we  study holomorphic framed
vertex operator algebras associated to  subcodes of the triply even
codes $\RM(1,4)^3$ and $\RM(1,4)\oplus \EuD(d_{16}^+)$ of length $48$.
These vertex operator algebras correspond to the holomorphic simple
current extensions of the lattice type vertex operator algebras
$(V_{\sqrt{2}E_8}^+)^{\otimes 3}$ and $V_{\sqrt{2}E_8}^+\otimes
V_{\sqrt{2}D_{16}^+}^+$. We determine such extensions using a
quadratic space structure on the set of all irreducible modules $R(W)$
of  $W$ when  $W= (V_{\sqrt{2}E_8}^+)^{\otimes 3}$  or
$V_{\sqrt{2}E_8}^+\otimes V_{\sqrt{2}D_{16}^+}^+$ \cite{Sh2,Sh6}.
As our main results, we construct seven new holomorphic vertex
operator algebras of central charge $24$ in Schellekens' list and
obtain a complete list of all Lie algebra structures associated to the
weight one subspaces
of holomorphic framed vertex operator algebras of central charge $24$.
\end{abstract}
\maketitle


\section*{Introduction}
The classification of even unimodular lattices of rank $24$ is  one of fundamental
results in lattice theory; there are exactly $24$ such lattices and each lattice is
uniquely determined by its root system -- the set of norm $2$ vectors. Since
there are many analogies between lattices and vertex operator algebras (VOAs),
it is natural to consider the corresponding classification problem for holomorphic
VOAs of central charge $24$.
In 1993 Schellekens \cite{Sc93} obtained a list of possible $71$ Lie algebras of holomorphic vertex operator algebras with central charge $24$.
However, not all cases are constructed explicitly and known to exist. In fact, only the
cases for VOAs associated to even unimodular lattices and their $\Z_2$-orbifolds  are well-studied
(\cite{FLM,DGM}).

Framed VOA is another class of  well-studied VOAs (\cite{DGH,Mi04,LY}). Roughly
speaking, a framed VOA is a simple VOA which contains a full subalgebra $T$,
called a Virasoro frame, isomorphic to a tensor product of copies of the Virasoro
VOA $L(1/2, 0)$. Such a VOA is rational and $C_2$-cofinite, and its structure is
mainly determined by certain combinatorial objects. Therefore, it is natural to
consider the classification of holomorphic framed VOAs of central charge $24$.

It was shown in \cite{LY} that a binary code $D$ of
length $16k$ can be realized as the $1/16$-code of a holomorphic framed VOA of
central charge $8k$ if and only if $D$ is triple even (i.e., $\wt(\al)\equiv
0\mod 8$ for all $\al \in D$) and the all-one vector $(1,\cdots,1)\in D$.
Therefore, the classification of holomorphic framed VOAs of central charge $24$
is almost equivalent to the problem of classifying all triply even codes of
length $48$ and the study of possible VOA structures associated to each triply
even code.

In \cite{BM}, triply even codes of length $48$ are classified: any triply even code of length $48$ is a subcode of one of the following:
\begin{enumerate}[(1)]
\item an  extended doubling $\EuD(E)$ for some doubly even code $E$ of length $24$ (see Definition \ref{double});
\item the  $9$-dimensional exceptional triply even code $D^{ex}$ of length $48$;
\item the  direct sum of three copies of the Reed-Muller code $\RM(1,4)$;
\item the  direct sum of ${\rm RM}(1,4)$ and $\EuD(d_{16}^+)$, where $d_{16}^+$ is the unique indecomposable doubly even self-dual code of length $16$.
\end{enumerate}
It was shown in \cite{Lam} that if the $1/16$-code is a subcode of  an extended doubling (Case 1), then the framed VOA is isomorphic to a lattice VOA or its $\Z_2$-orbifold.
Moreover, holomorphic framed VOAs associated to subcodes of $D^{ex}$ (Case 2) have been constructed and studied in
\cite{Lam}. The Lie algebras associated to their weight one subspaces are also
determined.
In particular, $10$ new holomorphic framed VOAs are found mathematically.
In order to complete the classification of holomorphic framed VOAs, it is very important to construct and study the holomorphic framed VOAs associated to the subcodes in Cases 3 and 4.



In this article, we will study the framed VOAs associated to the triply even
codes isomorphic to subcodes of $\RM(1,4)^3$ and $\RM(1,4)\oplus \EuD(d_{16}^+)$.
These VOAs correspond to the holomorphic simple current extensions of
$(V_{\sqrt{2}E_8}^+)^{\otimes 3}$ and $V_{\sqrt{2}E_8}^+\otimes
V_{\sqrt{2}D_{16}^+}^+$. It was proved in \cite{Sh2} that the  set of all
irreducible modules $R(V_{\sqrt{2}E_8}^+)$ and $R(V_{\sqrt{2}D_{16}^+}^+) $ of
$V_{\sqrt{2}E_8}^+$ and $V_{\sqrt{2}D_{16}^+}^+$ have the structures of some
quadratic spaces and $\aut(V_{\sqrt{2}E_8}^+)$ and
$\aut(V_{\sqrt{2}D_{16}^+}^+)$ act naturally on  $R(V_{\sqrt{2}E_8}^+)$ and
$R(V_{\sqrt{2}D_{16}^+}^+)$, respectively. By using these quadratic spaces,
we determine all holomorphic extensions of $(V_{\sqrt{2}E_8}^+)^{\otimes 3}$
and $V_{\sqrt{2}E_8}^+\otimes V_{\sqrt{2}D_{16}^+}^+$, up to conjugation, and
compute the associated Lie algebra structures of the corresponding weight one
spaces. As a consequence, we construct
 seven new holomorphic framed VOAs having Lie algebras $ D_{4,4}(A_{2,2})^4$,
$C_{4,2}(A_{4,2})^2$, $E_{6,2}C_{5,1}A_{5,1}$, $C_{8,1}(F_{4,1})^2$,
$E_{7,2}B_{5,1}F_{4,1}$, $E_{8,2}B_{8,1}$ and $A_{8,2}F_{4,2}$. In addition, we
obtain a complete list of all Lie algebra structures for the weight one
subspaces of holomorphic framed VOAs of central charge $24$. The main result is
as follows.

\begin{theorem} \label{thm:Lieframed}
Let $V$ be a holomorphic framed VOA of central charge
$24$. Then one of the following holds:
\begin{enumerate}[{\rm (1)}]
\item $V$ is isomorphic to a lattice VOA $V_N$ or its $\Z_2$-orbifold $\tilde{V}_N$ for some even unimodular
lattice $N$;
\item the weight one subspace $V_1$ is isomorphic to one of the Lie algebras
 in Table \ref{Lieframed}.
 \end{enumerate}
 Moreover, for each Lie algebra $\mathfrak{L}$ in Table \ref{Lieframed},
 there exists a holomorphic framed VOA $U$ whose weight one subspace $U_1$
 is isomorphic to $\mathfrak{L}$.
\end{theorem}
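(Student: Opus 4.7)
The plan is to combine the classification of triply even codes of length $48$ from \cite{BM} with a case-by-case analysis of the corresponding framed VOA structures. By the characterization in \cite{LY}, the $1/16$-code of a holomorphic framed VOA of central charge $24$ must be a triply even code of length $48$ containing the all-one vector, hence by \cite{BM} it is a subcode of one of the four codes listed in the introduction. Cases (1) and (2) have been handled in \cite{Lam}: subcodes of an extended doubling $\EuD(E)$ yield lattice VOAs $V_N$ or their $\Z_2$-orbifolds $\tilde V_N$, accounting for alternative (1) of the theorem, while subcodes of the exceptional code $D^{ex}$ give framed VOAs whose weight one Lie algebras are already known and appear in Table \ref{Lieframed}. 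Thus the remaining task is to handle Cases (3) and (4).

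First I would recast the problem as a classification of holomorphic simple current extensions. Concretely, the framed VOAs with $1/16$-code a subcode of $\RM(1,4)^3$ (resp.\ $\RM(1,4)\oplus\EuD(d_{16}^+)$) are precisely the holomorphic simple current extensions of $W=(V_{\sqrt{2}E_8}^+)^{\otimes 3}$ (resp.\ $V_{\sqrt{2}E_8}^+\otimes V_{\sqrt{2}D_{16}^+}^+$). Using \cite{Sh2,Sh6}, the set $R(W)$ of irreducible $W$-modules carries a natural quadratic form (coming from conformal weights modulo $\Z$) compatible with fusion, and $\mathrm{Aut}(W)$ acts on $R(W)$ preserving this form. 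Holomorphic simple current extensions then correspond bijectively to maximal totally singular subspaces of the quadratic space $R(W)$, and two extensions are isomorphic whenever the corresponding subspaces are conjugate under $\mathrm{Aut}(W)$.

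Next I would enumerate these maximal totally singular subspaces up to the $\mathrm{Aut}(W)$-action. Because the quadratic spaces attached to $R(V_{\sqrt{2}E_8}^+)$ and $R(V_{\sqrt{2}D_{16}^+}^+)$ have explicit descriptions, this reduces to a manageable orbit computation in a finite quadratic space of moderate dimension, where one exploits the tensor decomposition and the known action of $\mathrm{Aut}(V_{\sqrt{2}E_8}^+)$, $\mathrm{Aut}(V_{\sqrt{2}D_{16}^+}^+)$ together with the obvious symmetric group permutation. For each orbit representative $H$, the extension $V_H=\bigoplus_{M\in H} M$ is holomorphic, and its weight one subspace decomposes as $V_{H,1}=\bigoplus_{M\in H} M_1$. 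I would then compute $\dim V_{H,1}$ and the dimensions of Cartan and root data from the characters of the irreducible $W$-modules entering $H$; the Lie bracket structure, including the level of each simple factor, is recovered by standard techniques (identifying affine Kac--Moody subalgebras, using the Kac--Moody embedding index, and matching with dimension/rank data against Schellekens' list). This yields, for each case, a finite list of possible $V_{H,1}$ together with the seven new Lie algebras claimed.

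Finally, assembling all cases gives the complete list of alternative (2), and the explicit extensions constructed above prove the realization statement. I expect the main obstacle to lie in two places: (a) organising the orbit enumeration of maximal totally singular subspaces so that the case split remains tractable and provably exhaustive, given the triple-tensor symmetry in Case (3); and (b) the bookkeeping needed to pin down the precise Lie algebra and levels (as in $E_{6,2}C_{5,1}A_{5,1}$ or $A_{8,2}F_{4,2}$) from the weight-one data, rather than just the dimension. Once these two ingredients are under control, matching against Schellekens' table and checking that the seven new Lie algebras are genuinely new is straightforward.
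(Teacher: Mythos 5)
Your plan is correct and follows essentially the same route as the paper: reduce via \cite{LY} and \cite{BM} to the four families of triply even codes, dispose of the doubling and $D^{ex}$ cases by \cite{Lam}, and handle the remaining two cases by classifying maximal totally singular subspaces of the quadratic spaces on $R((V_{\sqrt2E_8}^+)^{\otimes 3})$ and $R(V_{\sqrt2E_8}^+\otimes V_{\sqrt2D_{16}^+}^+)$ up to the automorphism group, then pinning down each weight-one Lie algebra by dimension, rank, and level constraints against Schellekens' list. This matches the paper's assembly of Theorems \ref{VN}, \ref{Dex}, \ref{e8+} and \ref{d16+}.
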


\begin{table}[bht]
\caption{ \bf Lie algebras for holomorphic framed VOAs of $c=24$}
\label{Lieframed}
\begin{center}
\begin{tabular}{|c|c|c|}
\hline
No. in \cite{Sc93}& Dimension & Lie algebra \cr
\hline \hline
$7$  & $48$  & $({A_{3,4}})^3A_{1,2}$\cr
 \hline
$10$ &  $48$ & $D_{5,8}A_{1,2}$ \cr   \hline
$13$ & $60$ & $D_{4,4}(A_{2,2})^4$ \cr \hline
 $18$  &  $72$  & $A_{7,4}({A_{1,1}})^3$\cr
 \hline
$19$ &  $72$  & $  D_{5,4}C_{3,2}({A_{1,1}})^2$\cr
\hline
$22$ & $84$ & $C_{4,2}(A_{4,2})^2$ \cr \hline
$26$ &  $96$  &
  $ ({A_{5,2}})^2 C_{2,1}({A_{2,1}})^2$\cr
\hline
$33$ &  $120$ & $ A_{7,2}  ({C_{3,1}})^2A_{3,1}$\cr \hline
$35$ &  $120$  & $C_{7,2}A_{3,1}$\cr\hline
$36$ &  $132$ &  $A_{8,2}F_{4,2}$\cr \hline
$40$ &  $144$ & $  A_{9,2}A_{4,1}B_{3,1}$\cr \hline
$44$ & $168$&  $E_{6,2}C_{5,1}A_{5,1}$\cr \hline
$48$ & $192$ & $ ({C_{6,1}})^2B_{4,1}$\cr \hline
$52$ & $240$ & $C_{8,1}(F_{4,1})^2$ \cr \hline
$53$ & $240$ & $E_{7,2}B_{5,1}F_{4,1}$ \cr \hline
$56$ &  $288$ & $ C_{10,1}B_{6,1}$\cr \hline
$62$ & $384$ & $E_{8,2}B_{8,1}$       \cr \hline
\end{tabular}
\end{center}
\end{table}

The organization of this article is as follows. In Section 1, we recall some
basic facts about quadratic spaces and orthogonal groups. We also review the
notions of VOAs, modules and intertwining operators.  In Section 2,  the notion
and several important properties of framed VOAs will be reviewed. We will also
recall the classification of triply even codes of length $48$ from \cite{BM}.
In Section 3, the representation theory of the lattice type VOAs
$V_{\sqrt{2}E_8}^+$ and $V_{\sqrt{2}D_{16}^+}^+$ is reviewed. In particular,
the quadratic spaces associated to the set of their irreducible modules are
discussed. In Section 4, we will study the holomorphic simple current
extensions of $(V_{\sqrt{2}E_8}^+)^{\otimes 3}$. The main result is a complete
description of all maximal totally singular subspaces of
$R(V_{\sqrt{2}E_8}^+)^{\otimes 3}$, up to the action of
$\aut((V_{\sqrt{2}E_8}^+)^{\otimes 3})$. The Lie algebra structures of the
weight one subspaces of the corresponding framed VOAs associated to these
quadratic spaces will also be computed.   In Section 5, we study the
holomorphic simple current extensions of  $V_{\sqrt{2}E_8}^+\otimes
V_{\sqrt{2}D_{16}^+}^+$. We also obtain a complete classification of all maximal
totally singular subspaces of $R(V_{\sqrt{2}E_8}^+)\oplus
R(V_{\sqrt{2}D_{16}^+}^+)$, up to the action of $\aut( V_{\sqrt{2}E_8}^+\otimes
V_{\sqrt{2}D_{16}^+}^+)$. Again, the Lie algebras associated to the weight one
subspaces of the corresponding framed VOAs will be computed.


\section{Preliminary}

\begin{center}
{\bf Notations}
\begin{small}
\begin{tabular}{ll}
\ \\
$\langle\ , \ \rangle$& The symplectic form on a quadratic space $(R,q)$ or on $(R^k,q^k)$.\\
$\boxtimes$& The fusion rules for a VOA.\\
$\perp $& The orthogonal direct sum of subspaces in a quadratic space.\\
$A^\perp$& The orthogonal complement of a subspace $A$ in a quadratic space.\\
$D_{16}^+$& The indecomposable even unimodular lattice of rank $16$,\\
& whose root lattice is $D_{16}$.\\
$E_8$& The $E_8$ root lattice, even unimodular lattice of rank $8$.\\
$g\circ M$& The conjugate of a module $M$ for a VOA by an automorphism $g$.\\
$g\circ [M]$& The isomorphism class of $g\circ M$.\\
$[M]$& The isomorphism class of a module $M$ for a VOA.\\
$N(L)$& The even unimodular lattice of rank $24$ whose root lattice is isomorphic to $L$.\\
$\rho_i$& The $i$-th coordinate projection of a direct sum of quadratic spaces.\\
$q_V$& The quadratic form on $R(V)$ defined by\\
& $q_V([M])=0$ and $1$ if $M$ is $\Z$-graded and $(\Z+1/2)$-graded, respectively.\\
$O(R,q)$& The orthogonal group of $(R,q)$.\\
$\Sym_n$& The symmetric group of degree $n$.\\
$\mathcal{S}$& A maximal totally singular subspace of $R^k$ or $R(V)^k$.\\
$\mathcal{S}(m,k_1,k_2,\varepsilon)$ & The maximal totally singular subspace of $R^3$ defined in Theorem \ref{TClassify}.\\
$\mathcal{S}(m,k_1,k_2)$ & The maximal totally singular subspace of $R^3$ defined in Theorem \ref{TClassify2}.\\
$S(R)$& The set of singular vectors in $R$\\
$S(R)^\times$& The set of non-zero singular vectors in $R$\\
$\overline{S(R)}$& The set of non-singular vectors in $R$\\
${\rm Stab}_G(A)$& The setwise stabilizer of $A$ in a group $G$.\\
${\rm Stab^{pt}}_G(A)$& The pointwise stabilizer of $A$ in a group $G$.\\

$(R,q)$& A non-singular quadratic space $R$ of plus type with quadratic form $q$ over $\F_2$.\\
$(R^k,q^k)$& The orthogonal direct sum of $k$ copies of $(R,q)$.\\
$R(V)$& The set of all isomorphism classes of irreducible modules for a VOA $V$.\\
$V$& A simple rational $C_2$-cofinite self-dual VOA of CFT type, or $V=V_{\sqrt2E_8}^+$.\\
$V_L$& The lattice VOA associated with even lattice $L$.\\
$V_L^+$& The fixed point subVOA of $V_L$ with respect to a lift of the $-1$-isometry of $L$.\\
$\tilde{V}_L$& $\Z_2$-orbifold of $V_L$.\\
$V^k$& The tensor product of $k$ copies of a VOA $V$.\\
$\mathfrak{V}(\mathcal{S})$& The holomorphic VOA associated to a maximal totally singular subspace $\mathcal{S}$.\\
$\mathfrak{V}(\mathcal{T})$& The module associated to a subset $\mathcal{T}$ of $R(V)^k$ or $R(X)\oplus R(V)$.\\
$X$& $X=V_{\sqrt2D_{16}^+}^+$.\\
\end{tabular}
\end{small}
\end{center}

\subsection{Quadratic spaces and orthogonal groups}
Let us recall fundamental facts on quadratic spaces over $\F_2$ and orthogonal groups (cf.\ \cite{Ch}).

Let $R$ be a $2m$-dimensional vector space over $\F_2$.
A form $\langle\cdot,\cdot\rangle:R\times R\to \F_2$ is said to be {\it symplectic} if it is a symmetric bilinear form with $\langle a,a\rangle=0$ for all $a\in R$.
A map $q:R\to\F_2$ is called a {\it quadratic} form associated to $\langle\cdot,\cdot\rangle$ if $\langle a,b\rangle=q(a+b)+q(a)+q(b)$ for all $a,b\in R$.
Let $q$ be a quadratic form.
Then the pair $(R,q)$ is called a {\it quadratic space}, and it is said to be {\it non-singular} if $R^\perp=\{a\in R\mid \langle a,R\rangle=0\}=0$.
A vector $a\in R$ is said to be {\it singular} and {\it non-singular} if $q(a)=0$ and $q(a)=1$, respectively.
Let $S(R)$, $S(R)^\times$, and $\overline{S(R)}$ denote the sets of all singular vectors in $R$, of all non-zero singular vectors in $R$, and of all non-singular vectors in $R$, respectively.
A subspace $S$ of $R$ is said to be {\it totally singular} if any vector in $S$ is singular.
A quadratic form $q$ is said to be {\it of plus type} and {\it of minus type} if the dimension of a maximal totally singular subspace of $(R,q)$ is $m$ and $m-1$, respectively.
Let $O(R,q)$ denote the orthogonal group of $(R,q)$, the subgroup of ${\rm GL}(R)$ preserving $q$.
The following lemmas are well-known.

\begin{lemma}\label{LNum} Let $(R,q)$ be a non-singular $2m$-dimensional quadratic space of $\varepsilon$ type over $\F_2$, where $\varepsilon\in\{\pm\}$.
\begin{enumerate}
\item If $\varepsilon=+$ then $R$ has $2^{2m-1}+2^{m-1}-1$ non-zero singular vectors and $2^{2m-1}-2^{m-1}$ non-singular vectors.
\item If $\varepsilon=-$ then $R$ has $2^{2m-1}-2^{m-1}-1$ non-zero singular vectors and $2^{2m-1}+2^{m-1}$ non-singular vectors.
\end{enumerate}
\end{lemma}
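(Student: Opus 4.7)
The plan is to reduce to an orthogonal decomposition into standard hyperbolic (and anisotropic) planes and then count singular vectors by a short induction on $m$.

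First I would invoke the standard classification of non-singular quadratic forms over $\F_2$: a plus-type $(R,q)$ of dimension $2m$ is an orthogonal sum $H^{\perp m}$ of hyperbolic planes $H=\F_2 e\oplus\F_2 f$ with $q(e)=q(f)=0$ and $\la e,f\ra=1$; a minus-type $(R,q)$ of dimension $2m$ is an orthogonal sum $H^{\perp(m-1)}\perp A$ where $A=\F_2 e\oplus\F_2 f$ carries the anisotropic form $q(ae+bf)=a^2+ab+b^2$. This is a standard result (for instance in Chevalley's book already cited in the excerpt), so I would use it as a black box rather than reprove it.

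Next I would use the basic counting identity for orthogonal sums: if $R=R_1\perp R_2$, then a vector $(v_1,v_2)$ is singular iff $q_1(v_1)=q_2(v_2)$, so writing $s(R)$ for the total number of singular vectors of $R$ (including $0$) we get
\[
s(R)=s(R_1)s(R_2)+\bigl(|R_1|-s(R_1)\bigr)\bigl(|R_2|-s(R_2)\bigr).
\]
Direct inspection gives the base cases $s(H)=3$ and $s(A)=1$, and in either type adjoining one hyperbolic plane $H$ to a space $R'$ of dimension $2(m-1)$ yields the recurrence
\[
s_m=3s_{m-1}+\bigl(4^{m-1}-s_{m-1}\bigr)=2s_{m-1}+4^{m-1}.
\]

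Finally I would solve this recurrence by induction on $m$. In the plus case, starting from $s_1^+=3$ and using $s_m^+=2s_{m-1}^++4^{m-1}$ I would verify that $s_m^+=2^{2m-1}+2^{m-1}$, giving $2^{2m-1}+2^{m-1}-1$ nonzero singular vectors and $2^{2m}-s_m^+=2^{2m-1}-2^{m-1}$ non-singular vectors; in the minus case, starting from $s_1^-=1$ I would verify $s_m^-=2^{2m-1}-2^{m-1}$, giving the counts in (2). There is no serious obstacle here; the only point requiring care is to keep track that the non-zero condition removes exactly one vector from $S(R)$ (the origin), and that $\overline{S(R)}$ is the complement of $S(R)$ of size $2^{2m}-s_m^\varepsilon$.
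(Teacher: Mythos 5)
Your proof is correct. Note that the paper itself offers no argument here: Lemma \ref{LNum} is simply stated as ``well-known'' (with the general reference to Chevalley appearing only for the companion Lemma \ref{LO}), so there is no proof in the text to compare yours against. The route you take --- decomposing $(R,q)$ as an orthogonal sum of hyperbolic planes, with one anisotropic plane in the minus case, and counting singular vectors via the identity $s(R_1\perp R_2)=s(R_1)s(R_2)+(|R_1|-s(R_1))(|R_2|-s(R_2))$ --- is the standard derivation, and your recurrence $s_m=2s_{m-1}+4^{m-1}$ together with the base cases $s(H)=3$, $s(A)=1$ checks out: it yields $s_m^+=2^{2m-1}+2^{m-1}$ and $s_m^-=2^{2m-1}-2^{m-1}$, and the nonzero-singular and non-singular counts in the statement follow by subtracting $1$ and by taking the complement in $2^{2m}$, exactly as you say. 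The only implicit ingredient is the classification of non-singular quadratic forms over $\F_2$ into the two types, which you correctly flag as a black box; that is an entirely reasonable citation for a lemma the authors themselves do not prove.
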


\begin{lemma}\label{LO} {\rm (cf.\ \cite{Ch})} Let $(R,q)$ be a non-singular $2m$-dimensional quadratic space of plus type over $\F_2$.
Let $k\in\{1,2,\dots, m\}$.
\begin{enumerate}
\item The orthogonal group $O(R,q)$ is transitive on the set of all $k$-dimensional totally singular subspaces of $R$.
\item The stabilizer of a $k$-dimensional totally singular subspace in $O(R,q)$ has the shape $2^{{\binom{k}{2}}+k(2m-2k)}.({\rm SL}_k(2)\times O^+({2m-2k},2))$.
\item Let $U$ be a subspace of $R$ such that $\langle U,U\rangle=0$ and $\dim U\ge 2$.
Then $U$ contains a non-zero singular vector.
\end{enumerate}
\end{lemma}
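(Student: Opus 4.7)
The plan is to handle the three parts in a natural order: I would begin with part (3), which is elementary, then deduce part (1) from Witt's extension theorem, and finally obtain part (2) by analyzing the action of the stabilizer on a Witt decomposition adapted to a fixed totally singular subspace.

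For part (3), the key observation is that the identity $q(a+b)=q(a)+q(b)+\la a,b\ra$ forces the restriction $q|_U : U\to \F_2$ to be $\F_2$-linear as soon as $\la U,U\ra=0$. Hence its kernel has codimension at most one in $U$. Since $\dim U\ge 2$, this kernel has dimension at least one and so contains a nonzero singular vector.

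For part (1), I would invoke Witt's extension theorem: any isometry between two subspaces of a nonsingular quadratic space extends to an isometry of the ambient space. If $S_1, S_2$ are two $k$-dimensional totally singular subspaces, then any linear bijection $S_1\to S_2$ is automatically an isometry (both forms vanish identically), hence extends to an element of $O(R,q)$ carrying $S_1$ to $S_2$.

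For part (2), I would fix a totally singular $k$-subspace $S$ and build a Witt decomposition $R=S\oplus U\oplus T$, where $T$ is a second totally singular $k$-space dual to $S$ under $\la\cdot,\cdot\ra$ and $U=(S\oplus T)^\perp$ is nonsingular of dimension $2m-2k$. Because $R$ has plus type and $S\oplus T$ is hyperbolic, $U$ is necessarily of plus type. The parabolic $P=\stab_{O(R,q)}(S)$ preserves $S^\perp=S\oplus U$ and induces isometries on $S$ and on $S^\perp/S\cong U$; the resulting map yields a surjection $P\twoheadrightarrow \mathrm{GL}(S)\times O(U)=\SL_k(2)\times O^+(2m-2k,2)$. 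Its kernel consists of unipotent elements acting trivially on $S$ and on $S^\perp/S$. Writing such an element in block form with respect to $S\oplus U\oplus T$, it is parametrized by two maps: a free $\hom(U,S)$ component contributing $k(2m-2k)$ degrees of freedom, and a $\hom(T,S)$ component whose degrees of freedom are cut down from $k^2$ to $\binom{k}{2}$ by the requirement that $q$ (and not merely the bilinear form) be preserved. Summing gives a unipotent radical of order $2^{k(2m-2k)+\binom{k}{2}}$, yielding the asserted shape.

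The main obstacle is the last count: one must verify precisely, in characteristic $2$, which $\hom(T,S)$ blocks extend to elements preserving $q$. The condition translates into a ``skew'' constraint on the matrix representing $T\to S$ (with respect to the natural pairing induced by $\la\cdot,\cdot\ra$ between $S$ and $T$), which reduces the free parameters to $\binom{k}{2}$ rather than $k(k+1)/2$. Once this subtle count is pinned down, the remaining group-theoretic assembly is routine.
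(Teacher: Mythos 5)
Your proposal is correct. The paper itself offers no proof of this lemma: it is stated as ``well-known'' with a bare citation to Chevalley, so there is no argument in the text to compare against. What you have written is the standard proof one would extract from the references, and it is sound on all three points. Part (3) is exactly right: $\la U,U\ra=0$ makes $q|_U$ additive, hence $\F_2$-linear, and its kernel has dimension at least $\dim U-1\ge 1$. Part (1) is the usual application of Witt extension, which does hold for nonsingular quadratic spaces in characteristic $2$ (the relevant hypothesis here being that the associated bilinear form on $R$ is nondegenerate); since $q$ and $\la\cdot,\cdot\ra$ both vanish identically on a totally singular subspace, every linear bijection between two such subspaces is an isometry and extends. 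For part (2) your Witt decomposition $R=S\oplus U\oplus T$ and the surjection onto $\SL_k(2)\times O^+(2m-2k,2)$ are correct, and you rightly identify the one genuinely delicate point: in characteristic $2$, preserving the bilinear form alone would leave $k(k+1)/2$ parameters in the $\hom(T,S)$ block, and it is the additional requirement that $q$ itself be preserved that kills the diagonal and reduces the count to $\binom{k}{2}$, giving the unipotent radical order $2^{\binom{k}{2}+k(2m-2k)}$. Carrying out that block computation explicitly would complete the argument, but the structure and all the counts you give are the right ones.
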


\subsection{Vertex operator algebras}
Throughout this article, all VOAs are defined over the field $\C$ of complex
numbers unless otherwise stated.  We recall the notions of vertex operator
algebras (VOAs) and modules from \cite{Bo,FLM,FHL}.

A {\it vertex operator algebra} (VOA) $(V,Y,\1,\omega)$ is a $\Z_{\ge0}$-graded
 vector space $V=\oplus_{m\in\Z_{\ge0}}V_m$ equipped with a linear map

$$Y(a,z)=\sum_{i\in\Z}a_{(i)}z^{-i-1}\in ({\rm End}(V))[[z,z^{-1}]],\quad a\in V$$
and the {\it vacuum vector} $\1$ and the {\it conformal element} $\omega$
satisfying a number of conditions (\cite{Bo,FLM}). We often denote it by $V$ or
$(V,Y)$.

Two VOAs $(V,Y,\1,\omega)$ and $(V^\prime,Y',\1',\omega')$ are said to be {\it
isomorphic} if there exists a linear isomorphism $g$ from $V$ to $V^\prime$
such that $$ g\omega=\omega'\quad {\rm and}\quad gY(v,z)=Y'(gv,z)g\quad
\text{ for all } v\in V.$$ When $V=V'$,  such a linear isomorphism is called an
{\it automorphism}. The group of all automorphisms of $V$ is called the {\it
automorphism group} of $V$ and is denoted by $\Aut(V)$.

A {\it vertex operator subalgebra} ( or a {\it subVOA} ) is a graded subspace of
$V$ which has a structure of a VOA such that the operations and its grading
agree with the restriction of those of $V$ and that they share the vacuum vector.
When they also share the conformal element, we will call it a {\it full subVOA}.

An (ordinary) module $(M,Y_M)$ for a VOA $V$ is a $\C$-graded vector space $M=\oplus_{m\in\C} M_{m}$ equipped with a linear map
$$Y_M(a,z)=\sum_{i\in\Z}a_{(i)}z^{-i-1}\in ({\rm End}(M))[[z,z^{-1}]],\quad a\in V$$
satisfying a number of conditions (\cite{FHL}).
We often denote it by $M$ and its isomorphism class by $[M]$.
The {\it weight} of a homogeneous vector $v\in M_k$ is $k$.
If $M$ is irreducible then $M=\oplus_{m\in\Z_{\ge0}}M_{h+m}$ ($h\in\C, M_h\neq0$), where $h$ is the {\it lowest weight} of $M$.

A VOA $V$ is said to be {\it of CFT type} if $V_0=\C\1$, is said to be {\it
rational} if any module is completely reducible, and is said to be {\it
$C_2$-cofinite} if $V/{\rm Span}_\C\{ a_{(-2)}b\mid a,b\in V\}$ is
finite-dimensional. Note that any module is ordinary if $V$ is $C_2$-cofinite.
A VOA is said to be {\it holomorphic} if it is the only irreducible module up
to isomorphism. A module $M$ is said to be {\it self-dual} if its
contragradient module (cf.\ \cite{FHL}) is isomorphic to itself. Let $R(V)$
denote the set of all isomorphism classes of irreducible $V$-modules. Note that
if $V$ is rational or $C_2$-cofinite then $|R(V)|<\infty$.

Let $M_a,M_b,M_c$ be modules for a simple rational $C_2$-cofinite VOA $V$.
An intertwining operator of type $M_a\times M_b\to M_c$ is a linear map $M_a\mapsto ({\rm Hom}(M_b,M_c))\{z\}$ satisfying a number of conditions (\cite{FHL}).
Let $N_{M_a,M_b}^{M_c}$ denote the dimension of the space of all intertwining operators of type $M_a\times M_b\to M_c$, which is called the {\it fusion rule}.
Since $V$ is $C_2$-cofinite, the fusion rules are finite (\cite{ABD}).
By the definition of the fusion rules, $N_{M_a,M_b}^{M_c}=N_{M_a',M_b'}^{M_c'}$ if $M_x\cong M_x'$ as $V$-modules for all $x=a,b,c$.
Hence, the fusion rules are determined by the isomorphism classes of $V$-modules.
For convenience, we use the following expression $$[M_a]\boxtimes [M_b]=\bigoplus_{[M]\in R(V)} N_{M_a,M_b}^{M}[M],$$
which is also called the fusion rule.

Let $M=(M,Y_M)$ be a module for a VOA $V$.
For $g\in\Aut(V)$, let $g\circ M=(M,Y_{g\circ M})$ denote the $V$-module defined by $Y_{g\circ M}(v,z)=Y_M(g^{-1}v,z)$.
If $M\cong M'$ as $V$-modules, then $g\circ M\cong g\circ M'$ as $V$-modules.
Hence we use the notation $g\circ[M]$ for the isomorphism class of $g\circ M$.
If $M$ is irreducible, then so is $g\circ M$.
Hence $\Aut(V)$ acts on $R(V)$.
We often identify modules of a VOA with their respective isomorphism classes and use the same notation for the conjugates.

The theory of tensor products of VOAs was established in \cite{FHL}.
For a positive integer $k$, let $V^k$ denote the tensor product of $k$ copies of $V$.
Later, we use the following lemma.
\begin{lemma}\label{LemFHL} {\rm (\cite[Section 4.7]{FHL}, \cite{DMZ})} Let $V$ be a simple rational $C_2$-cofinite VOA of CFT type.
Then $$R({V^k})=\{\otimes_{i=1}^k W_i\mid W_i\in R(V)\},$$ and for $\otimes_{i=1}^k W_{i,a}, \otimes _{i=1}^kW_{i,b}\in R(V^k)$, the following fusion rule holds:
$$\left(\otimes_{i=1}^k W_{i,a}\right)\boxtimes \left(\otimes _{i=1}^kW_{i,b}\right)
=\otimes_{i=1}^k (W_{i,a}\boxtimes W_{i,b}).$$
\end{lemma}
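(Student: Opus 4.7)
The plan is to prove the lemma by induction on $k$, with the base case $k=1$ being tautological. The induction step reduces to establishing the case $k=2$, since $V^k \cong V^{k-1} \otimes V$ and the two assertions (classification of irreducibles and the fusion rule formula) are preserved under iterating tensor products of pairs. So throughout I will focus on $k=2$.

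For the classification of $R(V^2)$, the easier direction is that for irreducible $V$-modules $W_1, W_2$, the tensor product $W_1 \otimes W_2$ carries a natural $V^2$-module structure (with $Y_{W_1 \otimes W_2}(u\otimes v, z) = Y_{W_1}(u,z) \otimes Y_{W_2}(v,z)$), and is irreducible as a $V^2$-module. This irreducibility follows from the simplicity and rationality of $V$ together with a density/Jacobson-type argument applied factor by factor to the graded components, using that the Zhu algebra of $V^2$ is the tensor product of Zhu algebras. For the converse, let $M$ be an irreducible $V^2$-module. Restricting to the full subVOA $V \otimes \C\vacuum \cong V$, rationality provides a decomposition $M \cong \bigoplus_{[W] \in R(V)} W \otimes U_W$, where each multiplicity space $U_W = \hom_V(W, M)$ carries a commuting action of $\C\vacuum \otimes V \cong V$ compatible with the grading. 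Since $M$ is irreducible as a $V^2$-module and the two $V$-actions commute, an argument using that each $U_W$ is invariant under $\C\vacuum \otimes V$ forces exactly one $W$ to appear with $U_W$ irreducible as a $V$-module, yielding $M \cong W \otimes U_W$.

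For the fusion rule, I would first construct a natural linear map from the tensor product of intertwining operator spaces $I\binom{W_{1,c}}{W_{1,a}\ W_{1,b}} \otimes I\binom{W_{2,c}}{W_{2,a}\ W_{2,b}}$ into the $V^2$-intertwining operator space $I\binom{W_{1,c}\otimes W_{2,c}}{W_{1,a}\otimes W_{2,a}\ \, W_{1,b}\otimes W_{2,b}}$ by sending $(\mathcal{I}_1, \mathcal{I}_2)$ to the operator acting as
\[
(\mathcal{I}_1 \otimes \mathcal{I}_2)(w_1 \otimes w_2, z)(u_1 \otimes u_2) = \mathcal{I}_1(w_1, z)u_1 \otimes \mathcal{I}_2(w_2, z)u_2.
\]
The Jacobi identity for this operator follows from the Jacobi identities for $\mathcal{I}_1, \mathcal{I}_2$ because the two tensor factors of $V^2$ act independently, and the map is manifestly injective. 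Surjectivity is the substantive part: given an arbitrary $V^2$-intertwining operator $\mathcal{J}$, I would show it decomposes as a finite sum of tensor-product intertwining operators by restricting the coefficient operators of $\mathcal{J}$ to the grading pieces, then using the classification of $R(V^2)$ established above to identify these pieces as tensor products, and finally applying the reconstruction of intertwining operators from their components on homogeneous vectors (cf.\ the Frenkel--Huang--Lepowsky theory of intertwining operators, and finite-dimensionality from $C_2$-cofiniteness via \cite{ABD}).

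The main obstacle is the surjectivity in the fusion-rule half: proving that every $V^2$-intertwining operator factors as a sum of tensor products requires carefully controlling the commutation of the two $V$-factors on the spaces of operator-valued series, and invoking $C_2$-cofiniteness to guarantee that the operators involved land in finite-dimensional spaces so that the decomposition argument terminates. The classification of irreducibles is more straightforward, but one must be careful when extracting the commuting $V$-action on the multiplicity space $U_W$ to confirm it is actually a bona fide $V$-module (rather than merely an abstract module over the Zhu algebra), which uses the rationality and CFT-type hypothesis on $V$.
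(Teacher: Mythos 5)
First, a point of comparison: the paper offers no proof of this lemma at all --- it is imported as a known result from \cite[Section 4.7]{FHL} and \cite{DMZ} (the fusion-rule half is really due to \cite{DMZ} and \cite{ADL}) --- so your reconstruction can only be measured against the standard arguments in those references. Your treatment of the first assertion is essentially that standard argument and is sound: irreducibility of $W_1\otimes W_2$ via a density/Jacobson argument (equivalently $A(V^2)\cong A(V)\otimes A(V)$ for the Zhu algebras), and the converse by restricting to $V\otimes\C\1$, decomposing by rationality, and letting the commuting copy $\C\1\otimes V$ act on the multiplicity spaces $\hom_V(W,M)$. The two points you flag (that the two actions commute, which follows from $(u\otimes\1)_{(n)}(\1\otimes v)=0$ for $n\ge 0$, and that the multiplicity space is an honest ordinary $V$-module) are exactly the ones needing care and they do go through. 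One small omission in the induction: passing from $k=2$ to general $k$ requires simplicity, rationality, $C_2$-cofiniteness and CFT type to be inherited by $V^{k-1}$, which is true but itself rests on the $k=2$ case together with \cite{ABD}.

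The genuine gap is in the surjectivity half of the fusion-rule claim, i.e.\ the inequality $N^{V^2}\le N^V\cdot N^V$. Your sketch --- restrict the coefficients of $\mathcal{J}$ to graded pieces, identify those pieces as tensor products via the classification of $R(V^2)$, then ``reconstruct'' --- does not contain the mechanism that forces $\mathcal{J}$ to split as a finite sum of pure tensors of \emph{intertwining operators}. Any linear map between the relevant finite-dimensional graded pieces is trivially a sum of pure tensors of linear maps; the issue is whether the tensor factors can be chosen to be components of genuine $V$-intertwining operators, and nothing in your outline addresses this. The standard resolution is the Frenkel--Zhu/Li correspondence: for rational $V$ one has $N_{M_a,M_b}^{M_c}=\dim\hom_{A(V)}\bigl(A(M_a)\otimes_{A(V)}M_b(0),\,M_c(0)\bigr)$, which combined with $A(V^2)\cong A(V)\otimes A(V)$ and $A(M\otimes M')\cong A(M)\otimes A(M')$ reduces the factorization to linear algebra over a tensor product of finite-dimensional semisimple algebras. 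This correspondence is precisely where rationality enters (for a general VOA the passage from intertwining operators to lowest-weight data need not be bijective), and your appeal to ``reconstruction of intertwining operators from their components on homogeneous vectors'' is implicitly assuming it. Either invoke it explicitly (it is the content of the citation to \cite{DMZ} and \cite{ADL}) or supply a direct argument; as written the surjectivity step is circular.
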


Let $V(0)$ be a simple VOA.
An irreducible $V(0)$-module $M_a$ is called a {\it simple current module} if for any irreducible $V(0)$-module $M_b$, there exists a unique irreducible $V(0)$-module $M_c$ satisfying the fusion rule $[M_a]\boxtimes [M_b]=[M_c]$.
A simple VOA $V$ is called a {\it simple current extension} of $V(0)$ graded by a finite abelian group $E$ if $V$ is the direct sum of non-isomorphic irreducible simple current $V(0)$-modules $\{V(\alpha)\mid \alpha\in E\}$ and the fusion rule
$[V(\alpha)]\boxtimes [V(\beta)]=[V(\alpha+\beta)]$ holds for all $\alpha,\beta\in E$.

\begin{lemma}\label{LSY} {\rm \cite[Lemma 3.14]{SY}} Let $V=\oplus_{\alpha\in E}V(\alpha)$ be a simple current extension of a simple VOA $V(0)$ graded by a finite abelian group $E$.
Let $g$ be an automorphism of $V(0)$.
Then there exists a simple current extension of $V(0)$ such that it is isomorphic to $V$ as VOAs and is isomorphic to $\oplus _{\alpha\in E}\ g\circ V(\alpha)$ as $V(0)$-modules.
\end{lemma}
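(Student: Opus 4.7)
The plan is to construct the desired extension by transporting the VOA structure of $V$ along a carefully chosen linear bijection of the underlying vector space. Define $\phi: V \to V$ by
$$\phi|_{V(0)} = g, \qquad \phi|_{V(\alpha)} = \mathrm{id}_{V(\alpha)} \ \text{ for all } \alpha \in E\setminus\{0\}.$$
Since $g$ is invertible, so is $\phi$. Equip the underlying vector space of $V$ with a new VOA structure $V' = (V, Y', \1, \omega)$ by setting
$$Y'(u,z) \;:=\; \phi \circ Y(\phi^{-1}u,z) \circ \phi^{-1}.$$
By construction $\phi:(V,Y) \to (V,Y')$ is a VOA isomorphism, and the vacuum and conformal element are unchanged because $g$ fixes $\1,\omega \in V(0)$.

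Next I would check that the subspace $V(0) \subset V'$ is a full subVOA carrying exactly its original VOA structure. For $u, v \in V(0)$, both $g^{-1}u$ and $g^{-1}v$ lie in $V(0)$, and $Y(g^{-1}u,z)(g^{-1}v)$ takes values in $V(0)[[z,z^{-1}]]$, on which $\phi$ acts as $g$. Because $g\in\Aut(V(0))$,
$$Y'(u,z)v \;=\; g\bigl(Y(g^{-1}u,z)(g^{-1}v)\bigr) \;=\; Y(u,z)v.$$
Then, for $u \in V(0)$ and $v \in V(\alpha)$ with $\alpha \neq 0$, one has $\phi^{-1}v = v$, while $Y(g^{-1}u,z)v$ takes values in the summand $V(\alpha)\{z\}$, on which $\phi$ acts as the identity. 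Hence
$$Y'(u,z)v \;=\; Y(g^{-1}u,z)v \;=\; Y_{g\circ V(\alpha)}(u,z)v,$$
so the $V(0)$-module structure carried by the summand $V(\alpha)\subset V'$ is precisely $g\circ V(\alpha)$, giving the $V(0)$-module decomposition $V' = V(0) \oplus \bigoplus_{\alpha\neq 0} g\circ V(\alpha)$.

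Finally I would verify that $V'$ is a simple current extension of $V(0)$ graded by $E$. Since $g\in\Aut(V(0))$, each $g\circ V(\alpha)$ is irreducible, the conjugates $\{g\circ V(\alpha)\}_{\alpha\in E}$ remain pairwise non-isomorphic, and $g$ induces a bijection on the fusion algebra of $V(0)$, yielding
$$[g\circ V(\alpha)] \boxtimes [g\circ V(\beta)] \;=\; g\circ\bigl([V(\alpha)]\boxtimes[V(\beta)]\bigr) \;=\; [g\circ V(\alpha+\beta)].$$
Thus $V'$ is a simple current extension of $V(0)$ graded by $E$, isomorphic to $V$ as VOAs via $\phi$, with the prescribed $V(0)$-module decomposition. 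The only delicate point is the asymmetric choice of $\phi$: using $g$ on the identity summand $V(0)$ is what produces the twist, while keeping $\phi$ equal to the identity on each $V(\alpha)$ is what makes the restrictions of $Y'$ to $V(0)$ coincide with the original action without demanding that $g$ extends to an automorphism of $V$.
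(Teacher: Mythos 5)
The paper does not prove this statement; it is quoted verbatim from \cite[Lemma 3.14]{SY}, so there is no in-paper argument to compare against. Your transport-of-structure proof is correct and is essentially the standard argument for that cited result: taking $\phi$ to be $g$ on $V(0)$ and the identity on the other summands, pulling back $Y$ along $\phi$, and observing that the restriction to $V(0)\times V(0)$ is unchanged while each summand $V(\alpha)$ acquires the module structure $g\circ V(\alpha)$. The remaining points you invoke (that conjugation by $g$ preserves irreducibility, pairwise non-isomorphism, the simple current property, and the fusion rules, so that $[g\circ V(\alpha)]\boxtimes[g\circ V(\beta)]=[g\circ V(\alpha+\beta)]$) are standard and suffice to conclude.
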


\subsection{Lattice VOAs and $\Z_2$-orbifolds}
Let $L$ be an even unimodular lattice and let $V_L$ be the lattice VOA associated with $L$ (\cite{Bo,FLM}).
Then $V_L$ is holomorphic (\cite{Do}).
Let $\theta\in \aut(V_L)$ be a lift of $-1\in\aut(L)$ and let $V_L^+$ denote the subVOA of $V_L$ consisting of vectors in $V_L$ fixed by $\theta$.
Let $V_L^T$ be a unique $\theta$-twisted module for $V_L$ and $V_L^{T,+}$ the irreducible $V_L^+$-submodule of $V_L^T$ with integral weights. 
Let $$ \tilde{V}_L = V_L^+ \oplus V_L^{T,+}.$$ It is known that $ \tilde{V}_L$
has a unique holomorphic VOA structure by extending its
$V_L^+$-module structure (see \cite{FLM,DGM}).
The VOA  $\tilde{V}_L$ is often called the {\it $\Z_2$-orbifold} of $V_L$ .

\begin{remark} Assume that the rank of $L$ is $24$.
Then any even unimodular lattice $L$ has an orthogonal basis of norm $4$ (\cite{HK}).
Hence both $V_L$ and $\tilde{V}_L$ are framed (\cite{DMZ}).
\end{remark}

\section{Framed vertex operator algebras}

In this section, we review the notion of framed VOAs from \cite{DGH,Mi04}.

Let $\vir=\oplus_{n\in\Z}\C L_n \oplus \C \mathbf{c}$ be the Virasoro algebra.
That means  the $L_n$ satisfy the commutator relations:
\[
\begin{split}
 [L_m,L_n]=(m-n)L_{m+n}+ \frac{1}{12}(m^3-m) \delta_{m+n,0} \mathbf{c},\quad\text{ and }\quad
 [L_m,\mathbf{c}]=0.
\end{split}
\]
For any $c,h \in \C$, we will denote by $L(c,h)$ the irreducible highest weight
module of $\vir$ with central charge $c$ and highest weight $h$.
It is shown by \cite{FZ} that $L(c,0)$ has a natural VOA structure. We call it the
\emph{simple Virasoro VOA} with central charge $c$.

\begin{definition}\label{df:3.1}
Let $V=\bigoplus_{n=0}^\infty V_n$ be a VOA. An element $e\in V_2$ is  called
an {\it Ising vector}
if the
subalgebra $\vir(e)$ generated by $e$ is isomorphic to $L( \shf,0)$ and $e$ is
the conformal element of $\vir(e)$. Two Ising vectors $u,v\in V$ are said to be
{\it orthogonal} if $[Y(u,z_1), Y(v,z_2)]=0$.
\end{definition}

\begin{remark}
It is well-known that $L(\shf,0)$ is rational, i.e., all  $L(\shf, 0)$-modules are
completely reducible, and has only three inequivalent irreducible modules
$L(\shf,0)$, $L(\shf,\shf)$ and $L(\shf,\sfr{1}{16})$. The fusion rules of
$L(\shf,0)$-modules are computed in \cite{DMZ}:
\begin{equation}\label{eq:3.1}
\begin{array}{l}
  L(\shf,\shf)\fusion L(\shf,\shf)=L(\shf,0),
  \q
  L(\shf,\shf)\fusion L(\shf,\sfr{1}{16})=L(\shf,\sfr{1}{16}),
  \vsb\\
  L(\shf,\sfr{1}{16})\fusion L(\shf,\sfr{1}{16})=L(\shf,0)\oplus L(\shf,\shf).
\end{array}
\end{equation}
\end{remark}

\begin{definition}\label{df:3.2}
  (\cite{DGH})
A simple VOA $V$ is said to be \emph{framed} if there exists a set $\{e^1,
\dots,e^n\}$ of mutually orthogonal Ising vectors of $V$ such that their sum
$e^1+\cds +e^n$ is equal to the conformal element of $V$.   The subVOA  $T_n$
generated by $e^1,\dots,e^n$ is thus isomorphic to
  $L(\shf,0)^{\otimes n}$ and  is called
  a \emph{Virasoro frame} or simply  a \emph{frame} of $V$.
\end{definition}

\subsection{Structure codes}

Given a framed VOA $V$ with a frame $T_n$, one can associate two binary codes
$C$ and $D$ of length $n$ to $V$ and $T_{n}$ as follows:

Since $T_n= L(\shf,0)^{\otimes n}$ is rational, $V$ is a completely reducible
$T_n$-module. That is,
\begin{equation*}\label{2.1}
V \cong \bigoplus_{h_i\in\{0,\frac{1}{2},\frac{1}{16}\}}
m_{h_1,\ldots, h_{n}}L(\shf,h_1)\otimes\cdots\otimes L(\shf,h_n),
\end{equation*}
where the nonnegative integer $m_{h_1,\ldots,h_n}$ is the multiplicity of
$L(\shf,h_1)\otimes\cdots\otimes L(\shf, h_n)$ in $V$. Then all the
multiplicities are finite. It was also shown in \cite{DMZ} that
$m_{h_1,\ldots,h_n}$ is at most $1$ if all $h_i$ are different from $\sfr{1}{16}$.

\begin{definition}
Let $U\cong L(\shf,h_1)\otimes\cdots\otimes L(\shf,h_n) $ be an irreducible
module for $T_{n}$. We define the $\sfr{1}{16}$-word (or $\tau$-word)
$\tau(U)$ of $U$ as the binary word $\beta=(\beta_1, \dots,\beta_n)\in
\Z_2^n$ such that
\begin{equation}
\beta_i=
\begin{cases}
0 & \text{ if } h_i=0 \text{ or } \shf,\\
1 & \text{ if } h_i=\sfr{1}{16}.
\end{cases}
\end{equation}
\end{definition}

For any $\beta\in \Z_2^n$, denote by $V^\beta$ the sum of all irreducible
submodules $U$ of $V$ such that $\tau(U)=\beta$.

\begin{definition}
Define $ D:=\{\beta\in \Z_2^n \mid V^\beta \ne 0\}$. Then $D$ becomes a
binary code of length $n$ and $V$ can be written as a sum
$$V=\bigoplus_{\beta\in D} V^\beta.$$
\end{definition}

\medskip

For any $c=(c_1,\ldots,c_n)\in \Z_2^n$, denote
$M_c=m_{h_1,\ldots,h_n}L(\shf,h_1)\otimes\cdots\otimes L(\shf,h_n) $  where
$h_i=\shf$ if $c_i=1$ and $h_i=0$ elsewhere.  Note that $m_{h_1,\ldots,h_n}\leq
1$ since $h_i\neq \sfr{1}{16}$.

\begin{definition}
Define $C:=\{c\in \Z_2^n \mid M_c \neq 0\}$. Then $C$ also forms a binary code
and $V^0=\bigoplus_{c\in C}M_c$.
\end{definition}

\begin{remark}
The VOA $V^0$  is often called the {\it code VOA} associated to $C$  and is denoted
by $M_C$ (\cite{M1}).
\end{remark}
Summarizing, there exists a pair of binary codes $(C,D)$ such that
\[
V=\bigoplus_{\be\in D} V^\be \quad \text{ and } \quad V^0= \bigoplus_{c\in C}M_c.
\]
The codes $(C,D)$ are called the {\it structure codes} of a framed VOA $V$
associated to the frame $T_{n}$.
We also call the code $D$ the $\frac{1}{16}$-code and the code $C$ the
$\frac{1}2$-code of $V$ with respect to $T_n$. Note also that all $V^\be$,
$\be\in D$ , are irreducible $V^0$-modules.


Since $V$ is a VOA, its weights are integers and we have the lemma.
\begin{lemma}
\begin{enumerate}[{\rm (1)}]
\item The code $D$ is triply even, i.e., $\wt (\alpha)\equiv 0\mod 8$ for all $\alpha \in D$.

\item The code $C$ is even.
\end{enumerate}
\end{lemma}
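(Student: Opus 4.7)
The plan is to read off the grading constraints from the fact that every homogeneous vector in $V$ has integer weight, and to apply them to each irreducible $T_n$-summand appearing in the decompositions that define $C$ and $D$.

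First I would handle (2), which is the easier half. For $c=(c_1,\dots,c_n)\in C$ the summand $M_c$ is, by definition, (a copy of) $L(\tfrac12,h_1)\otimes\cdots\otimes L(\tfrac12,h_n)$ with $h_i=\tfrac12$ when $c_i=1$ and $h_i=0$ when $c_i=0$. Its lowest weight is therefore $\sum_i h_i=\wt(c)/2$. Since $M_c\subset V^0\subset V$ and all vectors of $V$ have integer weight, the lowest-weight vector of $M_c$ forces $\wt(c)/2\in\Z$, i.e.\ $\wt(c)$ is even. Hence $C$ is an even code.

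Next I would prove (1) by the same device applied to $V^\beta$. Fix $\beta\in D$, so $V^\beta\neq 0$, and pick any irreducible $T_n$-submodule $L(\tfrac12,h_1)\otimes\cdots\otimes L(\tfrac12,h_n)$ of $V^\beta$. By definition of $\tau$, the entries with $\beta_i=1$ have $h_i=\tfrac1{16}$, while the entries with $\beta_i=0$ have $h_i\in\{0,\tfrac12\}$. Let $k=\wt(\beta)$ and let $m$ denote the number of indices with $h_i=\tfrac12$. The lowest weight of this tensor product equals
\[
\sum_i h_i \;=\; \frac{k}{16}+\frac{m}{2} \;=\; \frac{k+8m}{16},
\]
and again this must be a (non-negative) integer because the corresponding vector lies in $V$. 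Thus $16\mid k+8m$, which forces $k\equiv 0\pmod 8$, independent of the parity of $m$. Consequently every $\beta\in D$ satisfies $\wt(\beta)\equiv 0\pmod 8$, so $D$ is triply even.

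There is no serious obstacle: the argument uses only the integrality of weights in the VOA $V$ and the explicit list of lowest weights $0,\tfrac12,\tfrac1{16}$ of the three irreducible $L(\tfrac12,0)$-modules. The only small point worth spelling out, to avoid a gap, is that each nonzero $V^\beta$ (resp.\ $M_c$) actually contains an irreducible $T_n$-submodule whose lowest weight vector sits inside $V$, so the arithmetic constraint above is truly enforced.
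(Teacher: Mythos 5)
Your proof is correct and follows exactly the route the paper intends: the paper dispenses with the lemma in one line ("Since $V$ is a VOA, its weights are integers and we have the lemma"), and your argument simply fills in the weight arithmetic $\wt(c)/2\in\Z$ and $16\mid \wt(\beta)+8m$ that this remark leaves implicit. No gaps; the point you flag at the end (that a nonzero $V^\beta$ or $M_c$ contains an irreducible $T_n$-summand whose one-dimensional lowest weight space lies in $V$) is indeed the only detail worth stating.
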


The following theorem is also well-known (cf.\ \cite[Theorem 2.9]{DGH} and
\cite[Theorem 6.1]{Mi04}):
\begin{theorem}
Let $V$ be a framed VOA with structure codes $(C,D)$. Then, $V$ is holomorphic
if and only if $C=D^\perp$.
\end{theorem}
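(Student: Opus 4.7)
The plan is to view $V$ as a $D$-graded simple-current extension of the code VOA $V^0=M_C$ and recast holomorphicity as a count of irreducible $V$-modules induced from $M_C$-modules. First I would prove that $C\subseteq D^\perp$ holds for any framed VOA, independently of holomorphicity. For $\beta\in D$ and $c\in C$, the summand $M_c\subset V^0$ acts on $V^\beta$ through an intertwining operator; decomposing coordinate-wise over $T_n$ and invoking the Ising fusion rules \eqref{eq:3.1}, at each $i$ with $c_i=\beta_i=1$ the action multiplies an $L(\tfrac12,\tfrac1{16})$-factor by the simple current $L(\tfrac12,\tfrac12)$, shifting its conformal weight by $\tfrac12$. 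The total shift is $\tfrac12\langle c,\beta\rangle$ modulo $\Z$, which must vanish because $V$ has integer weights; this forces $D\subseteq C^\perp$. Using Lemma \ref{LemFHL} applied to $T_n$ together with the simple-current nature of the irreducible $L(\tfrac12,0)$-modules (in the $1/2$-fusion subring), I then identify $V=\bigoplus_{\beta\in D}V^\beta$ as a $D$-graded simple-current extension of $M_C$ with fusion $[V^\alpha]\boxtimes[V^\beta]=[V^{\alpha+\beta}]$.

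The core of the proof is to establish $|R(V)|=|D^\perp/C|$. Invoking the classification of irreducible $M_C$-modules (\cite{M1} for the $\tau=0$ sector and its extension in \cite{LY} for general $\tau$-word), any irreducible $V$-module restricts to a $D$-orbit of irreducible $M_C$-modules under the fusion action of $\{V^\beta:\beta\in D\}$, and, conversely, every such orbit $\mathcal{O}$ induces to an irreducible $V$-module via $W\mapsto\bigoplus_{\beta\in D}(V^\beta\boxtimes W)$ for any $W\in\mathcal{O}$. The triply-even condition on $D$ ensures that the resulting module has integer conformal weights. A coordinate-wise orbit count against the $T_n$-module picture, using $C\subseteq D^\perp$ from Step 1, then yields $|R(V)|=|D^\perp/C|$. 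Combined with the automatic inclusion, this gives the equivalence: $V$ is holomorphic iff $|R(V)|=1$ iff $C=D^\perp$.

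The decisive and most delicate step is the module count. It requires a complete handle on irreducible $M_C$-modules with nonzero $\tau$-word -- these are built from projective representations of extraspecial $2$-groups attached to $C$ and $\tau$, and their existence and multiplicity are subtle -- together with an orbit analysis showing that the $D$-action partitions these isomorphism classes into exactly $|D^\perp/C|$ orbits, each of which induces successfully to a genuine (untwisted) $V$-module. The triply-even hypothesis on $D$ plays an essential role here in guaranteeing integrality of the conformal weights of the induced modules, ruling out spurious $g$-twisted contributions and ensuring that the induction is compatible with the VOA structure of $V$.
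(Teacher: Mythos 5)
The paper itself offers no proof of this theorem: it is quoted as well known, with pointers to \cite[Theorem 2.9]{DGH} and \cite[Theorem 6.1]{Mi04}. Your overall framework --- $C\subseteq D^\perp$ from integrality of the powers of $z$ in $Y(u,z)|_{V^\beta}$ for $u\in M_c$, viewing $V$ as a $D$-graded simple current extension of $M_C$ (this is a theorem of \cite{LY}, not a consequence of Lemma \ref{LemFHL}, since $L(\shf,\sfr{1}{16})$ is not a simple current and $M_C$ is not a tensor product), and then translating holomorphicity into a count of $D$-orbits of irreducible $M_C$-modules --- is the framework of those references, and the ingredients you list do yield the ``only if'' direction: if $c\in D^\perp\setminus C$, then $\bigoplus_{\beta\in D}V^\beta\boxtimes M_{c+C}$ is an untwisted irreducible $V$-module not isomorphic to $V$.

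The gap is that your central claim $|R(V)|=|D^\perp/C|$ is false, so the ``if'' direction is not established. A counterexample is supplied by the paper itself: $V=M_{\RM(2,4)}\cong V_{\sqrt2E_8}^+$ is a framed VOA with $D=\{0\}$ and $C=\RM(2,4)$, so $|D^\perp/C|=|\F_2^{16}/\RM(2,4)|=2^5$, whereas $|R(V_{\sqrt2E_8}^+)|=2^{10}$ by Lemma \ref{POVE}~(2). The discrepancy is exactly the contribution of the irreducible $M_C$-modules with nonzero $\tau$-word: these are not parametrized by cosets of $C$ in $D^\perp$, their number for a fixed $\tau$-word depends on $C$ through the projective representations you allude to, and they do in general induce to genuine untwisted irreducible $V$-modules (the summands $V^\beta$, $\beta\neq0$, are themselves of this type), so they cannot be discarded as ``spurious twisted contributions.'' What the ``if'' direction actually requires is a proof that, when $C=D^\perp$, the unique $D$-orbit of irreducible $M_C$-modules having trivial monodromy with every $V^\beta$ is $\{[V^\beta]\mid\beta\in D\}$; this needs the full classification of the twisted-type $M_C$-modules and their monodromies and is precisely the content of \cite[Theorem 6.1]{Mi04} (alternatively, \cite{DGH} extract the equality $|C|\cdot|D|=2^n$ from modular invariance of $\ch V$ via Zhu's theorem). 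Your proposal names this as the delicate step but substitutes an incorrect count for the argument.
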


In \cite{LY}, the structure of a general framed VOA has been studied in detail.
In particular, the following is established (see \cite[Theorem 10]{LY}).

\begin{theorem}\label{tecode}
Let $D$ be a linear binary code of length $16k, k\in \Z_{>0}$.  Then $D$ can be
realized as the $\frac{1}{16}$-code of a holomorphic framed VOA of central
charge $8k$ if and only if (1) $D$ is triply even and (2)  the all-one vector
$(1^{16k})\in D$.
\end{theorem}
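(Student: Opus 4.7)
The plan is to prove the two directions separately, with necessity being elementary and sufficiency being an explicit construction.

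For necessity, let $V$ be a holomorphic framed VOA of central charge $8k$ with frame $T_n$, $n=16k$, and $1/16$-code $D$. The lemma preceding the theorem already records that $D$ is triply even: for each $\beta \in D$, any irreducible $T_n$-submodule of $V^\beta$ has the form $L(\shf,h_1)\otimes\cdots\otimes L(\shf,h_n)$ with $h_i = \sfr{1}{16}$ exactly when $\beta_i=1$, so the lowest weight contains a contribution $\wt(\beta)/16$ together with contributions in $\shf \Z$ from the other factors; integrality of weights of $V$ forces $\wt(\beta) \equiv 0 \pmod 8$. For the second condition, I would argue via a quantum-dimension count: holomorphicity gives $|R(V)|=1$, so every irreducible module for the code subVOA $M_{D^\perp}$ must be absorbed as a summand of some $V^\beta$. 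In particular the canonical $\Z_2$-twisted sector of $M_{D^\perp}$, whose $\tau$-word is $(1^n)$, must appear, forcing $(1^n) \in D$.

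For sufficiency, suppose $D$ is triply even, contains $(1^n)$, and has length $n=16k$. Set $C = D^\perp$; since $(1^n)\in D$, the code $C$ is even, so the code VOA $M_C$ of \cite{M1} is well-defined and is rational, $C_2$-cofinite, simple, self-dual, of CFT type, and of central charge $8k$. The next step is to select for each $\beta \in D$ an irreducible $M_C$-module $V^\beta$ with $\tau(V^\beta)=\beta$ (taking $V^0=M_C$). The triply even condition guarantees each $V^\beta$ can be chosen so that $\wt(V^\beta)\in \Z$, since $\wt(V^\beta) \equiv \wt(\beta)/16 \pmod{\shf \Z}$. Then one assembles $V := \bigoplus_{\beta \in D} V^\beta$ into a simple current extension using the machinery of \cite{DGH, Mi04}. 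Holomorphicity follows from $C = D^\perp$ via the identity $|R(V)|\cdot|D| = |R(M_C)| = |D|$, yielding $|R(V)|=1$.

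The main obstacle is the simple-current assembly step. Concretely, one must verify that the quadratic form $q: D \to \C/\Z$ given by $q(\beta) = \wt(V^\beta) \bmod \Z$ is trivial, so that the abelian intertwining algebra structure on $\bigoplus_{\beta \in D} V^\beta$ extends to a genuine VOA structure; the triply even hypothesis is precisely what kills $q$. A secondary issue is that each $V^\beta$ is only determined up to a finite set of $M_C$-module isomorphism classes, so one must choose representatives compatibly. This is handled by invoking the standard theory of simple current extensions, which produces a consistent system of intertwining operators provided the residual $2$-cocycle class in $H^2(D,\C^\times)$ vanishes---and for an elementary abelian $2$-group $D$ this obstruction can always be trivialised by a suitable renormalisation.
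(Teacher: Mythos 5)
First, note that the paper does not prove this statement at all: it is quoted verbatim from \cite[Theorem 10]{LY}, so there is no internal proof to compare against. Judged on its own terms, your proposal follows the same broad strategy as \cite{LY} (necessity from integrality of weights, sufficiency by assembling a simple current extension of the code VOA $M_{D^\perp}$), but two of its key steps are genuinely broken.

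In the necessity direction, your argument for $(1^{16k})\in D$ rests on the claim that holomorphicity forces \emph{every} irreducible $M_{D^\perp}$-module to occur as a summand of $V$. That is false: a holomorphic VOA containing a rational full subVOA $W$ need not contain all irreducible $W$-modules (for instance $V_{E_8}$ contains $V_{D_8}$ as a full subVOA but only two of its four irreducible modules). The correct argument is much shorter and is already implicit in the text surrounding the theorem: the same integrality-of-weights consideration that gives triple evenness of $D$ shows that the $1/2$-code $C$ is even; holomorphicity gives $C=D^\perp$; and a binary code is even exactly when it lies in $\langle(1^n)\rangle^\perp$, whence $(1^n)\in C^\perp=D^{\perp\perp}=D$.

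In the sufficiency direction, the assembly step is where all the real work of \cite{LY} lives, and your outline leaves the load-bearing parts unjustified: (i) you assert, but do not establish, that for each $\beta\in D$ there exists an irreducible $M_C$-module with $\tau$-word $\beta$ \emph{and} integral weights --- the congruence $\wt(\beta)\equiv 0\bmod 8$ only shows the weights lie in $\tfrac12\Z$, and the existence of the integral-weight representative, the fact that it is a simple current, and the fact that the fusion of the chosen representatives reproduces the group law of $D$ all require the representation theory of code VOAs developed in \cite{M1,Mi04,LY}; (ii) the final holomorphicity count is wrong: $|R(M_C)|\neq|D|$ in general, and for a simple current extension by a group of order $|D|$ the global dimension drops by a factor of $|D|^2$, not $|D|$. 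The clean route to holomorphicity is simply the criterion quoted immediately before the theorem in the paper ($V$ is holomorphic if and only if $C=D^\perp$). As written, the proposal is a reasonable sketch of the intended strategy, but repairing these gaps essentially amounts to reproving the structural results of \cite{LY}.
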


By the theorem above, the classification of the $\frac{1}{16}$-codes for
holomorphic framed VOAs is equivalent to the classification of triply even codes
of length $16k$.

\subsection{Triply  even codes of length $48$} Triply even binary codes of
length $48$ have been classified recently by Betsumiya and Munemasa \cite{BM} .
In this subsection, we will recall their result.

\begin{definition}
Let $n$ be a positive integer. We define two linear maps $d:\Z_2^n \to
\Z_2^{2n}$, $\ell :\Z_2^n \to \Z_2^{2n}$ such that
\begin{equation}\label{mapdlr}
\begin{split}
d(a_1,a_2, \dots, a_n)& = (a_1,a_1, a_2,a_2, \dots, a_n,a_n),\\
\ell(a_1,a_2, \dots, a_n)& =(a_1,0, a_2,0, \dots, a_n,0),
\end{split}
\end{equation}
for any $(a_1,a_2, \dots, a_n)\in \Z_2^n$.
\end{definition}

\begin{definition}\label{double}
 Let $E$ be a binary code of length $n$. We will  define
$$\EuD(E) = \Span_{\Z_2} \{ d(E), \ell(1^n) \}$$ to be the code
generated by $d(E)$ and $\ell(1^n)$. We call the binary code $\EuD(E)$ of length $2n$ the
\textit{extended doubling} (or simply the \textit{doubling}) of $E$.
\end{definition}

\begin{lemma}
If $E$ is a $k$-dimensional doubly even binary code of length $8n$, then the doubling $\EuD(E)$ is a
$(k+1)$-dimensional triply even code of length $16n$.
\end{lemma}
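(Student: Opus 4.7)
My approach is to verify both claims directly from the definition of $\EuD(E)$, exploiting the fact that every element of the span can be written uniquely as $d(a) + \varepsilon \ell(1^{8n})$ with $a \in E$ and $\varepsilon \in \{0,1\}$, since $d$ and $\ell$ are $\F_2$-linear and $d(E)$ is closed under addition.

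For the dimension, I would note that $d : \Z_2^{8n} \to \Z_2^{16n}$ is injective because the $i$-th pair of coordinates of $d(a)$ is $(a_i, a_i)$, so $d(a)=0$ forces $a=0$; hence $\dim d(E) = \dim E = k$. It then suffices to show $\ell(1^{8n}) \notin d(E)$: every vector in the image of $d$ agrees on the two coordinates of each pair $\{2i-1, 2i\}$, whereas $\ell(1^{8n})$ has value $1$ on every odd coordinate and $0$ on every even coordinate. Therefore $\{d(E), \ell(1^{8n})\}$ spans a subspace of dimension $k+1$.

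For the triply even condition I would compute weights pair by pair in the two possible cases. If $\varepsilon = 0$, then $\wt(d(a)) = 2\wt(a)$, which is divisible by $8$ since $E$ is doubly even. If $\varepsilon = 1$, the $i$-th pair of $d(a) + \ell(1^{8n})$ equals $(a_i + 1, a_i)$, which has weight exactly $1$ regardless of whether $a_i = 0$ or $1$; hence $\wt(d(a) + \ell(1^{8n})) = 8n$, also divisible by $8$. There is no real obstacle; the entire proof amounts to a bookkeeping check on the generators and their sums, the only noteworthy point being that adding $\ell(1^{8n})$ to any $d(a)$ produces a vector of weight exactly $8n$, independent of $a$.
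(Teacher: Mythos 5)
Your proof is correct and complete. The paper states this lemma without proof, treating it as a routine verification, and your argument is exactly the expected one: the unique representation of each codeword as $d(a)+\varepsilon\,\ell(1^{8n})$ gives both the dimension count (via injectivity of $d$ and $\ell(1^{8n})\notin d(E)$) and the weight check, where $\wt(d(a))=2\wt(a)\equiv 0 \pmod 8$ by double evenness of $E$ and $\wt(d(a)+\ell(1^{8n}))=8n$ independently of $a$.
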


\begin{notation}
For any positive integer $n$, let $\mathcal{E}_n$ be the subcode of $\Z_2^n$
consisting of all even codewords. Note that $\EuD(E)^\perp$ contains $d(\mathcal{E}_n)$
for any binary code $E$.
\end{notation}

\begin{notation}
Let $k\geq 2$. We denote by $d_{2k}$  the doubly even binary code of length $2k$
generated by
\[
\begin{pmatrix}
1&1&1&1 & &  & & \\
&& 1&1 &1 &1 & & \\
&\quad &\quad &\quad&\quad &\ddots &\ddots &\quad&\quad   \\
&&&& & &1 &1 &1&1
\end{pmatrix}.
\]
We also denote the doubly even binary codes generated by
\[
\begin{pmatrix}
1&1&1&1 &0 &0  &0 \\
1&1 &0&0 &1 &1 &0  \\
1&0 &1 &0 &1&0 &1
\end{pmatrix}
\quad\text{ and }\quad
\begin{pmatrix}
1&1&1&1 & 1&1&1&1 \\
1&1&1&1 &0 &0  &0 &0 \\
1&1 &0&0 &1 &1 &0&0  \\
1&0 &1 &0 &1&0 &1&0
\end{pmatrix}
\]
by $e_7$ and $e_8$, respectively.
\end{notation}

\begin{notation}
Let $a_1, \dots, a_k$ be doubly even binary codes generated by its weight $4$
elements. We will  use $(a_1\cdots a_k)^+$ to denote the doubly even self-dual
code whose weight 4 elements generate a subcode $a_1\oplus \cdots \oplus a_k$.
We also use $g_{24}$ to denote the binary Golay code of length 24. Note that
$g_{24}$ has no element of weight $4$.
\end{notation}

Recently, Betsumiya and Munemasa \cite{BM} have classified all maximal triply
even binary codes of length $48$. Their main result is as follows.

\begin{theorem}[cf. \cite{BM}]
Let $D$ be a triply even code of length $48$. Then $D$ is isomorphic to a
subcode of one of the following codes:
\begin{enumerate}
         \item $\EuD(g_{24})$,   $\EuD((d_{10}e_7^2)^+)$,$\EuD(d_{24}^+)$,
            $\EuD((d_{12}^2)^+)$, $\EuD((d_{4}^6)^+)$, $\EuD((d_6^4)^+)$,
            $\EuD((d_8^3)^+) $,
         \item $\EuD(e_8)^{\oplus3}$,
         \item $\EuD(e_8)\oplus \EuD(d_{16}^+)$,
         \item $D^{\rm ex}$, a maximal triply even code of dimension $9$.
         \end{enumerate}
\end{theorem}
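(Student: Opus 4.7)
The strategy is to reduce the problem to classifying \emph{maximal} triply even codes of length $48$, from which the stated result follows immediately since any triply even code embeds in a maximal one. The classification exploits the tension between triply-evenness and the doubling map $d:\F_2^n\to\F_2^{2n}$ from (\ref{mapdlr}).

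First I would establish a doubling reduction. For a triply even code $D$ of length $48$, fix a pairing of the $48$ coordinates into $24$ pairs, and let $\sigma$ be the involution swapping the two coordinates in each pair. If $\sigma D = D$, then the $\sigma$-fixed part of $D$ lies in $d(\F_2^{24})$ and so has the form $d(E)$ for some $E \subseteq \F_2^{24}$. A direct weight computation using $\wt(d(e)) = 2\wt(e)$ forces $E$ to be doubly even; if in addition $\ell(1^{24}) \in D$, then $D \subseteq \EuD(E)$, and maximality of $D$ forces equality. Invoking the classical Pless--Sloane classification of doubly even self-dual codes of length $24$ (realizing the seven root systems $g_{24}$, $(d_{10}e_7^2)^+$, $d_{24}^+$, $(d_{12}^2)^+$, $(d_4^6)^+$, $(d_6^4)^+$, $(d_8^3)^+$) then produces exactly the seven codes of family (1).

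Second, for triply even codes that are not $\sigma$-invariant under any coordinate pairing, I would analyze block decompositions of the $48$ coordinates into triply even components. Since $\EuD(e_8) = \RM(1,4)$ is (up to equivalence) the unique maximal triply even code of length $16$, and since $\EuD(d_{16}^+)$ is a maximal triply even code of length $32$, the block decompositions $48 = 16+16+16$ and $48 = 16+32$ yield families (2) and (3). The remaining case, where no such block decomposition is compatible with $D$, must be shown to force $D$ to lie in the exceptional $9$-dimensional code $D^{\mathrm{ex}}$ of family (4); this exceptional code can be produced by an explicit ``twisted'' glueing construction between blocks.

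The main obstacle is establishing completeness, i.e., that no further maximal triply even code exists outside the listed four families. This requires a careful enumeration of the possible glue patterns between triply even subcodes supported on coordinate blocks, and in particular ruling out alternative exceptional configurations. Here the description of $D^{\mathrm{ex}}$ does not reduce to a lower-rank classification problem, so a computer-assisted search over a finite family of structural possibilities (as carried out in Betsumiya--Munemasa) appears essentially unavoidable. The verification that each of the four maximal codes genuinely occurs, by contrast, is routine: one simply checks triple-evenness, dimension, and maximality directly from the generator matrices.
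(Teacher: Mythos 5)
First, a point of reference: the paper does not prove this statement at all. It is imported verbatim from Betsumiya--Munemasa \cite{BM} (hence the ``cf.'' in the attribution), so there is no internal argument to compare yours against; the honest benchmark is whether your outline would stand as a self-contained proof. It would not, for one structural reason and one concrete false step. The structural reason: the trichotomy you organize the proof around --- $D$ is invariant under a coordinate-pairing involution $\sigma$ and is captured by a doubling, or $D$ decomposes into triply even blocks of lengths $16+16+16$ or $16+32$, or $D$ lies in the exceptional code $D^{\rm ex}$ --- is essentially a restatement of the theorem, and nothing in the proposal derives it. You acknowledge that completeness requires the computer-assisted enumeration of \cite{BM}; but that enumeration \emph{is} the content of the theorem, so the proposal is a roadmap that defers the entire load-bearing step to the external source.

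The concrete false step is in your doubling reduction. You claim that if $\sigma D=D$ and $\ell(1^{24})\in D$, then $D\subseteq\EuD(E)$ where $d(E)$ is the $\sigma$-fixed part of $D$. This fails for $\EuD(e_8)^{\oplus3}$ itself: it is invariant under the natural pairing involution, contains $\ell(1^{24})$, and has $\sigma$-fixed part $d(e_8^{\oplus3})$, yet it is not contained in $\EuD(e_8^{\oplus3})=\Span_{\Z_2}\{d(e_8^{\oplus3}),\ell(1^{24})\}$ --- for instance $\ell(1^80^{16})$ lies in the former but not the latter (the two codes have dimensions $15$ and $13$). The point is that $D/D^\sigma$ need not be one-dimensional, so a $\sigma$-invariant code is not recovered from its fixed part plus a single extra generator. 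Relatedly, there are nine (not seven) doubly even self-dual codes of length $24$, as the paper's own remark notes; a correct version of your first step must also explain why the two decomposable ones, $e_8^3$ and $e_8\oplus d_{16}^+$, contribute nothing new to family (1) --- namely because $\EuD(E\oplus E')$ is properly contained in $\EuD(E)\oplus\EuD(E')$, so their doublings sit inside the codes of families (2) and (3) and are not maximal.
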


\begin{remark}
Note that there are exactly nine non-equivalent doubly even self-dual codes of
length $24$. The codes $g_{24}$,   $(d_{10}e_7^2)^+$, $d_{24}^+$,
$(d_{12}^2)^+$, $(d_{4}^6)^+$, $(d_6^4)^+$ and $(d_8^3)^+$ are indecomposable
while $e_8^3$ and $e_8\oplus d_{16}^+$ are decomposable.
\end{remark}

By the theorem above, we know that most triply even codes of length $48$ are
contained in some extended doublings.  The following theorem characterizes all
holomorphic framed VOAs with the $1/16$-code $D$ contained in an extended
doubling.

\begin{theorem}\cite[Theorem 3.9]{Lam}\label{VN}
Let $V=\oplus_{\be \in D }V^\be$ be a holomorphic framed VOA with the $\frac{1}{16}$-code $D$. Suppose that $D$ can be embedded into a doubling
$\EuD(E)$ for some doubly even code $E$. Then there is an even unimodular
lattice $N$ such that $V\cong V_N$ or $\tilde{V}_N$.
\end{theorem}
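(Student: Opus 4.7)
The plan is to build an even unimodular lattice $N$ of rank $24$ from the code data and show that $V$ is isomorphic to $V_N$ or $\tilde{V}_N$. The strategy has three stages: first, locate a lattice-type subVOA $V_{\sqrt{2}L}^+$ inside $V^0$ using the doubling structure of $\EuD(E)$; second, reconstruct the full $V$ as a simple current extension of $V^0$; third, recognize that extension as $V_N$ (when no $\theta$-twisted sector appears) or as $\tilde{V}_N=V_N^+\oplus V_N^{T,+}$ (when it does).

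First I would use the hypothesis $D\subset \EuD(E)$ together with $C=D^\perp$ (which holds because $V$ is holomorphic) to conclude $C\supset \EuD(E)^\perp\supset d(\mathcal{E}_{24})$. Grouping the $48$ Ising vectors into the $24$ pairs $\{e^{2i-1},e^{2i}\}$ dictated by the map $d$, the containment $d(\mathcal{E}_{24})\subset C$ ensures that the code subVOA $M_{d(\mathcal{E}_{24})}\subset V^0$ is isomorphic to a lattice-type VOA of the form $V_{\sqrt{2}D_{24}}^+$, using the identification $V_{\sqrt{2}\Z}^+\cong L(\tfrac{1}{2},0)^{\otimes 2}\oplus L(\tfrac{1}{2},\tfrac{1}{2})^{\otimes 2}$ on each pair and extending by $\mathcal{E}_{24}$ between the pairs. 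The additional $\tfrac{1}{2}$-codewords of $C$ beyond $d(\mathcal{E}_{24})$ correspond to irreducible $M_{d(\mathcal{E}_{24})}$-modules indexed (via duality) by the doubly even code $E$, producing an identification $V^0\cong V_{\sqrt{2}L}^+$ for an even lattice $L$ of rank $24$ obtained from $D_{24}$ and $E$ by a standard Construction~A-type procedure.

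Second, I would reconstruct $V$ as a simple current extension of $V^0$ by the irreducible $V^0$-modules $V^\beta$ indexed by $\beta\in D$. Writing each $\beta=d(\alpha)+\epsilon\,\ell(1^{24})$ with $\alpha\in E$ and $\epsilon\in\{0,1\}$, the summands with $\epsilon=0$ correspond to untwisted $V_{\sqrt{2}L}^+$-modules coming from an overlattice of $\sqrt{2}L$, while those with $\epsilon=1$ correspond to $\theta$-twisted modules. The classification of irreducible $V_{\sqrt{2}L}^+$-modules and their simple current fusion rules then yields an even unimodular lattice $N$ of rank $24$ (the overlattice of $\sqrt{2}L$ determined by the cosets $\{d(\alpha)\in D\}$) such that $V\cong V_N$ when no $\epsilon=1$ codeword appears in $D$, and $V\cong \tilde{V}_N$ otherwise.

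The main obstacle lies in the second stage: matching each $V^\beta$ with the correct (possibly $\theta$-twisted) irreducible $V_{\sqrt{2}L}^+$-module and checking that the full simple current algebra of $V$ coincides with that of $V_N$ or $\tilde{V}_N$. This requires tracking the fusion rules of $L(\tfrac{1}{2},0)$ on each coordinate pair together with the representation theory of $V_{\sqrt{2}L}^+$. Verifying that $N$ is even and unimodular is a separate consistency check that uses the triply even property of $D$, the doubly even property of $E$, and the self-duality $C=D^\perp$.
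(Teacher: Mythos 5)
First, be aware that this paper contains no proof of Theorem \ref{VN}: it is imported wholesale from \cite[Theorem 3.9]{Lam}, so the only comparison available is with that reference, whose strategy your outline does follow in broad terms --- pair the Ising vectors via the doubling map $d$, use $C=D^\perp\supset\EuD(E)^\perp\supset d(\mathcal{E}_{24})$ to place a lattice-type full subVOA inside $V^0$, and sort the summands $V^\beta$ into untwisted versus $\theta$-twisted modules according to whether $\beta\in d(\Z_2^{24})$ or $\beta\in d(\Z_2^{24})+\ell(1^{24})$. (Note also that the theorem is stated without any restriction to central charge $24$, while your argument fixes the length at $48$.)

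Two points, however, are genuine problems. The base identification is incorrect: $L(\tfrac{1}{2},0)^{\otimes 2}\oplus L(\tfrac{1}{2},\tfrac{1}{2})^{\otimes 2}$ is the \emph{full} lattice VOA $V_{\Z\alpha}$ for a norm-$4$ generator $\alpha$, not $V_{\sqrt{2}\Z}^{+}$; consequently $M_{d(\mathcal{E}_{24})}\cong V_{L_0}^{+}$ with $L_0=(2\Z)^{24}\cong\sqrt{2}A_1^{24}$, not $V_{\sqrt{2}D_{24}}^{+}$ (their weight-two subspaces have dimensions $324$ and $852$, respectively). Since the entire untwisted/twisted dichotomy is taken relative to this lattice and its $-1$-lift $\theta$, the error propagates. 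More seriously, what you label the ``main obstacle'' is precisely the content of the theorem and is nowhere carried out: one must show that $L(\tfrac{1}{2},\tfrac{1}{16})^{\otimes 2}$ on a coordinate pair is an \emph{untwisted} $V_{\Z\alpha}$-module (namely $V_{\pm\alpha/4+\Z\alpha}$) whereas $L(\tfrac{1}{2},\tfrac{1}{16})\otimes L(\tfrac{1}{2},h)$ with $h\in\{0,\tfrac{1}{2}\}$ lies in the $\theta$-twisted module $V_{\Z\alpha}^{T}$; that consequently $\bigoplus_{\beta\in D\cap d(\Z_2^{24})}V^\beta$ is a simple current extension of $V_{L_0}^{+}$ by untwisted-type modules, hence isomorphic to $V_N$ (when $D\subset d(\Z_2^{24})$) or to $V_N^{+}$ for an even unimodular overlattice $N$ (otherwise); and that in the latter case the remaining summands assemble into $V_N^{T,+}$, so that $V\cong\tilde{V}_N$ by uniqueness of the simple current extension. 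In particular, your intermediate claim that $V^0\cong V_{\sqrt{2}L}^{+}$ for a single even lattice $L$ is both unjustified and unnecessary for the argument. As it stands the proposal is a plausible plan, not a proof.
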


Because of Theorem \ref{VN}, we only concentrate on triply even codes which
are not contained in any doubling. Such codes must be subcodes of
$\EuD(e_8)^{\oplus3}$, $\EuD(e_8)\oplus \EuD(d_{16}^+)$ or $D^{ex}$.  In \cite{Lam},
holomorphic VOAs associated to subcodes of $D^{ex}$ has been studied and the
following theorem is proved.

\begin{theorem}\cite[Theorem 6.78 and Table 1]{Lam} \label{Dex}
Let $V=\oplus_{\be \in D }V^\be$ be a holomorphic framed VOA of central charge
$24$ with the $\frac{1}{16}$-code $D$.
Assume that $D$ is a subcode of $D^{\rm ex}$. Then either

\begin{enumerate}[{\rm (1)}]
\item  $V$ is isomorphic to $V\cong V_N$ or $\tilde{V}_N$ for some even unimodular
lattice $N$; or

\item  the weight one subspace $V_1$ is isomorphic to one of the Lie algebra
listed in Table \ref{LieDex}.
\end{enumerate}
Moreover, for each Lie algebra $\mathfrak{L}$ in Table \ref{LieDex}, there is a
holomorphic framed VOA $U$ such that $U_1\cong \mathfrak{L}$.
\end{theorem}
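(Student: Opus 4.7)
The plan is to carry out a three-step simple-current-extension analysis of framed VOAs whose $1/16$-code lies in $D^{\rm ex}$. \textbf{Step 1 (enumeration of candidates):} by Theorem~\ref{tecode}, the possible $1/16$-codes are exactly the triply even subcodes $D \subseteq D^{\rm ex}$ containing the all-ones vector $(1^{48})$; since $D^{\rm ex}$ is only $9$-dimensional, these can be listed up to the natural action of $\aut(D^{\rm ex})$, yielding a finite set of orbit representatives.

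\textbf{Step 2 (classification for fixed $D$):} for each orbit representative, I would choose a convenient full subVOA $W \subseteq V^0$ whose irreducible modules carry a non-singular quadratic-space structure $(R(W), q)$ over $\F_2$, where $q$ distinguishes $\Z$-graded modules ($q = 0$) from $(\Z + \tfrac12)$-graded modules ($q = 1$), as in the notation table. Suitable $W$ arise as tensor products built from $V_{\sqrt{2}E_8}^+$, $V_{\sqrt{2}D_{16}^+}^+$, and related lattice-orbifold pieces, matched to a natural decomposition of $D^{\rm ex}$ into sixteen-coordinate blocks. Any holomorphic framed VOA $V$ with $1/16$-code $D$ is then a simple current extension $V = \bigoplus_{\alpha \in \mathcal{S}} W(\alpha)$ indexed by a maximal totally singular subspace $\mathcal{S} \subseteq R(W)$ whose $1/16$-projection equals $D$. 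Using Lemma~\ref{LO} to handle the orthogonal-group orbits of $\mathcal{S}$ and Lemma~\ref{LSY} to recognise when conjugate extensions produce isomorphic VOAs, I would enumerate the $\aut(W)$-orbits of such $\mathcal{S}$.

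\textbf{Step 3 (Lie algebra identification):} for each resulting $V$, decompose
\[
V_1 = (V^0)_1 \;\oplus\; \bigoplus_{\beta \in D \setminus \{0\}} (V^\beta)_1,
\]
and compute each summand from the Virasoro-frame decomposition of $V^\beta$ (only $T_n$-summands of conformal weight $1$ contribute). The resulting $\dim V_1$, compared against Schellekens' list \cite{Sc93}, narrows the candidate Lie algebra to a short list; the precise simple-factor decomposition is then determined by locating a Cartan subalgebra inside $(V^0)_1$ and identifying root-vector contributions from the individual $V^\beta$. When the resulting Lie algebra does \emph{not} appear in Table~\ref{LieDex}, I would exhibit $V \cong V_N$ or $V \cong \tilde V_N$ by recognising a sublattice VOA inside $V$ whose rank and root system force the identification, placing $V$ in case~(1).

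The main obstacle is Step~2: classifying maximal totally singular $\mathcal{S} \subseteq R(W)$ with prescribed $1/16$-projection $D$ up to $\aut(W)$ requires fine control of the orthogonal group on $(R(W), q)$ and its interaction with the combinatorial symmetries of $D^{\rm ex}$, and a single $D$ typically yields several non-isomorphic framed VOAs that must be separated. A secondary difficulty is that in Step~3 the dimension $\dim V_1$ rarely pins down the Lie algebra uniquely on Schellekens' list, forcing explicit root-system computations inside each constructed VOA, as well as careful matching of levels of the simple factors against the entries of Table~\ref{LieDex}.
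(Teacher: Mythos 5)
You should first note that this paper does not prove Theorem \ref{Dex} at all: it is imported verbatim, with its table, from \cite[Theorem 6.78 and Table 1]{Lam}, so there is no internal proof to compare your proposal against. What can be compared is your strategy versus the method this paper actually uses for the \emph{other} two maximal triply even codes in Sections 4 and 5, since your proposal is essentially a transplant of that method to $D^{\rm ex}$.

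The transplant has a genuine gap at Step 2. Your plan hinges on choosing a full subVOA $W\subseteq V^0$ that is a tensor product of pieces such as $V_{\sqrt2E_8}^+$ and $V_{\sqrt2D_{16}^+}^+$ ``matched to a natural decomposition of $D^{\rm ex}$ into sixteen-coordinate blocks,'' so that $R(W)$ inherits a quadratic space structure and Proposition \ref{PS1} applies. But $D^{\rm ex}$ is precisely the case in the Betsumiya--Munemasa classification \cite{BM} that admits no such block decomposition: unlike $\EuD(e_8)^{\oplus3}$ and $\EuD(e_8)\oplus\EuD(d_{16}^+)$, it is not a direct sum of length-$16$ codes, which is exactly why this paper treats it separately by citation rather than by the quadratic-space machinery of Sections 4--5. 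Without that decomposition you have no canonical lattice-type $W$, no quadratic form $q_W$ of the required shape, and no analogue of Proposition \ref{PS1} guaranteeing that a maximal totally singular $\mathcal{S}$ yields a holomorphic VOA; one must instead work directly with the code VOA $M_{(D^{\rm ex})^\perp}$ and establish the simple-current and fusion properties of its modules from scratch, which is where the substance of \cite{Lam} lies. Two smaller points: in Step 3, ruling a constructed $V$ into case (1) requires either the criterion of Theorem \ref{VN} (embedding $D$ into a doubling) or an explicit $\Z_2$-orbifold identification in the style of Proposition \ref{PZ2}, not merely ``recognising a sublattice VOA''; and the existence half of the theorem (the ``Moreover'') is only delivered by Step 2 if the extension construction is actually justified for your chosen $W$, which, as above, is the unproved crux.
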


\begin{table}[bht]
\caption{ \bf Lie algebras associated to $D^{ex}$} \label{LieDex}
\begin{center}
\begin{tabular}{|c|c|c|c|c|}
\hline
No. in \cite{Sc93}& Dimension & Lie algebra \cr
\hline \hline
$7$  & $48$  & $({A_{3,4}})^3A_{1,2} $\cr
 \hline
$10$ &  $48$ & $D_{5,8}A_{1,2}$ \cr   \hline
 $18$  &  $72$  & $ A_{7,4}({A_{1,1}})^3$\cr
 \hline
$19$ &  $72$  & ${ D_{5,4}C_{3,2} (A_{1,1}})^2$\cr
\hline
$26$ &  $96$  &
  $ ({A_{5,2}})^2 C_{2,1}({A_{2,1}})^2$\cr
\hline
$33$ &  $120$ & $  A_{7,2} ({C_{3,1}})^2A_{3,1}$\cr \hline
$35$ &  $120$  & $C_{7,2}A_{3,1}$\cr\hline
$40$ &  $144$ & $  A_{9,2}A_{4,1}B_{3,1}$\cr \hline
$48$ & $192$ & $ ({C_{6,1}})^2B_{4,1}$\cr \hline
$56$ &  $288$ & $ C_{10,1}B_{6,1}$\cr \hline
\end{tabular}
\end{center}
\end{table}

Because of Theorems \ref{VN} and \ref{Dex}, we will only study holomorphic
framed VOAs associated to subcodes of $\EuD(e_8)^{\oplus3}$ and $\EuD(e_8)\oplus
\EuD(d_{16}^+)$ in the remaining of this article.

First, we note that $\EuD(e_8)\cong {\rm RM}(1,4)$ and ${\rm RM}(1,4)^\perp ={\rm RM}(2,4)$,
where ${\rm RM}(k, r)$ denotes the $k$-th order Reed-Muller code of length $2^r$. In
addition, the binary code VOA $M_{{\rm RM}(2,4)}$ is isomorphic to
$V_{\sqrt{2}E_8}^+$ (cf. \cite{Mi04}). Similarly, $\EuD(d_{16}^+)^\perp=\mathrm{span}_{\Z_2} \{
d(\mathcal{E}_{16}), \ell(d_{16}^+)\}$ and the corresponding binary code VOA
is isomorphic to $V_{\sqrt{2}D_{16}^+}^+$.
Therefore, holomorphic framed VOAs associated to subcodes of $\EuD(e_8)^{\oplus3}$ and
$\EuD(e_8)\oplus \EuD(d_{16}^+)$ are holomorphic extensions of
$(V_{\sqrt{2}E_8}^+)^{\otimes 3}$ and $V_{\sqrt{2}E_8}^+ \otimes
V_{\sqrt{2}D_{16}^+}^+$, respectively.

\section{Lattice type VOA $V_L^+$ for a totally even lattice $L$}\label{VL+}

Let $L$ be an even lattice of rank $n\in8\Z$. Assume that $L$ is totally even,
that is, $\sqrt2L^*$ is even, and that $L$ has an orthogonal basis of norm $4$.
Then $L^*\subset L/2$. Let $2^m$ be the size of $L^*/L$.
Since $L$ has an orthogonal basis of norm $4$,
$V_{L}^+$ is a framed VOA \cite{DMZ}. By \cite{DGH}, $V_{L}^+$ is rational and
$C_2$-cofinite. In this section, we review the properties of $V=V_{L}^+$ and
the set $R(V)$ of all isomorphism classes of irreducible $V$-modules.

By \cite{AD}, any irreducible $V$-module is isomorphic to one of the following:
$$\{V_{\lambda+L}^\pm,V_{L}^{T_{\chi_\lambda},\pm}\mid \lambda\in L^*/L\}.$$
Hence $|R(V)|=2^{m+2}$. We refer to \cite{Sh2} for the definition of
$\chi_\lambda$. We also use the following notations to denote the isomorphism
classes:
$$ [\lambda]^\pm=[V_{\lambda+L}^\pm],\quad  [\chi_\lambda]^\pm=[V_{L}^{T_{\chi_\lambda},\pm}].$$
By \cite{ADL}, the fusion rules of $R(V)$ are given as follows.

\begin{proposition}\label{fusion}{\rm (cf.\ \cite{ADL})} Let $L$ be an even lattice such that $\sqrt2L^*$ is even.
Then the fusion rules of $V_{L}^+$ are described as follows:
\begin{eqnarray*}
[\lambda_1]^\delta\boxtimes[\lambda_2]^\varepsilon&=&[\lambda_1+\lambda_2]^{\delta\varepsilon},\\
{} [\lambda_1]^\delta\boxtimes[\chi_{\lambda_2}]^\varepsilon&=&[\chi_{\lambda_1+\lambda_2}]^{\delta\varepsilon\nu(\lambda_2)\nu(\lambda_1+\lambda_2)},\\
{}[\chi_{\lambda_1} ]^\delta\boxtimes[\chi_{\lambda_2}]^\varepsilon&=&[\lambda_1+\lambda_2]^{\delta\varepsilon\nu(\lambda_1)\nu(\lambda_2)},
\end{eqnarray*}
where $\nu(\lambda)=+$ and $-$ if $\langle\lambda,\lambda\rangle\in2\Z$
and $1+2\Z$, respectively, and $\delta,\varepsilon\in\{\pm\}\cong\Z_2$.
\end{proposition}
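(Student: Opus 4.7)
The plan is to deduce the fusion rules of $V_L^+$ from the well-known fusion rules for $V_L$ and its $\theta$-twisted modules, exploiting the fact that $V_L$ is a $\Z_2$-simple current extension of $V_L^+$. First I would recall that, at the $V_L$-level, one has $V_{\lambda_1+L}\boxtimes V_{\lambda_2+L}\cong V_{\lambda_1+\lambda_2+L}$, and that the fusion involving the unique $\theta$-twisted $V_L$-module is determined by the character $\chi_\lambda$ (cf.\ \cite{FLM}). Each irreducible $V_L$-module and each irreducible $\theta$-twisted $V_L$-module splits as the direct sum of its $\pm$-eigenspaces under a lift of $\theta$, giving precisely the irreducibles $[\lambda]^\pm$ and $[\chi_\lambda]^\pm$, and every $V_L^+$-intertwining operator arises by restriction from these larger modules.

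For the untwisted-untwisted rule the argument is then essentially bookkeeping. Restricting the $V_L$-fusion $V_{\lambda_1+L}\boxtimes V_{\lambda_2+L}\cong V_{\lambda_1+\lambda_2+L}$ to $V_L^+$, the left side expands into four summands and the right side into two. Because $\theta$ acts as an algebra automorphism of the intertwining algebra, a $\theta$-eigenvector of eigenvalue $\delta$ paired with one of eigenvalue $\varepsilon$ must land in the $\delta\varepsilon$-eigenspace of $V_{\lambda_1+\lambda_2+L}$, yielding $[\lambda_1]^\delta\boxtimes[\lambda_2]^\varepsilon=[\lambda_1+\lambda_2]^{\delta\varepsilon}$.

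For the mixed and twisted-twisted cases I would work with the twisted vertex operator construction and the intertwining operators it provides between untwisted and twisted modules. The signs $\nu(\lambda)$ arise because a fixed lift of $\theta$ to the twisted module $V_L^{T_{\chi_\lambda}}$ acts on the lowest-weight space with an overall phase governed by $\langle\lambda,\lambda\rangle\bmod 2$; this parity is exactly what $\nu$ records. Computing the $\theta$-eigenvalue of a lowest-weight vector in the image of the corresponding fusion product then forces the stated signs $\nu(\lambda_2)\nu(\lambda_1+\lambda_2)$ in the untwisted-times-twisted case and $\nu(\lambda_1)\nu(\lambda_2)$ in the twisted-times-twisted case, since one must compare the $\theta$-action on source and target twisted modules.

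The main obstacle will be this twisted-sector sign calculation: one must verify that the chosen lift of $\theta$ to $V_L^{T_{\chi_\lambda}}$ really produces the sign $\nu(\lambda)$ on the top level, and that this is compatible with the normalization used to define the irreducible summands $V_L^{T_{\chi_\lambda},\pm}$. Once the sign for a single coset representative is pinned down, the remaining cases follow from the abelian-group structure of $L^*/L$ together with the multiplicativity of $\theta$ on intertwining operators. I would conclude by cross-checking the resulting formulas against the fusion rule computations in \cite{ADL}, to which the proposition is attributed.
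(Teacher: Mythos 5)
The paper offers no proof of this proposition at all: it is quoted verbatim from Abe--Dong--Li \cite{ADL}, so there is nothing internal to compare your argument against. Judged on its own terms, your sketch correctly predicts the answer via $\Z_2$-eigenspace bookkeeping, but it has a genuine gap. The restriction/eigenvalue argument only produces \emph{some} nonzero intertwining operators of the stated types, i.e.\ lower bounds $N_{M_a,M_b}^{M_c}\ge 1$ for the displayed targets. It cannot rule out additional fusion channels — for instance an intertwining operator of type $[\lambda_1]^{\delta}\times[\lambda_2]^{\varepsilon}\to[\lambda_1+\lambda_2]^{-\delta\varepsilon}$, or from two untwisted-type modules into a twisted-type module — nor can it exclude multiplicities $\ge 2$. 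Establishing these upper bounds is the actual mathematical content of \cite{ADL}, and is done there through Zhu's $A(V)$-bimodule machinery and the previously computed fusion rules of $M(1)^+$; your proposal does not engage with this at all. Note also that you cannot shortcut this by invoking the simple-current property of the irreducible $V_L^+$-modules, since in this paper that property is \emph{deduced from} the proposition, so using it as an input would be circular.

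A second unproved assertion is that ``every $V_L^+$-intertwining operator arises by restriction'' from a (possibly twisted) $V_L$-intertwining operator. This is plausible but is precisely the kind of surjectivity statement that requires an independent dimension count or an $A(V)$-theoretic argument; without it, the $\theta$-equivariance constraint on restricted operators says nothing about operators that do not arise by restriction. Your closing step of ``cross-checking against \cite{ADL}'' effectively concedes that the proof lives there; as written, the proposal is a correct heuristic for the shape of the answer (including the $\nu$-signs, whose claimed origin in the parity of $\langle\lambda,\lambda\rangle$ on the twisted top level would also need explicit verification against the normalization of $V_L^{T_{\chi_\lambda},\pm}$), but not a proof.
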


By the proposition above, any irreducible $V$-module is a self-dual simple
current module. By \cite{ADL}, the associativity of $\boxtimes$ holds for $V_L^+$.
Hence $R(V)$ has an elementary abelian $2$-group structure of order $2^{m+2}$
under the fusion rules. We view $R(V)$ as an $(m+2)$-dimensional vector space
over $\F_2$.

We now assume that $n\in8\Z$. Then any irreducible $V$-module is graded by
$\Z$ or $\Z+1/2$. Let $q_V$ be the map from $R(V)$ to $\F_2$ defined by
$q_V([M])=0$ and $1$ if the weights of $M$ belong to $\Z$ and $1/2+\Z$, respectively.
Let $\langle \, ,\, \rangle$ be the $2$-form on $R(V)$ defined by $\langle
W,W'\rangle=q_V(W\boxtimes W')+q_V(W)+q_V(W')$. Then by \cite{Sh2}, it is a
symplectic form, and hence $q_V$ is a quadratic form. Moreover the type of
$q_V$ is equal to that of the quadratic form $q_L$ on $L^*/L$ defined by
$q_L(v)=\langle v,v\rangle$. By Proposition \ref{fusion}, we obtain the
following lemma directly.
\begin{lemma}\label{Inner} Let $\lambda,\mu\in L^*/L$ and $\varepsilon,\delta\in\{\pm\}$. Then the following hold:
\begin{enumerate}[{\rm (1)}]
\item $\langle[\lambda]^\varepsilon,[\mu]^\delta\rangle=2\langle\lambda,\mu\rangle$;
\item $\langle[\lambda]^+,[\chi_0]^+\rangle=0$;
\item $\langle[\lambda]^-,[\chi_0]^+\rangle=1$;
\item $\langle[0]^-,[\chi_\lambda]^\varepsilon\rangle=1$.
\end{enumerate}
\end{lemma}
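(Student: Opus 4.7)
The plan is to deduce all four identities from the defining relation
$\langle W, W'\rangle = q_V(W\boxtimes W') + q_V(W) + q_V(W')$
combined with the fusion rules of Proposition \ref{fusion} and a small amount of lowest-weight data. Two preliminary facts will do most of the work: (a) for untwisted modules, $q_V([\lambda]^\pm)\equiv\langle\lambda,\lambda\rangle \pmod 2$, since the lowest weight of $V_{\lambda+L}^\pm$ equals $\langle\lambda,\lambda\rangle/2$ modulo $\Z$; and (b) $V_L^{T_{\chi_\lambda},+}$ and $V_L^{T_{\chi_\lambda},-}$ have lowest weights differing by $1/2$, hence $q_V([\chi_\lambda]^+)+q_V([\chi_\lambda]^-)=1$; in particular $q_V([\chi_0]^+)=0$ by the very definition of $V_L^{T,+}$.

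For (1), apply the first fusion rule $[\lambda]^\varepsilon\boxtimes[\mu]^\delta=[\lambda+\mu]^{\varepsilon\delta}$ and use (a) to get
$\langle[\lambda]^\varepsilon,[\mu]^\delta\rangle\equiv \langle\lambda+\mu,\lambda+\mu\rangle+\langle\lambda,\lambda\rangle+\langle\mu,\mu\rangle\equiv 2\langle\lambda,\mu\rangle\pmod 2.$
For (4), since $\nu(\lambda)^2=+$, the third fusion rule gives $[0]^-\boxtimes[\chi_\lambda]^\varepsilon=[\chi_\lambda]^{-\varepsilon}$, and then (b) yields $\langle[0]^-,[\chi_\lambda]^\varepsilon\rangle=q_V([\chi_\lambda]^{-\varepsilon})+q_V([\chi_\lambda]^\varepsilon)=1$. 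For (3), the second fusion rule gives $[\lambda]^-\boxtimes[\chi_0]^+=[\chi_\lambda]^{-\nu(\lambda)}$. The sign $\nu(\lambda)$ is designed exactly so that $[\chi_\lambda]^{\nu(\lambda)}$ has the same weight parity as $[\lambda]^+$, i.e.\ $q_V([\chi_\lambda]^{\nu(\lambda)})\equiv\langle\lambda,\lambda\rangle\pmod 2$; combining with (b), $q_V([\chi_\lambda]^{-\nu(\lambda)})\equiv\langle\lambda,\lambda\rangle+1$, and the sum collapses to $1$.

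Finally, (2) could be proved in the same style as (3), but it is cheaper to appeal to the $\F_2$-bilinearity of $\langle\cdot,\cdot\rangle$ on $R(V)$. Indeed $[\lambda]^+=[\lambda]^-\boxtimes[0]^-$ by the first fusion rule, so (3) and (4) (with $\lambda=0$, $\varepsilon=+$) give $\langle[\lambda]^+,[\chi_0]^+\rangle=\langle[\lambda]^-,[\chi_0]^+\rangle+\langle[0]^-,[\chi_0]^+\rangle=1+1=0$.

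The single non-bookkeeping point, and the place where some care is needed, is the compatibility statement in input (b) together with the weight-parity interpretation of $\nu(\lambda)$ used in (3): namely that the $\pm$-labels on the twisted classes $[\chi_\lambda]^\pm$ can be normalised so that $[\chi_\lambda]^{\nu(\lambda)}$ has $q_V$-value $\langle\lambda,\lambda\rangle\bmod 2$. This can be verified either by unwinding the conventions for $V_L^{T_{\chi_\lambda},\pm}$ in \cite{AD,Sh2} and directly computing the lowest weight of the twisted module, or bootstrapped from the associativity of fusion applied to $[\lambda]^+\boxtimes[\chi_0]^+\boxtimes[\chi_0]^+=[\lambda]^+$ combined with (a) and the already-established part of (1). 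Everything else is arithmetic in $\F_2$.
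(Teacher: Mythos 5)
Your proposal is correct and follows essentially the same route as the paper, which simply derives the lemma ``directly'' from the fusion rules of Proposition \ref{fusion} together with the definition of $q_V$ via the weight grading; you have merely written out the $\F_2$-arithmetic that the paper leaves implicit. You also correctly isolate the one non-formal input (the compatibility of the $\pm$-labels on $[\chi_\lambda]^\pm$ with $\nu(\lambda)$, i.e.\ $q_V([\chi_\lambda]^{\nu(\lambda)})\equiv\langle\lambda,\lambda\rangle$), which is consistent with the conventions of \cite{AD,Sh2} and with Table \ref{Orbit}, and your use of bilinearity to get (2) from (3) and (4) is a legitimate shortcut since the paper has already recorded (from \cite{Sh2}) that $\langle\cdot,\cdot\rangle$ is a symplectic form.
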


\subsection{VOA $V_{\sqrt2E_8}^+$}\label{sec:3.1}
Let $E_8$ denote the $E_8$ root lattice and set $\sqrt2E_8=\{\sqrt2v\mid v\in
E_8\}$. Then $\sqrt2E_8$ is totally even, and contains an orthogonal basis of
norm $4$.
In this subsection, we review the properties of $V=V_{\sqrt2E_8}^+$.

By the previous section, $(R(V),q_V)$ is a $10$-dimensional quadratic space of
plus type over $\F_2$.
By the definition of $q_V$, $\Aut(V)$ preserves it. Hence we obtain a group
homomorphism from $\Aut(V)$ to the orthogonal group $O(R(V),q_{V})$. In fact,
it is an isomorphism by \cite[Theorem 4.5]{Sh2} (cf.\ \cite{Gr1}).

Let $M$ be an irreducible module of $V=V_{\sqrt{2}E_8}^+$. Then the lowest
weight of $M$ is $0, 1/2$ or $1$, and it is $0$ if and only if
$M\cong V_{\sqrt{2}E_8}^+$ (cf. \cite{AD,Mi04}).

Since $V_{\sqrt2E_8}=V\oplus V_{\sqrt2E_8}^-$ is a VOA, the invariant bilinear
form on the irreducible $V$-module $V_{\sqrt2E_8}^-$ is symmetric
(\cite[Proposition 5.3.6]{FHL}). By Lemma \ref{LO} (1), $\Aut(V)\cong
O(R(V),q_V)$ is transitive on the set of all isomorphism classes of $\Z$-graded
irreducible $V$-modules except for $[0]^+$. Hence the invariant bilinear form on
arbitrary $\Z$-graded irreducible $V$-module is also symmetric. Since the
dimension of the lowest weight space of any $(\Z+1/2)$-graded irreducible
$V$-module is one-dimensional, the invariant form on it must be symmetric.
\begin{lemma}\label{POVE} Let $V=V_{\sqrt2E_8}^+$.
Then the following hold:
\begin{enumerate}
\item $V$ is simple, rational, $C_2$-cofinite, self-dual and of CFT type;
\item $(R(V),q_V)$ is a non-singular $10$-dimensional quadratic space of plus type over $\F_2$;
\item $\Aut(V)\cong O(R(V),q_V)$;
\item The invariant bilinear form on arbitrary irreducible $V$-module is symmetric;
\item For $[M]\in R(V)$, the lowest weight of $M$ is $0,1/2$ or $1$, and the dimension of the lowest weight space is $1,1$ or $8$, respectively.
\end{enumerate}
\end{lemma}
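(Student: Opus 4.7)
The plan is to piece together the five parts of the lemma from the general facts about $V_L^+$ recalled in Section~\ref{VL+}, specialised to $L=\sqrt{2}E_8$, together with direct citations to results mentioned in the paragraphs immediately preceding the statement. The lemma is essentially a consolidated restatement rather than a new assertion.

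For (1), the lattice $\sqrt{2}E_8$ is positive-definite and even, totally even (because $\sqrt{2}(\sqrt{2}E_8)^* = 2E_8^* = 2E_8$ is even), and admits an orthogonal basis of norm $4$; thus by \cite{DGH} the VOA $V$ is framed, and in particular rational and $C_2$-cofinite, while simplicity, self-duality and the CFT condition $V_0 = \C\vacuum$ are standard consequences of the construction of $V_L^+$ for a positive-definite even lattice. For (2), the classification of irreducible $V_L^+$-modules recalled above gives $|R(V)| = 2^{m+2}$ with $2^m = |L^*/L|$; since $E_8$ is unimodular of rank $8$, $(\sqrt{2}E_8)^*/\sqrt{2}E_8 \cong E_8/2E_8 \cong \F_2^8$, so $m=8$ and $\dim_{\F_2} R(V) = 10$. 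Non-singularity of $q_V$ and the equality of its type with that of the discriminant form $q_L$ on $L^*/L$ have been stated in Section~\ref{VL+} with reference to \cite{Sh2}; the plus type of the mod-$2$ reduction of the $E_8$ inner product on $E_8/2E_8$ is classical.

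For (3) I would simply cite \cite[Theorem~4.5]{Sh2}. For (4), I would follow the argument sketched just before the lemma: the invariant bilinear form on $V_{\sqrt{2}E_8}^-$ is symmetric by \cite[Proposition~5.3.6]{FHL} because $V_{\sqrt{2}E_8} = V \oplus V_{\sqrt{2}E_8}^-$ carries a VOA structure; Lemma~\ref{LO}(1) with $k=1$, combined with (3), yields transitivity of $\Aut(V)$ on the $\Z$-graded irreducibles other than $[0]^+$ itself, and this transitivity propagates the symmetry to every $\Z$-graded irreducible module; for a $(\Z+1/2)$-graded irreducible, the lowest weight space is one-dimensional and the invariant form is therefore automatically symmetric. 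For (5), I would cite \cite{AD, Mi04} for the list of possible lowest weights together with the corresponding multiplicities $1, 1, 8$.

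The only step that is not a pure citation is the plus-type verification in (2), which underpins every subsequent invocation of Lemma~\ref{LO}; this is a classical computation for $E_8$ and presents no real obstacle, though it should be carried out carefully.
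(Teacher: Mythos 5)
Your proposal matches the paper's treatment: the lemma is assembled exactly as you describe, from the general $V_L^+$ facts of Section~\ref{VL+} specialised to $L=\sqrt2E_8$, the citation of \cite[Theorem 4.5]{Sh2} for (3), the symmetry argument via $V_{\sqrt2E_8}=V\oplus V_{\sqrt2E_8}^-$ and transitivity of $O(R(V),q_V)$ for (4), and \cite{AD,Mi04} for (5). One trivial slip: $(\sqrt2E_8)^*=\tfrac{1}{\sqrt2}E_8$, so $\sqrt2(\sqrt2E_8)^*=E_8$ rather than $2E_8$, but the conclusion that the lattice is totally even is unaffected.
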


The following lemma will be used in the later sections.

\begin{lemma}\label{LSVk}{\rm (cf.\ \cite{Sh6})} The automorphism group $\Aut(V^k)$ of $V^k$ is isomorphic to $\Aut(V)\wr\Sym_k$.
\end{lemma}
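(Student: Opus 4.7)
The natural map
\[
\Phi\colon \Aut(V)\wr\Sym_k = \Aut(V)^k\rtimes\Sym_k \longrightarrow \Aut(V^k)
\]
sends $((g_1,\ldots,g_k),\sigma)$ to the automorphism that permutes the $k$ tensor factors by $\sigma$ and acts by $g_{\sigma^{-1}(i)}$ on the $i$-th factor. Injectivity of $\Phi$ is immediate by evaluating on vectors of the form $\vacuum^{\otimes(i-1)}\otimes v\otimes\vacuum^{\otimes(k-i)}$: both the permutation and each factor automorphism are forced to be trivial. The substance of the lemma is surjectivity.

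The plan for surjectivity is to recover, from an arbitrary $g\in\Aut(V^k)$, an underlying permutation $\sigma\in\Sym_k$ of the tensor factors. The crucial input is that $V=V_{\sqrt{2}E_8}^+$ has $V_1=0$: the lattice $\sqrt{2}E_8$ has no norm-$2$ vectors, so $(V_{\sqrt{2}E_8})_1$ coincides with the Heisenberg subspace, on which the $\theta$-involution acts as $-1$. Hence $(V^k)_1=0$, and the weight-two space takes the block form
\[
(V^k)_2\ =\ \bigoplus_{i=1}^{k} V_2^{(i)},\qquad V_2^{(i)} := \vacuum^{\otimes(i-1)}\otimes V_2\otimes\vacuum^{\otimes(k-i)},
\]
for which the Griess product $a\star b := a_{(1)}b$ makes each $V_2^{(i)}$ a commutative subalgebra with identity $\omega_i/2$ and forces $V_2^{(i)}\star V_2^{(j)}=0$ for $i\neq j$. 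The target is to establish an intrinsic characterisation of $\{\omega_1,\ldots,\omega_k\}$: they are, up to permutation, the unique way to write $\omega$ as a sum of $k$ pairwise commuting conformal vectors, each of central charge $8$, in $V^k$.

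This uniqueness is the main obstacle of the proof. It cannot be read off from the Griess algebra alone, since $V_2$ itself admits many nontrivial orthogonal idempotent decompositions (for instance into $16$ Ising vectors from a frame), so $\omega$ has many decompositions into mutually commuting conformal vectors \emph{of smaller central charge}. The argument I would carry out is to write an arbitrary candidate $f_i = \sum_j a_{ij}\omega_j + u_i$ with $u_i\in\bigoplus_j(V_2^{(j)}\ominus\C\omega_j)$ and impose simultaneously $\sum_i f_i=\omega$, the quadratic identity $f_i\star f_i = 2f_i$, the pairwise vanishing $f_i\star f_j=0$ for $i\neq j$, and the central-charge normalisation $(f_i)_{(3)}f_i=4\vacuum$. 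It is this last condition that distinguishes central charge $8$ from the smaller central charges appearing in Ising-type decompositions, and together with the rigidity of the Griess algebra of $V_{\sqrt{2}E_8}^+$ under the transitive action of $\Aut(V)\cong O^+(10,2)$ recorded in Lemma \ref{POVE}, it forces $(a_{ij})$ to be a $0/1$ permutation matrix and each $u_i$ to vanish.

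Granting the uniqueness, there exists $\sigma\in\Sym_k$ with $g\omega_i=\omega_{\sigma(i)}$. Replacing $g$ by its composition with $\Phi((\mathrm{id},\ldots,\mathrm{id}),\sigma^{-1})$, we may assume $g\omega_i=\omega_i$ for every $i$. Then $g$ commutes with each operator $L^{(i)}(0) := (\omega_i)_{(1)}$, hence preserves every joint eigenspace. In particular,
\[
V^{(i)}\ =\ \bigcap_{j\neq i}\ker L^{(j)}(0)\ \subset\ V^k
\]
is $g$-stable, and the restriction $g|_{V^{(i)}}$ lies in $\Aut(V^{(i)})\cong\Aut(V)$. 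Assembling these $k$ restrictions with the previously extracted $\sigma$ produces the desired preimage in $\Aut(V)\wr\Sym_k$, completing the proof.
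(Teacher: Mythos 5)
Your framework is sound in its outer layers: injectivity of $\Phi$, the block decomposition $(V^k)_2=\bigoplus_i V_2^{(i)}$ coming from $V_1=0$, the reduction to the case $g\omega_i=\omega_i$, and the final restriction to $V^{(i)}=\bigcap_{j\neq i}\ker L^{(j)}(0)$ are all correct steps (the paper itself gives no proof, only the citation to \cite{Sh6}, so the entire weight of the lemma rests on your middle step). The problem is that the load-bearing claim is false: $\{\omega_1,\dots,\omega_k\}$ is \emph{not} the unique way to write $\omega$ as a sum of $k$ pairwise commuting conformal vectors of central charge $8$. Already for $k=2$: the paper notes that $V=V_{\sqrt2E_8}^+$ is framed, so it has a frame $\{e^1,\dots,e^{16}\}$ of Ising vectors; put $c_1=e^1+\cdots+e^8$ and $c_2=e^9+\cdots+e^{16}$, two mutually commuting conformal vectors of central charge $4$ with $c_1+c_2=\omega_V$. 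Then
\[
f_1=c_1\otimes\vacuum+\vacuum\otimes c_1,\qquad f_2=c_2\otimes\vacuum+\vacuum\otimes c_2
\]
are pairwise commuting conformal vectors of $V\otimes V$ summing to $\omega$, each satisfying $f_i\star f_i=2f_i$ and $(f_i)_{(3)}f_i=4\vacuum$ (central charge $4+4=8$), yet $\{f_1,f_2\}\neq\{\omega_1,\omega_2\}$. So the system of conditions you propose to solve admits solutions with $(a_{ij})$ far from a permutation matrix, and the central-charge normalisation does not rescue the argument: central charge is additive across the tensor factors, and $V$ itself splits $\omega_V$ into commuting pieces whose charges can be reassembled to $8$ diagonally. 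The appeal to ``rigidity'' of the Griess algebra under $O^+(10,2)$ is not a substitute for an actual argument here, since Lemma \ref{POVE} records only the action on $R(V)$.

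The characterisation that does work is algebraic rather than numerical. Because $V_1=0$, each $V_2^{(i)}$ is an \emph{ideal} of the Griess algebra $\bigl((V^k)_2,\star\bigr)$ with identity element $\omega_i/2$, and $(V^k)_2=\bigoplus_i V_2^{(i)}$ is a decomposition into indecomposable ideal direct summands (indecomposability of the Griess algebra of $V_{\sqrt2E_8}^+$ itself is a nontrivial input, available from \cite{Gr1}). For a finite-dimensional commutative algebra with identity, such a decomposition is unique: if $\bigoplus_s I_s=\bigoplus_t J_t$, then $I_s=I_s\star 1\subseteq\sum_t I_s\star J_t\subseteq\sum_t(I_s\cap J_t)$ forces each $I_s$ to coincide with some $J_t$. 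Hence any $g\in\Aut(V^k)$, being an automorphism of the Griess algebra, permutes the blocks $V_2^{(i)}$ and therefore the vectors $\omega_i$; note that in your counterexample the subalgebras generated by $f_1$ and $f_2$ are not ideals, which is exactly what this criterion detects. With your uniqueness claim replaced by this ideal-theoretic one, the remainder of your argument goes through.
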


\subsection{VOA $V_{\sqrt2D_{16}^+}^+$}
Let $D_{16}^+$ be an even unimodular lattice of rank $16$ whose root lattice is $D_{16}$. Note that
$D_{16}^+$ is a unique indecomposable even unimodular lattice of rank $16$
up to isomorphism. Set $\sqrt2D_{16}^+=\{\sqrt2v\mid v\in D_{16}^+\}$. Then
$\sqrt2D_{16}^+$ is totally even, and it contains an orthogonal basis of norm $4$.
In this subsection, we review the properties of $X=V_{\sqrt2D_{16}^+}^+$.

Let us recall a description of $\sqrt2D_{16}^+$. Let $\{\alpha_i\mid 1\le i\le
16\}$ be an orthogonal basis of $\R^{16}$ of norm $2$ and let
$\alpha_c=\sum_{i=1}^{16}c_i\alpha_i$ for  $c=(c_i)\in\F_2^{16}$. Then
$$\sqrt2D_{16}^+\cong \sum_{1\le i,j\le
16}\Z(\alpha_i+\alpha_j)+\Z\frac{\alpha_{(1^{16})}}{2}.$$
 It is easy to see
that
\begin{equation}(\sqrt2D_{16}^+)^*\cong \sum_{1\le i\le 16}\Z\alpha_i+\sum_{c\in\mathcal{E}_{16}}\Z\frac{\alpha_c}{2}+\Z\frac{\alpha_{(1^{16})}}{4},\label{Eq:dual16}\end{equation} where $\mathcal{E}_{16}$ is the binary code of length $16$ consisting of all codewords with even weight.

\begin{lemma}\label{wt8} Let $c\in\F_2^{16}$ with $\wt(c)=8$.
Then $\sqrt2D_{16}^++\Z\alpha_c/2$ contains a sublattice isometric to
$\sqrt2E_8^{\oplus 2}$.
\end{lemma}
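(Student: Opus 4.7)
The plan is to exhibit two orthogonal sublattices, each isomorphic to $\sqrt 2 E_8$, supported on complementary subsets of the coordinates. Let $I=\supp(c)$ and $J=\{1,\dots,16\}\setminus I$, so that $|I|=|J|=8$. Write $c'=c+(1^{16})$ so that $\supp(c')=J$. For any subset $K\subset\{1,\dots,16\}$ of size $8$, let
\[
\Lambda_K:=\Span_{\Z}\{\alpha_i+\alpha_j,\,\alpha_i-\alpha_j\mid i,j\in K\}+\Z\cdot\tfrac{1}{2}\sum_{i\in K}\alpha_i.
\]
First I would verify the isomorphism $\Lambda_K\cong\sqrt 2E_8$. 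This follows from the standard description $E_8=D_8+\Z\cdot\tfrac12(\beta_1+\cdots+\beta_8)$ with $\{\beta_i\}$ an orthonormal basis: rescaling by $\sqrt 2$ and identifying $\sqrt2\beta_i$ with $\alpha_i$ (which has norm $2$), one gets exactly the lattice $\Lambda_{\{1,\dots,8\}}$, and the same works for any $8$-element index set $K$.

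Next I would check that $\Lambda_I$ and $\Lambda_J$ both lie in $\sqrt2D_{16}^++\Z\alpha_c/2$. For the generators $\alpha_i\pm\alpha_j$ (with $i,j$ in the same block), these already lie in $\sqrt2D_{16}^+$: indeed $\alpha_i+\alpha_j\in\sqrt2D_{16}^+$ by definition, and $\alpha_i-\alpha_j=(\alpha_i+\alpha_k)-(\alpha_j+\alpha_k)$ for any auxiliary $k$. The remaining generator of $\Lambda_I$ is $\tfrac12\sum_{i\in I}\alpha_i=\alpha_c/2$, which is in $\Z\alpha_c/2$ by construction. For $\Lambda_J$, the remaining generator is
\[
\tfrac12\sum_{j\in J}\alpha_j \;=\; \alpha_{(1^{16})}/2 \;-\; \alpha_c/2,
\]
and $\alpha_{(1^{16})}/2\in\sqrt2D_{16}^+$ by the presentation recalled in the section, while $\alpha_c/2\in\Z\alpha_c/2$.

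Finally, since $\Lambda_I$ and $\Lambda_J$ are supported on the disjoint index sets $I$ and $J$, they are mutually orthogonal in $\R^{16}$. Hence
\[
\Lambda_I\oplus\Lambda_J\;\cong\;\sqrt2E_8\oplus\sqrt2E_8\;=\;\sqrt2E_8^{\oplus 2}
\]
is an isometric sublattice of $\sqrt2D_{16}^++\Z\alpha_c/2$, as claimed. There is no real obstacle here; the only point requiring care is to make sure all the half-integral vectors needed for the $E_8$-glue are actually available after adjoining $\alpha_c/2$, which is why the identity $\alpha_{c'}/2=\alpha_{(1^{16})}/2-\alpha_c/2$ must be recorded explicitly.
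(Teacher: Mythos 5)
Your proof is correct and is essentially the paper's own argument: the paper's proof consists of the single observation that $\sqrt2E_8\cong \sum_{1\le i,j\le 8}\Z(\alpha_i+\alpha_j)+\Z\,\alpha_{(1^{8})}/2$ followed by ``this is clear,'' and your two blocks $\Lambda_I$ and $\Lambda_J$, glued by $\alpha_c/2$ and $\alpha_{(1^{16})}/2-\alpha_c/2$ respectively, are exactly the details the paper leaves to the reader.
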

\begin{proof} This is clear from $$\sqrt2E_8\cong \sum_{1\le i,j\le 8}\Z(\alpha_i+\alpha_j)+\Z\frac{\alpha_{(1^{8})}}{2}.$$
\end{proof}

By the method for calculating the automorphism group of $V_L^+$ given in
\cite{Sh2}, one can show that ${\rm Aut}(X)\cong 2^{15}.(2^{14}.{\rm
Sym}_{16}).{\rm Sym}_3$.  The orbits  can then be computed directly. By the
explicit calculation, one can also show that the invariant form among
$[\lambda]^\pm$ is symmetric (cf.\ \cite{FLM}). Hence the invariant form on any
irreducible $V$-modules is symmetric since each orbit in Table \ref{Orbit}
contains an element of the form $[\lambda]^\pm$.

The fusion rules of $X$ and the quadratic form $q_X$ on $R(X)$ was described in
Section \ref{VL+} (See Proposition \ref{fusion} and Lemma \ref{Inner}).  Hence
we obtain the following lemma.

\begin{lemma}\label{POVE2} Let $X=V_{\sqrt2D_{16}^+}^+$.
Then the following hold:
\begin{enumerate}
\item $X$ is simple, rational, $C_2$-cofinite, self-dual and of CFT type;
\item $(R(X),q_X)$ is a non-singular $18$-dimensional quadratic space of plus type over $\F_2$;
\item The invariant bilinear form on arbitrary irreducible $X$-module is symmetric;
\item The orbits in $R(X)$ for the action of $\Aut(X)$ are given by Table \ref{Orbit}.
\end{enumerate}
\end{lemma}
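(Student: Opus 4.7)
The plan is to prove the four parts by specialising the general theory of $V_L^+$ from Section \ref{VL+} to $L=\sqrt2D_{16}^+$, along the lines already sketched in the paragraph preceding the statement. For part (1), self-duality and evenness of $D_{16}^+$ make $L$ totally even (since $\sqrt2L^*=D_{16}^+$ is even), and the description of $L$ displayed just above the lemma exhibits an orthogonal basis of norm $4$ via the vectors $\alpha_i+\alpha_j$. This places $X$ in the framework opening Section \ref{VL+}: framed by \cite{DMZ}, rational and $C_2$-cofinite by \cite{DGH}, simple and self-dual from standard lattice VOA theory (\cite{FLM,Do}), and of CFT type as $L$ has minimum norm $4$.

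For part (2), since $D_{16}^+$ is unimodular, \eqref{Eq:dual16} gives $L^*=(1/\sqrt2)D_{16}^+$, whence $L=2L^*$ and $|L^*/L|=2^{16}$. In the notation of Section \ref{VL+} this yields $m=16$ and $\dim_{\F_2}R(X)=m+2=18$, while non-singularity of $q_X$ is part of the general statement recalled there. To see that $q_X$ is of plus type I would check directly that the form $q_L$ on $L^*/L$ is of plus type, by exhibiting a $9$-dimensional totally singular subspace using the explicit description in \eqref{Eq:dual16} together with the sublattice structure of Lemma \ref{wt8} to reduce to the plus-type case of $\sqrt2E_8$ provided by Lemma \ref{POVE}.

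Parts (3) and (4) follow the outline in the text immediately preceding the lemma. For (3), the lattice VOA $V_{\sqrt2D_{16}^+}=X\oplus V_{\sqrt2D_{16}^+}^-$ and its coset shifts $V_{\lambda+\sqrt2D_{16}^+}$ carry symmetric invariant bilinear forms by \cite[Proposition 5.3.6]{FHL}, giving symmetry for each $[\lambda]^\pm$; combined with part (4), every $\Aut(X)$-orbit on $R(X)$ contains such a representative, and since symmetry is preserved under automorphisms, it extends to all irreducible $X$-modules. For (4), the method of \cite{Sh2} for computing $\Aut(V_L^+)$ yields $\Aut(X)\cong 2^{15}.(2^{14}.\Sym_{16}).\Sym_3$, and the orbits on $R(X)$ are then determined by tracking this group's action on the parametrisation $\{[\lambda]^\pm,[\chi_\lambda]^\pm\}$, using lowest weight and dimension of lowest weight space as discriminating invariants.

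The principal obstacle is the orbit analysis in part (4): $\Aut(X)$ acts both through a permutation part $\Sym_{16}\times\Sym_3$ (the outer $\Sym_3$ being a triality permuting certain integer-weight modules) and through sign-twisting $2$-subgroups, so verifying that exactly the orbits listed in Table \ref{Orbit} arise — neither a coarser nor a finer partition — requires careful case analysis on both the residue classes $[\lambda]^\pm$ and the twisted classes $[\chi_\lambda]^\pm$. The remaining parts essentially reduce to citing the general $V_L^+$ machinery of Section \ref{VL+}.
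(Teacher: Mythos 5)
Your proposal follows essentially the same route as the paper, which likewise derives (1)--(2) from the general $V_L^+$ machinery of Section 3 (with $|L^*/L|=2^{16}$ giving $m=16$ and $\dim R(X)=18$), obtains (3) by first checking symmetry of the invariant form for the classes $[\lambda]^\pm$ and then propagating it using the fact that every orbit in Table \ref{Orbit} contains such a representative, and gets (4) by computing $\Aut(X)\cong 2^{15}.(2^{14}.\Sym_{16}).\Sym_3$ via the method of \cite{Sh2} and tracking its action on $\{[\lambda]^\pm,[\chi_\lambda]^\pm\}$. The only slip is cosmetic: a totally singular subspace witnessing that $q_L$ is of plus type lives in the $16$-dimensional space $L^*/L$ and so has dimension $8$, not $9$ (dimension $9$ is correct only for the full $18$-dimensional $(R(X),q_X)$).
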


\begin{table}[bht]
\caption{Orbits of irreducible modules of $V_{\sqrt2D_{16}^+}^+$ under ${\rm
Aut}(V_{\sqrt2D_{16}^+}^+)$.}\label{Orbit}
$$\begin{array}{|c|c|c|c|}
\hline
\hbox{orbits} & \hbox{orbit size} &  \hbox{lowest weight }&\hbox{dim. of lowest space} \\ \hline
[0]^+ & 1 & 0&1  \\ \hline
[0]^-,\, [\alpha_1]^\pm,   & 3 & 1&16 \\ \hline
[\alpha_c/2]^\pm,[\alpha_c/2-\alpha_1]^\pm\ \wt(c)=2,           & 2^2\times 120& 1/2&1 \\ \hline
[\alpha_c/2]^\pm,[\alpha_c/2-\alpha_1]^\pm\ \wt(c)=4,  & 2^2\times 1820 &1&4\\ \hline
[\alpha_c/2]^\pm,[\alpha_c/2-\alpha_1]^\pm\ \wt(c)=6,       &  2^2\times 8008 & 3/2&16 \\ \hline
[\alpha_c/2]^\pm,[\alpha_c/2-\alpha_1]^\pm\ \wt(c)=8  & 2\times 12870  &2&128\\ \hline
[\alpha_{(1^{16})}/4-\alpha_c/2]^\pm, (c\in\mathcal{E}_{16}),\ [\chi_\lambda]^+& 2^{15}+ 2^{16} & 1&1 \\ \hline
[\alpha_{(-31^{15})}/4-\alpha_c/2]^\pm,(c\in\mathcal{E}_{16}),\  [\chi_\lambda]^-& 2^{15}+ 2^{16} & 3/2&16 \\ \hline
\end{array}$$
\end{table}

\section{Framed VOAs associated to subcodes of $\EuD(e_8)^{\oplus3}$}
In this section, we will study the framed VOAs of central charge $24$ associated to subcodes of
$\EuD(e_8)^{\oplus3}$. Recall that $\EuD(e_8)\cong {\rm RM}(1,4)$ and ${\rm RM}(1,4)^\perp
={\rm RM}(2,4)$ and the binary code VOA $M_{{\rm RM}(2,4)}$ is isomorphic to
$V_{\sqrt{2}E_8}^+$. Therefore, if $U$ is a holomorphic framed VOA whose $1/16$
code is contained in $\EuD(e_8)^{\oplus3}$, then $U$ contains
$(V_{\sqrt{2}E_8}^+)^{\otimes 3}$ as a full subVOA and $U$ is a holomorphic simple current
extension of $(V_{\sqrt{2}E_8}^+)^{\otimes 3}$.

\subsection{Simple current extensions of $(V_{\sqrt2E_8}^+)^{\otimes k}$}

Let $V=V_{\sqrt2E_8}^+$. For the detail of $V$, see Section \ref{sec:3.1}. In this
subsection, we study holomorphic VOAs associated to maximal totally singular
subspaces of $(R(V)^k,q_V^k)$, which correspond to  holomorphic simple current
extensions of $V^k$.

We will  recall the relation between simple current extensions of
$(V_{\sqrt2E_8}^+)^{\otimes k}$ and totally singular subspaces of $R(V)^k$ from
\cite{Sh6}.

Let $k$ be a positive integer. We identify $R(V^k)$ with $R(V)^k$ by Lemma
\ref{LemFHL}. By Lemma \ref{POVE} (2), $(R(V)^k,q_V^k)$ is a non-singular
$10k$-dimensional quadratic space of plus type over $\F_2$.
\begin{notation}
 Let $\mathcal{T}$ be a subset of $R(V)^k$. We define $\mathfrak{V}(\mathcal{T})=\oplus_{[M]\in\mathcal{T}}M$.
\end{notation}

If $\mathcal{T}$ is a totally singular subspace, then
$\mathfrak{V}(\mathcal{T})=\oplus_{[M]\in\mathcal{T}}M$ is a VOA, which is a
simple current extension of $V^k$.  Conversely, let $U$ be a simple current
extension of $V^k$. Then
$U\cong\mathfrak{V}(\mathcal{T})=\oplus_{[M]\in\mathcal{T}}M$ for some
totally singular subspace $\mathcal{T}$ of $R(V^k)$.

\begin{proposition}\label{PS1}{\rm \cite[Proposition 4.4]{Sh6}} Let $V=V_{\sqrt2E_8}^+$.
Then the $V^k$-module $\mathfrak{V}(\mathcal{T})=\oplus_{[M]\in\mathcal{T}}M$
has a simple VOA structure of central charge $8k$ by extending its $V^k$-module
structure if and only if $\mathcal{T}$ is a totally singular subspace of
$R(V)^k$. Moreover, $\mathfrak{V}(\mathcal{T})$ is holomorphic if and only if
$\mathcal{T}$ is maximal.
\end{proposition}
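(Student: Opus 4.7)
The plan is to deduce the result from standard simple current extension theory, using the quadratic space $(R(V)^k, q_V^k)$ as the organizing structure. The setup is favorable: by Lemma \ref{POVE} combined with Lemma \ref{LemFHL}, $V^k$ is simple, rational, $C_2$-cofinite, self-dual and of CFT type; every irreducible $V^k$-module is a self-dual simple current (Proposition \ref{fusion} together with Lemma \ref{LemFHL}); and the invariant bilinear form on each irreducible $V^k$-module is symmetric (Lemma \ref{POVE}(4)). These four properties are precisely the hypotheses required to feed into the machinery of simple current extensions.

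For the necessity half of the first equivalence, suppose $\mathfrak{V}(\mathcal{T})$ carries a simple VOA structure extending the $V^k$-module structure. The vacuum lies in the $V^k$-summand, so $[V^k]\in\mathcal{T}$; closure of the vertex operator product, combined with the simple current property of every irreducible $V^k$-module, forces $\mathcal{T}$ to be a subgroup of $R(V)^k$ under $\boxtimes$, hence an $\F_2$-subspace. Since the grading of a VOA is by $\Z$, every $[M]\in\mathcal{T}$ must be $\Z$-graded, which by the definition of $q_V^k$ is precisely the condition $q_V^k([M])=0$. Hence $\mathcal{T}$ is a totally singular subspace. Conversely, if $\mathcal{T}$ is a totally singular subspace, then the four properties recalled above hold with every module in $\mathcal{T}$ in addition having integer conformal weights. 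I would apply the existence theorem for simple current extensions (in the form of \cite{SY}; note that Lemma \ref{LSY} is stated there as a consequence of the same theory) to produce a simple VOA structure on $\mathfrak{V}(\mathcal{T})$ of central charge $k\cdot 8=8k$ extending the $V^k$-module structure.

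For the holomorphicity statement, I would invoke the standard description of the irreducible modules over a simple current extension. For any $V^k$-module $N$, the fusion monodromy of $N$ with $[M]\in\mathcal{T}$ is governed by the symplectic form $\langle\cdot,\cdot\rangle$ associated to $q_V^k$; the condition that $N$ extend to a $\mathfrak{V}(\mathcal{T})$-module is $\langle[N],\mathcal{T}\rangle=0$, and extended modules are identified along $\mathcal{T}$-orbits. Consequently the irreducible $\mathfrak{V}(\mathcal{T})$-modules are parameterized by $\mathcal{T}^\perp/\mathcal{T}$. The ambient quadratic space $(R(V)^k,q_V^k)$ is non-singular of plus type by Lemma \ref{POVE}(2) combined with the fact that an orthogonal direct sum of non-singular plus type quadratic spaces is again non-singular of plus type. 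Therefore $\mathcal{T}^\perp=\mathcal{T}$ iff $\dim\mathcal{T}=5k$, i.e.\ iff $\mathcal{T}$ is maximal; in that case $\mathfrak{V}(\mathcal{T})$ has only itself as an irreducible module and so is holomorphic, while if $\mathcal{T}$ is not maximal then $\mathcal{T}^\perp/\mathcal{T}$ contains nontrivial cosets producing further irreducible $\mathfrak{V}(\mathcal{T})$-modules.

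The main obstacle is the sufficiency half of the first equivalence: promoting a fusion-closed, totally singular family of irreducible modules to an honest VOA requires controlling the associativity of the resulting vertex operators and choosing a compatible cocycle normalization. The total-singularity hypothesis is exactly what eliminates the quadratic obstruction (guaranteeing integer weights), and the symmetry of the invariant bilinear form on each summand (Lemma \ref{POVE}(4)) supplies the required self-duality, so the construction reduces to the simple current extension framework already developed in \cite{SY}. The parameterization of irreducibles by $\mathcal{T}^\perp/\mathcal{T}$ used in the holomorphicity step is a standard output of that same framework applied to our quadratic space.
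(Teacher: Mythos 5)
The paper gives no proof of Proposition \ref{PS1}; it is quoted verbatim from \cite[Proposition 4.4]{Sh6}, whose argument is exactly the simple current extension machinery you describe (self-dual simple currents with symmetric invariant forms, integrality of weights encoded by $q_V^k$, and irreducibles of the extension parameterized by $\mathcal{T}^\perp/\mathcal{T}$). Your reconstruction is correct and follows essentially the same route as the cited source, so there is nothing to flag.
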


Since $V$ is framed, so is $\mathfrak{V}(\mathcal{T})$. By Lemmas \ref{POVE} (3)
and \ref{LSVk},  we have $\Aut(V^k)\cong O(R(V),q_V)\wr\Sym_k$. By Lemma
\ref{LSY}, we obtain the following lemma.

\begin{lemma}\label{Lconj} Let $\mathcal{T}$ be a totally singular subspace of $R(V)^k$ and let $g$ be an automorphism of $V^k$.
Then the VOA $\mathfrak{V}(g\circ\mathcal{T})$ is isomorphic to
$\mathfrak{V}(\mathcal{T})$.
\end{lemma}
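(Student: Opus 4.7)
The plan is to deduce this directly from Lemma \ref{LSY} together with Proposition \ref{PS1}, using $V(0) = V^k$ and the abelian group $E = \mathcal{T}$. First I would note that since $g \in \Aut(V^k)$ induces an orthogonal transformation of $(R(V)^k, q_V^k)$ (the quadratic form $q_V^k$ is defined in terms of weights and fusion rules, both of which are preserved by $g$), the image $g \circ \mathcal{T}$ is again a maximal totally singular subspace. Therefore, by Proposition \ref{PS1}, $\mathfrak{V}(g \circ \mathcal{T}) = \bigoplus_{[M] \in \mathcal{T}} g \circ M$ carries a simple current extension VOA structure over $V^k$.

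Next I would apply Lemma \ref{LSY} to the simple current extension $\mathfrak{V}(\mathcal{T}) = \bigoplus_{[M] \in \mathcal{T}} M$ of $V^k$, graded by the elementary abelian $2$-group $\mathcal{T}$, using the automorphism $g$ of $V^k$. This produces a simple current extension $W$ of $V^k$ which, on the one hand, is isomorphic to $\mathfrak{V}(\mathcal{T})$ as a VOA, and on the other hand, is isomorphic to $\bigoplus_{[M] \in \mathcal{T}} g \circ M$ as a $V^k$-module.

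The last step is to identify $W$ with $\mathfrak{V}(g \circ \mathcal{T})$. Both are simple current extensions of $V^k$ with the same underlying $V^k$-module structure $\bigoplus_{[M] \in \mathcal{T}} g \circ M$ and with grading group $g \circ \mathcal{T}$. By the uniqueness of simple current extensions of a simple VOA attached to a fixed graded sum of irreducible simple current modules, we conclude $W \cong \mathfrak{V}(g \circ \mathcal{T})$ as VOAs. Combining with $W \cong \mathfrak{V}(\mathcal{T})$ yields $\mathfrak{V}(g \circ \mathcal{T}) \cong \mathfrak{V}(\mathcal{T})$, as desired.

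The only delicate point is the uniqueness invocation in the last step: strictly speaking Lemma \ref{LSY} merely asserts the existence of some VOA structure on the twisted module, while one needs uniqueness of the simple current extension structure to identify it with $\mathfrak{V}(g \circ \mathcal{T})$. However, this uniqueness is standard in the simple current extension literature and is implicit in the framework being used throughout the paper (e.g.\ in \cite{SY}), so the argument is essentially formal once Lemma \ref{LSY} is in place.
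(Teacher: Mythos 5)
Your proposal is correct and follows essentially the same route as the paper, which derives this lemma directly from Lemma \ref{LSY} (together with the fact that $\Aut(V^k)$ preserves the quadratic space structure, so Proposition \ref{PS1} applies to $g\circ\mathcal{T}$); you merely spell out the uniqueness point that the paper leaves implicit. One trivial slip: $\mathcal{T}$ need only be totally singular, not maximal, but Proposition \ref{PS1} covers that case as well, so nothing is affected.
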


\begin{lemma}\label{PS2} Let $\mathcal{S}$ be a maximal totally singular subspace of $R(V)^k$.
\begin{enumerate}
\item If $\mathcal{S}$ contains $(a_1,0,\dots,0), (0,a_2,0,\dots,0),\dots,$ and $ (0,\dots,0,a_k)$ for some $a_i\in S(R(V))^\times$, $i=1, \dots,k$,
 then $\mathfrak{V}(\mathcal{S})$ is isomorphic to a lattice VOA $V_L$.
\item  If $\mathcal{S}$ contains $(a_1,a_2,0,\dots,0), (0,a_2,a_3,0,\dots,0),\dots,$ and $ (0,\dots,0,a_{k-1},a_k)$ for some $a_i\in S(R(V))^\times$,
$i=1,\dots,k$,  then $\mathfrak{V}(\mathcal{S})$ is isomorphic to $V_L$ or its
$\Z_2$-orbifold $\tilde{V}_L$.
\end{enumerate}
\end{lemma}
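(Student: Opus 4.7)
The plan is to reduce both parts to Theorem \ref{VN}, which classifies holomorphic framed VOAs whose $\frac{1}{16}$-code lies in an extended doubling $\EuD(E)$. Hence the key task is to verify, under each hypothesis, that the $\frac{1}{16}$-code $D$ of $\mathfrak{V}(\mathcal{S})$ with respect to the canonical frame of $V^{\otimes k}$ is contained in some extended doubling.

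For each $i \in \{1,\dots,k\}$, set $\mathcal{S}_i := \{a \in R(V) : (0, \dots, 0, a, 0, \dots, 0) \in \mathcal{S}\}$, which is a totally singular subspace of $R(V)$ since $\mathcal{S}$ is. The axial hypothesis in (1) forces each $\mathcal{S}_i \neq 0$ directly. For (2), a similar analysis using the chain elements shows, after forming linear combinations within $\mathcal{S}$, that each factor receives non-trivial contribution. Extending each $\mathcal{S}_i$ to a maximal totally singular subspace $\tilde{\mathcal{S}}_i$ of $R(V)$ of dimension $5$, the VOA $\mathfrak{V}(\tilde{\mathcal{S}}_i)$ is a holomorphic framed VOA of central charge $8$ and hence isomorphic to $V_{E_8}$ (applying Theorem \ref{VN} together with the uniqueness of $E_8$ as the even unimodular lattice of rank $8$). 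This realizes each $\mathfrak{V}(\mathcal{S}_i)$ as a subVOA of the lattice VOA $V_{E_8}$, so that $\bigotimes_{i=1}^k \mathfrak{V}(\mathcal{S}_i) \subseteq \mathfrak{V}(\mathcal{S})$ has a natural lattice-type structure sitting inside $V_{E_8^k}$.

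For part (1), the clean axial decomposition of the generators of $\mathcal{S}$, combined with the lattice-type structure of $\bigotimes_i \mathfrak{V}(\mathcal{S}_i)$, enables me to verify that $D$ lies in some extended doubling $\EuD(E)$. Theorem \ref{VN} then yields $\mathfrak{V}(\mathcal{S}) \cong V_L$ or $\tilde{V}_L$. To rule out $\tilde{V}_L$, I would argue that each axial extension $V \oplus M_{a_i}$ embeds in a lattice VOA $V_{L_i}$ for an overlattice $L_i \supset \sqrt{2}E_8$, and that this forces the full simple current extension $\mathfrak{V}(\mathcal{S})$ to be compatible with a pure lattice structure rather than a $\Z_2$-orbifold. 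For part (2), the chain hypothesis similarly forces $D \subset \EuD(E)$ and Theorem \ref{VN} directly yields the dichotomy $V_L$ or $\tilde{V}_L$; here the chained coupling can accommodate $\Z_2$-twists compatible with a genuine orbifold, so the $\tilde{V}_L$ possibility cannot be excluded.

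The main obstacle will be the explicit verification that $D \subset \EuD(E)$ under each hypothesis. This requires a careful analysis of the $\tau$-words of $V$-modules---which are codewords of $\EuD(e_8) \cong \mathrm{RM}(1,4)$---and of how the structural constraints on $\mathcal{S}$ (axial in (1), chained in (2)) translate into constraints on the joint $\tau$-words of elements of $\mathcal{S}$ in $\F_2^{16k}$. A secondary obstacle, specific to part (1), is excluding the orbifold possibility $\tilde{V}_L$, which requires a more explicit lattice-theoretic argument about how the axial subVOAs embed.
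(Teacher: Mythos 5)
Your overall direction (reduce to Theorem \ref{VN} by controlling the $\tfrac{1}{16}$-code) is not the paper's route, and as written it has real gaps. The paper's proof is much more direct: since $\Aut(V)\cong O(R(V),q_V)$ is transitive on non-zero singular vectors (Lemma \ref{LO} and Lemma \ref{POVE}), each $a_i$ is conjugate to $[0]^-=[V_{\sqrt2E_8}^-]$, and by Lemma \ref{Lconj} one may replace $\mathcal{S}$ by a conjugate without changing the isomorphism type of $\mathfrak{V}(\mathcal{S})$. In case (1) the axial elements then give $V_{\sqrt2E_8}^+\oplus V_{\sqrt2E_8}^-=V_{\sqrt2E_8}$ in each tensor factor, so $\mathfrak{V}(\mathcal{S})$ contains the full subVOA $V_{\sqrt2E_8^{\oplus k}}$ and is a simple current extension of a \emph{lattice} VOA, hence $V_L$ for an even overlattice $L$ --- the orbifold possibility never arises. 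In case (2) the chain elements span exactly the ``even sign'' combinations, giving the full subVOA $V_{\sqrt2E_8^{\oplus k}}^+$, whence $V_L$ or $\tilde V_L$. Your proposal omits this conjugation step entirely, and it is the whole engine of the argument: without it you cannot identify $\mathfrak{V}(\{0,a_i\})$ with anything concrete.

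Beyond that, three specific problems. First, the step you yourself flag as ``the main obstacle'' --- verifying that the $\tfrac{1}{16}$-code of $\mathfrak{V}(\mathcal{S})$ with respect to the standard frame lies in an extended doubling --- is precisely where all the content sits, and it is not obviously true for that frame without the conjugation above; note that $\mathfrak{V}(\mathcal{S}(5,4,0))$ is a lattice VOA whose structure code is \emph{not} contained in any doubling, so ``is a lattice VOA'' and ``code lies in a doubling'' are not interchangeable. Second, in part (2) your claim that each $\mathcal{S}_i\neq 0$ is false in general: linear combinations of the chain elements $(a_1,a_2,0,\dots)$, $(0,a_2,a_3,0,\dots)$, \dots never produce axial elements (e.g.\ their sum of two consecutive ones is $(a_1,0,a_3,0,\dots)$), and indeed Condition (\ref{Cond2}) is exactly the case where no axial element need exist; so the tensor decomposition $\bigotimes_i\mathfrak{V}(\mathcal{S}_i)$ you build is unavailable there. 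Third, even granting Theorem \ref{VN}, part (1) requires excluding $\tilde V_L$, and your argument for that is only a gesture; the paper sidesteps the issue by exhibiting the untwisted lattice VOA $V_{\sqrt2E_8^{\oplus k}}$ as a full subVOA, so that every summand of $\mathfrak{V}(\mathcal{S})$ is an untwisted $V_{\sqrt2E_8^{\oplus k}}$-module.
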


\begin{proof} Recall that $a_i$ conjugate to $[0]^-$ by Lemma \ref{POVE}.

If $\mathcal{S}$ satisfies the assumption of (1) then by Lemma \ref{Lconj},
$\mathfrak{V}(\mathcal{S})$ contains a full subVOA isomorphic to
$V_{\sqrt2E_8^{\oplus k}}$. Hence we may view $\mathfrak{V}(\mathcal{S})$ as
a simple current extension of $V_{\sqrt2E_8^{\oplus k}}$, and
$\mathfrak{V}(\mathcal{S})$ is isomorphic to $V_L$ for some even overlattice
$L$ of $\sqrt2E_8^{\oplus k}$.

If $\mathcal{S}$ satisfies the assumption of (2) then by Lemma \ref{Lconj},
$\mathfrak{V}(\mathcal{S})$ contains a full subVOA isomorphic to
$V_{\sqrt2E_8^{\oplus k}}^+$. Hence we may view $\mathfrak{V}(\mathcal{S})$
as a simple current extension of $V_{\sqrt2E_8^{\oplus k}}^+$, and
$\mathfrak{V}(\mathcal{S})$ is isomorphic to $V_L$ or $\tilde{V}_L$ for some
even overlattice $L$ of $\sqrt2E_8^{\oplus k}$.
\end{proof}

\subsection{Construction of maximal totally singular subspaces of $(R^3,q^3)$}

Let $(R,q)$ be a non-singular $2m$-dimensional quadratic space of plus type
over $\F_2$. Then $(R^3,q^3)$ is a non-singular $6m$-dimensional quadratic
space of plus type over $\F_2$. In this subsection, we study maximal totally
singular subspaces $\mathcal{S}$ of $(R^3,q^3)$.

Let us consider the following two conditions on maximal totally singular subspaces
$\mathcal{S}$ of $R^3$:
\begin{eqnarray}
(a,0,0), (0,b,0),(0,0,c)\in\mathcal{S}\setminus\{0\}\ \text{for some}\ a,b,c\in S(R)^\times, \label{Cond1}\\
(a,b,0),(0,b,c)\in\mathcal{S}\setminus\{0\}\ \text{for some}\ a,b,c\in S(R)^\times,\label{Cond2}
\end{eqnarray}
where $S(R)^\times$ is the set of all non-zero singular vectors in $R$.

\begin{remark}
If $R=R(V_{\sqrt{2}E_8}^+)$, then by Lemma \ref{PS2}, the VOA
$\mathfrak{V}(\mathcal{S})$ associated to a maximal totally singular subspace
$\mathcal{S}$ satisfying \eqref{Cond1} or \eqref{Cond2} is isomorphic to a
lattice VOA $V_L$ or its $\Z_2$-orbifold $\tilde{V}_L$.
\end{remark}

Now let us construct maximal totally singular subspaces satisfying neither (\ref{Cond1})
nor (\ref{Cond2}).

\begin{theorem}\label{TClassify} Let $S_1$ be a $k_1$-dimensional totally singular subspace of $R$ and let $S_2$ be a $k_2$-dimensional totally singular subspace of $S_1$.
Assume that $m-k_1-k_2$ is even.

Let $P$ be an $(m-k_1-k_2)$-dimensional non-singular subspace of $S_1^\perp$ of $\varepsilon$ type, where $\varepsilon\in\{\pm\}$.
Let $Q$ be a complementary subspace of $S_1$ in $(S_1\perp P)^\perp$.
Let $T$ be a complementary subspace of $S_2$ in $(S_2\perp P)^\perp$.
Let $U=Q^\perp$.
Then the following hold:

\begin{enumerate}
\item $T$ and $U$ are non-singular isomorphic quadratic spaces;
\item Let $\varphi$ be an isomorphism of quadratic spaces from $T$ to $U$ and let $\mathcal{S}(S_1,S_2,P,Q,T,\varphi)$ be the subspace of $R^3$ defined by
\begin{eqnarray*}
{\rm Span}_{\F_2}\biggl\{
(s_1,0,0),(0,s_2,0),(p,p,0),(q,0,q),(0,t,\varphi(t)) \big|\ s_i\in S_i,p\in P,q\in Q,t\in T\biggr\}.
\end{eqnarray*}
Then $\mathcal{S}(S_1,S_2,P,Q,T,\varphi)$ is a maximal totally singular
subspace of $R^3$;
\item $\mathcal{S}(S_1,S_2,P,Q,T,\varphi)$ depends only on $k_1,k_2$ and $\varepsilon$ up to $O(R,q)\wr\Sym_3$;
\item $\mathcal{S}(S_1,S_2,P,Q,T,\varphi)$ satisfies neither (\ref{Cond1}) nor (\ref{Cond2}).
\end{enumerate}

\end{theorem}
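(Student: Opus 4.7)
The plan is to verify each of the four assertions in turn, relying on multiplicativity of Witt type and on the inclusions that define $Q$, $T$, $U$.

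For (1), the orthogonal decomposition $R = P \oplus P^\perp$ shows $P^\perp$ is non-singular of dimension $m+k_1+k_2$; since Witt type is multiplicative under orthogonal sum and $R$ has plus type, $P^\perp$ has type $\varepsilon$. Inside $P^\perp$, both $S_1$ and $S_2$ are totally singular, and $Q$ (resp.\ $T$) is a complement of $S_1$ (resp.\ $S_2$) in $S_1^{\perp_{P^\perp}} = (S_1\perp P)^\perp$ (resp.\ $(S_2\perp P)^\perp$). Because the radical of the restriction of $q$ to $S_i^{\perp_{P^\perp}}$ equals $S_i$, the induced forms on $Q$ and $T$ are non-singular and isomorphic to the quotients $S_i^{\perp_{P^\perp}}/S_i$, both of type $\varepsilon$. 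Since $R = Q \oplus Q^\perp = Q \oplus U$ and $R$ is plus type, $U$ also has type $\varepsilon$; a dimension count gives $\dim T = \dim U = m+k_1-k_2$, proving $T \cong U$.

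For (2), a direct dimension count yields
\[
\dim \mathcal{S}(S_1,S_2,P,Q,T,\varphi) = k_1 + k_2 + (m-k_1-k_2) + (m+k_2-k_1) + (m+k_1-k_2) = 3m,
\]
the maximum possible for a totally singular subspace of the non-singular $6m$-dimensional plus-type space $(R^3,q^3)$. Each generator is singular via the $\F_2$-identities $q(p)+q(p)=0$, $q(q)+q(q)=0$, and $q(t)+q(\varphi(t))=0$ (the last because $\varphi$ is an isometry). Orthogonality of any two generators reduces to ten routine pair checks using $S_i$ totally singular, $P\subseteq S_1^\perp$, $Q\subseteq(S_1\perp P)^\perp$, $T\subseteq(S_2\perp P)^\perp$; the essential non-trivial case $\langle(q,0,q),(0,t,\varphi(t))\rangle = \langle q,\varphi(t)\rangle$ vanishes because $\varphi(t)\in U = Q^\perp$. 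For (4), I compute the coordinate fibers: from the direct-sum property $S_1 \oplus P \oplus Q \subseteq R$ and $Q\cap U = 0$, one sees that $(a,0,0)\in\mathcal{S}$ forces $a\in S_1$, $(0,b,0)\in\mathcal{S}$ forces $b\in S_2$, and $(0,0,c)\in\mathcal{S}$ forces $c=0$, which rules out (\ref{Cond1}). For (\ref{Cond2}), matching the middle coordinate of elements $(a,b,0)$ and $(0,b,c)$ and using $S_2\cap(T+P)=0$ forces $t=p=0$ and hence $c=\varphi(t)=0$, a contradiction.

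For (3), I apply Lemma \ref{LO} together with Witt's extension theorem to normalize the data in order. Transitivity of $O(R,q)$, embedded diagonally in $O(R,q)\wr\Sym_3$, on nested totally singular pairs $S_2\subseteq S_1$ of fixed dimensions $(k_2,k_1)$ fixes $(S_1,S_2)$. The induced action of the stabilizer on the non-singular plus-type quotient $S_1^\perp/S_1$ of dimension $2m-2k_1$ is transitive on non-singular subspaces of a given dimension and Witt type, which fixes $P$ of type $\varepsilon$. Any two complements $Q$ of $S_1$ in $(S_1\perp P)^\perp$ are conjugate by an element that fixes $S_1\perp P$ pointwise and extends to $R$, and similarly for $T$. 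Finally, two isometries $T\to U$ differ by an element of $O(U,q)$, realized by an automorphism acting only on the third copy of $R$. Thus $\mathcal{S}(S_1,S_2,P,Q,T,\varphi)$ depends only on $(k_1,k_2,\varepsilon)$ up to $O(R,q)\wr\Sym_3$. I expect part (3) to be the main obstacle, as it requires carefully organizing the chain of transitivity arguments for the auxiliary data $(P,Q,T,\varphi)$; parts (1), (2), (4) are essentially formal once the dimension bookkeeping and the defining inclusion identities are in place.
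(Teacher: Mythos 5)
Your proposal is correct and follows essentially the same route as the paper's proof: type multiplicativity and the radical computation for (1), the singular-generators-plus-dimension-count argument for (2), the step-by-step normalization of $(S_1,S_2,P,Q,T,\varphi)$ via transitivity of $O(R,q)$ and its stabilizers for (3), and a direct coordinate-fiber computation for (4) (which the paper leaves as "follows from the definition"). The only cosmetic differences are that the paper moves $S_1'$ and $S_2'$ by acting on the first and second coordinates separately rather than diagonally, and derives the types of $Q,T,U$ from the plus type of $P\perp Q$, $P\perp T$, $Q\perp U$ rather than of $P^\perp$; neither affects the argument.
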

\begin{proof} It is easy to see that
\begin{equation}
\dim Q=m-k_1+k_2 \quad \text{ and } \quad \dim T=\dim U=m+k_1-k_2.\label{Eq:Dim}
\end{equation}
Since $S_i$ is the radical of $S_i^\perp$, both $T$ and $Q$ are non-singular, and so is $U$.
Since $P\perp Q$, $P\perp T$ and $Q\perp U$ are of $+$ type, the types of $P$, $Q$, $T$, $U$ are $\varepsilon$, and we obtain (1).

Clearly the generators of $\mathcal{S}(S_1,S_2,P,Q,T,\varphi)$ are singular and
they are perpendicular to each other. Hence
$\mathcal{S}(S_1,S_2,P,Q,T,\varphi)$ is totally singular. Since $$\dim
\mathcal{S}(S_1,S_2,P,Q,T,\varphi)=\dim S_1+\dim S_2+\dim P+\dim Q+\dim
T=3m,$$ it is maximal totally singular. Hence we obtain (2).

Consider  $\mathcal{S}(S_1',S_2',P',Q',T',\varphi')$ under the assumption that $\dim S_i'=\dim S_i=k_i$ for $i=1,2$ and the type of $P'$ is $\varepsilon$.
Up to the actions of $O(R,q)$ on the first and second coordinates, we may assume that $S_1'=S_1$ and $S_2'=S_2$.
Moreover, by the action of ${\rm Stab}_{O(R,q)}(S_i)$ on each coordinate, we may assume that $P'=P$, $Q'=Q$ and that $(p,p,0),(q,0,q)\in \mathcal{S}(S_1',S_2',P',Q',T',\varphi')$ for all $p\in P$ and $q\in Q$ (cf.\ Lemma \ref{LO} (2)).
Up to the action of ${\rm Stab}_{O(R,q)}(S_2)\cap{\rm Stab}_{O(R,q)}^{\rm pt}(Q)$ on the second coordinate, we obtain $T'=T$.
Furthermore, by the action of ${\rm Stab}^{\rm pt}_{O(R,q)}(Q)$ on the third coordinate, we may assume that $\varphi'=\varphi$, and hence we obtain (3).

(4) follows from the definition of $\mathcal{S}(S_1,S_2,P,Q,T,\varphi)$.
\end{proof}

\begin{remark} By (3), we denote $\mathcal{S}(S_1,S_2,P,Q,T,\varphi)$ by $\mathcal{S}(m,k_1,k_2,\varepsilon)$.
\end{remark}

\begin{theorem}\label{TClassify2} 
Let $S_1$ be a $k_1$-dimensional totally singular subspace of $R$ and let $S_2$ be a $k_2$-dimensional totally singular subspace of $S_1$.
Assume that $m-k_1-k_2$ is odd.

Let $P$ be an $(m-k_1-k_2-1)$-dimensional non-singular subspace of $S_1^\perp$ of plus type.
Let $Q$ be an $(m-k_1+k_2-1)$-dimensional non-singular subspace of $(S_1\perp P)^\perp$ of plus type.
Let $B$ be a complementary subspace of $S_1$ in $(S_1\perp P\perp Q)^\perp$.
Let $T$ be a complementary subspace of $S_2$ in $(P\perp S_2\perp B)^\perp$.
Let $U=(Q\perp B)^\perp$.
Then the following hold:

\begin{enumerate}
\item $B$ is a $2$-dimensional non-singular subspace of plus type;
\item $T$ and $U$ are isomorphic non-singular quadratic spaces of plus type;
\item Let $y$ be the non-singular vector in $B$ and let $z$ be a non-zero singular vector in $B$.
Let $\varphi$ be an isomorphism of quadratic spaces from $T$ to $U$ and set
\begin{eqnarray*}
&&\mathcal{S}(S_1,S_2,P,Q,B,T,z,\varphi)=\\
&&{\rm Span}_{\F_2}\biggr\{(s_1,0,0),(0,s_2,0),(p,p,0),(q,0,q),(0,t,\varphi(t)),(y,y,0),(y,0,y),(z,z,z)\\
&&\hspace{2cm}\bigg|\ s_i\in S_i, p\in P,q\in Q,t\in T\biggl\}.
\end{eqnarray*}
Then $\mathcal{S}(S_1,S_2,P,Q,B,T,z,\varphi)$ is a maximal totally singular subspace of $R^3$;
\item $\mathcal{S}(S_1,S_2,P,Q,B,T,z,\varphi)$ depends only on $k_1,k_2$ up to $O(R,q)\wr\Sym_3$;
\item $\mathcal{S}(S_1,S_2,P,Q,B,T,z,\varphi)$ satisfies neither (\ref{Cond1}) nor (\ref{Cond2}).

\end{enumerate}
\end{theorem}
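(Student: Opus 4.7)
The proof will parallel that of Theorem \ref{TClassify}, with the new features being the two-dimensional plus-type block $B$ and the extra generators $(y,y,0)$, $(y,0,y)$, $(z,z,z)$. My plan is: establish dimensions and types of $B$, $T$, $U$ (parts (1)--(2)) via orthogonal decompositions of $R$ and identification of radicals; check directly that all generators are singular, pairwise orthogonal, and total $3m$ in number (part (3)); align two configurations using Lemma \ref{LO}(1) and successive stabilizer arguments (part (4)); and rule out (\ref{Cond1}) and (\ref{Cond2}) by a directness analysis of internal sums in $R$ (part (5)).

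For (1), a dimension count gives $\dim(S_1\perp P\perp Q)=2m-k_1-2$, hence $\dim B=2$. To see $B$ is non-singular of plus type, decompose $R=(P\perp Q)\perp(P\perp Q)^\perp$: the second summand is plus type of dimension $2k_1+2$, contains $S_1$, and the radical of $(S_1\perp P\perp Q)^\perp$ is exactly $S_1$. Extending $S_1$ to a maximal totally singular subspace of dimension $k_1+1$ inside $(P\perp Q)^\perp$ (Lemma \ref{LO}(1)) produces a non-zero singular vector in $B$, forcing plus type. For (2), the decomposition $R=(Q\perp B)\perp U$ with $Q\perp B$ plus immediately gives $U$ non-singular of plus type; analogously $R=(P\perp B)\perp(P\perp B)^\perp$ realizes $(P\perp B)^\perp$ as plus with $S_2$ totally singular in it, so $T\cong S_2^{\perp_{(P\perp B)^\perp}}/S_2$ is non-singular of plus type. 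The common dimension $m+k_1-k_2-1$ then forces $T\cong U$.

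For (3), each generator with two equal entries contributes $q(\cdot)+q(\cdot)=0$, and $(z,z,z)$ contributes $3q(z)=0$ since $z\in S(R)^\times$; mutual orthogonality reduces to the nesting $P\subset S_1^\perp$, $Q\subset(S_1\perp P)^\perp$, $B\subset(S_1\perp P\perp Q)^\perp$, $T\subset(P\perp S_2\perp B)^\perp$, $\varphi(T)\subset(Q\perp B)^\perp$, together with $\varphi$ being an isometry; adding the generator counts yields $3m$, matching maximality. For (4), after using Lemma \ref{LO}(1) to move $(S_2\subset S_1)$ to $(S_2'\subset S_1')$, successive pointwise stabilizers in $O(R,q)$ align $P$, $Q$, $B$, $T$; the new ingredient is the transitivity of ${\rm Stab}_{O(R,q)}^{\rm pt}(S_1\perp P\perp Q)$ on non-zero singular vectors of the plus-type $2$-space $B$, which moves $z$ into place, after which $\varphi$ is adjusted as in the previous theorem.

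For (5), the heart is that $S_1\oplus P\oplus Q\oplus B$, $S_2\oplus T\oplus B$, and $S_2\oplus P\oplus T\oplus B$ are each internal direct sums in $R$; these are verified by projecting against the non-singular summands $P,Q,B$ and using $S_2\subset S_1\subset B^\perp$. Writing a general element of $\mathcal{S}$ with coefficients $s_1,s_2,p,q,t,\alpha,\beta,\gamma$ on the generators, directness of $S_1\oplus P\oplus Q\oplus B$ shows that the first coordinate vanishes iff $s_1=p=q=\gamma=0$ and $\alpha=\beta$, after which directness of $S_2\oplus T\oplus B$ forces $s_2=t=\alpha=0$, hence also the third coordinate, ruling out (\ref{Cond1}). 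For (\ref{Cond2}), matching the $b$'s in $(a,b,0)$ and $(0,b,c)$ via directness of $S_2\oplus P\oplus T\oplus B$ forces $b=s_2+\alpha y$; the singularity condition $q(b)=0$ then yields $\alpha=0$, whence $c=\alpha y=0$, contradicting $c\neq 0$. The main obstacle is keeping these directness statements straight amid the extra block $B$; once they are in hand, everything else is bookkeeping.
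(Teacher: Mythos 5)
Your proposal is correct and follows essentially the same route as the paper: dimension counts and identification of radicals for (1)--(2), direct verification of singularity and pairwise orthogonality plus a dimension count for (3), and successive transitivity/stabilizer alignments of $S_1,S_2,P,Q,B,z,T,\varphi$ for (4). The only real difference is that you spell out the directness argument for (5) (which the paper dismisses with ``follows from the definition''), and your version of that argument checks out.
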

\begin{proof} It is easy to see that $\dim B=(2m-2k_1)-(m-k_1-k_2-1)-(m-k_1+k_2-1)=2$.
Since $S_1$ is the radical of $S_1^\perp$, $B$ is non-singular.
Since $P$ and $Q$ are of plus type,  so is $B$.

It is easy to see that
\begin{equation}
\dim T=\dim U=m+k_1-k_2-1.\label{Eq:Dim2}
\end{equation}
Since $S_2$ is the radical of $S_2^\perp$, $T$ is non-singular.
Since the type of $P$ and $Q$ are the same, we obtain (2).

Clearly the generators of $\mathcal{S}(S_1,S_2,P,Q,B,T,z,\varphi)$ are singular
and they are perpendicular to each other. Hence
$\mathcal{S}(S_1,S_2,P,Q,B,T,z,\varphi)$ is totally singular. Since $$\dim
\mathcal{S}(S_1,S_2,P,Q,B,T,z,\varphi)=\dim S_1+\dim S_2+\dim P+\dim Q+\dim
T+3=3m,$$ it is maximal totally singular. Hence we obtain (3).

Consider $\mathcal{S}(S_1',S_2',P',Q',B',T',z',\varphi')$ under the assumption that $\dim S_i'=\dim S_i=k_i$ for $i=1,2$ and the type of $P'$ is plus.
Up to the actions of $O(R,q)$ on the first and second coordinates, we may assume that $S_1'=S_1$ and $S_2'=S_2$.
Moreover, up to the action of ${\rm Stab}_{O(R,q)}(S_i)$ on each coordinate (cf.\ Lemma \ref{LO} (2)), we may assume that $P'=P$, $Q'=Q$, $B'=B$, $z'=z$ and that $(p,p,0),(q,0,q)\in \mathcal{S}(S_1',S_2',P',Q',B',T',z',\varphi')$ for all $p\in P$ and $q\in Q$.
Up to the action of ${\rm Stab}_{O(R,q)}(S_2)\cap{\rm Stab}^{\rm pt}_{O(R,q)}(Q\perp B)$ on the second coordinate, we may assume that $T'=T$.
Furthermore, up to the action of ${\rm Stab}_{O(R,q)}^{\rm pt}(Q\perp B)$ in $O(R,q)$ on the third coordinate, we may assume that $\varphi'=\varphi$, and hence we obtain (4).

(5) follows from the definition of $\mathcal{S}(S_1,S_2,P,Q,B,T,z,\varphi)$.
\end{proof}

\begin{remark} By (4), we often denote $\mathcal{S}(S_1,S_2,P,Q,B,T,z,\varphi)$ by $\mathcal{S}(m,k_1,k_2)$.
\end{remark}

Next we count the numbers of certain vectors in
$\mathcal{S}(m,k_1,k_2,\varepsilon)$ and $\mathcal{S}(m,k_1,k_2)$.

\begin{lemma}\label{LNumber} Let $\mathcal{S}=\mathcal{S}(m,k_1,k_2,\varepsilon)$ or $\mathcal{S}(m,k_1,k_2)$.
\begin{enumerate}
\item The number of vectors in $\mathcal{S}$ of the form $\sigma(a,0,0)$, $a\in S(R)^\times$ and $\sigma\in \Sym_3$, is $2^{k_1}+2^{k_2}-2$.
\item The number of vectors in $\mathcal{S}$ of the form $\sigma(b,c,0)$, $b,c\in \overline{S(R)}$ and $\sigma\in \Sym_3$, is \begin{eqnarray*}\begin{cases}
2^{m-1}+2^{m+k_1-1}+2^{m+k_2-1}-3\times2^{(m+k_1+k_2)/2-1}\quad {\rm if}\quad \varepsilon=+,\ m-k_1-k_2\in2\Z,\\
2^{m-1}+2^{m+k_1-1}+2^{m+k_2-1}+3\times2^{(m+k_1+k_2)/2-1}\quad {\rm if}\quad \varepsilon=-,\ m-k_1-k_2\in2\Z,\\
2^{m-1}+2^{m+k_2-1}+2^{m+k_1-1}\quad \quad {\rm if}\quad m-k_1-k_2\in 1+2\Z.
\end{cases}
\end{eqnarray*}
\end{enumerate}
\end{lemma}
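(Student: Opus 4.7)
The plan is to enumerate the vectors of the stated forms directly from the explicit generator descriptions of $\mathcal{S}$ in Theorems~\ref{TClassify} and \ref{TClassify2}. A generic element of $\mathcal{S}(m,k_1,k_2,\varepsilon)$ is $(s_1+p+q,\,s_2+p+t,\,q+\varphi(t))$ with $s_i\in S_i$, $p\in P$, $q\in Q$, $t\in T$; a generic element of $\mathcal{S}(m,k_1,k_2)$ has three additional summands $(\zeta+\eta)y+\theta z$, $\zeta y+\theta z$, $\eta y+\theta z$ in the respective coordinates. Throughout I will repeatedly use three orthogonality facts: (i) $P,Q,T,U$ are non-singular (with $U=Q^\perp$ in the even case and $U=(Q\perp B)^\perp$ in the odd case); (ii) $(S_1+P)\cap(S_1+P)^\perp=S_1$ because $P$ is non-singular, together with the analogue for $S_2$; (iii) in the odd case, $B$ is orthogonal to each of $S_1,S_2,P,Q,T,U$, so projecting a coordinate equation onto $B$ picks out the $\zeta,\eta,\theta$ contribution alone.

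For Part~(1), for each $i\in\{1,2,3\}$ I solve ``the other two coordinates vanish'' in the generator parameters. Using $Q\cap U=0$ (so $q+\varphi(t)=0$ forces $q=t=0$), $T\cap(S_2+P)=0$ (so $s_2+p+t=0$ forces $s_2=p=t=0$), and $Q\cap(S_1+P)=0$, I conclude that $(a,0,0)\in\mathcal{S}\Leftrightarrow a\in S_1$, $(0,b,0)\in\mathcal{S}\Leftrightarrow b\in S_2$, and $(0,0,c)\in\mathcal{S}\Rightarrow c=0$. In the odd case, the $B$-projection first forces $\zeta,\eta,\theta=0$, after which the same arguments apply. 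Summing the three cases gives $(2^{k_1}-1)+(2^{k_2}-1)=2^{k_1}+2^{k_2}-2$, which is (1).

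For Part~(2) in the even case, say $v_3=0$: then $q=t=0$ and $v=(s_1+p,\,s_2+p,\,0)$. Since $s_i\in S_i\subset S_1^\perp$ is singular and $\langle s_i,p\rangle=0$, we have $q(s_i+p)=q(p)$ for $i=1,2$, so both nonzero coordinates are non-singular iff $p$ is non-singular in $P$. By Lemma~\ref{LNum} applied to $P$ (type $\varepsilon$, dimension $m-k_1-k_2$), multiplied by $|S_1||S_2|=2^{k_1+k_2}$, the contribution is $2^{m-1}\mp 2^{(m+k_1+k_2)/2-1}$ for $\varepsilon=\pm$. Analogously, $v_1=0$ forces $s_1=p=q=0$, leaving $v=(0,s_2+t,\varphi(t))$ with common quadratic value $q(t)$, and $v_2=0$ forces $s_2=p=t=0$, leaving $v=(s_1+q,0,q)$ with common value $q(q)$. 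Using that $T,Q$ both have type $\varepsilon$ (from the proof of Theorem~\ref{TClassify}) with dimensions $m+k_1-k_2$ and $m+k_2-k_1$, these contribute $2^{m+k_1-1}\mp 2^{(m+k_1+k_2)/2-1}$ and $2^{m+k_2-1}\mp 2^{(m+k_1+k_2)/2-1}$ respectively; summing gives the even-case formula.

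In the odd case the same reduction leaves $v=(s_1+p+\zeta y,\,s_2+p+\zeta y,\,0)$ for $v_3=0$, and analogous expressions for $v_1=0$, $v_2=0$. Since $y$ is non-singular with $q(y)=1$ and is orthogonal to every other subspace in play, the quadratic value of each nonzero coordinate is shifted exactly by the free bit $\zeta$ (resp.\ $\eta$); therefore, for every choice of the remaining parameters there is a \emph{unique} value of this bit making both coordinates non-singular. The resulting counts are $|S_1||S_2||P|=2^{m-1}$, $|S_2||T|=2^{m+k_1-1}$, $|S_1||Q|=2^{m+k_2-1}$, using the odd-case dimensions $\dim P=m-k_1-k_2-1$, $\dim Q=m-k_1+k_2-1$, $\dim T=m+k_1-k_2-1$. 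The correction $\pm 3\cdot 2^{(m+k_1+k_2)/2-1}$ present in the even case is precisely absorbed by this extra free parameter. The main structural obstacle is the orthogonality fact $(S_1+P)\cap(S_1+P)^\perp=S_1$ and its $S_2$-analogue underlying the rigidity of parameter recovery, together with the $B$-projection step in the odd case; once these are in hand, the remainder is a routine parameter count combined with Lemma~\ref{LNum}.
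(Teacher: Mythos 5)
Your argument is correct and is essentially the paper's own proof: both split by which coordinate vanishes, recover the generator parameters using the orthogonality and non-singularity of $P$, $Q$, $T$, $U$ (and $B$ in the odd case), observe that the two non-zero coordinates share a common quadratic value, and then count via Lemma \ref{LNum}. Your treatment of the odd case, where the free bit coming from $y$ uniquely adjusts the quadratic value and replaces $|\overline{S(P)}|$ by $|P|$, is exactly the paper's "$a_2\in \overline{S(P)}+S_2$ or $a_2\in y+S(P)+S_2$" step, just spelled out in more detail.
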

\begin{proof} The number of vectors in (1) is equal to the number of all nonzero vectors in $S_1$ and $S_2$, which is $2^{k_1}+2^{k_2}-2$.

Let $v=(a_1,a_2,a_3)$ be a vector in (2).
Then one of $a_i$ is zero.
Assume that $m-k_1-k_2\in2\Z$.
If $a_3=0$ then $a_1\in a_2+S_1$, $a_2\in  \overline{S(P)}+S_2$.
The number of such vectors is $2^{k_1+k_2}|\overline{S(P)}|$.
If $a_2=0$ then $a_1\in a_3+S_1$, $a_3\in \overline{S(Q)}$.
The number of such vectors is $2^{k_1}|\overline{S(Q)}|$.
If $a_1=0$ then  $a_2\in a_3+S_2$, $a_3\in \overline{S(U)}$.
The number of such vectors is $2^{k_2}|\overline{S(U)}|$.

Assume that $m-k_1-k_2\in2\Z+1$.
If $a_3=0$ then $a_1\in a_2+S_1$ and, $a_2\in \overline{S(P)}+S_2$ or $a_2\in y+S(P)+S_2$.
The number of such vectors is $2^{k_1+k_2}|P|$.
If $a_2=0$ then $a_1\in a_3+S_1$ and, $a_3\in \overline{S(Q)}$ or $a_3\in y+S(Q)$.
The number of such vectors is $2^{k_1}|Q|$.
If $a_1=0$ then  $a_2\in a_3+S_2$ and, $a_3\in \overline{S(U)}$ or $a_3\in y+{S(U)}$.
The number of such vectors is $2^{k_2}|U|$.
\end{proof}

\subsection{Classification of maximal totally singular subspaces of $(R^3,q^3)$}

Let $\rho_i$ denote the $i$-th coordinate projection $R^3\to R$,
$(a_1,a_2,a_3)\mapsto a_i$. For a subspace $\mathcal{S}$ of $R^3$ and distinct
$i,j\in \{1,2,3\}$, we denote  $\mathcal{S}^{(i)}=\{v\in \mathcal{S}\mid
\rho_i(v)=0\}$ and $\mathcal{S}^{(ij)}=\{v\in \mathcal{S}\mid
\rho_i(v)=\rho_j(v)=0\}$. The next theorem classifies all maximal totally singular
subspaces of $R^3$, up to $O(R,q)\wr\Sym_3$.

\begin{theorem}\label{TC} Let $\mathcal{S}$ be a maximal totally singular subspace of $R^3$.
Then up to  $O(R,q)\wr\Sym_3$, one of the following holds:
\begin{enumerate}
\item $\mathcal{S}$ satisfies (\ref{Cond1});
\item $\mathcal{S}$ satisfies (\ref{Cond2});
\item $\mathcal{S}$ is conjugate to $\mathcal{S}(S_1,S_2,P,Q,T,\varphi)$ defined as in Theorem \ref{TClassify};
\item $\mathcal{S}$ is conjugate to $\mathcal{S}(S_1,S_2,P,Q,B,T,z,\varphi)$ defined as in Theorem \ref{TClassify2}.
\end{enumerate}
\end{theorem}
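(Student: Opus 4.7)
The plan is to analyze $\mathcal{S}$ through its intersections with the coordinate axes and hyperplanes in $R^3$, and then to use Lemma \ref{LO}(1) (transitivity of $O(R,q)$ on totally singular subspaces of each fixed dimension), applied to each coordinate independently via the $O(R,q)\wr\Sym_3$-action, to conjugate $\mathcal{S}$ into one of the two canonical shapes from Theorems \ref{TClassify} and \ref{TClassify2}. For $\{i,j,k\}=\{1,2,3\}$ set $A_i=\rho_i(\mathcal{S}^{(jk)})$; each $A_i$ is a totally singular subspace of $(R,q)$, and by applying an element of $\Sym_3$ we may assume $\dim A_1\ge\dim A_2\ge\dim A_3$. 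Write $k_i=\dim A_i$.

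Detecting cases (1) and (2): if $A_3\ne 0$, then choosing non-zero $a\in A_1$, $b\in A_2$, $c\in A_3$ yields $(a,0,0),(0,b,0),(0,0,c)\in\mathcal{S}$, which are singular since the $A_i$ are, so $\mathcal{S}$ satisfies (\ref{Cond1}) and we are in case (1). Otherwise $A_3=0$; if there exist non-zero singular $a,b,c\in R$ with $(a,b,0),(0,b,c)\in\mathcal{S}$, then (\ref{Cond2}) holds and we are in case (2).

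Assume henceforth that neither (\ref{Cond1}) nor (\ref{Cond2}) holds, so $A_3=0$. Using $O(R,q)$ on each of the first two coordinates, we may move $A_1$ to a chosen totally singular subspace $S_1$ of dimension $k_1$ and $A_2$ to a chosen totally singular subspace $S_2\subseteq S_1$ of dimension $k_2$, by Lemma \ref{LO}(1). Next, set
\[
\widetilde{P}=\{v\in\mathcal{S}:\rho_3(v)=0,\ \rho_1(v)=\rho_2(v)\},\qquad \widetilde{Q}=\{v\in\mathcal{S}:\rho_2(v)=0,\ \rho_1(v)=\rho_3(v)\},
\]
let $P,Q\subseteq R$ denote the first-coordinate images of $\widetilde{P},\widetilde{Q}$ modulo $S_2,S_1$, and let $T$ be a complement of $0\oplus S_2\oplus 0$ in $\mathcal{S}^{(1)}$ projected onto the second coordinate; define $\varphi:T\to U:=Q^\perp$ as the map induced by projection to the third coordinate. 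The total singularity of $\mathcal{S}$ forces $P\subseteq S_1^\perp$ non-singular, $Q$ complementary to $S_1$ inside $(S_1\perp P)^\perp$, and $T$ complementary to $S_2$ inside $(S_2\perp P)^\perp$, matching the hypotheses of Theorem \ref{TClassify}. Finally, comparing $\dim\mathcal{S}=3m$ against the dimensions contributed by $S_1,S_2,P,Q,T$ determines the parity of $m-k_1-k_2$: even parity fixes the type $\varepsilon$ of $P$ and yields case (3); odd parity forces the extraction of an additional $2$-dimensional non-singular subspace $B\subseteq S_1^\perp$ of plus type, with extra generators $(y,y,0),(y,0,y),(z,z,z)$, placing us in case (4).

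The main obstacle is the last step: verifying that the subspaces $P,Q,T$ (and $B,y,z$ in the odd case) extracted from $\mathcal{S}$ genuinely assemble into the canonical data of Theorem \ref{TClassify} or \ref{TClassify2}, rather than into some intermediate or degenerate configuration. The argument hinges on using the joint failure of (\ref{Cond1}) and (\ref{Cond2}) to rule out pathological vectors in $\mathcal{S}^{(3)}$, $\mathcal{S}^{(2)}$, $\mathcal{S}^{(1)}$. Specifically, one shows by contradiction that any vector $(a,b,0)\in\mathcal{S}$ with $a,b$ non-zero singular and $a\notin S_1$ or $b\notin S_2$ would, via Lemma \ref{LO}(3) applied to the $2$-dimensional trivial-form subspace of possible completions to the third coordinate, produce either a direct violation of (\ref{Cond1}) (if some completion is zero) or of (\ref{Cond2}) (if some completion is non-zero singular). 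This forces the "off-axis" part of $\mathcal{S}^{(3)}$ to live on a diagonal $\{(p,p,0):p\in P\}$, and symmetric arguments yield the diagonals in $\mathcal{S}^{(2)}$ and $\mathcal{S}^{(1)}$. The dimension identity $\dim\mathcal{S}=3m$ together with the uniqueness statements in Theorems \ref{TClassify}(3) and \ref{TClassify2}(4) then identifies $\mathcal{S}$ with $\mathcal{S}(m,k_1,k_2,\varepsilon)$ or $\mathcal{S}(m,k_1,k_2)$, completing the classification.
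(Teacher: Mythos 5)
Your overall strategy (project onto the coordinate axes, detect (\ref{Cond1}) and (\ref{Cond2}), then conjugate the remainder into the canonical shapes) is the same as the paper's, but the step you yourself single out as the crux is based on a false claim. You assert that any $(a,b,0)\in\mathcal{S}$ with $a,b$ non-zero singular and $a\notin S_1$ or $b\notin S_2$ forces a violation of (\ref{Cond1}) or (\ref{Cond2}). This is not true: the canonical subspace $\mathcal{S}(m,k_1,k_2,\varepsilon)$ itself contains $(p+s_1,p+s_2,0)$ for every $p\in P$, $s_i\in S_i$, and whenever $p$ is a non-zero \emph{singular} vector of $P$ (which exists as soon as $\dim P\ge 2$ and $P$ is of plus type, e.g.\ for $\mathcal{S}(5,1,0,+)$) both coordinates are non-zero singular and lie outside $S_1$, $S_2$, yet neither condition is violated. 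Consequently your conclusion that the off-axis part of $\mathcal{S}^{(3)}$ is forced onto a diagonal cannot follow from this argument; in the paper the diagonal form is not intrinsic but is \emph{achieved} by conjugating a chosen complement $\mathcal{P}$ of $\mathcal{S}^{(23)}\perp\mathcal{S}^{(13)}$ in $\mathcal{S}^{(3)}$ by an element of $\mathrm{Stab}_{O(R,q)}(S_2)$ acting on the second coordinate. The correct intrinsic statement, which is what Lemma \ref{LO}(3) is actually used for, is that the radical of the quadratic space $\rho_1(\mathcal{P})$ has dimension at most $1$ and, if $1$-dimensional, is spanned by a non-singular vector: a non-zero singular vector $a$ in the radical would lie in $\rho_1(\mathcal{Q})\perp S_1$ by the orthogonality relation $(\rho_1(\mathcal{P})\perp S_1)^\perp=\rho_1(\mathcal{Q})\perp S_1$, producing $(a,b,0)$ and $(a,0,c)$ in $\mathcal{S}$ with all entries non-zero singular, i.e.\ (\ref{Cond2}) up to $\Sym_3$. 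The even/odd dichotomy in $m-k_1-k_2$ is then exactly the dichotomy between trivial and $1$-dimensional radical (since a non-singular $\F_2$-quadratic space is even-dimensional), not something read off from $\dim\mathcal{S}=3m$ as you suggest.

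Two further points are glossed over. First, your $\widetilde{P}$ and $\widetilde{Q}$ are defined as the sets of vectors whose coordinates are \emph{already} equal; before any conjugation these may well be zero, so they cannot serve as the complements you need. Second, in the odd case you simply list $(z,z,z)$ among the generators, but the paper must argue from maximality that $(\mathcal{S}')^\perp/\mathcal{S}'$ is spanned by $(z,z,z)+\mathcal{S}'$ and $(y,0,0)+\mathcal{S}'$, that the latter cannot lie in $\mathcal{S}/\mathcal{S}'$ because $y$ is non-singular, and that $(z+y,z,z)$ can be moved to $(z,z,z)$ by an element of $\mathrm{Stab}_{O(R,q)}(S_1)\cap\mathrm{Stab}^{\mathrm{pt}}_{O(R,q)}(P\perp Q\perp\F_2y)$. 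As written, your proposal does not close the classification.
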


\begin{proof} Let $\mathcal{S}$ be a maximal totally singular subspace of $R^3$ satisfying neither (\ref{Cond1}) nor (\ref{Cond2}).
Set $S_1=\rho_1(\mathcal{S}^{(23)})$, $S_2=\rho_2(\mathcal{S}^{(13)})$, $S_3=\rho_3(\mathcal{S}^{(12)})$ and $k_i=\dim S_i$.
Up to the action of $\Sym_3\subset O(R,q)\wr\Sym_3$, we may assume that $k_1\ge k_2\ge k_3$.
If $k_3\ge1$ then $\mathcal{S}$ satisfies (\ref{Cond1}), which is a contradiction.
Hence $k_3=0$, and $\mathcal{S}^{(12)}=0$.
Up to the action of $O(R,q)$ on the second coordinate, we may assume that $S_2\subset S_1$.

Let $\mathcal{P}$, $\mathcal{Q}$, $\mathcal{T}$ be complementary subspaces
of $\mathcal{S}^{(23)}\perp \mathcal{S}^{(13)}$ in $\mathcal{S}^{(3)}$,
$\mathcal{S}^{(23)}$ in $\mathcal{S}^{(2)}$, and  $\mathcal{S}^{(13)}$ in
$\mathcal{S}^{(1)}$, respectively. By the maximality of $\mathcal{S}$, we have
$\rho_i(\mathcal{S})=S_i^\perp$, and hence $\dim\rho_3(\mathcal{S})=2m$,
$\dim\rho_2(\mathcal{S})=2m-k_2$ and $\dim\rho_1(\mathcal{S})=2m-k_1$. It
is easy to see that $$\dim\mathcal{P}=m-k_1-k_2,\
\dim\mathcal{Q}=m-k_1+k_2,\ \dim\mathcal{T}=m+k_1-k_2.$$ Since
$\mathcal{S}$ is totally singular,
$\langle\rho_i(\mathcal{P}),\rho_i(\mathcal{Q})\rangle=\langle\rho_i(\mathcal{P}),\rho_i(\mathcal{T})\rangle=\langle\rho_i(\mathcal{T}),\rho_i(\mathcal{Q})\rangle=0$
for $i=1,2,3$. By the dimensions,
\begin{eqnarray}
(\rho_1(\mathcal{P})\perp S_1)^\perp&=&\rho_1(\mathcal{Q})\perp S_1, \label{Eq:PQS1}\\
(\rho_2(\mathcal{P})\perp S_2)^\perp&=&\rho_2(\mathcal{T})\perp S_2, \label{Eq:PTS2}\\
(\rho_3(\mathcal{Q}))^\perp&=&\rho_3(\mathcal{T}) \label{Eq:QT}
\end{eqnarray} in $R$.

Suppose that the dimension of the radical ${\rm Rad}(\rho_1(\mathcal{P}))$ of
$\rho_1(\mathcal{P})$ is greater than or equal to $2$. Then by Lemma \ref{LO}
(3), there exist non-zero singular vectors $a\in{\rm Rad}(\rho_1(\mathcal{P}))$
and $b\in R$ such that $(a,b,0) \in \mathcal{P}$. By
(\ref{Eq:PQS1}), we have $a\in\rho_1(\mathcal{Q})\perp S_1$. By the definition
of $\mathcal{P}$, $a\notin S_1$. This shows that $\mathcal{Q}\perp
\mathcal{S}^{(23)}$ contains $(a,0,c)$ for some non-zero singular vector $c\in
R$, which contradicts that $\mathcal{S}$ does not satisfy (\ref{Cond2}). Thus
$\dim {\rm Rad}(\rho_1(\mathcal{P}))\le 1$. Moreover, if $\dim {\rm
Rad}(\rho_1(\mathcal{P}))= 1$ then the non-zero vector must be non-singular.
By the same arguments, we have $\dim{\rm Rad}(\rho_1(\mathcal{Q}))\le 1$ and
$\dim{\rm Rad}(\rho_2(\mathcal{T}))\le 1$.

Assume that $\dim {\rm Rad}(\rho_1(\mathcal{P}))=0$. Then $m-k_1-k_2$ is
even. Let $\varepsilon$ be the type of $\rho_1(\mathcal{P})$. Up to the action of
${\rm Stab}_{O(R,q)}(S_2)$ on the second coordinate, we may assume that
$\mathcal{P}=\{(p,p,0)\mid p\in \rho_1(\mathcal{P})\}$. Moreover, up to the
action of $O(R,q)$ on the third coordinate, we may assume that
$\mathcal{Q}=\{(q,0,q)\mid q\in \rho_1(\mathcal{Q})\}$. By (\ref{Eq:QT}), we
have  $\rho_3(\mathcal{T})=(\rho_1(\mathcal{Q}))^\perp$. Let $\varphi$ be the map from
$\rho_2(\mathcal{T})$ to $\rho_3(\mathcal{T})$ defined by
$(0,t,\varphi(t))\in\mathcal{T}$. Since $\mathcal{T}$ is a subspace and
$q(t)=q(\varphi(t))$ for all $t\in \rho_2(\mathcal{T})$, $\varphi$ is an
isomorphism from $\rho_2(\mathcal{T})$ to $\rho_3(\mathcal{T})$. Thus
$\mathcal{S}$ is conjugate to
$\mathcal{S}(S_1,S_2,\rho_1(\mathcal{P}),\rho_1(\mathcal{Q}),\rho_2(\mathcal{T}),\varphi)$
under $O(R,q)\wr\Sym_3$.

Assume that $\dim{\rm Rad}(\rho_1(\mathcal{P}))=1$. Then $m-k_1-k_2$ is odd.
Let ${\rm Rad}(\rho_1(\mathcal{P}))=\F_2 y$. By the argument in the third
paragraph, $y$ must be non-singular. Let $P$ be a complementary subspace of
$\F_2 y$ in $\rho_1(\mathcal{P})$. Since $y$ is non-singular and is orthogonal to
$P$, we may assume that $P$ is of $+$ type. Up to the action of ${\rm
Stab}_{O(R,q)}(S_2)$ on the second coordinate, we may assume that
$\mathcal{P}=\{(p,p,0)\mid p\in P\perp\F_2 y\}$. By (\ref{Eq:PQS1}),
$y\in\rho_1(\mathcal{Q})\perp S_1$. Hence we may assume that $y\in{\rm
Rad}(\rho_1(\mathcal{Q}))$. Let $Q$ be a complementary subspace of $\F_2 y$
in $\rho_1(\mathcal{Q})$. We may assume that $Q$ is of $+$ type. Up to the
action of $O(R,q)$ on the third coordinate, we may also assume that
$\mathcal{Q}=\{(q,0,q)\mid q\in Q\perp\F_2 y\}$. Since $\mathcal{S}$ contains
$(y,y,0)$ and $(y,0,y)$, we have $(0,y,y)\in \mathcal{T}\perp
\mathcal{S}^{(13)}$. By (\ref{Eq:PQS1}), $y\in\rho_2(\mathcal{T})\perp S_2$.
Hence we may assume that $y\in\rho_2(\mathcal{T})$, and
$(0,y,y)\in\mathcal{T}$. Let $B$ be a complementary subspace of $S_1$ in
$(S_1\perp P\perp Q)^\perp$ such that $y\in B$. Let
$T=B^\perp\cap\rho_2(\mathcal{T})$ and
$U=B^\perp\cap\rho_3(\mathcal{T})$. Let $\varphi$ be the map from $T$ to
$U$ defined by $(0,t,\varphi(t))\in\mathcal{T}$. Since $\mathcal{T}$ is a
subspace and $q(t)=q(\varphi(t))$ for all $t\in T$, $\varphi$ is an isomorphism
from $T$ to $U$. Since both types of $P$ and $Q$ are $+$, the type of $B$ is
also $+$. Let $z$ be a non-zero singular vector in $B$. Set $\mathcal{S}'={\rm
Span}_{\F_2}\{\mathcal{S}^{(23)},\mathcal{S}^{(13)},\mathcal{P},\mathcal{Q},\mathcal{T}\}$.
Then $$(\mathcal{S}')^\perp/\mathcal{S}'=\Span_{\F_2}\{(z,z,z)+\mathcal{S}',
(y,0,0)+\mathcal{S}'\}.$$ Since $y$ is non-singular,
$(y,0,0)+\mathcal{S}'\notin\mathcal{S}/\mathcal{S}'$. Since $\mathcal{S}$ is
maximal totally singular, $(z,z,z)$ or $(z+y,z,z)\in\mathcal{S}$. Up to the action
of ${\rm Stab}_{O(R,q)}(S_1)\cap{\rm Stab}^{\rm pt}_{O(R,q)}(P\perp
Q\perp\F_2 y)$ on the first coordinate, we may assume that $(z,z,z)\in
\mathcal{S}$. Therefore $\mathcal{S}$ is conjugate to
$\mathcal{S}(S_1,S_2,P,Q,B,T,z,\varphi)$ under $O(R,q)\wr\Sym_3$.
\end{proof}

\subsection{Lie algebras of simple current extensions of $(V_{\sqrt2E_8}^+)^{\otimes k}$}
In this subsection, we study the Lie algebra structure of $\mathfrak{V}(\mathcal{T})$ for a totally singular subspace $\mathcal{T}$ of $R(V)^k$.



Let $S(R(V))$ denote the set of all singular vectors in $R(V)$. Set
$S(R(V))^\times=S(R(V))\setminus\{0\}$ and $\overline{S(R(V))}=R(V)\setminus S(R(V))$. The
following lemma is easy.

\begin{lemma}\label{Lw2} Let $[M]$ be a vector in $(R(V)^k,q_V^k)$.
Then $M_1\neq0$ if and only if $[M]=\sigma(a,0,\dots,0)$ or $[M]=\sigma(b,c,0,\dots,0)$ for some $a\in S(R(V))^\times$ and $b,c\in \overline{S(R(V))}$ and $\sigma\in \Sym_k$.
\end{lemma}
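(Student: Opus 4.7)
The plan is to decompose $M$ via Lemma~\ref{LemFHL} and then simply track weights, using the very restrictive list of possible lowest weights provided by Lemma~\ref{POVE}. By Lemma~\ref{LemFHL}, $M \cong \bigotimes_{i=1}^k W_i$ for irreducible $V$-modules $W_i$, and the lowest weight of $M$ equals $\sum_{i=1}^k h_i$ where $h_i$ is the lowest weight of $W_i$. By Lemma~\ref{POVE}(5), $h_i \in \{0, 1/2, 1\}$, and $h_i = 0$ holds precisely when $W_i \cong V$, i.e., $[W_i] = 0$ in $R(V)$.

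Next I translate the weight data into the quadratic-space language: by definition of $q_V$, the class $[W_i]$ is singular iff $W_i$ is $\Z$-graded, i.e., $h_i \in \{0,1\}$; hence $[W_i] \in S(R(V))^\times$ iff $h_i = 1$, while $[W_i] \in \overline{S(R(V))}$ iff $h_i = 1/2$. Since $M$ is graded in $\sum h_i + \Z_{\ge 0}$, any weight-$1$ vector forces $\sum h_i \in \{0,1\}$. When $\sum h_i = 1$, the lowest weight space of $M$ already lies inside $M_1$ and is nonzero, so $M_1 \ne 0$; and inspection of how $\{0,1/2,1\}$-valued $h_i$ can sum to $1$ gives exactly two configurations: either one $h_i = 1$ with all others $0$ (which, via the dictionary above, becomes $[M] = \sigma(a,0,\dots,0)$ with $a \in S(R(V))^\times$), or exactly two of the $h_i$ equal $1/2$ with the rest $0$ (giving $[M] = \sigma(b,c,0,\dots,0)$ with $b, c \in \overline{S(R(V))}$). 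These are precisely the patterns listed in the statement.

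The only potentially subtle case is $\sum h_i = 0$, where all $W_i = V$ and $M = V^{\otimes k}$, so $M_1 \cong V_1^{\oplus k}$ because $V_0 = \C\vacuum$. The main obstacle is therefore to verify $V_1 = 0$: since $\sqrt{2}E_8$ has minimum norm $4$ and therefore no vectors of norm $2$, the weight-one space of the lattice VOA satisfies $(V_{\sqrt2 E_8})_1 = \h = \sqrt{2}E_8 \otimes_\Z \C$, and the lift $\theta$ of the $-1$-isometry acts as $-1$ on $\h$. Hence $V_1 = (V_{\sqrt2 E_8})_1^\theta = 0$, so $M_1 = 0$ in this case. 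Combining the two cases completes the equivalence.
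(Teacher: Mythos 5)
Your proof is correct, and it is exactly the argument the paper has in mind: the paper states this lemma without proof (``The following lemma is easy''), and the intended justification is precisely your combination of the tensor decomposition from Lemma \ref{LemFHL} with the lowest-weight data of Lemma \ref{POVE} (5), the dictionary between $q_V$ and the grading, and the observation that $(V_{\sqrt2E_8}^+)_1=0$. Nothing is missing.
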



Define $d: R(V)^k  \to \{0,1,8\}\subset \Z$ by
$$d(v)=\begin{cases}8& \text{if}\ v=\sigma(a,0,\dots,0)\ {\rm for}\ {\rm some}\ a\in S(R(V))^\times,\ \sigma\in \Sym_k,\\
1& \text{if}\  v=\sigma(b,c,0,\dots,0)\ {\rm for}\ {\rm some}\ b,c\in \overline{S(R(V))},\ \sigma\in \Sym_k,\\
0& \text{otherwise}.\end{cases}$$
For a subset $\mathcal{T}$ of $R(V)^k$, we set $\displaystyle
{d}({\mathcal{T}})=\sum_{v\in\mathcal{T}}d(v)$.

\begin{lemma}\label{LVS2} Let $\mathcal{T}$ be a totally singular subspace of $R(V)^k$.
Then the dimension of the weight $1$ subspace of $\mathfrak{V}(\mathcal{T})$ is equal to $d(\mathcal{T})$.
\end{lemma}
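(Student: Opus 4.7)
The plan is straightforward: decompose the weight-one space of $\mathfrak{V}(\mathcal{T})$ module-by-module and evaluate $\dim M_1$ on each irreducible $V^k$-module. Concretely, since $\mathfrak{V}(\mathcal{T}) = \bigoplus_{[M] \in \mathcal{T}} M$ as $V^k$-modules, one has
\[
\dim \mathfrak{V}(\mathcal{T})_1 = \sum_{[M] \in \mathcal{T}} \dim M_1,
\]
so it suffices to verify that $\dim M_1 = d([M])$ for every $[M] \in R(V)^k$. Summing over $\mathcal{T}$ then yields $d(\mathcal{T})$ by definition.

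The heart of the computation rests on Lemma \ref{POVE}(5) together with the vanishing $V_1 = 0$: since $\sqrt{2}E_8$ contains no norm-two vectors, $(V_{\sqrt{2}E_8})_1$ is just the Cartan, and the lift of $-1$ acts as $-1$ on it, so the $\theta$-fixed part is zero. Using Lemma \ref{LemFHL} I would write $M = \bigotimes_{i=1}^k W_i$ with $W_i \in R(V)$ and observe that, because $V$ itself carries no weight-one vectors, any contribution to $M_1$ must come from pure tensors of lowest-weight vectors whose lowest weights $h_i \in \{0, 1/2, 1\}$ sum to $1$.

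A short case analysis on the multiset of $h_i$ then enumerates the possibilities. Either (i) exactly one $W_i$ has lowest weight $1$ and every other $W_j \cong V$, in which case $\dim M_1 = 8$ by Lemma \ref{POVE}(5); or (ii) exactly two $W_i$ have lowest weight $1/2$ and every other $W_j \cong V$, in which case $\dim M_1 = 1 \cdot 1 = 1$; or neither configuration occurs and $\dim M_1 = 0$. Translating lowest weights into the quadratic-space vocabulary via $q_V([W]) = 0$ iff $W$ is $\mathbb{Z}$-graded (so lowest weight $0$ or $1$) and $q_V([W]) = 1$ iff $W$ is $(\mathbb{Z} + 1/2)$-graded (lowest weight $1/2$), the three sub-cases match the three branches in the definition of $d$: case (i) is $[M] = \sigma(a, 0, \ldots, 0)$ with $a \in S(R(V))^{\times}$ and case (ii) is $[M] = \sigma(b, c, 0, \ldots, 0)$ with $b, c \in \overline{S(R(V))}$.

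No real obstacle arises; the argument is a quantitative upgrade of Lemma \ref{Lw2}, which has already isolated the same two configurations as the only classes with nonzero weight-one part. The one non-cosmetic input is the vanishing $V_1 = 0$, which is precisely what prevents spurious weight-one contributions from tensor factors equal to $V$ and thereby forces all weight-one vectors to sit at the bottom of their respective tensor factors.
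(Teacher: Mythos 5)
Your proof is correct and follows the same route as the paper, which simply cites Lemma \ref{POVE}(5) and the definition of $d$; you have filled in exactly the intended details (the tensor decomposition from Lemma \ref{LemFHL}, the vanishing of $V_1$, and the case analysis on the lowest weights $h_i\in\{0,1/2,1\}$ summing to $1$). Nothing is missing.
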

\begin{proof} 
This follows from Lemma \ref{POVE} (5) and the definition of $d$.
\end{proof}

\begin{lemma}\label{L1} Let $M_a$, $M_b$ and $M_c$ be irreducible modules for $V^k$.
Assume that $(M_x)_1\neq0$ for $x=a,b,c$.
Let $\mathcal{Y}(\cdot,z)=\sum_{i\in\C}a_{(i)}z^{i-1}$ be a non-zero intertwining operator of type $M_a\times M_b\to M_c$.
Then for some non-zero vector $v\in (M_a)_1$, the map $v_{(0)}:(M_b)_1\to (M_c)_1$ is non-zero.
\end{lemma}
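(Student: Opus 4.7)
The plan is to identify the map $v \otimes w \mapsto v_{(0)}w$ as the top-mode (leading component) of $\mathcal{Y}$ on the lowest weight spaces of $M_a, M_b, M_c$, and then invoke the standard injectivity of the top-mode assignment to conclude.

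First, I would show that under the hypothesis $(M_x)_1 \neq 0$, the space $(M_x)_1$ coincides with the lowest weight space of $M_x$ and the lowest weight is exactly $1$, for $x = a, b, c$. This uses Lemma \ref{Lw2}: any $M_x$ with non-zero weight-$1$ part is either of the form $\sigma([W_1], [0], \dots, [0])$ with $W_1$ of lowest weight $1$ (the non-trivial $\Z$-graded case of Lemma \ref{POVE}(5)) or $\sigma([W_1], [W_2], [0], \dots, [0])$ with $W_1, W_2$ of lowest weight $1/2$ (two $(\Z + \tfrac{1}{2})$-graded factors); since $V_1 = (V_{\sqrt{2}E_8}^+)_1 = 0$, the overall lowest weight of $M_x$ is $1$ and its top space is $(M_x)_1$.

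Second, I would do the weight bookkeeping for the intertwining operator. For $v \in (M_a)_1$ and $w \in (M_b)_1$, the coefficient $v_{(i)}w$ in $\mathcal{Y}(v, z)w = \sum_i v_{(i)}w \, z^{i-1}$ lies in $(M_c)_{i+1}$, so it vanishes for $i < 0$ and maps into the lowest weight space $(M_c)_1$ exactly when $i = 0$. Hence the map $v \otimes w \mapsto v_{(0)}w$ is precisely the restriction of $\mathcal{Y}$ to lowest weight components.

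Third, I would invoke the standard fact that the top-mode assignment $\mathcal{Y} \mapsto \mathcal{Y}_0$, where $\mathcal{Y}_0(v \otimes w) = v_{(0)}w$, is injective when $M_a, M_b, M_c$ are irreducible modules of a simple rational $C_2$-cofinite VOA; this is a consequence of the Frenkel--Zhu bimodule theorem. Since $\mathcal{Y} \neq 0$, it follows that $\mathcal{Y}_0 \neq 0$, and one may pick a non-zero $v \in (M_a)_1$ for which $v_{(0)} : (M_b)_1 \to (M_c)_1$ is non-zero.

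The main obstacle is the injectivity of $\mathcal{Y} \mapsto \mathcal{Y}_0$: the preceding steps amount to weight-grading bookkeeping, while this injectivity is the standard but non-trivial content, derived either by direct appeal to Frenkel--Zhu, or equivalently by observing that if $v_{(0)}w = 0$ for every lowest-weight pair, then the iterate and commutator formulas for intertwining operators, combined with the fact that $M_a$ and $M_b$ are both generated by their top spaces under the $V^k$-action and the irreducibility of $M_c$, force $\mathcal{Y} = 0$, contradicting the hypothesis.
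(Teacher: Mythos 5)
Your steps one and two are fine: Lemma \ref{Lw2} together with Lemma \ref{POVE} (5) does show that under the hypothesis each $(M_x)_1$ is the lowest weight space of $M_x$, and the weight bookkeeping identifying $v_{(0)}$ as the only mode landing in $(M_c)_1$ is correct. The gap is in step three. The assignment sending $\mathcal{Y}$ to its restriction to $(M_a)_{\mathrm{top}}\otimes (M_b)_{\mathrm{top}}$ is \emph{not} injective for general irreducible modules over a simple rational $C_2$-cofinite VOA, and this is not what Frenkel--Zhu gives: their injectivity statement lands in $\hom_{A(V)}(A(M_a)\otimes_{A(V)}(M_b)_{\mathrm{top}},(M_c)_{\mathrm{top}})$, where $A(M_a)$ is a quotient of \emph{all} of $M_a$, not of its top level. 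A concrete counterexample: take $V=V_L$ with $L=\Z\alpha$, $\langle\alpha,\alpha\rangle=6$, and $M_a=M_b=V_{L+\alpha/3}$, so $M_c=M_a\boxtimes M_b=V_{L+2\alpha/3}$. The top levels are one-dimensional of weight $1/3$, spanned by $e^{\alpha/3}$, $e^{\alpha/3}$ and $e^{-\alpha/3}$ respectively, but every term of $\mathcal{Y}(e^{\alpha/3},z)e^{\alpha/3}$ is a Heisenberg descendant of $e^{2\alpha/3}$ and hence has weight at least $4/3$; so the top-mode map vanishes identically although $\mathcal{Y}\neq 0$. Your fallback argument (generation of $M_a$ and $M_b$ by their top levels plus irreducibility of $M_c$) would ``prove'' $\mathcal{Y}=0$ in this example, so it cannot be correct either: the top level of $M_c$ is reached by zero modes of \emph{higher-weight} vectors of $M_a$, which is precisely the information retained in $A(M_a)$ and lost when you restrict to $(M_a)_{\mathrm{top}}$.

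Because the general principle fails, some input specific to $V=V_{\sqrt2E_8}^+$ is unavoidable, and this is exactly what the paper's proof supplies: using that conjugation by $\Aut(V)\wr\Sym_k$ preserves fusion rules, it reduces via Lemma \ref{Lw2} to the two explicit cases $[M_a]=([0]^-,[0]^+,\dots,[0]^+)$ and $[M_a]=([\lambda]^+,[\lambda]^+,[0]^+,\dots,[0]^+)$ with $\lambda\in(\sqrt2E_8)^*$ of norm $1$, and then checks non-vanishing of the zero mode directly from the explicit lattice intertwining operators of \cite{FLM}. To repair your proof you would need to carry out this (or an equivalent case-specific) verification rather than appeal to a general injectivity theorem.
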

\begin{proof} By the assumption, $(M_x)_1\neq 0$ for $x\in\{a,b,c\}$ and $[M_a]\times[M_b]=[M_c]$.
Note that the fusion rules are preserved by the conjugation action of $\Aut(V^k)$
and thus, up to $\Aut(V)\wr\Sym_k$, we may assume that
$[M_a]=([0]^-,[0]^+,\dots,[0]^+)$ or
$([\lambda]^+,[\lambda]^+[0]^+,\dots,[0]^+)$ by Lemma \ref{Lw2} (2), where
$\lambda$ is a vector in $(\sqrt2E_8)^*$ with norm $1$.

By the explicit description of the intertwining operator (\cite{FLM}) for $[0]^-$ and $[\lambda]^+$, we have $v_{(0)}:(M_b)_1\to (M_c)_1$ is non-zero for some $v\in (M_a)_1$.
\end{proof}

\begin{lemma}\label{LVS} Let $\mathcal{T}$ be a totally singular subspace of $R(V)^k$ and $\mathcal{U}$ a subspace of $\mathcal{T}$.
\begin{enumerate}
\item The subspace $\mathfrak{V}(\mathcal{U})_1$ is a Lie subalgebra of $\mathfrak{V}(\mathcal{T})_1$.
\item Assume that $(M^1\boxtimes M^2)_1=0$ for all $[M^1]\in\mathcal{U}$, $[M^2]\in\mathcal{T}\setminus\mathcal{U}$ with $M^1_1\neq0$ and $M^2_1\neq0$.
Then $\mathfrak{V}(\mathcal{U})_1$ is an ideal of $\mathfrak{V}(\mathcal{T})_1$.
\item Assume that $(M^1\boxtimes M^2)_1=0$  for all $[M^1], [M^2]\in\mathcal{U}$ with $M^1_1\neq0$ and $M^2_1\neq0$.
Then $\mathfrak{V}(\mathcal{U})_1$ is an abelian subalgebra.
\end{enumerate}
\end{lemma}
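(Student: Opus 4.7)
The plan rests on the standard fact that the weight-one subspace of any VOA carries a Lie-algebra structure via $[u,v]=u_{(0)}v$, together with the observation that for homogeneous components $u\in M^a_1$ and $v\in M^b_1$ inside an extension $\mathfrak{V}(\mathcal{T})$, the product $u_{(0)}v$ lies in the weight-one part of the fusion product $M^a\boxtimes M^b$. Since all three conclusions are bilinear, I can reduce throughout to checking homogeneous vectors indexed by elements of $\mathcal{T}$.

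For (1), I would use that any subspace of the totally singular $\mathcal{T}$ is itself totally singular, so by Proposition \ref{PS1} the subspace $\mathfrak{V}(\mathcal{U})$ is a (full) subVOA of $\mathfrak{V}(\mathcal{T})$. Its weight-one part is therefore automatically closed under the bracket.

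For (2), take homogeneous $u\in M^1_1$ with $[M^1]\in\mathcal{U}$ and $v\in M^2_1$ with $[M^2]\in\mathcal{T}$. The bracket $u_{(0)}v$ lives in $(M^1\boxtimes M^2)_1$. If $[M^2]\in\mathcal{U}$, then $[M^1\boxtimes M^2]=[M^1]+[M^2]\in\mathcal{U}$, and the bracket is in $\mathfrak{V}(\mathcal{U})_1$ by (1). If $[M^2]\in\mathcal{T}\setminus\mathcal{U}$, then either $u=0$ or $v=0$ (the bracket is trivially $0\in\mathfrak{V}(\mathcal{U})_1$), or else $M^1_1\ne0$ and $M^2_1\ne0$, and the hypothesis of (2) forces $(M^1\boxtimes M^2)_1=0$, again giving $[u,v]=0$. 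Thus $[\mathfrak{V}(\mathcal{U})_1,\mathfrak{V}(\mathcal{T})_1]\subseteq\mathfrak{V}(\mathcal{U})_1$. For (3), the same argument, applied to homogeneous $u\in M^1_1$, $v\in M^2_1$ with $[M^1],[M^2]\in\mathcal{U}$, combined directly with the hypothesis, yields $[u,v]\in(M^1\boxtimes M^2)_1=0$, so $\mathfrak{V}(\mathcal{U})_1$ is abelian.

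There is no real obstacle here: the content is just that the fusion rules determine which submodule a product lands in, together with the module-grading on $\mathfrak{V}(\mathcal{T})$. The only subtlety worth flagging is that in (2) and (3) one must first pass to homogeneous components $u\in M^1_1$ and $v\in M^2_1$ (with these weight-one spaces actually nonzero) before invoking the hypothesis, since otherwise the hypothesis is vacuous for that pair and the bracket vanishes for the trivial reason that $u$ or $v$ itself vanishes.
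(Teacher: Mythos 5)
Your proof is correct and follows essentially the same route as the paper: part (1) via $\mathfrak{V}(\mathcal{U})$ being a subVOA, and parts (2)--(3) by noting that $u_{(0)}v$ for $u\in M^1_1$, $v\in M^2_1$ lands in $(M^1\boxtimes M^2)_1$ and then invoking the respective hypothesis (the diagonal case in (3) being automatic since $[M]\boxtimes[M]=[V^k]$ and $V^k_1=0$, which the paper notes explicitly). Your flag about first passing to homogeneous components with nonzero weight-one spaces is exactly the right care to take and matches the paper's treatment.
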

\begin{proof} Since $\mathcal{U}$ is a subspace, $\mathfrak{V}(\mathcal{U})$ is a subVOA of $\mathfrak{V}(\mathcal{T})$.
Hence (1) holds.

For  (2), let $[M^1]\in\mathcal{U}$ and $[M^2]\in\mathcal{T}$ with
$M^1_1\neq0$ and $M^2_1\neq0$.
By the assumption for (2), $((M^1)_1)_{(0)}(M^2)_1\subset (M^3)_1=0$ if $[M^2]\in\mathcal{T}\setminus\mathcal{U}$, where $[M^3]=[M^1]\times [M^2]$.
Hence (2) holds.

(3) can be proved by the similar arguments in (2). Note also that
$[M]\times[M]=[V^k]$ for any $[M]\in \mathcal{T}$.
Therefore, $(M_1)_{(0)}M_1=0$  since $V^k_1=0$.
\end{proof}

\begin{lemma}\label{Lsemi} Let $\mathcal{T}$ be a totally singular subspace of $R(V)^k$ and $[M]$ an element in $\mathcal{T}$ with $M_1\neq0$.
Then for any $a\in M_1$,  $a_{(0)}$ is semisimple on
$\mathfrak{V}(\mathcal{T})_1$.
\end{lemma}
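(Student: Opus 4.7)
The plan is to show that the modes $\{a_{(n)}\}_{n\in\mathbb{Z}}$ together with $\mathrm{id}$ generate a Heisenberg algebra action on $\mathfrak{V}(\mathcal{T})$, and then invoke the Fock-space decomposition to deduce semisimplicity; a separate treatment is needed for the isotropic case in Case (i) of Lemma \ref{Lw2}.

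First, I would reduce to the case where $\mathcal{T}$ is maximal totally singular: for any extension $\mathcal{T}\subseteq\mathcal{T}'$ with $\mathcal{T}'$ maximal, $\mathfrak{V}(\mathcal{T})\subseteq\mathfrak{V}(\mathcal{T}')$ as sub-VOAs, the subspace $\mathfrak{V}(\mathcal{T})_1$ is $a_{(0)}$-invariant, and semisimplicity on the ambient weight-one space restricts to it.

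The key computation uses $[M]\boxtimes[M]=[V^k]$: the element $a_{(n)}a$ lies in $V^k$ at weight $1-n$, and since $V^k_1=0$ (because $V_1=0$ for $V=V_{\sqrt2E_8}^+$), one gets $a_{(0)}a=0$, $a_{(i)}a=0$ for $i\geq 2$, and $a_{(1)}a=c\mathbf{1}$ for some $c\in\mathbb{C}$ (with $c$ proportional to the invariant-form value $\langle a,a\rangle$ on $M_1$, which is symmetric and non-degenerate by Lemma \ref{POVE} (4)). The Borcherds commutator identity then collapses to
\begin{equation*}
[a_{(m)}, a_{(n)}] = mc\,\delta_{m+n, 0}\,\mathrm{id},
\end{equation*}
so the modes generate a Heisenberg algebra of central charge $c$. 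In Case (ii) of Lemma \ref{Lw2}, $M_1$ is one-dimensional and the form is non-zero there, forcing $c\neq 0$ for any non-zero $a$; in Case (i), $M_1\cong\mathfrak{h}_{E_8}$ after conjugation via Lemma \ref{Lconj}, and $c$ may vanish for isotropic Cartan elements over $\mathbb{C}$.

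When $c\neq 0$, the $\mathbb{Z}_{\geq 0}$-graded, locally finite Heisenberg representation on $\mathfrak{V}(\mathcal{T})$ decomposes by the standard Stone--von Neumann-type argument into a direct sum of Fock spaces, on each of which $a_{(0)}$ acts as a scalar; thus $a_{(0)}$ is semisimple on $\mathfrak{V}(\mathcal{T})_1$. For the residual case $c=0$ in Case (i), I would invoke the sub-VOA $W=V^k\oplus M\cong V_{\sqrt2E_8}\otimes V^{\otimes(k-1)}$ of $\mathfrak{V}(\mathcal{T})$, within which $a$ lies in the Cartan $\mathfrak{h}_{E_8}$ of the lattice VOA $V_{\sqrt2E_8}$; on each irreducible $V_{\sqrt2E_8}$-module $V_{\sqrt2E_8+\mu}$, $a_{(0)}$ acts diagonally by $e^v\mapsto\langle a,v\rangle e^v$, hence semisimply irrespective of whether $\langle a,a\rangle=0$. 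Since $\mathfrak{V}(\mathcal{T})$ decomposes as a direct sum of irreducible $W$-modules (by rationality of $W$), the conclusion follows.

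The main obstacle is the rigorous justification of the Fock-space decomposition in the $c\neq 0$ case; this follows from the standard theory of graded Heisenberg modules of non-zero central charge, but requires verifying that the highest-weight vectors (annihilated by $a_{(n)}$ for $n>0$) are simultaneous eigenvectors of $a_{(0)}$, which one checks by taking vectors of minimal $L_0$-grade in each sub-representation.
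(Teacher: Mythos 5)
Your reduction to maximal $\mathcal{T}$, the computation $a_{(0)}a=0$, $a_{(1)}a=c\1$, $a_{(i)}a=0$ for $i\ge 2$ (from $[M]\boxtimes[M]=[V^k]$ and $(V^k)_1=0$), and the resulting Heisenberg commutation relations are all correct, as is your identification of $c$ with $\langle a,a\rangle$ and the observation that $c\neq 0$ is automatic when $\dim M_1=1$. Your treatment of the isotropic vectors in the $d([M])=8$ case via the lattice VOA $V_{\sqrt2E_8}\otimes V^{\otimes(k-1)}$ is essentially the paper's entire argument for that case (the paper applies the explicit FLM description to all of $M_1=(V_{\sqrt2E_8}^-)_1$ at once, isotropic or not), so that part is fine.

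The genuine gap is the step ``Heisenberg of nonzero level $\Rightarrow a_{(0)}$ semisimple.'' The Stone--von Neumann-type decomposition $\mathfrak{V}(\mathcal{T})\cong M(1)\otimes\Omega$, with $\Omega=\{v\mid a_{(n)}v=0,\ n>0\}$, only controls the modes $a_{(n)}$ with $n\neq0$; the zero mode $a_{(0)}$ is an extra operator commuting with the entire Heisenberg algebra, hence preserving each finite-dimensional graded piece of $\Omega$, and nothing in that decomposition forces it to be diagonalizable there. Your proposed fix (passing to vectors of minimal $L_0$-grade in each subrepresentation) does not close this, because a Jordan block of $a_{(0)}$ can live entirely inside a single graded piece of $\Omega$: for example, $W=M(1)\otimes\C^2$ with $a_{(n)}$, $n\neq0$, acting on the first factor and $a_{(0)}=1\otimes\left(\begin{smallmatrix}0&1\\0&0\end{smallmatrix}\right)$ is an ordinary $\Z_{\ge0}$-graded module for the level-one Heisenberg VOA with finite-dimensional graded pieces (note $L(0)=\tfrac12a_{(0)}^2+\sum_{n>0}a_{(-n)}a_{(n)}$ is still semisimple since $a_{(0)}^2=0$ on $\C^2$), yet $a_{(0)}$ is not semisimple on it. So semisimplicity has to be extracted from the specific structure of $\mathfrak{V}(\mathcal{T})$, not from the Heisenberg relations alone. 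The paper avoids this in the $d([M])=1$ case by a different mechanism: since $\dim M_1=1$ and $[M]$ is an order-two simple current, $a_{(0)}$ interchanges the summands $N_1$ and $(M\boxtimes N)_1$, and the nonvanishing statement of Lemma \ref{L1} is used in place of any Fock-space argument. To salvage your route you would need an additional input (e.g.\ the Dong--Mason integrability results, a positive-definite invariant form, or the explicit realization of the modules $N$ with $N_1\neq0$) ruling out a nilpotent part of $a_{(0)}$ on $\Omega_1$.
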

\begin{proof} If $d([M])=8$ then by Lemma \ref{LO} (1), we may assume that $[M]=([0]^-,[0]^+,\dots,[0]^+)$ up to $\Aut(V^k)$.
Since $(V_L^-)_1$ is the weight $1$ subspace of $V_L$, one can see that its action on $R(V^k)$ is semisimple by \cite{FLM}.

If $d([M])=1$ then $\dim M_1=1$.
Hence the semisimplicity follows from the simple current property of $M$ and Lemma \ref{L1}.
\end{proof}

\subsection{Lie algebras of holomorphic simple current extensions of $(V_{\sqrt2E_8}^+)^{\otimes3}$}
In this subsection, we consider the case $k=3$, and study the Lie algebra
structure of the weight $1$ subspace of $\mathfrak{V}(\mathcal{S})$ for a
maximal totally singular subspace $\mathcal{S}$ of $R(V)^3$.
Recall that maximal totally
singular subspaces of $(R(V)^3,q_V^3)$ were constructed in Theorems
\ref{TClassify} and \ref{TClassify2}, and were classified in Theorem \ref{TC} up to
$\Aut(V^3)\cong\Aut(V)\wr\Sym_3$.

Combining Lemmas \ref{LNumber} and \ref{LVS2}, we obtain the following.
\begin{proposition} Let $\mathcal{S}=\mathcal{S}(5,k_1,k_2,\varepsilon)$ or $\mathcal{S}(5,k_1,k_2)$ be the maximal totally singular subspace of $R(V)^3$ given in Theorem \ref{TClassify} or \ref{TClassify2}.
Then the dimension of the weight $1$ subspace of $\mathfrak{V}(\mathcal{S})$ is equal to
\begin{eqnarray*}d(\mathcal{S})=\begin{cases}3(2^{k_1+3}+2^{k_2+3}-2^{(3+k_1+k_2)/2})\quad {\rm if}\quad \varepsilon=+,\ k_1+k_2\in2\Z+1,\\
3(2^{k_1+3}+2^{k_2+3}+2^{(3+k_1+k_2)/2})\quad {\rm if}\quad \varepsilon=-,\ k_1+k_2\in2\Z+1,\\
3(2^{k_1+3}+2^{k_2+3}) \quad {\rm if}\quad k_1+k_2\in2\Z.
\end{cases}
\end{eqnarray*}

\end{proposition}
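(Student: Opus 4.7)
The plan is to combine Lemma \ref{LVS2} with Lemma \ref{LNumber}, specialized to $m=5$ (which is forced because $R(V)$ is $10$-dimensional over $\F_2$ by Lemma \ref{POVE} (2)). By Lemma \ref{LVS2}, the dimension of $\mathfrak{V}(\mathcal{S})_1$ equals $d(\mathcal{S})=\sum_{v\in\mathcal{S}}d(v)$, and by the case-definition of $d$ recalled just before that lemma, only vectors of the form $\sigma(a,0,0)$ with $a\in S(R(V))^\times$ (each contributing $8$) and vectors of the form $\sigma(b,c,0)$ with $b,c\in\overline{S(R(V))}$ (each contributing $1$) give nonzero terms. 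So the entire computation reduces to counting these two types of vectors in $\mathcal{S}$.

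Next, I apply Lemma \ref{LNumber}(1) to see that the first type contributes $8(2^{k_1}+2^{k_2}-2)$, and Lemma \ref{LNumber}(2) to count the second type. Since $m=5$ is odd, the parity of $m-k_1-k_2$ equals the opposite parity of $k_1+k_2$; hence the three cases in Lemma \ref{LNumber}(2) correspond precisely to the three cases in the proposition, namely $(\varepsilon=+,\ k_1+k_2\text{ odd})$, $(\varepsilon=-,\ k_1+k_2\text{ odd})$, and the $\mathcal{S}(5,k_1,k_2)$ case with $k_1+k_2$ even. The first matches the construction $\mathcal{S}(5,k_1,k_2,\varepsilon)$ of Theorem \ref{TClassify}, and the second matches $\mathcal{S}(5,k_1,k_2)$ of Theorem \ref{TClassify2}.

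The last step is elementary arithmetic. For instance, in the $\varepsilon=+$ case one finds
\begin{align*}
d(\mathcal{S})
&=8(2^{k_1}+2^{k_2}-2)+\bigl(2^{4}+2^{k_1+4}+2^{k_2+4}-3\cdot 2^{(5+k_1+k_2)/2-1}\bigr)\\
&=24\cdot 2^{k_1}+24\cdot 2^{k_2}-3\cdot 2^{(3+k_1+k_2)/2}\\
&=3\bigl(2^{k_1+3}+2^{k_2+3}-2^{(3+k_1+k_2)/2}\bigr),
\end{align*}
and the two remaining cases are handled by the same simplification, with the sign of the middle term flipped in the $\varepsilon=-$ subcase and that term absent altogether in the even-parity case. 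There is no real obstacle here: the proposition is a bookkeeping corollary of the two cited lemmas, and the only care required is to match the parity dichotomy of Lemma \ref{LNumber}(2) to the two families of maximal totally singular subspaces $\mathcal{S}(5,k_1,k_2,\varepsilon)$ and $\mathcal{S}(5,k_1,k_2)$.
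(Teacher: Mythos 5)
Your proof is correct and is exactly the paper's approach: the paper derives this proposition by simply ``combining Lemmas \ref{LNumber} and \ref{LVS2}'' with $m=5$, which is precisely your reduction, and your arithmetic (including the parity matching between $m-k_1-k_2$ and $k_1+k_2$) checks out. You have merely written out the bookkeeping that the paper leaves implicit.
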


By Theorem \ref{TC} and the proposition above, we obtain the following corollary.
\begin{corollary}\label{Ex}
There are exactly $15$ maximal totally singular subspaces of $R(V)^3$ satisfying
neither (\ref{Cond1}) nor (\ref{Cond2}), up to $\Aut(V^3)$. Moreover,

$$\begin{array}{ccc}
d(\mathcal{S}(5,1,0,+))=60,& d(\mathcal{S}(5,1,0,-))=84,& d(\mathcal{S}(5,3,0,+))=192,\\ d(\mathcal{S}(5,3,0,-))=240,& d(\mathcal{S}(5,5,0,+))=744,& d(\mathcal{S}(5,2,1,+))=120,\\
d(\mathcal{S}(5,2,1,-))=168,& d(\mathcal{S}(5,4,1,+))=384,& d(\mathcal{S}(5,3,2,+))=240,\\
d(\mathcal{S}(5,0,0))=48,&d(\mathcal{S}(5,2,0))=120,& d(\mathcal{S}(5,4,0))=408,\\
d(\mathcal{S}(5,1,1))=96, &d(\mathcal{S}(5,3,1))=240,& d(\mathcal{S}(5,2,2))=192.
\end{array}$$
\end{corollary}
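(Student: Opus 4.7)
The plan is to apply Theorem~\ref{TC} together with the counting formulas of the preceding proposition. By Theorem~\ref{TC}, every maximal totally singular subspace $\mathcal{S}$ of $R(V)^3$ falls into one of four types up to $\Aut(V^3)\cong O(R(V),q_V)\wr\Sym_3$, and the subspaces satisfying neither (\ref{Cond1}) nor (\ref{Cond2}) are precisely those conjugate to some $\mathcal{S}(m,k_1,k_2,\varepsilon)$ from Theorem~\ref{TClassify} or to some $\mathcal{S}(m,k_1,k_2)$ from Theorem~\ref{TClassify2}. Since $V=V_{\sqrt 2 E_8}^+$ has $(R(V),q_V)$ a $10$-dimensional plus-type quadratic space by Lemma~\ref{POVE}(2), we work with $m=5$.

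The main step is to enumerate the admissible parameter values. For the first family $\mathcal{S}(5,k_1,k_2,\varepsilon)$, Theorem~\ref{TClassify} requires $0\le k_2\le k_1$, $m-k_1-k_2\in 2\Z_{\ge 0}$ (so $k_1+k_2$ is odd and at most $5$), and a choice of a non-singular subspace $P$ of $\varepsilon$-type with $\dim P=m-k_1-k_2$; when $\dim P>0$ both signs $\varepsilon\in\{+,-\}$ are available, while $\dim P=0$ forces the trivial plus type. The admissible pairs $(k_1,k_2)$ with $k_1+k_2\in\{1,3,5\}$ are $(1,0),(3,0),(2,1),(5,0),(4,1),(3,2)$, producing $2+2+2+1+1+1=9$ classes. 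For the second family, Theorem~\ref{TClassify2} requires $m-k_1-k_2$ odd with $\dim P=m-k_1-k_2-1\ge 0$, so $k_1+k_2$ is even and at most $4$; the pairs $(0,0),(2,0),(1,1),(4,0),(3,1),(2,2)$ give $6$ further classes, totalling $15$. Non-redundancy between different parameters follows from Theorems~\ref{TClassify}(3) and \ref{TClassify2}(4): the multiset $\{k_1,k_2,0\}=\{\dim\rho_i(\mathcal{S}^{(jk)})\}$ and the Witt type $\varepsilon$ are intrinsic to the $\Aut(V^3)$-orbit, and the two families are separated by the parity of $m-k_1-k_2$.

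The fifteen listed values of $d(\mathcal{S})$ are then obtained by direct substitution of $(m,k_1,k_2,\varepsilon)$ into the closed-form expression of the preceding proposition; for instance, $d(\mathcal{S}(5,1,0,+))=3(2^{4}+2^{3}-2^{2})=60$ and $d(\mathcal{S}(5,0,0))=3(2^{3}+2^{3})=48$, and all other entries follow by identical routine arithmetic. The only substantive point in the proof is thus the non-redundancy of the parameter enumeration, which is handled by the invariance assertions cited above; the remainder is purely a bookkeeping exercise listing the six even-parity and nine odd-parity parameter tuples and applying the formula to each.
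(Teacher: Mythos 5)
Your proposal is correct and follows exactly the route the paper intends: the paper derives this corollary in one line from Theorem \ref{TC} together with the dimension formula of the preceding proposition, and your enumeration of the admissible parameter tuples (nine with $k_1+k_2$ odd, six with $k_1+k_2$ even) and the substitution into that formula is precisely the omitted bookkeeping. Your non-redundancy argument via the invariants $\dim\rho_i(\mathcal{S}^{(jk)})$ and the type of $P$ is consistent with parts (3) and (4) of Theorems \ref{TClassify} and \ref{TClassify2}, and all fifteen numerical values check out.
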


Let us study the Lie algebra structure of $\mathfrak{V}(\mathcal{S})$ for $\mathcal{S}$ given in Theorems \ref{TClassify} and \ref{TClassify2} by using their explicit descriptions.

\begin{lemma}\label{Lrank} Let $\mathcal{S}=\mathcal{S}(5,k_1,k_2,\varepsilon)$ or $\mathcal{S}(5,k_1,k_2)$.
\begin{enumerate}
\item If $k_1\ge 1$ then the rank of $\mathfrak{V}(\mathcal{S})_1$ is greater than or equal to $8$.
\item If $k_1\ge k_2\ge 1$ then the rank of $ \mathfrak{V}(\mathcal{S})_1$ is equal to $16$.
\end{enumerate}
\end{lemma}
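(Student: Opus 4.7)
The plan is to construct an explicit abelian toral subalgebra of the expected dimension inside $\mathfrak{g}:=\mathfrak{V}(\mathcal{S})_1$ and, in case~(2), verify that it is self-centralising, hence a Cartan subalgebra. (Since $\mathfrak{V}(\mathcal{S})$ is a holomorphic VOA of CFT type, $\mathfrak{g}$ is reductive, so the rank of $\mathfrak{g}$ equals the dimension of any Cartan.)

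By the transitivity in Lemma~\ref{LO}(1),(2) together with the VOA-isomorphism invariance $\mathfrak{V}(g\circ\mathcal{T})\cong\mathfrak{V}(\mathcal{T})$ from Lemmas~\ref{Lconj} and~\ref{LSVk}, I may reduce, in case~(1), to $[0]^-\in S_1$, and, in case~(2), further to $[0]^-\in S_2\subset S_1$. Because $(s_1,0,0)\in\mathcal{S}$ for all $s_1\in S_1$ (and analogously for $S_2$), this embeds $V_{\sqrt{2}E_8}\otimes V\otimes V$, respectively $V_{\sqrt{2}E_8}\otimes V_{\sqrt{2}E_8}\otimes V$, as a sub-VOA of $\mathfrak{V}(\mathcal{S})$. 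Its Cartan $H$---namely $\mathfrak{h}\otimes\vacuum\otimes\vacuum$ in case~(1) and $H=(\mathfrak{h}\otimes\vacuum\otimes\vacuum)\oplus(\vacuum\otimes\mathfrak{h}\otimes\vacuum)$ in case~(2)---is an abelian subalgebra of $\mathfrak{g}$ of dimension $8$ or $16$ whose elements act semisimply by Lemma~\ref{Lsemi}. This establishes the desired lower bound on the rank.

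For the equality in case~(2) I would show that $C_{\mathfrak{g}}(H)=H$ by analyzing the isotypic decomposition $\mathfrak{g}=\bigoplus_{v\in\mathcal{S},\,d(v)>0}M_v^{(1)}$ component by component. The key ingredient is that $[0]^-\in S_1\cap S_2$ combined with Lemma~\ref{Inner}(4) forces each of $S_1^\perp$ and $S_2^\perp$ (and hence $P$, $Q$, $T$ and the auxiliary subspaces from Theorem~\ref{TClassify2}) to contain no twisted module; every $v\in\mathcal{S}$ with $d(v)>0$ then has its active coordinates of the form $[\mu]^\pm$, on which $h\in\mathfrak{h}\subset(V_{\sqrt{2}E_8}^-)_1$ acts via the lattice VOA formula
\[
h_{(0)}\bigl(e^{\lambda}\pm e^{-\lambda}\bigr)=\langle h,\lambda\rangle\bigl(e^{\lambda}\mp e^{-\lambda}\bigr),
\]
sending $M_v^{(1)}$ into the distinct isotype $M_{v+([0]^-,0,0)}^{(1)}$ or $M_{v+(0,[0]^-,0)}^{(1)}$. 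Since all norm-$2$ vectors $\lambda\in\mu+\sqrt{2}E_8$ are non-zero when $\mu\ne 0$, this map has no kernel inside $M_v^{(1)}$ except at the two components $([0]^-,0,0)$ and $(0,[0]^-,0)$, which together yield exactly $H$.

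It follows that $C_{\mathfrak{g}}(H)=H$, so $H$ is a Cartan subalgebra of $\mathfrak{g}$ and $\mathrm{rank}\,\mathfrak{g}=\dim H=16$. The main obstacle in this argument is the untwisted reduction: without it, a twisted singular module in $S_i$ would contribute a weight-one subspace on which every $h\in\mathfrak{h}_i$ acts by zero (twisted modules admit only half-integral Heisenberg modes), producing spurious $H$-fixed vectors; the role of $[0]^-\in S_i$ via Lemma~\ref{Inner}(4) is precisely to exclude this possibility.
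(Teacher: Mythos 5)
Your proposal is correct and follows essentially the same route as the paper: an $8$- (resp.\ $16$-)dimensional toral abelian subalgebra $H$ built from the weight-one spaces of the modules indexed by $(s_1,0,0)$ and $(0,s_2,0)$, followed by a component-by-component verification that $H$ is self-centralising. Your normalisation $s_1=s_2=[0]^-$ and the explicit formula $h_{(0)}(e^{\lambda}\pm e^{-\lambda})=\langle h,\lambda\rangle(e^{\lambda}\mp e^{-\lambda})$ is exactly how the paper's Lemma \ref{L1} and Lemma \ref{Lsemi} are proved, so the difference is only cosmetic (and your explicit exclusion of twisted coordinates via Lemma \ref{Inner}(4) is a point the paper leaves implicit).
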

\begin{proof} Let $a \in S_1\setminus\{0\}$.
Then $\mathfrak{V}(\{(a,0,0)\})_1$ is an $8$-dimensional abelian subalgebra of $\mathfrak{V}(\mathcal{S})_1$ since $a$ is conjugate to $[0]^-$.
Moreover, by Lemma \ref{Lsemi}, it is toral.
Hence (1) follows.

Assume that $k_1\ge k_2\ge1$.
Let $s_i\in S_i$ for $i=1,2$.
Set $H=\mathfrak{V}(\{(s_1,0,0),(0,s_2,0)\})_1$.
Then by Lemmas \ref{Lw2} and \ref{Lsemi}, $H$ is toral abelian and $\dim H=16$.
Let us show that $H$ is maximal abelian.

Let $v\in\mathfrak{V}(\mathcal{S})_1$ such that $v_{(0)}H=0$. It suffices to
show that $v\in H$. Let $v=\sum_{[M]\in\mathcal{S}}v_M$, where $v_M\in M_1$.
Take $[M]=(a,b,c)\in\mathcal{S}$ with $v_M\neq0$. Clearly, $M_1\neq 0$. By
Lemma \ref{Lw2}, $[M]=(a,0,0)$, $(0,b,0)$ for some $a,b\in S(R(V))^\times$ or
$[M]=(a,b,0)$, $(a,0,c)$, $(0,b,c)$ for some $a, b,c\in \overline{S(R(V))}$.

First, we consider the case where  $a,b\in \overline{S(R(V))}$ and $c=0$. Then
by the definitions of $\mathcal{S}(5,k_1,k_2,\varepsilon)$ and
$\mathcal{S}(5,k_1,k_2)$, $a,b\in S_1^\perp$ and hence
$a+s_1\in\overline{S(R(V))}$ . Since $\mathcal{S}$ is a subspace,
$(a+s_1,b,0)=(a,b,0)+(s_1,0,0)\in \mathcal{S}$. Let $[M']=(a+s_1,b,0)$. Then
$M'_1\neq 0$ and by Lemma \ref{L1}, there is $h\in
\mathfrak{V}(\{(s_1,0,0)\})_1\subset H$ such that $(v_M)_{(0)}h\neq0$. Since
the projection of $v_{(0)}h$ to $M'$ is $(v_M)_{(0)}h$, we have
$v_{(0)}h\neq0$, which contradicts $v_{(0)}H=0$. Hence $v_M=0$.
By the same arguments, $v_M=0$
if $[M] =(a,b,0)$, $(a,0,c)$ or $(0,b,c)$ with $a,b,c \in \overline{S(R(V))}$.

Next, we consider the case where $[M]=(a,0,0)$, $(0,b,0)$ for some $a,b\in S(R(V))^\times$.
Then $[M]$ belongs to $\{(x,y,0)\mid x\in
S_1,y\in S_2\}$. Since $v_{(0)}H=0$, it is easy to see that
$M_1\subset H$ by the similar arguments. Hence $v_M\in H$, and $v\in H$. Thus
$H$ is maximal abelian, and the rank of $\mathfrak{V}(\mathcal{S})_1$ is $16$.
\end{proof}

\begin{lemma}\label{Lrank2} Let $\mathcal{S}=\mathcal{S}(5,k_1,k_2,\varepsilon)$ or $\mathcal{S}(5,k_1,k_2)$.
\begin{enumerate}
\item If $k_1= 2,3$ and $4$ then $\mathfrak{V}(\mathcal{S})_1$ has a semi-simple Lie subalgebra of type $(A_{1,1})^8$, $(D_{4,1})^2$ and $D_{8,1}$, respectively.
\item If $k_1\ge k_2\ge 2$ then $ \mathfrak{V}(\mathcal{S})_1$ has a semi-simple Lie subalgebra of type $(A_{1,1})^{16}$.
\item If $k_1=3$ and $k_2=2$ then $ \mathfrak{V}(\mathcal{S})_1$ has a semi-simple Lie subalgebra of type $(D_{4,1})^2(A_{1,1})^{8}$.
\end{enumerate}
\end{lemma}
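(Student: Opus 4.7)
The approach is to exhibit, inside $\mathfrak{V}(\mathcal{S})_1$, a semisimple Lie subalgebra of the claimed type by restricting to a suitable totally singular sub-subspace $\mathcal{U}\subseteq\mathcal{S}$. For such $\mathcal{U}$, the subVOA $\mathfrak{V}(\mathcal{U})$ sits inside $\mathfrak{V}(\mathcal{S})$, and by Lemma \ref{LVS}(1) its weight one space is a Lie subalgebra of $\mathfrak{V}(\mathcal{S})_1$. The main tool is the transitivity of $\Aut(V)\cong O(R(V),q_V)$ on totally singular subspaces of each fixed dimension (Lemma \ref{LO}(1)), combined with Lemma \ref{Lconj}, which allows us to replace $S_1$ and $S_2$ by any totally singular subspaces of the same dimension. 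In particular, since $[0]^-=[V_{\sqrt2E_8}^-]$ is a non-zero singular vector of $R(V)$, we may assume $[0]^-\in S_i$ whenever $\dim S_i\geq 1$.

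For part (1), take $\mathcal{U}=\{(s,0,0)\mid s\in S_1\}$, a $k_1$-dimensional totally singular subspace of $R(V)^3$. Since $V_1=0$, the weight one space $\mathfrak{V}(\mathcal{U})_1$ coincides with the weight one space of the simple current extension of $V$ by $S_1$ on the first tensor factor. With $[0]^-\in S_1$, this extension contains $V_{\sqrt2E_8}=V\oplus V_{\sqrt2E_8}^-$ as a subVOA and is therefore a lattice VOA $V_L$ for an even overlattice $L$ of $\sqrt2E_8$ of index $2^{k_1-1}$. By Lemma \ref{LO}(1) applied to the discriminant form of $\sqrt2E_8$, namely $(\sqrt2E_8)^*/\sqrt2E_8\cong E_8/2E_8$, the isomorphism type of $L$ depends only on $k_1$. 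A direct determinant computation gives $\det L=64,16,4$ for $k_1=2,3,4$; moreover each non-trivial singular coset in $E_8/2E_8$ is represented by $16$ norm-$4$ vectors of $E_8$, so $L$ contains exactly $16(2^{k_1-1}-1)=16,48,112$ vectors of norm $2$. These root counts, together with the discriminants, identify $L$ as a rank-$8$ even lattice with root system $A_1^8$, $D_4^2$, $D_8$ respectively, giving the weight one Lie algebras $V_{L,1}$ of semisimple type $(A_{1,1})^8$, $(D_{4,1})^2$, $D_{8,1}$.

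For part (2), take $\mathcal{U}$ spanned by $\{(s,0,0)\mid s\in S_1'\}$ and $\{(0,s,0)\mid s\in S_2'\}$ for $2$-dimensional totally singular subspaces $S_i'\subseteq S_i$ containing $[0]^-$; then $\mathfrak{V}(\mathcal{U})\cong V_{L_1}\otimes V_{L_2}\otimes V$ with each $L_i$ the index-$2$ overlattice of $\sqrt2E_8$ from part (1) (having root system $A_1^8$), giving the Lie subalgebra $(A_{1,1})^{16}$. For part (3), take $\mathcal{U}$ spanned by $\{(s,0,0)\mid s\in S_1\}$ and $\{(0,s,0)\mid s\in S_2\}$ with $\dim S_1=3$ and $\dim S_2=2$; this produces $V_{L_1}\otimes V_{L_2}\otimes V$ with root systems $D_4^2$ and $A_1^8$, and Lie subalgebra $(D_{4,1})^2(A_{1,1})^8$.

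The main technical obstacle is the lattice-theoretic identification in part (1). The case $k_1=4$ requires distinguishing the two isomorphism classes of rank-$8$ even lattices with discriminant $4$, namely $D_8$ and $E_7\oplus A_1$; the count of $112$ norm-$2$ vectors (rather than $128$) forces $L\cong D_8$. The identifications for $k_1=2,3$ follow similarly from the root count together with the determinant, using that there is a unique rank-$8$ even lattice with the given discriminant and number of roots.
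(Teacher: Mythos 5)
Your proposal follows essentially the same route as the paper's proof: both restrict to a totally singular subspace $\mathcal{U}\subseteq\mathcal{S}$ supported on one or two coordinates, invoke Lemma \ref{LVS} (1) to get a Lie subalgebra, and identify $\mathfrak{V}(\mathcal{U})$ as (a tensor factor of) a lattice VOA whose root system is $A_1^8$, $D_4^2$ or $D_8$. The only difference is that the paper cites these lattice identifications as known facts, whereas you re-derive them via determinants and root counts; that derivation is sound, though for $k_1=2,3$ the cleanest justification is that norm-$2$ vectors lying in a common coset of $\sqrt2E_8$ are mutually orthogonal or opposite (their difference would otherwise be a too-short vector of $\sqrt2E_8$), rather than an appeal to uniqueness of a lattice with the given discriminant and number of roots.
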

\begin{proof} Recall that for $2$, $3$ and $4$ -dimensional totally singular subspaces $U$ of $R(V)$, $\oplus_{[M]\in U}M$ are lattice VOAs associated to root lattices $A_1^8$, $D_4^2$ and $D_8$, respectively.
Hence we obtain this lemma by Lemma \ref{LVS} (1).
\end{proof}

\begin{lemma}\label{LIL3} Let $\mathcal{S}=\mathcal{S}(5,k_1,0,\varepsilon)$ be the maximal totally singular subspace of $R(V)^3$ given in Theorem \ref{TClassify}.
Let $\mathcal{T}=\{(0,t,\varphi(t))\mid t\in T\}$.
\begin{enumerate}
\item The subspace $\mathfrak{V}(\mathcal{T})_1$ is an ideal of $\mathfrak{V}(\mathcal{S})_1$.
\item The rank of $\mathfrak{V}(\mathcal{T})_1$ is $2$, $4$, $8$ if $\dim (T)=4,6,8$, respectively.
\end{enumerate}
\end{lemma}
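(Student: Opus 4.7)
The plan is to derive (1) from the ideal criterion Lemma~\ref{LVS}~(2) via a direct fusion-rule analysis, and (2) by describing the Lie-algebra structure on $\mathfrak{V}(\mathcal{T})_1$ and matching it against the classification of simple complex Lie algebras.

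For (1), first note that $\mathcal{T}=\mathcal{S}^{(1)}$: a general element of $\mathcal{S}$ reads $(s_1+p+q,\,p+t,\,q+\varphi(t))$ with $s_1\in S_1$, $p\in P$, $q\in Q$, $t\in T$, and the sum $S_1\oplus P\oplus Q$ is direct (since $P$ and $Q$ are non-singular and $S_1$ is the radical of $S_1^\perp$), so vanishing of the first coordinate forces $s_1=p=q=0$. Hence an element of $\mathcal{S}\setminus\mathcal{T}$ falls into one of three families: (A) $(s_1,0,0)$ with $s_1\in S_1\setminus\{0\}$; (B) $(s_1+p,\,p,\,0)$ with $p\in P\setminus\{0\}$; or (C) $(s_1+q,\,0,\,q)$ with $q\in Q\setminus\{0\}$. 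For $[M^1]=(0,t,\varphi(t))\in\mathcal{T}$ with $M^1_1\neq 0$, Lemma~\ref{Lw2} forces $t$ (hence $\varphi(t)$) to be non-singular. Fusing $[M^1]$ with each of A, B, C componentwise and using $T\cap P=P^\perp\cap P=0$ and $U\cap Q=Q^\perp\cap Q=0$ to rule out cancellations in the middle or third slot, Lemma~\ref{POVE}~(5) shows the lowest weight of the resulting $V^{\otimes 3}$-module equals $2$ in every case, so its weight-one part vanishes and the hypothesis of Lemma~\ref{LVS}~(2) is satisfied.

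For (2), write $\mathfrak{V}(\mathcal{T})=V\otimes\mathfrak{W}$ where $\mathfrak{W}=\bigoplus_{t\in T}M_t\otimes M_{\varphi(t)}$ is the simple current extension of $V^{\otimes 2}$ attached to the totally singular subspace $\{(t,\varphi(t))\}\subset R(V)^2$ (Proposition~\ref{PS1}). Since $V_1=0$, $\mathfrak{V}(\mathcal{T})_1\cong\mathfrak{W}_1$; and by the lowest-weight dictionary of Lemma~\ref{POVE}~(5) the summand $M_t\otimes M_{\varphi(t)}$ contributes a one-dimensional weight-one space exactly when $t$ is non-singular. Let $e_t$ be a basis vector of this line. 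By Lemma~\ref{L1} together with the componentwise fusion rules, $[e_t,e_{t'}]\neq 0$ iff $t+t'$ is non-singular, equivalently iff $\langle t,t'\rangle=1$; each $e_t$ is semisimple on $\mathfrak{V}(\mathcal{T})_1$ by Lemma~\ref{Lsemi}.

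Finally, compute $\dim\mathfrak{V}(\mathcal{T})_1=|\overline{S(T)}|$ via Lemma~\ref{LNum}: the answer matches $\dim\mathfrak{so}_{2^{\dim T/2}}$ when $T$ is of plus type and $\dim\mathfrak{sp}_{2^{\dim T/2}}$ when of minus type. Combined with the bracket structure and the semisimplicity of the algebra (which follows once $\dim T\geq 4$ since $\overline{S(T)}$ then spans the non-singular $T$), the classification of simple complex Lie algebras identifies $\mathfrak{V}(\mathcal{T})_1$ with one of $\mathfrak{so}_{2^{\dim T/2}}$ or $\mathfrak{sp}_{2^{\dim T/2}}$, both of rank $2^{\dim T/2-1}$. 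Substituting $\dim T\in\{4,6,8\}$ yields the claimed ranks $2,4,8$. The main obstacle is this final identification: while a maximal totally isotropic subspace $W\subset T$ with $q|_W\neq 0$ yields a direct toral subalgebra of the expected dimension in the minus-type case (a Lagrangian of dimension $\dim T/2$, automatically not totally singular), in the plus-type case every Lagrangian is totally singular and one must lean on the dimensional match with the Lie-algebra classification to promote the weaker direct lower bound to the true rank.
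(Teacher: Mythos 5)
Your proof of part (1) is correct; it is the paper's own argument (which is compressed into ``it is easy to see'') with the details supplied: you identify $\mathcal{T}=\mathcal{S}^{(1)}$, observe via Lemma~\ref{Lw2} that the only elements of $\mathcal{S}\setminus\mathcal{T}$ with non-zero weight-one part are of the forms $(s_1,0,0)$, $(s_1+p,p,0)$, $(s_1+q,0,q)$, and check that each fusion product with $(0,t,\varphi(t))$, $t$ non-singular, has three non-zero components, so Lemma~\ref{LVS}~(2) applies.

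Part (2) has a genuine gap, and it sits exactly where you flagged it. Knowing that $\dim\mathfrak{V}(\mathcal{T})_1=|\overline{S(T)}|$ coincides numerically with $\dim\mathfrak{so}_{2^{n}}$ or $\dim\mathfrak{sp}_{2^{n}}$ (where $n=\dim T/2$) identifies nothing: a semisimple Lie algebra is not determined by its dimension (e.g.\ $28=\dim D_4=\dim (B_2)^2A_2$, of ranks $4$ and $6$), so ``the dimensional match with the Lie-algebra classification'' cannot upgrade a lower bound on the rank to an equality; your justification of semisimplicity (``$\overline{S(T)}$ spans $T$'') is also a non sequitur. Worse, the dichotomy you rely on is false: Lagrangians $W\subset T$ with $q|_W\neq0$ exist in the plus-type case as well (take a codimension-one subspace $W_1$ of a maximal totally singular subspace and adjoin a vector from the non-singular coset of $W_1^\perp/W_1$; since $q$ restricted to a subspace that is totally isotropic for the bilinear form is linear, $q|_W\neq0$ forces $|\overline{S(W)}|=2^{n-1}$). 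This is precisely what the paper does: its $T_0$, a maximal subset of $\overline{S(T)}$ with pairwise singular sums, is $\overline{S(W)}$ for such a $W$, so $\mathfrak{V}(\{(0,t,\varphi(t))\mid t\in T_0\})_1$ is abelian by Lemma~\ref{LVS}~(3) and toral by Lemma~\ref{Lsemi}, of dimension $2^{n-1}=2,4,8$, and the maximality of $T_0$ (verified as in the proof of Lemma~\ref{LIL6}, using that any $b\in\overline{S(T)}\setminus T_0$ satisfies $\langle b,c\rangle=1$ for some $c\in T_0$, whence $[e_b,e_c]\neq0$) makes it a Cartan subalgebra. Replacing your classification step by this explicit Cartan subalgebra, which works uniformly in both types, closes the gap; your computation of the bracket pattern $[e_t,e_{t'}]\neq0\iff\langle t,t'\rangle=1$ is exactly the input needed for it.
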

\begin{proof} It is easy to see that $\mathcal{T}$ satisfies $(M\boxtimes M')_1=0$ for all $[M]\in\mathcal{T}$, $[M']\in\mathcal{S}\setminus\mathcal{T}$ with $M_1\neq0$ and $M'_1\neq0$.
Then (1) follows from Lemma \ref{LVS} (2).

Let $T_0$ be a maximal subset of $\overline{S(T)}$ such that $a+b\in S(T)$ for all distinct $a,b\in T_0$.
Set $\mathcal{T}_0=\{(0,t,\varphi(t))\mid t\in T_0\}$.
Then $\mathfrak{V}(\mathcal{T}_0)_1$ is a Cartan subalgebra of $\mathfrak{V}(\mathcal{T})_1$ by Lemmas \ref{LVS} (3) and \ref{Lsemi}.
It is easy to see that $|T_0|=2,4,8$ if $\dim T=4,6,8$.
Hence we have (2).
\end{proof}

\begin{lemma}\label{LIL5} Let $\mathcal{S}=\mathcal{S}(5,k_1,1,\varepsilon)$ be the maximal totally singular subspace of $R(V)^3$ given in Theorem \ref{TClassify}.
Let $H$ be a Cartan subalgebra of $\mathfrak{V}(\mathcal{S})_1$ contained in $\mathfrak{V}(\{(s_1,s_2,0)\mid s_i\in S_i\})_1$.
Let $\mathcal{T}=\{(0,t+s,\varphi(t))\mid t\in T,\ s\in S_2\}\setminus\{(0,s,0)\mid s\in S_2\}$.
Then $\mathfrak{V}(\mathcal{T})_1$ is a sum of root spaces corresponding to a sum of irreducible components of root systems of $\mathfrak{V}(\mathcal{S})_1$.
\end{lemma}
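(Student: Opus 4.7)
The plan is to verify two facts in sequence: (i) $\mathfrak{V}(\mathcal{T})_1$ is $H$-invariant and intersects $H$ trivially, so it decomposes as a direct sum of nonzero $H$-weight spaces, i.e., root spaces of $\mathfrak{V}(\mathcal{S})_1$; and (ii) the resulting set $\Phi_\mathcal{T}$ of roots is a union of irreducible components of the root system $\Phi$ of $\mathfrak{V}(\mathcal{S})_1$, which follows from the bracket-orthogonality criterion $[\mathfrak{g}_\alpha,\mathfrak{g}_{\pm\beta}]=0$ for all $\alpha\in\Phi_\mathcal{T}$ and $\beta\in\Phi\setminus\Phi_\mathcal{T}$.

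Set $\mathcal{S}_0=\{(s_1,s_2,0)\mid s_i\in S_i\}$, so that $H\subset\mathfrak{V}(\mathcal{S}_0)_1$ by hypothesis. Every element of $\mathcal{T}$ has first coordinate $0$ and third coordinate $\varphi(t)\neq 0$; hence $\mathcal{T}\cap\mathcal{S}_0=\emptyset$ and $\mathfrak{V}(\mathcal{T})_1\cap H=0$. The labels $v=(0,c+t,\varphi(t))\in\mathcal{T}$ that contribute to $\mathfrak{V}(\mathcal{T})_1$ are exactly those with $t\in\overline{S(T)}$, since Lemma \ref{POVE}(5) gives module lowest weight $2$ when $t$ is singular and nonzero. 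For $H$-invariance I would take $v'=(s_1',s_2',0)\in\mathcal{S}_0$ and compute $v+v'=(s_1',\,s_2'+c+t,\,\varphi(t))$: by Lemma \ref{POVE}(5) the lowest weight is $1+\tfrac12+\tfrac12=2$ when $s_1'\neq 0$ (so $(M_{v+v'})_1=0$), and when $s_1'=0$ the label remains in $\mathcal{T}$. Thus $H\cdot\mathfrak{V}(\mathcal{T})_1\subset\mathfrak{V}(\mathcal{T})_1$, and together with Lemma \ref{Lsemi} this gives stage (i).

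For stage (ii), the explicit form of $\mathcal{S}$ in Theorem \ref{TClassify} together with Lemma \ref{Lw2} shows that every label $v'\in\mathcal{S}$ with $(M_{v'})_1\neq 0$ lying outside $\mathcal{T}$ and outside the Cartan labels $\{(0,b,0)\mid b\in S_2\}$ is of one of three types: $(a,0,0)$ with $a\in S_1\setminus\{0\}$; $(a+p,b+p,0)$ with $p\in\overline{S(P)}$; or $(a+q,0,q)$ with $q\in\overline{S(Q)}$. For each such $v'$ and for $v=(0,c+t,\varphi(t))$ with $t\in\overline{S(T)}$, the three coordinates of $v+v'$ are all nonzero with singular/non-singular profile (singular, nonsingular, nonsingular), (nonsingular, singular, nonsingular), and (nonsingular, nonsingular, singular) respectively; non-vanishing of coordinates uses $S_1\cap P=S_1\cap Q=0$ and $T\cap(S_2\oplus P)=0$ (consequences of the non-singularity of $P,Q,T$ in Theorem \ref{TClassify}), while the singular middle or third coordinate in the second and third types comes from $q(p+t)=q(p)+q(t)=0$ (since $\langle P,T\rangle=0$) and $q(q+\varphi(t))=q(q)+q(\varphi(t))=0$ (since $U=Q^\perp$). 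Lemma \ref{POVE}(5) yields module lowest weight $2$ in every case, so $(M_{v+v'})_1=0$ and thus $[\mathfrak{V}(\mathcal{T})_1,(M_{v'})_1]=0$. Since every root space of $\Phi\setminus\Phi_\mathcal{T}$ sits inside the sum of the weight-one subspaces of these three types of labels (and the same for the negative root), we get $[\mathfrak{g}_\alpha,\mathfrak{g}_{\pm\beta}]=0$ for all $\alpha\in\Phi_\mathcal{T}$ and $\beta\in\Phi\setminus\Phi_\mathcal{T}$; in the reductive Lie algebra $\mathfrak{V}(\mathcal{S})_1$ this forces $\alpha\pm\beta\notin\Phi$, i.e., $\alpha\perp\beta$. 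Therefore $\Phi_\mathcal{T}$ is a union of irreducible components of $\Phi$. The most delicate step is the systematic fusion and lowest-weight bookkeeping in the second and third types, where the mixed singular/non-singular structure of $v+v'$ rests on the orthogonal decomposition $(S_1\perp P)^\perp=S_1\oplus Q$ together with $Q\cap U=0$.
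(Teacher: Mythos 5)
Your proof is correct and follows essentially the same route as the paper's (much terser) argument: the paper likewise observes that $H$ preserves $\mathfrak{V}(\mathcal{T})_1$ and that for $[M]\in\mathcal{S}$ with $M_1\neq0$ one has $(M\boxtimes M')_1\neq0$ for some $[M']\in\mathcal{T}$ only when $[M]\in\mathcal{T}\cup\{(0,s,0)\mid s\in S_2\}$. You have simply made explicit the case analysis of labels and the singular/non-singular lowest-weight bookkeeping that the paper leaves to the reader.
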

\begin{proof} Clearly, $H$ preserves $\mathfrak{V}(\mathcal{T})_1$.
Hence $\mathfrak{V}(\mathcal{T})_1$ is a sum of root spaces.

Let $[M]\in\mathcal{S}$ with $M_1\neq0$. Then $(M\boxtimes M')_1\neq0$
for some $[M']\in\mathcal{T}$ if and only if $[M]\in\mathcal{T}\cup
\{(0,s,0)\mid s\in S_2\}$. Hence we obtain this lemma.
\end{proof}

\begin{lemma}\label{LIL6} Let $\mathcal{S}=\mathcal{S}(5,k_1,0)$ be the maximal totally singular subspace of $R(V)^3$ given in Theorem \ref{TClassify2}.
Let $\mathcal{T}=\{(0,t,\varphi(t))\mid t\in T\perp\F_2y\}\setminus\{(0,y,y)\}$, where $\varphi(y)=y$.
\begin{enumerate}
\item The subspace $\mathfrak{V}(\mathcal{T})_1$ is an ideal of $\mathfrak{V}(\mathcal{S})_1$.
\item Let $2m$ be the dimension of $T$.
Then the rank of $\mathfrak{V}(\mathcal{T})_1$ is $2^m-1$.
\end{enumerate}
\end{lemma}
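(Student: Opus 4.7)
The plan is to exploit the structure of $\mathcal{T}_0 := \mathcal{T} \cup \{(0,y,y)\} = \{(0,t,\varphi(t)) \mid t \in T \perp \F_2 y\}$, which is a totally singular subspace of $R(V)^3$ contained in $\mathcal{S}$ (the total singularity uses that $y$ lies in the radical of the bilinear form on $T \perp \F_2 y$, so pairings in the 2nd and 3rd coordinates cancel). Hence $\mathfrak{V}(\mathcal{T}_0)_1$ is a Lie subalgebra of $\mathfrak{V}(\mathcal{S})_1$ by Lemma \ref{LVS}(1), and $\mathfrak{V}(\mathcal{T})_1$ differs from $\mathfrak{V}(\mathcal{T}_0)_1$ precisely by removing the one-dimensional weight-$1$ space of the module at $(0,y,y)$, whose generator I denote $v_y$.

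For (1), I would check the ideal property directly by showing that for any $[M] \in \mathcal{T}$ with $M_1 \neq 0$ and any $[N] \in \mathcal{S}$ with $N_1 \neq 0$, the bracket $[M_1, N_1]$ lies in $\mathfrak{V}(\mathcal{T})_1$. By Lemma \ref{Lw2}, $M_1\neq 0$ forces $[M]=(0,t,\varphi(t))$ with $t$ non-singular, and by assumption $t\neq y$. I would then split on whether $[N]\in\mathcal{T}_0$. If $[N] = (0,s,\varphi(s)) \in \mathcal{T}_0$ with $s$ non-singular, then $[M] + [N] \in \mathcal{T}_0$; to rule out the value $(0,y,y)$ I would use that the bilinear form on $T \perp \F_2 y$ has $\F_2 y$ as its radical, so $q(t+s) = q(t)+q(s) = 0 \neq 1 = q(y)$, contradicting $t+s = y$. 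If instead $[N] \notin \mathcal{T}_0$, Lemma \ref{Lw2} together with the fact that $\mathcal{S}$ satisfies neither (\ref{Cond1}) nor (\ref{Cond2}) forces $[N]$ to be $(a,0,0)$ with $a \in S_1$, or $(b,c,0)$ or $(b,0,c)$ with $b,c$ non-singular. For each form, a direct computation using the explicit generators of $\mathcal{S}(5,k_1,0)$ and the identity $(T \perp \F_2 y) \cap (P \perp \F_2 y) = \F_2 y$ shows either that no such $[N]$ exists in $\mathcal{S}$, or that $[M]+[N]$ has all three coordinates nonzero, so $(M\boxtimes N)_1 = 0$.

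For (2), I first establish that $v_y$ is central in $\mathfrak{V}(\mathcal{T}_0)_1$: for $[M] \in \mathcal{T}_0$ with $M_1\neq 0$, one has $[M] + (0,y,y) = (0, t+y, \varphi(t+y))$ with $q(t+y) = q(t)+q(y) = 0$, so the target module has lowest weight $\geq 2$ and contributes nothing in weight $1$. Consequently $\mathfrak{V}(\mathcal{T}_0)_1 = \mathfrak{V}(\mathcal{T})_1 \oplus \C v_y$ as Lie algebras, and the rank drops by one when passing from $\mathfrak{V}(\mathcal{T}_0)_1$ to $\mathfrak{V}(\mathcal{T})_1$. To compute the rank of $\mathfrak{V}(\mathcal{T}_0)_1$, I mimic the argument of Lemma \ref{LIL3}(2): choose a maximal subspace $W_0 \subset T$ with $\langle W_0, W_0\rangle = 0$ of dimension $m$, set $W = W_0 \perp \F_2 y$, and take $T_0' = \{a \in W \mid q(a) = 1\}$, a set of $2^m$ pairwise orthogonal non-singular vectors with all pairwise sums singular. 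The span $\bigoplus_{a \in T_0'} M^{(0,a,\varphi(a))}_1$ is then a toral abelian subalgebra of $\mathfrak{V}(\mathcal{T}_0)_1$ of dimension $2^m$, and maximality follows by the same argument as in Lemma \ref{Lrank}(2). Since $y \in T_0'$, its contribution $\C v_y$ lies in this Cartan, and $\mathfrak{V}(\mathcal{T})_1$ therefore has rank $2^m - 1$.

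The main obstacle is the case analysis in (1) verifying that no exotic $[N] \in \mathcal{S} \setminus \mathcal{T}_0$ produces a nonzero bracket landing on the $v_y$-component; this depends on carefully combining the explicit description of $\mathcal{S}(5,k_1,0)$ from Theorem \ref{TClassify2} with the orthogonality relations among $S_1, P, Q, B, T$.
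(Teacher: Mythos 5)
Your proposal is correct and follows essentially the same route as the paper: part (1) is the paper's ``easy to see'' fusion-rule check spelled out (the paper invokes Lemma \ref{LVS}(2) directly), and for part (2) your toral abelian subalgebra $\{(0,w+y,\varphi(w)+y)\mid w\in W_0\}$ is exactly the paper's Cartan $\mathcal{U}$ (with $U=W_0$) together with the extra central vector $v_y$, and the maximality argument you defer to is the same pairing argument the paper writes out. The detour through $\mathcal{T}_0=\mathcal{T}\cup\{(0,y,y)\}$ and subtracting one from the rank is a cosmetic repackaging, not a different method.
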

\begin{proof} It is easy to see that $\mathcal{T}$ satisfies $(M\boxtimes M')_1=0$ for all $[M]\in\mathcal{T}$, $[M']\in\mathcal{S}\setminus\mathcal{T}$ with $M_1\neq0$ and $M'_1\neq0$.
Then (1) follows from Lemma \ref{LVS} (2).

Let $U$ be a maximal totally singular subspace of $T$. Then $\dim U=m$. Set
$\mathcal{U}=\{(0,a+y,\varphi(a)+y)\mid a\in U\}\setminus\{(0,y,y)\}$ and $H=\mathfrak{V}(\mathcal{U})_1$. Then
$M_1\neq0$ for all $[M]\in\mathcal{U}$. Moreover for $[M],[M']\in\mathcal{U}$,
$(M\boxtimes M')_1=0$. By Lemma \ref{LVS} (3),
$H$ is abelian, and its dimension is $2^m-1$. By
Lemma \ref{Lsemi}, it is toral. Let us show that $H$ is maximal.

Let $v\in \mathfrak{V}(\mathcal{T})_1$ with $v_{(0)}H=0$.
Let $v=\sum_{[M]\in\mathcal{T}}v_M$, where $v_M\in M_1$.
Suppose that $v_M\neq0$ for $[M]=(0,b+y,\varphi(b)+y)$, $b\in S(T)$. If  $b\notin U$ then there
exists $c\in U$ such that $\langle b,c\rangle=1$, then $(v_M)_{(0)}M'_1\neq0$,
where $[M']=(0,c+y,\varphi(c)+y)$. It contradicts $v_{(0)}H=0$. Hence $b\in U$. Suppose that
$v_M\neq0$ for $[M]=(0,b,\varphi(b))$, $b\in \overline{S(T)}$. Then there exists $c\in U$
such that $\langle b,c\rangle=1$, then $(v_M)_{(0)}M'\neq0$, where
$[M']=(0,c+y,\varphi(c)+y)$. This contradicts the assumption. Thus $v\in
 H$, and we obtain (2).
\end{proof}

Later, we use the following lemma about holomorphic VOAs of central charge $16$.

\begin{lemma}\label{Lholo16}
\begin{enumerate}
\item The $\Z_2$-orbifold of $V_{E_8\oplus E_8}$ is isomorphic to $V_{D_{16}^+}$, and that of $V_{D_{16}^+}$ is isomorphic to $V_{E_8\oplus E_8}$.
\item Let $\mathcal{T} =\{ (a, a)\mid a\in R(V)\}$.
Then $\mathfrak{V}(\mathcal{T})\cong V_{D_{16}^+}$.
\end{enumerate}
\end{lemma}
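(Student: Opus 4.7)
The plan is to combine part~(1), which is a classical orbifold result, with the classification framework developed earlier in the paper to identify $\mathfrak{V}(\mathcal{T})$ among the two candidates.

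For part~(1), I rely on the classical $\Z_2$-orbifold computation of Dolan--Goddard--Montague~\cite{DGM}: since $V_{E_8\oplus E_8}$ and $V_{D_{16}^+}$ are the only two holomorphic VOAs of central charge $16$ arising from even unimodular lattices, and the $\Z_2$-orbifold construction for the lift of $-1$ yields another holomorphic lattice VOA of the same central charge, the two orbifold constructions necessarily exchange them.

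For part~(2), I would first check that $\mathcal{T}$ is a maximal totally singular subspace of $R(V)^2$: every $(a,a)\in\mathcal{T}$ satisfies $q_V^2((a,a))=2q_V(a)=0$ in $\F_2$, so $\mathcal{T}$ is totally singular, and $\dim\mathcal{T}=\dim R(V)=10=\tfrac12\dim R(V)^2$ gives maximality. Proposition~\ref{PS1} then endows $\mathfrak{V}(\mathcal{T})$ with the structure of a simple holomorphic VOA of central charge~$16$. Since $([0]^-,[0]^-)\in\mathcal{T}$ with $[0]^-\in S(R(V))^\times$ (because $V_{\sqrt{2}E_8}^-$ is $\Z$-graded), the hypothesis of Lemma~\ref{PS2}(2) applies with $k=2$, giving $\mathfrak{V}(\mathcal{T})\cong V_N$ or $\tilde{V}_N$ for some even overlattice $N$ of $\sqrt{2}E_8\oplus\sqrt{2}E_8$; holomorphy and the central charge force $N$ to be an even unimodular lattice of rank~$16$, so $N\in\{E_8\oplus E_8,\,D_{16}^+\}$. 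Combined with part~(1), $\mathfrak{V}(\mathcal{T})\cong V_{E_8\oplus E_8}$ or $V_{D_{16}^+}$.

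To eliminate the first option, I would argue that $\mathcal{T}$ is \emph{not factorable}: if $\mathcal{T}=\mathcal{S}_1\oplus\mathcal{S}_2$ with $\mathcal{S}_i\subset R(V)$, then $\mathcal{S}_i=\rho_i(\mathcal{T})=R(V)$ would force $\dim\mathcal{T}=20$, contradicting $\dim\mathcal{T}=10$. Factorability is $\Aut(V^2)=\Aut(V)\wr\Sym_2$-invariant and Lemma~\ref{Lconj} shows $\mathfrak{V}(\mathcal{T})$ depends only on the $\Aut(V^2)$-orbit of $\mathcal{T}$, while any factorable maximal totally singular subspace $\mathcal{S}_1\oplus\mathcal{S}_2$ yields $V_{E_8}\otimes V_{E_8}\cong V_{E_8\oplus E_8}$. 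It therefore remains to show that no non-factorable orbit produces $V_{E_8\oplus E_8}$; the cleanest route---which I expect to be the main obstacle---is to verify that the weight-$1$ Lie algebra $\mathfrak{V}(\mathcal{T})_1$ is simple (hence $\mathfrak{so}(32)$, not $\mathfrak{e}_8\oplus\mathfrak{e}_8$). Using Lemma~\ref{POVE}(5), one has $\mathfrak{V}(\mathcal{T})_1=\bigoplus_{a\in\overline{S(R(V))}}(M_a)_{1/2}^{\otimes 2}$, a $496$-dimensional space (by Lemma~\ref{LNum}), and combining transitivity of the diagonal $\Aut(V)\cong O^+(10,2)$ on $\overline{S(R(V))}$ (Witt's theorem) with Lemma~\ref{L1} applied to non-singular $a,b$ with $\langle a,b\rangle=1$ shows that the root system of $\mathfrak{V}(\mathcal{T})_1$ is connected, forcing simplicity.
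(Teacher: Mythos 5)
Your part (2) is where the real content of the lemma lies, and that is where your argument breaks down. The reduction is fine: $\mathcal{T}$ is a $10$-dimensional, hence maximal, totally singular subspace, Proposition \ref{PS1} makes $\mathfrak{V}(\mathcal{T})$ holomorphic of central charge $16$, and $([0]^-,[0]^-)\in\mathcal{T}$ puts you in the situation of Lemma \ref{PS2}\,(2), so $\mathfrak{V}(\mathcal{T})$ is $V_{E_8\oplus E_8}$ or $V_{D_{16}^+}$. But the final step --- ``the root system is connected, forcing simplicity'' --- is not a proof. The decomposition $\mathfrak{V}(\mathcal{T})_1=\bigoplus_{a\in\overline{S(R(V))}}(M_a)_{1/2}^{\otimes2}$ into $496$ lines $L_a$ is \emph{not} a root space decomposition: no summand lies in a Cartan subalgebra by itself (a Cartan subalgebra is the span of $16$ of these lines indexed by pairwise orthogonal non-singular vectors), and since $\mathrm{ad}\,x_a$ interchanges $L_b$ and $L_{a+b}$ rather than preserving $L_b$, the actual root vectors are non-trivial linear combinations of the $x_a$. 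Connectedness of the graph on $\overline{S(R(V))}$ with adjacency $\langle a,b\rangle=1$ (true, with diameter $2$) therefore says nothing about connectedness of the Dynkin diagram, and it does not exclude a splitting into two ideals, because an ideal need not be spanned by a subset of the lines $L_a$. Worse, the two candidates cannot be separated by this sort of local data: $\mathfrak{e}_8\oplus\mathfrak{e}_8$ and $\mathfrak{so}(32)$ both have dimension $496$, rank $16$, $480$ roots, are simply laced, and even agree in the number ($366$) of roots orthogonal to a fixed root. A similar, smaller issue affects your part (1): knowing that the orbifold is again one of the two rank-$16$ lattice VOAs does not show the two are \emph{exchanged} rather than fixed; you are in effect just citing \cite{DGM} for the computation, whereas the paper deduces (1) from \cite[Lemma 3.4]{Sh3}, namely that $V_{E_8\oplus E_8}^+$ and $V_{D_{16}^+}^+$ are both isomorphic to $V_N$ for a common index-$2$ sublattice $N$ of each.

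The paper avoids the Lie-algebra identification entirely by an iterated orbifold argument built on part (1). Starting from $\mathcal{T}_0=U_0\oplus U_0$ with $U_0$ maximal totally singular in $R(V)$ (your factorable case, giving $V_{E_8\oplus E_8}$), it performs five successive $\Z_2$-orbifolds; at step $i$ the fixed-point datum becomes $\mathcal{T}_i=\{(a,b)\mid a,b\in U_i\}\oplus\{(a,a)\mid a\in U_i'\}$ with $\dim U_i$ dropping by one and the diagonal part growing, and part (1) guarantees each step toggles between $V_{E_8\oplus E_8}$ and $V_{D_{16}^+}$. After the odd number of five steps one reaches $\mathcal{T}_5=\mathcal{T}$, whence $\mathfrak{V}(\mathcal{T})\cong V_{D_{16}^+}$. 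To salvage your route you would have to replace the connectivity claim by something that genuinely distinguishes $D_{16}$ from $E_8\oplus E_8$ --- e.g.\ an explicit computation of the structure constants $c_{ab}$ relative to a chosen Cartan subalgebra --- which is substantially more work than the paper's parity argument.
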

\begin{proof} By \cite[Lemma 3.4]{Sh3} both $V_{E_8\oplus E_8}^+$ and $V_{D_{16}^+}^+$ are isomorphic to $V_N$ for some even lattice $N$ such that $E_8\oplus E_8$ and $D_{16}^+$ are even overlattices of $N$ of index $2$.
Hence we obtain (1).

Let $U_0$ be a maximal totally singular subspace of $R(V)$.
Let $\mathcal{T}_0=\{(a,b)\mid a,b\in U_0\}$.
Then $\mathfrak{V}(\mathcal{T}_0)\cong V_{E_8\oplus E_8}$.
Consider its $\Z_2$-orbifold associated to the lift of $(-1,-1)\in\Aut(\sqrt2E_8\oplus \sqrt2E_8)$.
Then it is isomorphic to $\mathfrak{V}(\mathcal{T}_1)$, where $\mathcal{T}_1=\{(a,b)\mid a,b\in U_1\}\oplus\{(a,a)\mid a\in U'_1\}$, where $U_1$ is a codimension $1$ subspace of $U$ and $U'_1$ is a complementary subspace of $U_1$ in $U_1^\perp$.
By (1), $\mathfrak{V}(\mathcal{T}_1)\cong V_{D_{16}^+}$.

Let $\mathcal{T}_i=\{(a,b)\mid a,b\in U_i\}\oplus\{(a,a)\mid a\in U'_i\}$, where $U_i$ is a codimension $1$ subspace of $U_{i-1}$ and $U'_i$ is a complementary subspace of $U_i$ in $U_i^\perp$.
By the same argument, we obtain $\mathfrak{V}(\mathcal{T}_2)\cong V_{E_8\oplus E_8}$, $\mathfrak{V}(\mathcal{T}_3)\cong V_{D_{16}^+}$, $\mathfrak{V}(\mathcal{T}_4)\cong V_{E_8\oplus E_8}$, and $\mathfrak{V}(\mathcal{T}_5)\cong V_{D_{16}^+}$.
Since $\mathcal{T}_5=\mathcal{T}$, we obtain this lemma.
\end{proof}

\subsection{Determination of the Lie algebra structure of $\mathfrak{V}(\mathcal{S})$ for $\mathcal{S}\subset R(V)^3$}
In this subsection, we determine the Lie algebra structure of the weight $1$ subspace of $\mathfrak{V}(\mathcal{S})$ for $\mathcal{S}=\mathcal{S}(5,k_1,k_2,\varepsilon)$ and $\mathcal{S}(5,k_1,k_2)$.

First we will recall several important results from \cite{DMb,DM4}.
\begin{proposition}\label{PDM1}{\rm \cite[Theorem 3 and (3.6)]{DMb}} Let $V$ be a $C_2$-cofinite holomorphic VOA of CFT type. Suppose the central charge of $V$ is $24$.
\begin{enumerate}[{\rm (a)}]
\item The Lie
algebra $V_1$ has Lie rank less than or equal to $24$ and  is either abelian
(including $0$) or semisimple.
\item Suppose $V_1$ is semisimple, that is,
\[
V_1=\mathfrak{g}_{1,k_1}\oplus \mathfrak{g}_{2,k_2}\oplus \cdots\oplus \mathfrak{g}_{n,k_n},
\]
where $\mathfrak{g}_i$ is a simple Lie algebra whose affine Lie algebra has level $k_i$ on $V$.
Then
\begin{equation}\label{rl}
\frac{h_i^\vee}{k_i}= \frac{(\dim V_1 -24)}{24},
\end{equation}
where $h_i^{\vee}$ is the dual Coxeter number of $\mathfrak{g}_i$.
In particular, the ratio ${h_i^\vee}/{k_i}$ is independent of $\mathfrak{g}_i$.
\end{enumerate}
\end{proposition}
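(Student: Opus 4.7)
The proposition has two parts, which I would tackle in order.

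For part (a), my first step is to establish reductivity of $V_1$. Since $V$ is holomorphic and $C_2$-cofinite of CFT type, $V$ is self-dual, so it carries a unique (up to scalar) non-degenerate invariant symmetric bilinear form $(\cdot,\cdot)$. Its restriction to $V_1$ is non-degenerate and invariant under the Lie bracket $[a,b]=a_{(0)}b$; consequently every ideal of $V_1$ has an orthogonal ideal complement, so $V_1$ is reductive and splits as $V_1 = Z \oplus V_1^{ss}$ with $Z$ abelian and $V_1^{ss}$ semisimple. For the rank bound, choose a Cartan $H \subseteq V_1$ of rank $r$: the vectors $\{h_{(-1)}\vacuum \mid h\in H\}$ generate a Heisenberg subVOA of central charge $r$, and its commutant in $V$ inherits a conformal structure of central charge $24-r$. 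Non-negativity of this central charge yields $r \le 24$.

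For part (b), assume $V_1 = \bigoplus_{i=1}^n \mathfrak{g}_{i,k_i}$ is semisimple. Each simple component embeds an affine Kac--Moody subVOA into $V$ via the Sugawara construction, with conformal vector $\omega_i$ of central charge $c_i = k_i \dim \mathfrak{g}_i /(k_i + h_i^\vee)$. Set $\omega_{\mathrm{Sug}} = \sum_i \omega_i$ and $\omega' = \omega - \omega_{\mathrm{Sug}}$. Routine checks show $\omega'$ is a Virasoro vector of central charge $24 - \sum_i c_i$ whose zero mode annihilates $V_1$. A direct algebraic manipulation shows the identity $(3.6)$ is equivalent to $\sum_i c_i = 24$: if all ratios $h_i^\vee/k_i$ equal $r := (\dim V_1 - 24)/24$, then $k_i + h_i^\vee = k_i \dim V_1/24$, so $c_i = 24 \dim \mathfrak{g}_i / \dim V_1$ and $\sum_i c_i = 24$; conversely, this common value is pinned down by a trace computation using the Casimir identity relating $(\omega_i)_{(1)}$ on $\mathfrak{g}_j$ to $\delta_{ij}\cdot\mathrm{id}$ together with the fact that $(\omega)_{(1)}$ acts as the identity on $V_1$.

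Thus the crux of (b) is to prove $\omega' = 0$. My plan is to invoke modular invariance: since $V$ is holomorphic of central charge $24$, $\ch V(\tau) = J(\tau) + \dim V_1$ is modular-invariant, and decomposing $V$ as a module for the tensor product of the affine subVOA and its commutant expresses $\ch V$ as a finite bilinear combination of affine vacuum characters and coset characters. The explicit modular transformation laws of the affine vacuum characters then force the coset central charge to vanish, whence $\omega' = 0$. The main obstacle is making this modular decomposition rigorous, which requires the $C_2$-cofiniteness hypothesis in order to control the commutant subVOA together with Zhu-type modular invariance. The abelian-or-semisimple dichotomy asserted in (a) will then follow by running the same Sugawara/coset argument in a hypothetical mixed case $V_1 = Z \oplus V_1^{ss}$ with both summands nontrivial, and observing that the resulting constraint on central charges, combined with the common-ratio formula $(3.6)$ applied to $V_1^{ss}$, is incompatible with $\dim Z > 0$ at $c = 24$.
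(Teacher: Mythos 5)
First, a remark on scope: the paper does not prove this proposition at all --- it is quoted verbatim from Dong--Mason \cite{DMb} --- so your proposal has to be measured against that reference rather than against an argument in the present paper. Your opening moves are fine (reductivity of $V_1$ from the nondegenerate invariant form is exactly the standard argument), but the rank bound already has a gap: the commutant of the rank-$r$ Heisenberg subVOA has central charge $24-r$ by construction, and there is no a priori reason this must be non-negative, since non-unitary VOAs (e.g.\ the minimal model $L(-22/5,0)$) have negative central charge. Dong--Mason instead bound the Lie rank by the \emph{effective} central charge, using the growth of the coefficients of the modular-invariant character; that is the step your sketch replaces with an unjustified positivity claim.

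The more serious gap is in (b). Even granting $\omega'=0$, you only obtain the single scalar equation $\sum_i c_i=24$, which does not determine the $n$ ratios $h_i^\vee/k_i$, let alone force them to be equal; and the ``trace computation'' you propose to pin down the common value carries no information, because $(\omega_i)_{(1)}$ acts on $\mathfrak{g}_j\subseteq V_1$ as $\delta_{ij}\cdot\mathrm{id}$ while $L_0$ acts as the identity, so the resulting identity is the tautology $1=\sum_j\delta_{ij}$. Likewise, modular invariance of the one character $J(\tau)+\dim V_1$ does not ``force the coset central charge to vanish'': the vacuum characters of the affine subVOA and of its commutant have leading exponents $q^{-\sum_i c_i/24}$ and $q^{-(24-\sum_i c_i)/24}$, whose product is $q^{-1}$ for \emph{every} value of $\sum_i c_i$. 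The actual engine behind (3.6) in \cite{DMb} is Zhu's modular invariance applied to the weight-two one-point function $Z_V(u[-1]v,\tau)$ for $u,v\in V_1$: since there are no nonzero holomorphic modular forms of weight $2$ for $\mathrm{SL}_2(\Z)$, one obtains a linear relation expressing $\tr_{V_1}o(u)o(v)$ --- which on a simple component $\mathfrak{g}_i$ equals $2h_i^\vee$ times the bilinear form normalized so that the level is $k_i$ --- in terms of $\langle u,v\rangle(\dim V_1-24)/24$. Your one-directional computation (common ratio implies $\sum_i c_i=24$) and your outline of the abelian-versus-semisimple dichotomy are reasonable, but both sit downstream of this missing ingredient, so the proposal as written does not establish the proposition.
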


\begin{proposition}\label{PDM2}{\rm \cite[Theorem 3.1]{DM4}}
Let $V$ be a simple self-dual VOA which is $C_2$-cofinite and of CFT
type.
Let $\mathfrak{g}$ be a simple Lie subalgebra of $V_1$, $k$ the level of $V$ as
module for the corresponding affine Lie algebra.
Then $k$ is a positive integer.
\end{proposition}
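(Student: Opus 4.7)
The plan is to first verify that the modes $\{a_{(n)} : a \in \mathfrak{g},\ n \in \Z\}$ together with a central element satisfy the defining relations of the affine Kac-Moody algebra $\hat{\mathfrak{g}}$ acting on $V$. Using the standard commutator formula for vertex operators and the hypothesis $V_0 = \C\mathbf{1}$ (which follows from $V$ being simple and of CFT type), one obtains
\begin{equation*}
[a_{(m)},\, b_{(n)}] = [a,b]_{(m+n)} + m\,\langle a, b\rangle\, \delta_{m+n,0}\cdot\id_V,
\end{equation*}
where the Lie bracket on $\mathfrak{g}$ is $[a,b] = a_{(0)}b$ and the form $\langle a, b\rangle\mathbf{1} := a_{(1)}b$ is symmetric and $\mathfrak{g}$-invariant on $\mathfrak{g} \subset V_1$. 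Since $\mathfrak{g}$ is simple, $\langle\cdot,\cdot\rangle$ is a scalar multiple of the Killing form, and the normalization $\langle \theta, \theta\rangle = 2k$ for the highest root $\theta$ defines the level $k\in\C$.

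Next I would show $k \in \Z_{\geq 0}$ by establishing integrability of $V$ as a $\hat{\mathfrak{g}}$-module. The key input is $C_2$-cofiniteness: by Zhu's theorem the Zhu algebra $A(V)$ is finite-dimensional, and in particular $V$ admits only finitely many inequivalent irreducible subquotients as a module over the vertex subalgebra $\langle \mathfrak{g}\rangle \subset V$. Fixing a Chevalley generator $e_\theta \in \mathfrak{g}$ attached to the highest root $\theta$, I would argue that $(e_\theta)_{(-1)}$ acts locally nilpotently on $V$: otherwise the orbit $\{(e_\theta)_{(-1)}^N v : N \ge 0\}$ for some $v\in V$ would produce arbitrarily negative $\hat{\mathfrak{g}}$-weights in $V$, contradicting the finite-dimensionality of $V/C_2(V)$ after passing to the associated graded. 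By Weyl conjugation every real root vector of $\hat{\mathfrak{g}}$ acts locally nilpotently, and hence $V$ is an integrable $\hat{\mathfrak{g}}$-module; integrable modules occur only at non-negative integer levels.

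Finally I would rule out $k = 0$ using self-duality. If $k$ were zero, the vertex subalgebra $W := \langle \mathfrak{g}\rangle \subset V$ would be a quotient of the universal affine vertex algebra $V_{\hat{\mathfrak{g}}}(0, 0)$, whose simple quotient carries a Sugawara conformal vector of central charge $0$ and is the one-dimensional trivial vertex algebra. Since $V$ is simple and self-dual (so the invariant bilinear form on $V$ is non-degenerate and $\langle\cdot,\cdot\rangle$ on $\mathfrak{g}\subset V_1$ agrees up to scalar with its restriction), the vanishing of the form on $\mathfrak{g}$ would force $\mathfrak{g}$ to lie in the kernel of the full vertex algebra action, contradicting $\mathfrak{g} \subset V_1 \ne 0$. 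Combining the two steps yields $k \in \Z_{>0}$. The main obstacle is the integrability step: one must convert the global finiteness condition $\dim V/C_2(V) < \infty$ into local nilpotence of the specific modes $(e_\theta)_{(-1)}$ on $V$, which requires a careful combination of Zhu-algebra techniques with the classification of highest-weight $\hat{\mathfrak{g}}$-modules at arbitrary complex levels along the lines of Dong-Li-Mason and Kac-Wakimoto.
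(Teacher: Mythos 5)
First, note that the paper itself does not prove this proposition: it is imported verbatim from Dong and Mason \cite[Theorem 3.1]{DM4}, so there is no internal argument to compare against, and your outline does follow the strategy of that cited paper (affine commutation relations from the mode formula, integrability from $C_2$-cofiniteness, then positivity of the level). The first step is correct as written. The two substantive steps, however, both have genuine gaps. For integrability, the claim that a non-locally-nilpotent action of $(e_\theta)_{(-1)}$ ``would produce arbitrarily negative $\hat{\mathfrak{g}}$-weights, contradicting the finite-dimensionality of $V/C_2(V)$'' is not an argument: $V$ is infinite-dimensional and already contains vectors of unbounded $\mathfrak{h}$-weight and conformal weight (lattice VOAs, for instance), and finite-dimensionality of $V/C_2(V)$ says nothing directly about weights. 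The actual mechanism in \cite{DM4} is that $V/C_2(V)$ is a finite-dimensional $\Z_{\ge0}$-graded commutative algebra whose degree-zero part is $\C$, so its augmentation ideal is nilpotent; applied to a long-root vector $e$ (which satisfies $e_{(n)}e=0$ for all $n\ge0$, so that all modes of $e$ commute) this gives $e_{(-1)}^N\mathbf{1}\in C_2(V)$, and a separate lemma, using $Y(e_{(-1)}^N\mathbf{1},z)=Y(e,z)^N$, upgrades this to local nilpotence of every $e_{(m)}$ on $V$. You correctly identify this as the main obstacle, but the proposal does not supply it.

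The exclusion of $k=0$ is also a non sequitur as written. If $k=0$ then $\langle\cdot,\cdot\rangle$ vanishes on $\mathfrak{g}$, i.e.\ $\mathfrak{g}$ is an \emph{isotropic} subspace of $V_1$; this does not place $\mathfrak{g}$ in the radical of the invariant form on $V$ (or even on $V_1$), since $\mathfrak{g}$ may pair nontrivially with a complementary $\mathfrak{g}$-submodule of $V_1$ containing adjoint constituents, so no contradiction with non-degeneracy of the global form arises. The standard argument, once integrability is established, does not use self-duality at this point: a positive-energy integrable $\hat{\mathfrak{g}}$-module of level $0$ generated by a vector annihilated by $\mathfrak{g}\otimes t\C[t]$ is trivial, so the submodule $U(\hat{\mathfrak{g}})\mathbf{1}\subset V$ would be trivial, contradicting $a_{(-1)}\mathbf{1}=a\ne0$ for $0\ne a\in\mathfrak{g}$. (Simplicity and self-duality are needed earlier, to ensure $L_1V_1=0$ so that $\langle a,b\rangle\mathbf{1}=a_{(1)}b$ is a symmetric invariant form.) As it stands, your proposal is a correct road map of the Dong--Mason proof rather than a proof: both the local-nilpotence step and the elimination of level zero need to be replaced by the actual arguments.
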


\begin{remark} Let $\mathcal{T}$ be a totally singular subspace of $R(V)^k$.
Since $\mathfrak{V}(\mathcal{T})$ is framed, it is simple, rational, $C_2$-cofinite and of CFT type (\cite{DGH}).
\end{remark}

\noindent {\bf Case: $\mathcal{S}=\mathcal{S}(5,1,0,+)$.}  By Lemma \ref{LVS2}
and Corollary \ref{Ex}, $\dim \mathfrak{V}(\mathcal{S})_1=60$.
By Proposition \ref{PDM1}, each simple component $\mathfrak{g}_i$ of
$\mathfrak{V}(\mathcal{S})_1$ satisfies $h_i/k_i=3/2$. Hence by Proposition
\ref{PDM2}, $\mathfrak{g}_i$ is one of  the following.
\[
\begin{array} {l|c|c|c|c|c|c|c}
\text{Type} & A_{2,2} &  A_{5,4}&C_{2,2}&B_{5,6}&C_{5,4}&D_{4,4}&F_{4,6}\\     \hline
\ \ h^\vee      &  3 & 6 &3 & 9 &6 & 6 & 9 \\ \hline
\text{Dimension}& 8&35&10&55&55&28& 52
\end{array}
\]
By Lemma \ref{LIL3}, $\mathfrak{V}(\mathcal{S})_1$ has a $28$-dimensional
ideal with rank $4$. Hence $\mathfrak{V}(\mathcal{S})_1$ is isomorphic to
$D_{4,4}(A_{2,2})^4$.

\begin{proposition} The Lie algebra structure of $\mathfrak{V}(\mathcal{S}(5,1,0,+))_1$ is $D_{4,4}(A_{2,2})^4$.
\end{proposition}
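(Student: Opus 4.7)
The plan is to combine the dimension count $\dim \mathfrak{V}(\mathcal{S})_1 = 60$ with the constraint $h^\vee_i/k_i = 3/2$ from Proposition \ref{PDM1} and the rank/ideal information already extracted from Lemmas \ref{Lrank} and \ref{LIL3}. Since $\dim \mathfrak{V}(\mathcal{S})_1 = 60 > 24$ exceeds the rank bound in Proposition \ref{PDM1}(a), the weight one space cannot be abelian, so it is semisimple and every simple summand lies in the table of admissible components ($A_{2,2}$, $C_{2,2}$, $D_{4,4}$, $A_{5,4}$, $F_{4,6}$, $B_{5,6}$, $C_{5,4}$).

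First, I would apply Lemma \ref{LIL3} with $\dim T = m + k_1 - k_2 = 6$ to produce an ideal $\mathfrak{V}(\mathcal{T})_1$ of $\mathfrak{V}(\mathcal{S})_1$ of dimension equal to the number of nonzero vectors of the form $(0,t,\varphi(t))$ (which can be read off from the proof of Lemma \ref{LNumber}) and of rank $4$. Then I would argue that among the admissible simple types listed above, the only one of rank $4$ whose dimension matches that ideal count is $D_{4,4}$ (dimension $28$). Thus $\mathfrak{V}(\mathcal{S})_1$ contains an ideal isomorphic to $D_{4,4}$.

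Since $\mathfrak{V}(\mathcal{S})_1$ is semisimple, it decomposes as $D_{4,4} \oplus \mathfrak{h}$ with $\dim \mathfrak{h} = 60 - 28 = 32$, where $\mathfrak{h}$ is a sum of simple components all satisfying $h^\vee/k = 3/2$. A direct enumeration using the admissible dimensions $\{8,10,28,35,52,55\}$ shows that $32$ can only be written as $8+8+8+8$; no other combination works (e.g.\ $28 + 4$ is impossible since no simple component has dimension $4$, and $10 + 22$ or $10 + 10 + 12$ also fail). Hence $\mathfrak{h} \cong (A_{2,2})^4$ and $\mathfrak{V}(\mathcal{S})_1 \cong D_{4,4}(A_{2,2})^4$.

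The only nontrivial verification is the identification of the $28$-dimensional, rank-$4$ ideal as $D_{4,4}$ at the prescribed level; once that is secured, the remaining arithmetic uniqueness of the decomposition of $32$ within the constrained list is routine. I expect no serious obstacle, as all the structural input (dimension count, rank lower bound, the existence and rank of the ideal coming from $\mathcal{T}$) is already supplied by the preceding lemmas.
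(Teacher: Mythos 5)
Your proposal is correct and follows essentially the same route as the paper: the dimension count $60$ gives $h^\vee/k=3/2$, the same table of admissible simple components applies, and the rank-$4$ ideal $\mathfrak{V}(\mathcal{T})_1$ from Lemma \ref{LIL3} --- whose weight-one dimension is the number of \emph{non-singular} vectors of $T$, namely $28$ (not the number of all nonzero vectors of the form $(0,t,\varphi(t))$, though your appeal to the proof of Lemma \ref{LNumber} recovers the right count) --- pins down $D_{4,4}$, after which $60-28=32=4\cdot 8$ forces $(A_{2,2})^4$. The paper states this more tersely, but the logic is identical.
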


\noindent {\bf Case: $\mathcal{S}=\mathcal{S}(5,1,0,-)$.}
By Lemma \ref{LVS2} and Corollary \ref{Ex}, $\dim
\mathfrak{V}(\mathcal{S})_1=84$. By Proposition \ref{PDM1}, each simple
component $\mathfrak{g}_i$ of $\mathfrak{V}(\mathcal{S})_1$ satisfies
$h_i/k_i=5/2$. Hence by Proposition \ref{PDM2}, $\mathfrak{g}_i$ is one of the
following.
\[
\begin{array} {l|c|c|c|c}
\text{Type} &  A_{4,2} &B_{3,2}& C_{4,2} &D_{6,4}       \\     \hline
\ \ h^\vee      &  5 & 5 &5 & 10 \\ \hline
\text{Dimension}& 24& 21& 36&66
\end{array}
\]
By Lemma \ref{LIL3}, $\mathfrak{V}(\mathcal{S})_1$ has a $36$-dimensional
ideal with rank $4$. Hence $\mathfrak{V}(\mathcal{S})_1$ must be isomorphic
to $C_{4,2}(A_{4,2})^2$.

\begin{proposition} The Lie algebra structure of $\mathfrak{V}(\mathcal{S}(5,1,0,-))_1$ is $C_{4,2}(A_{4,2})^2$.
\end{proposition}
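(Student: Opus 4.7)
The plan is to mirror the argument just given for $\mathcal{S}(5,1,0,+)$. First, I would record that by Lemma \ref{LVS2} and Corollary \ref{Ex}, $\dim \mathfrak{V}(\mathcal{S})_1 = 84$. Since this exceeds the upper bound $24$ on the rank of an abelian $V_1$, Proposition \ref{PDM1}(a) forces $\mathfrak{V}(\mathcal{S})_1$ to be semisimple, and Proposition \ref{PDM1}(b) together with Proposition \ref{PDM2} then pins the ratio $h_i^\vee/k_i$ for every simple component to $(84-24)/24 = 5/2$ with $k_i$ a positive integer. This is exactly the list already tabulated in the preceding case: the possible simple summands are $A_{4,2}$, $B_{3,2}$, $C_{4,2}$, $D_{6,4}$ of dimensions $24$, $21$, $36$, $66$ respectively.

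Next, I would invoke Lemma \ref{LIL3} with $\mathcal{T}=\{(0,t,\varphi(t))\mid t\in T\}$: this yields an ideal of $\mathfrak{V}(\mathcal{S})_1$. Here $\dim T = m+k_1-k_2 = 6$, and since $\varepsilon=-$ in the construction of Theorem \ref{TClassify}, $T$ is a non-singular quadratic space of minus type. Lemma \ref{LIL3}(2) gives the rank of this ideal as $4$, and its dimension equals the number of non-singular vectors in $T$, which by Lemma \ref{LNum}(2) is $2^{5}+2^{2}=36$. Among the four admissible simple summands, the only one with dimension $36$ and rank $4$ is $C_{4,2}$, so the ideal must be simple and isomorphic to $C_{4,2}$.

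It then remains to identify the complementary ideal of dimension $84-36=48$. A short enumeration shows that the only way to write $48$ as a sum of elements of $\{24,21,36,66\}$ (with the constraint that all summands continue to satisfy $h^\vee/k=5/2$) is $24+24$, corresponding to $(A_{4,2})^2$. Assembling the pieces gives $\mathfrak{V}(\mathcal{S}(5,1,0,-))_1 \cong C_{4,2}(A_{4,2})^2$.

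The main technical point, as in the $\mathcal{S}(5,1,0,+)$ case, is the exhibition of the simple ideal via Lemma \ref{LIL3}; once that and the dimension are in hand, the classification is forced by the Dong--Mason constraint. No genuinely new difficulty arises beyond the one already handled in the previous case.
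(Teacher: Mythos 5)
Your argument is correct and follows the paper's proof of this case essentially verbatim: compute $\dim\mathfrak{V}(\mathcal{S})_1=84$, restrict the possible simple components to $A_{4,2}$, $B_{3,2}$, $C_{4,2}$, $D_{6,4}$ via Propositions \ref{PDM1} and \ref{PDM2}, and use the $36$-dimensional rank-$4$ ideal from Lemma \ref{LIL3} to force $C_{4,2}(A_{4,2})^2$. Your explicit derivation of the ideal's dimension as $|\overline{S(T)}|=2^5+2^2=36$ for the $6$-dimensional minus-type space $T$ is exactly the computation the paper leaves implicit.
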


\noindent {\bf Case: $\mathcal{S}=\mathcal{S}(5,3,0,+)$.}
By Lemma \ref{LVS2} and Corollary \ref{Ex}, $\dim
\mathfrak{V}(\mathcal{S})_1=192$. By Proposition \ref{PDM1}, each simple
component $\mathfrak{g}_i$ of $\mathfrak{V}(\mathcal{S})_1$ satisfies
$h_i/k_i=7$. Hence by Proposition \ref{PDM2}, $\mathfrak{g}_i$ is one of the
following.
\[
\begin{array} {l|c|c|c|c}
\text{Type} & A_{6,1} &B_{4,1} &C_{6,1} &D_{8,2}      \\     \hline
\ \ h^\vee      &  7 & 7 &7 & 14 \\ \hline
\text{Dimension}& 48& 36 & 78 & 120
\end{array}
\]
By Lemma \ref{LIL3}, $\mathfrak{V}(\mathcal{S})_1$ has a $120$-dimensional
ideal with rank $8$. Hence $\mathfrak{V}(\mathcal{S})_1$ must be isomorphic
to $D_{8,2}(B_{4,1})^2$.

\begin{proposition} The Lie algebra structure of $\mathfrak{V}(\mathcal{S}(5,3,0,+))_1$ is $D_{8,2}(B_{4,1})^2$.
\end{proposition}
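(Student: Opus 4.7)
The plan is to follow the strategy just used for $\mathcal{S}(5,1,0,\pm)$. By Corollary~\ref{Ex} and Lemma~\ref{LVS2}, $\dim\mathfrak{V}(\mathcal{S})_1 = 192$. Since $\mathfrak{V}(\mathcal{S})$ is a holomorphic framed VOA of central charge $24$ (hence simple, rational, $C_2$-cofinite, self-dual, and of CFT type), Proposition~\ref{PDM1}(a) forces $V_1$ to be semisimple (it cannot be abelian because its dimension exceeds $24$), and Proposition~\ref{PDM1}(b) yields $h_i^\vee / k_i = (192-24)/24 = 7$ for every simple summand. Combined with Proposition~\ref{PDM2}, this restricts the simple summands to $A_{6,1}$, $B_{4,1}$, $C_{6,1}$, $D_{8,2}$, of dimensions $48,36,78,120$ respectively.

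Next I would apply Lemma~\ref{LIL3} to $\mathcal{T} = \{(0,t,\varphi(t)) \mid t \in T\}$. Since $m=5$, $k_1=3$, $k_2=0$, equation~(\ref{Eq:Dim}) gives $\dim T = 8$, so $\mathfrak{V}(\mathcal{T})_1$ is an ideal of $V_1$ of rank $8$. A short count using Lemma~\ref{LNum}(1) (applied to the plus-type $8$-dimensional space $T$ provided by Theorem~\ref{TClassify}) together with the definition of $d$ identifies its dimension as the number of non-singular vectors in $T$, namely $2^7-2^3=120$. Since ideals in semisimple Lie algebras are sums of simple summands, and the pair (rank,dim)$=(8,120)$ can only be realized by $D_{8,2}$ among the four candidates listed above, the ideal must be a single simple factor of type $D_{8,2}$.

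The complementary ideal then has dimension $192 - 120 = 72$ and its simple summands lie in $\{A_{6,1},B_{4,1},C_{6,1}\}$, since a second copy of $D_{8,2}$ is excluded by the total dimension. The only non-negative integer solution of $48a + 36b + 78c = 72$ is $(a,b,c)=(0,2,0)$, which gives $(B_{4,1})^2$. Consequently $\mathfrak{V}(\mathcal{S}(5,3,0,+))_1 \cong D_{8,2}(B_{4,1})^2$.

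The main obstacle I anticipate is ensuring that the dimension of the ideal produced by Lemma~\ref{LIL3} is indeed exactly $120$; this requires a careful accounting of which vectors $(0,t,\varphi(t))$ contribute to weight one (those with $t$ non-singular, since then $\varphi(t)$ is also non-singular and $d=1$ applies). Once this is in hand, the classification of admissible simple summands and the arithmetic of fitting them into dimensions $120$ and $72$ is entirely elementary.
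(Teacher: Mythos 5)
Your proposal is correct and follows essentially the same route as the paper: compute $\dim\mathfrak{V}(\mathcal{S})_1=192$, restrict the simple summands to $A_{6,1},B_{4,1},C_{6,1},D_{8,2}$ via Propositions~\ref{PDM1} and~\ref{PDM2}, and use the rank-$8$, $120$-dimensional ideal from Lemma~\ref{LIL3} to pin down $D_{8,2}(B_{4,1})^2$. Your explicit verification that the ideal has dimension $|\overline{S(T)}|=2^7-2^3=120$ (which the paper asserts without computation) is a welcome addition, and the concluding arithmetic matches the paper's.
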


\noindent {\bf Case: $\mathcal{S}=\mathcal{S}(5,3,0,-)$.}
By Lemma \ref{LVS2} and Corollary \ref{Ex}, $\dim
\mathfrak{V}(\mathcal{S})_1=240$. By Proposition \ref{PDM1}, each simple
component $\mathfrak{g}_i$ of $\mathfrak{V}(\mathcal{S})_1$ satisfies
$h_i/k_i=9$. Hence by Proposition \ref{PDM2},  $\mathfrak{g}_i$ is one of the
following.
\[
\begin{array} {l|c|c|c|c|c|c}
\text{Type} & A_{8,1} &B_{5,1} & C_{8,1} &D_{10,2} & E_{7,2} &F_{4,1}   \\     \hline
\ \ h^\vee      &  9 & 9 &9 & 18 & 18 &9 \\ \hline
\text{Dimension}& 80 & 55& 136 & 190 &133 & 52
\end{array}
\]
By Lemma \ref{LIL3}, $\mathfrak{V}(\mathcal{S})_1$ has a $136$-dimensional
ideal with rank $8$. Hence $\mathfrak{V}(\mathcal{S})_1$ must be isomorphic
to $C_{8,1}(F_{4,1})^2$.

\begin{proposition} The Lie algebra structure of $\mathfrak{V}(\mathcal{S}(5,3,0,-))_1$ is $C_{8,1}(F_{4,1})^2$.
\end{proposition}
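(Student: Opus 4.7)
Following the template established in the preceding three cases, the plan is to compute $\dim \mathfrak{V}(\mathcal{S})_1$, use Propositions \ref{PDM1} and \ref{PDM2} to restrict the admissible simple summands, and then invoke Lemma \ref{LIL3} to pin down a distinguished ideal. By Lemma \ref{LVS2} together with Corollary \ref{Ex}, $\dim \mathfrak{V}(\mathcal{S})_1 = 240$, so Proposition \ref{PDM1}(b) yields $h_i^\vee/k_i = (240-24)/24 = 9$ for every simple ideal $\mathfrak{g}_i$. Combining this ratio with the positive-integral level requirement of Proposition \ref{PDM2} restricts each simple summand to the list $\{A_{8,1},\, B_{5,1},\, C_{8,1},\, D_{10,2},\, E_{7,2},\, F_{4,1}\}$, of respective dimensions $80,\,55,\,136,\,190,\,133,\,52$.

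Next I apply Lemma \ref{LIL3} with $k_1 = 3$, $k_2 = 0$, $\varepsilon = -$. By Theorem \ref{TClassify}(1) the subspace $T$ is non-singular of dimension $m + k_1 - k_2 = 8$ and of type $\varepsilon = -$, so Lemma \ref{LNum}(2) gives $|\overline{S(T)}| = 2^{7} + 2^{3} = 136$. Because the generator $(0,t,\varphi(t))$ of $\mathcal{T}$ has $M_1 \neq 0$ precisely when $t \in \overline{S(T)}$, with each such $t$ contributing a one-dimensional lowest weight space (Lemma \ref{Lw2} combined with Lemma \ref{POVE}(5)), one obtains an ideal $\mathfrak{V}(\mathcal{T})_1$ of $\mathfrak{V}(\mathcal{S})_1$ of dimension $136$ and rank $8$. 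In particular $\mathfrak{V}(\mathcal{S})_1$ is non-abelian, hence semisimple by Proposition \ref{PDM1}(a); the ideal is therefore a direct sum of simple components drawn from the list above, and the only member with dimension $136$ (equivalently, rank $8$ and the right dimension) is $C_{8,1}$.

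The complementary ideal of $\mathfrak{V}(\mathcal{S})_1$ then has dimension $240 - 136 = 104$ and total Lie rank at most $24 - 8 = 16$. A direct inspection of the allowed dimensions shows that the only way to write $104$ as a sum of members of the list without exceeding the rank bound is $52 + 52$, yielding $(F_{4,1})^2$ (the other combinations with a summand of dimension $\le 104$, namely $A_{8,1}$, $B_{5,1}$, $F_{4,1}$, and sums thereof, do not hit $104$). Therefore $\mathfrak{V}(\mathcal{S})_1 \cong C_{8,1}(F_{4,1})^2$. The argument is entirely parallel to the three preceding cases; the only case-specific input is the minus-type count in Lemma \ref{LNum}(2), and no serious obstacle arises.
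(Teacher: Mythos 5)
Your proof is correct and follows essentially the same route as the paper: compute $\dim\mathfrak{V}(\mathcal{S})_1=240$, restrict the simple summands to $\{A_{8,1},B_{5,1},C_{8,1},D_{10,2},E_{7,2},F_{4,1}\}$ via Propositions \ref{PDM1} and \ref{PDM2}, and use the $136$-dimensional rank-$8$ ideal from Lemma \ref{LIL3} to force $C_{8,1}(F_{4,1})^2$. You merely make explicit the minus-type count $|\overline{S(T)}|=2^7+2^3=136$ and the final arithmetic that the paper leaves to the reader.
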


\noindent {\bf Case: $\mathcal{S}=\mathcal{S}(5,5,0,+)$.}
By Lemma \ref{LVS2} and Corollary \ref{Ex}, $\dim
\mathfrak{V}(\mathcal{S})_1=744$. By Proposition \ref{PDM1}, each simple
component $\mathfrak{g}_i$ satisfies $h_i/k_i=30$. Hence by Proposition
\ref{PDM2}, $\mathfrak{g}_i$ is  $D_{16,1}$ or $E_{8,1}$. Note that their
dimensions are $496$ and $248$. Hence $\mathfrak{V}(\mathcal{S})_1$ is
isomorphic to $(E_{8,1})^3$ or $E_{8,1}D_{16,1}$.

By the construction in Theorem \ref{TClassify}, $S_1$ is a
$5$-dimensional totally singular space. Then we have $P=Q=0$ and $T=U=R(V)$.
Hence, $\mathcal{S}(5,5,0,+)$ is spanned by
\[
\begin{split}
&(a, 0,0), \qquad a\in S_1,\\
&(0, b, b), \qquad b\in R(V).
\end{split}
\]
Since $S_1$ is a maximal totally singular subspace of $R(V)$,
$\mathfrak{V}(\{(a,0,0)\mid a\in S_1\})$ is isomorphic to $V_{E_8}\otimes V^{\otimes2}$.
By Lemma \ref{Lholo16}, $\mathfrak{V}(\{(0,b,b)\mid b\in R(V)\})$ is isomorphic to $V\otimes V_{D_{16}^+}$.
Hence we obtain the following.

\begin{proposition}
The VOA $\mathfrak{V}(\mathcal{S}(5,5,0,+))$ is isomorphic to the lattice VOA associated to the Niemeier lattice $N(E_{8}D_{16})$.
\end{proposition}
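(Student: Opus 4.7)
The plan is to recognize $\mathfrak{V}(\mathcal{S}(5,5,0,+))$ as the tensor product $V_{E_8}\otimes V_{D_{16}^+}$ by gluing the two subVOAs already identified in the paragraph preceding the statement. The first step is to note that with $m=k_1=5$, $k_2=0$, $\varepsilon=+$ in Theorem \ref{TClassify}, one has $P=Q=0$ and $T=U=R(V)$, so
\[
\mathcal{S}(5,5,0,+) \;=\; \{(a,0,0)\mid a\in S_1\}\;\perp\;\{(0,b,\varphi(b))\mid b\in R(V)\}.
\]
Using the action of $\Aut(V)$ on the third coordinate (Lemma \ref{LSVk}) together with Lemma \ref{Lconj}, I may normalize $\varphi=\mathrm{id}$, so the two summands occupy disjoint tensor slots of $V^{\otimes 3}$.

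Next I would extract the ``pure'' subVOAs inside $\mathfrak{V}(\mathcal{S})$: namely $V_{E_8}\otimes\C\mathbf{1}\otimes\C\mathbf{1}$ sitting inside $\mathfrak{V}(\{(a,0,0)\mid a\in S_1\})\cong V_{E_8}\otimes V\otimes V$, and $\C\mathbf{1}\otimes V_{D_{16}^+}$ sitting inside $\mathfrak{V}(\{(0,b,b)\mid b\in R(V)\})\cong V\otimes V_{D_{16}^+}$ (via Lemma \ref{Lholo16}(2)). These commute inside $\mathfrak{V}(\mathcal{S})$ because they are supported on disjoint tensor factors. Moreover their conformal elements sum to $\omega_{V^{\otimes 3}}=\omega_{\mathfrak{V}(\mathcal{S})}$, since the Virasoro vector of $V=V_{\sqrt{2}E_8}^+$ coincides with that of $V_{\sqrt{2}E_8}$ and of $V_{E_8}$ (all depend only on the common Cartan $\sqrt{2}E_8\otimes\C$), and similarly $\omega_{V_{D_{16}^+}}=\omega_V\otimes\mathbf{1}+\mathbf{1}\otimes\omega_V$ in the last two slots. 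Hence $V_{E_8}$ and $V_{D_{16}^+}$ generate a \emph{full} subVOA of $\mathfrak{V}(\mathcal{S})$ isomorphic to $V_{E_8}\otimes V_{D_{16}^+}$.

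Finally, $V_{E_8}\otimes V_{D_{16}^+}$ is a holomorphic VOA of central charge $24$. Since it embeds as a full subVOA into the holomorphic VOA $\mathfrak{V}(\mathcal{S})$ of the same central charge, and since $\mathfrak{V}(\mathcal{S})_0=\C\mathbf{1}$ (CFT type), $\mathfrak{V}(\mathcal{S})$ viewed as a module over the tensor product decomposes as a sum of copies of the only irreducible module, with multiplicity pinned to one by the CFT condition. Therefore $\mathfrak{V}(\mathcal{S}(5,5,0,+))\cong V_{E_8}\otimes V_{D_{16}^+}\cong V_{E_8\oplus D_{16}^+}=V_{N(E_8D_{16})}$, the Niemeier lattice VOA. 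The only step requiring real care is the fullness of the combined subVOA, which amounts to the conformal-vector matching described above; everything else then collapses into a standard holomorphic-containment argument.
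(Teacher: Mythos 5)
Your proof is correct and follows essentially the same route as the paper: both identify $\mathcal{S}(5,5,0,+)$ as the orthogonal sum of $S_1\times\{0\}\times\{0\}$ and the diagonal $\{(0,b,b)\mid b\in R(V)\}$, recognize the two pieces as $V_{E_8}$ and $V_{D_{16}^+}$ via Lemma \ref{Lholo16} (2), and conclude $\mathfrak{V}(\mathcal{S})\cong V_{E_8}\otimes V_{D_{16}^+}=V_{N(E_8D_{16})}$. The only differences are cosmetic: the paper reads off the tensor factorization of $\mathfrak{V}(\mathcal{S})$ directly, whereas you make that last step explicit through the full-subVOA-plus-holomorphic-containment argument, and you omit the paper's preliminary Dong--Mason dimension count, which is indeed not needed for this particular identification.
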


\noindent {\bf Case: $\mathcal{S}=\mathcal{S}(5,2,1,+)$.}
By Lemma \ref{LVS2} and Corollary \ref{Ex}, $\dim
\mathfrak{V}(\mathcal{S})_1=120$. By Proposition \ref{PDM1}, each simple
component $\mathfrak{g}_i$ of $\mathfrak{V}(\mathcal{S})_1$ satisfies
$h_i/k_i=4$. Hence by Proposition \ref{PDM2}, $\mathfrak{g}_i$ is one of the
following.
\[
\begin{array} {l|c|c|c|c|c|c|c|c}
\text{Type} &A_{3,1} & A_{7,2} &C_{3,1} &C_{7,2} & D_{5,2} & D_{7,3} &E_{6,3}&G_{2,1}      \\     \hline
\ \ h^\vee      &  4 & 8 &4 & 8& 8 & 12 &12 & 4 \\ \hline
\text{Dimension}& 15 &63 &21 & 105 & 45 &91 & 78 &14
\end{array}
\]
By Lemma \ref{LIL5}, $\mathfrak{V}(\mathcal{S})_1$ contains an ideal with
$56$-dimensional root space. Hence $\mathfrak{V}(\mathcal{S})_1$ is
isomorphic to $A_{7,2}(C_{3,1})^2A_{3,1}$ or  $A_{7,2}A_{3,1}(G_{2,1})^3$. By
Lemma \ref{Lrank2}, $\mathfrak{V}(\mathcal{S})_1$ contains $(A_{1,1})^8$.
Hence it must be isomorphic to $A_{7,2}(C_{3,1})^2A_{3,1}$.

\begin{proposition} The Lie algebra structure of $\mathfrak{V}(\mathcal{S}(5,2,1,+))_1$ is $A_{7,2}(C_{3,1})^2A_{3,1}$.
\end{proposition}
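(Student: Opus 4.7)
The plan is to follow the template established in the preceding cases. First, Lemma~\ref{LVS2} and Corollary~\ref{Ex} give $\dim\mathfrak{V}(\mathcal{S})_1 = 120$, and then Proposition~\ref{PDM1}(b) forces $h_i^\vee/k_i = (120-24)/24 = 4$ for every simple ideal $\mathfrak{g}_{i,k_i}$ of $\mathfrak{V}(\mathcal{S})_1$. Combined with the integrality of the level (Proposition~\ref{PDM2}), this restricts each simple factor to the eight types displayed in the table in the statement of the proposition.

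Next I would apply Lemma~\ref{LIL5} to the data of $\mathcal{S}(5,2,1,+)$: with $k_1=2$, $k_2=1$, and $P$ of plus type, the space $T$ is a $6$-dimensional non-singular plus-type quadratic space containing $28$ non-singular vectors, and a direct count in the spirit of Lemma~\ref{LNumber} yields $d(\mathcal{T}) = 28\cdot|S_2|\cdot 1 = 56$. Since $A_{7,2}$ is the only simple factor on the admissible list with root-space dimension $56$, and no positive combination of the other root-space dimensions $\{12,18,40,72,84,98\}$ sums to $56$, the ideal $\mathfrak{V}(\mathcal{T})_1$ corresponds to an $A_{7,2}$-summand of $\mathfrak{V}(\mathcal{S})_1$. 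The complementary semisimple ideal then has dimension $57$ and rank $9$ (by Lemma~\ref{Lrank}(2) the total rank is $16$), and enumerating triples $(a,b,c)=(\#A_{3,1},\#C_{3,1},\#G_{2,1})$ subject to $15a+21b+14c=57$ and $3a+3b+2c=9$ leaves only the two candidates $A_{7,2}(C_{3,1})^2 A_{3,1}$ and $A_{7,2}A_{3,1}(G_{2,1})^3$; factors $D_{5,2},D_{7,3},E_{6,3},C_{7,2}$ are ruled out by dimension, and a second copy of $A_{7,2}$ is too large.

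To decide between the two candidates, I would invoke Lemma~\ref{Lrank2}(1), which produces a subalgebra $(A_{1,1})^8\subset\mathfrak{V}(\mathcal{S})_1$. Since mutually commuting level-$1$ copies of $A_1$ inside a direct sum of simple components must each be supported in a single factor, it suffices to count the maximum number of orthogonal level-$1$ $A_1$'s each admissible factor can carry: zero in $A_{7,2}$ (any $A_1\hookrightarrow A_7$ has integer Dynkin index, producing level $\ge 2$ in $A_{7,2}$), two in $A_{3,1}$ (via the orthogonal simple roots $\alpha_1,\alpha_3$), three in $C_{3,1}$ (three orthogonal long roots, each of Dynkin index $1$), and one in $G_{2,1}$ (the long roots of $G_2$ form $A_2$, which has no orthogonal root pair, while the short-root embedding has Dynkin index $3$). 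Hence $A_{7,2}A_{3,1}(G_{2,1})^3$ admits at most $2+3=5<8$ such copies and is excluded, whereas $A_{7,2}(C_{3,1})^2 A_{3,1}$ realises exactly $2+2\cdot 3=8$, yielding the desired identification. The main obstacle is the Dynkin-index/level bookkeeping used to eliminate $G_{2,1}^3$; it is elementary but requires care with normalisations of the invariant form so as not to overlook an exotic source of a level-$1$ $A_1$ summand.
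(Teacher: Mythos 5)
Your proposal is correct and follows essentially the same route as the paper: compute $\dim\mathfrak{V}(\mathcal{S})_1=120$ and the ratio $h^\vee/k=4$, use Lemma \ref{LIL5} to produce the ideal with $56$-dimensional root space (forcing an $A_{7,2}$ factor and leaving only $A_{7,2}(C_{3,1})^2A_{3,1}$ and $A_{7,2}A_{3,1}(G_{2,1})^3$), and then use the $(A_{1,1})^8$ subalgebra from Lemma \ref{Lrank2}(1) to eliminate the $G_{2,1}$ case. You merely make explicit several steps the paper leaves implicit (the count $d(\mathcal{T})=56$, the rank-$16$ enumeration of candidates, and the Dynkin-index argument showing $(G_{2,1})^3A_{3,1}$ cannot contain eight commuting level-one $A_1$'s), all of which check out.
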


\noindent {\bf Case: $\mathcal{S}=\mathcal{S}(5,2,1,-)$.}
By Lemma \ref{LVS2} and Corollary \ref{Ex}, $\dim
\mathfrak{V}(\mathcal{S})_1=168$. By Proposition \ref{PDM1}, each simple
component $\mathfrak{g}_i$ of $\mathfrak{V}(\mathcal{S})_1$ satisfies
$h_i/k_i=6$. Hence by Proposition \ref{PDM2}, $\mathfrak{g}_i$ is one of the
following.
\[
\begin{array} {l|c|c|c|c|c|c|c}
\text{Type} & A_{5,1} & A_{11,2} & C_{5,1} & D_{4,1} & D_{7,2} & E_{6,2} & E_{7,3}       \\     \hline
\ \ h^\vee      &  6 & 12 &6 & 6& 12& 12 &18 \\ \hline
\text{Dimension}&      35 & 143 & 55 & 28 & 91 &78    &133
\end{array}
\]
By Lemma \ref{LIL5}, $\mathfrak{V}(\mathcal{S})_1$ contains an ideal with
$72$-dimensional root space. Hence $\mathfrak{V}(\mathcal{S})_1$ is
isomorphic to $E_{6,2}C_{5,1}A_{5,1}$,

\begin{proposition} The Lie algebra structure of $\mathfrak{V}(\mathcal{S}(5,2,1,-))_1$ is $E_{6,2}C_{5,1}A_{5,1}$.
\end{proposition}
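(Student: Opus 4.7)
I would follow the template of the preceding case $\mathcal{S}(5,2,1,+)$; set $\mathcal{S}=\mathcal{S}(5,2,1,-)$ throughout.

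By Lemma~\ref{LVS2} combined with Corollary~\ref{Ex}, $\dim\mathfrak{V}(\mathcal{S})_1=168$. Since $k_1\ge k_2\ge 1$, Lemma~\ref{Lrank}(2) shows that the Lie rank of $\mathfrak{V}(\mathcal{S})_1$ equals $16$; in particular it is semisimple. Proposition~\ref{PDM1}(b) then forces $h_i^\vee/k_i=(168-24)/24=6$ for every simple component, and Proposition~\ref{PDM2} gives that each $k_i$ is a positive integer; the possible simple components are those already tabulated in the preceding case, namely $A_{5,1}, A_{11,2}, C_{5,1}, D_{4,1}, D_{7,2}, E_{6,2}, E_{7,3}$.

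Next I would apply Lemma~\ref{LIL5} to exhibit a semisimple ideal $\mathfrak{V}(\mathcal{T})_1$ and compute its dimension. Here $T$ is a non-singular $6$-dimensional subspace of $R(V)$ of minus type, with $T\subset S_2^\perp$ and $T\cap S_2=0$. For $(0,t+s,\varphi(t))\in\mathcal{T}$ with $t\in T$, $s\in S_2$, the identity $q(t+s)=q(t)+q(s)+\langle t,s\rangle=q(t)$, together with the fact that $\varphi$ preserves $q$ and that $t+s=0$ forces $t=s=0$, shows that the element contributes $d=1$ precisely when $t$ is non-singular, and $d=0$ otherwise. By Lemma~\ref{LNum}(2), a non-singular $6$-dimensional quadratic space of minus type contains $2^5+2^2=36$ non-singular vectors, so $\dim\mathfrak{V}(\mathcal{T})_1=36\cdot |S_2|=72$.

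Comparing the numbers $\dim\mathfrak{g}-\operatorname{rank}\mathfrak{g}$ among the seven candidates (which equal $30, 132, 50, 24, 84, 72, 126$ respectively), the only decompositions of $72$ are $E_{6,2}$ alone or $(D_{4,1})^3$. I would rule out the second by observing that its complement would need total dimension $168-3\cdot 28=84$ and total rank $16-12=4$, which no combination of the candidates attains, since $D_{4,1}$ is the unique rank-$4$ entry and has dimension only $28$. Therefore $E_{6,2}$ must be a simple direct summand, and the remaining ideal has dimension $90$ and rank $10$. A short case check on the candidate list shows that $C_{5,1}\oplus A_{5,1}$ (dimensions $55+35=90$, ranks $5+5=10$) is the unique realization. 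This identifies $\mathfrak{V}(\mathcal{S}(5,2,1,-))_1\cong E_{6,2}C_{5,1}A_{5,1}$.

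The main obstacle is the ideal count $\dim\mathfrak{V}(\mathcal{T})_1=72$, which requires carefully tracing through the quadratic-form identity $q(t+s)=q(t)$ afforded by $T\subset S_2^\perp$, together with $T\cap S_2=0$ to exclude vanishing middle coordinates; the remaining numerical bookkeeping is routine.
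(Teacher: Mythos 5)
Your proposal is correct and follows essentially the same route as the paper: compute $\dim\mathfrak{V}(\mathcal{S})_1=168$ from Corollary \ref{Ex}, restrict the simple components via Propositions \ref{PDM1} and \ref{PDM2} to the $h^\vee/k=6$ list, and use the ideal of Lemma \ref{LIL5}, whose root space you correctly count as $36\cdot|S_2|=72$ dimensions, to force $E_{6,2}$ and hence $E_{6,2}C_{5,1}A_{5,1}$. You actually supply two details the paper leaves implicit — the explicit count via Lemma \ref{LNum} for the minus-type $6$-dimensional space $T$ and the rank-$16$ elimination of $(D_{4,1})^3$ — and your only slip is cosmetic: the candidate table is the one for $h^\vee/k=6$ in this case, not the $h^\vee/k=4$ table of the preceding case $\mathcal{S}(5,2,1,+)$.
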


\noindent {\bf Case: $\mathcal{S}=\mathcal{S}(5,4,1,+)$.}
By Lemma \ref{LVS2} and Corollary \ref{Ex}, $\dim
\mathfrak{V}(\mathcal{S})_1=384$. By Proposition \ref{PDM1}, each simple
component $\mathfrak{g}_i$ of $\mathfrak{V}(\mathcal{S})_1$ satisfies
$h_i/k_i=15$. Hence by Proposition \ref{PDM2}, $\mathfrak{g}_i$ is one of the
following.
\[
\begin{array} {l|c|c|c}
\text{Type} &A_{14,1}&B_{8,1} &E_{8,2}      \\     \hline
\ \ h^\vee      &  15 & 15 &30  \\ \hline
\text{Dimension}& 224 &136 &248
\end{array}
\]
Hence $\mathfrak{V}(\mathcal{S})_1$ is isomorphic to $E_{8,2}B_{8,1}$.

\begin{proposition} The Lie algebra structure of $\mathfrak{V}(\mathcal{S}(5,4,1,+))_1$ is $E_{8,2}B_{8,1}$.
\end{proposition}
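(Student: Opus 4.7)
The plan is to follow exactly the template used in the preceding cases in this subsection. First I would compute $\dim \mathfrak{V}(\mathcal{S})_1$ using Lemma \ref{LVS2} together with Corollary \ref{Ex}; for the parameters $(m,k_1,k_2,\varepsilon) = (5,4,1,+)$, since $k_1+k_2=5$ is odd and $\varepsilon=+$, the dimension formula gives $d(\mathcal{S})=3(2^7+2^4-2^4)=384$. Then Proposition \ref{PDM1} forces the common ratio $h_i^\vee/k_i = (384-24)/24 = 15$ for every simple component of the weight-one Lie algebra, and Proposition \ref{PDM2} combined with Kac's list of Kac--Moody levels gives the candidate table in the excerpt: $A_{14,1}$ of dimension $224$, $B_{8,1}$ of dimension $136$, and $E_{8,2}$ of dimension $248$.

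Next I would solve the Diophantine problem of writing $384$ as a nonnegative integer combination of $224$, $136$, and $248$, using at most total rank $24$ (Proposition \ref{PDM1}). The only solutions with positive coefficients are $248+136 = 384$, i.e.\ $E_{8,2}B_{8,1}$ (rank $16$); $224+136$ falls short, $224+248$ overshoots, $2\cdot 136 = 272$ is too small and $224+2\cdot 136 = 496$ too large, and no multiple of a single component hits $384$. This already pins down the Lie algebra structure as $E_{8,2}B_{8,1}$ uniquely, with no further ideal analysis required.

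As a sanity check I would verify consistency with the rank bound from Lemma \ref{Lrank}(2): since $k_1=4 \ge k_2=1 \ge 1$, the rank of $\mathfrak{V}(\mathcal{S})_1$ must be exactly $16$, which matches $\mathrm{rank}(E_{8,2}) + \mathrm{rank}(B_{8,1}) = 8+8$. One could also check consistency with Lemma \ref{Lrank2}(1), which produces a semisimple subalgebra of type $D_{8,1}$ inside $\mathfrak{V}(\mathcal{S})_1$ (from $k_1=4$), compatible with the embedding $D_{8,1} \hookrightarrow E_{8,2} \oplus B_{8,1}$.

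The main obstacle is essentially cosmetic rather than conceptual: the numerical step requires checking that no alternative decomposition of $384$ into sums of entries from $\{224,136,248\}$ is consistent with the rank constraint. Because the three candidate dimensions are relatively large compared to $384$, the enumeration is short and the answer is forced without invoking the more delicate ideal arguments (Lemmas \ref{LIL3}, \ref{LIL5}, \ref{LIL6}) that were needed in some of the previous cases. Thus the proof reduces to a one-line dimension count followed by a finite case check.
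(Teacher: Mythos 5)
Your proposal is correct and follows essentially the same route as the paper: the paper also computes $\dim\mathfrak{V}(\mathcal{S})_1=384$, derives $h_i^\vee/k_i=15$, lists the same three candidates $A_{14,1}$, $B_{8,1}$, $E_{8,2}$, and concludes immediately that $E_{8,2}B_{8,1}$ is the only possibility, leaving the short Diophantine check implicit. Your added rank and subalgebra sanity checks are consistent but not needed.
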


\begin{remark} In \cite{Sc93}, $E_{8,2}B_{8,1}$ was written as $E_{8,2}B_{6,1}$, which is a misprint.
\end{remark}

\noindent {\bf Case: $\mathcal{S}=\mathcal{S}(5,3,2,+)$.}
By Lemma \ref{LVS2} and Corollary \ref{Ex}, $\dim
\mathfrak{V}(\mathcal{S})_1=240$. By Proposition \ref{PDM1}, each simple
component $\mathfrak{g}_i$ of $\mathfrak{V}(\mathcal{S})_1$ satisfies
$h_i/k_i=9$. Hence by Proposition \ref{PDM2}, $\mathfrak{g}_i$ is one of the
following.
\[
\begin{array} {l|c|c|c|c|c|c}
\text{Type} & A_{8,1} & B_{5,1} & C_{8,1} & D_{10,2} & E_{7,2} & F_{4,1}     \\     \hline
\ \ h^\vee      &  9 & 9 &9 & 18&18 &9 \\ \hline
\text{Dimension}& 80 & 55 & 136 & 190 & 133 & 52
\end{array}
\]
Hence $\mathfrak{V}(\mathcal{S})_1$ is isomorphic to $(A_{8,1})^3$,
$C_{8,1}(F_{4,1})^2$ or $B_{5,1}E_{7,2}F_{4,1}$. By Lemma \ref{Lrank}, the rank
of $\mathfrak{V}(\mathcal{S})_1$ is $16$. Moreover, by Lemma \ref{Lrank2},
$\mathfrak{V}(\mathcal{S})_1$ has a subalgebra $(D_{4,1})^2(A_{1,1})^{8}$.
Hence $\mathfrak{V}(\mathcal{S})_1$ must be isomorphic to
$C_{8,1}(F_{4,1})^2$.

\begin{proposition} The Lie algebra structure of $\mathfrak{V}(\mathcal{S}(5,3,2,+))_1$ is $C_{8,1}(F_{4,1})^2$.
\end{proposition}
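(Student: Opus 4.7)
The plan is to determine $\mathfrak{V}(\mathcal{S})_1$ by narrowing down candidate semisimple types using dimension, the Dong--Mason ratio, a rank constraint, and the existence of a prescribed subalgebra. First, I would combine Lemma \ref{LVS2} with Corollary \ref{Ex} to obtain $\dim \mathfrak{V}(\mathcal{S})_1 = d(\mathcal{S}(5,3,2,+)) = 240$. Since $\mathfrak{V}(\mathcal{S})$ is a holomorphic framed VOA of central charge $24$, Proposition \ref{PDM1} gives $h_i^\vee/k_i = (240-24)/24 = 9$ for every simple component $\mathfrak{g}_{i,k_i}$, and Proposition \ref{PDM2} restricts $k_i$ to positive integers. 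This reduces the candidates to the six types tabulated above the proposition.

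The next step is to enumerate all semisimple combinations from this list whose dimensions sum to $240$. A short case analysis, using the dimensions $80, 55, 136, 190, 133, 52$ for $A_{8,1}, B_{5,1}, C_{8,1}, D_{10,2}, E_{7,2}, F_{4,1}$ respectively, yields exactly three possibilities: $(A_{8,1})^3$, $C_{8,1}(F_{4,1})^2$, and $B_{5,1}E_{7,2}F_{4,1}$. Lemma \ref{Lrank}(2), applied with $k_1 = 3 \ge k_2 = 2 \ge 1$, forces the rank of $\mathfrak{V}(\mathcal{S})_1$ to be exactly $16$, which eliminates $(A_{8,1})^3$ (rank $24$).

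To rule out $B_{5,1}E_{7,2}F_{4,1}$, I would invoke Lemma \ref{Lrank2}(3), which provides a semisimple subalgebra of type $(D_{4,1})^2(A_{1,1})^8$ in $\mathfrak{V}(\mathcal{S})_1$. Since this subalgebra already has rank $16$ matching that of $B_5 \oplus E_7 \oplus F_4$, each of its simple summands must lie inside a single simple component of the ambient algebra. For any embedding $\mathfrak{h} \hookrightarrow \mathfrak{g}$ of simple Lie algebras with Dynkin index $j \in \Z_{>0}$, the level of $\mathfrak{h}$ equals $j$ times the level of $\mathfrak{g}$. Since $1 = 2j$ has no positive-integer solution, neither $D_{4,1}$ nor $A_{1,1}$ can be embedded inside $E_{7,2}$. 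Hence all ten simple summands of $(D_{4,1})^2(A_{1,1})^8$ would have to fit inside $B_5 \oplus F_4$, whose total rank is only $5 + 4 = 9 < 16$, a contradiction.

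The only candidate surviving both restrictions is $C_{8,1}(F_{4,1})^2$, which completes the proof. The principal difficulty is the level-matching argument in the previous paragraph: one must combine the full-rank subalgebra supplied by Lemma \ref{Lrank2}(3) with the integrality of Dynkin indices to see that the level-$2$ component $E_{7,2}$ admits no level-$1$ simple subalgebra at all, thereby collapsing the rank available to house $(D_{4,1})^2(A_{1,1})^8$.
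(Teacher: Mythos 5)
Your proposal is correct and follows essentially the same route as the paper: dimension $240$ from Lemma \ref{LVS2} and Corollary \ref{Ex}, the ratio $h^\vee/k=9$ with Propositions \ref{PDM1} and \ref{PDM2} giving the three candidates $(A_{8,1})^3$, $C_{8,1}(F_{4,1})^2$, $B_{5,1}E_{7,2}F_{4,1}$, then rank $16$ from Lemma \ref{Lrank} and the $(D_{4,1})^2(A_{1,1})^8$ subalgebra from Lemma \ref{Lrank2} to isolate $C_{8,1}(F_{4,1})^2$. The only difference is that you spell out, via integrality of Dynkin indices and level matching, why the level-one full-rank subalgebra is incompatible with $B_{5,1}E_{7,2}F_{4,1}$ --- a step the paper asserts without elaboration --- and your argument for it is sound.
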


\noindent {\bf Case: $\mathcal{S}=\mathcal{S}(5,0,0)$.}
By Lemma \ref{LVS2} and Corollary \ref{Ex}, $\dim \mathfrak{V}(\mathcal{S})_1=48$.
By Propositions \ref{PDM1}, each simple component $\mathfrak{g}_i$ of $\mathfrak{V}(\mathcal{S})_1$ satisfies $h_i/k_i=1$.

We use the notation in Theorem \ref{TClassify2}. Set $\mathcal{T}={\rm
Span}_{\F_2}\{(y,y,0),(y,0,y)\}\subset\mathcal{S}$. Then
$\mathfrak{V}(\mathcal{T})_1$ is a $3$-dimensional ideal of
$\mathfrak{V}(\mathcal{S})_1$ isomorphic to  a Lie algebra of type $A_1$.
Moreover, by Lemma \ref{LIL6},  $\mathfrak{V}(\mathcal{S})_1$ has a
$15$-dimensional ideal
$$\mathfrak{V}(\{(a+y,a+y,0),(b,b,0)\mid a\in S(P)^\times, b\in \overline{S(P)}\})_1$$
of  rank $3$.  By the same argument in the proof of Lemma \ref{LIL6},
\begin{eqnarray*}
\mathfrak{V}(\{(a+y,0,a+y),(b,0,b)\mid a\in S(Q)^\times, b\in \overline{S(Q)}\})_1,\\
\mathfrak{V}(\{(0,a+y,\varphi(a)+y),(0,b,\varphi(b))\mid a\in S(T)^\times, b\in \overline{S(T)}\})_1,
\end{eqnarray*}
are $15$-dimensional ideals of  rank $3$. Hence by Proposition \ref{PDM2},
$\mathfrak{V}(\mathcal{S})_1$ is isomorphic to $(A_{3,4})^3A_{1,2}$.

\begin{proposition}
The Lie algebra structure of $\mathfrak{V}(\mathcal{S}(5,0,0))_1$ is $(A_{3,4})^3A_{1,2}$.
\end{proposition}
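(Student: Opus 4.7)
My plan is to combine the standard Dong--Mason dimension constraints with an explicit ideal decomposition extracted from the description of $\mathcal{S}(5,0,0)$ in Theorem \ref{TClassify2}.

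First I assemble the numerical data. By Lemma \ref{LVS2} and Corollary \ref{Ex}, $\dim\mathfrak{V}(\mathcal{S})_1 = 48$, so Proposition \ref{PDM1} forces $h_i^\vee/k_i = (48-24)/24 = 1$ on every simple component, and Proposition \ref{PDM2} restricts $k_i$ to positive integers. Each simple component therefore comes from the short list of simple affine Lie algebras with $h^\vee = k$: the relevant candidates of dimension at most $48$ are $A_{1,2}$ (dim $3$), $A_{2,3}$ (dim $8$), and $A_{3,4}$ (dim $15$).

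Next I locate specific ideals using the explicit generators. Since $k_1 = k_2 = 0$ in Theorem \ref{TClassify2}, the space $\mathcal{S}$ is spanned by $(p,p,0)$, $(q,0,q)$, $(0,t,\varphi(t))$ for $p\in P$, $q\in Q$, $t\in T$, together with $(y,y,0),(y,0,y),(z,z,z)$. The two-dimensional subspace $\mathcal{T}_0 = \Span_{\F_2}\{(y,y,0),(y,0,y)\}$ of $\mathcal{S}$ yields via Lemma \ref{LVS} a three-dimensional ideal of $\mathfrak{V}(\mathcal{S})_1$; from the candidate list this ideal must be $A_{1,2}$. Independently, Lemma \ref{LIL6} produces a $15$-dimensional ideal of rank $3$ from $P$ embedded in the first two coordinates. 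The identical argument, permuted by the natural $\Sym_3$-action on coordinates, yields two further $15$-dimensional rank-$3$ ideals: one from $Q$ living in positions $1,3$, and one from the graph of $\varphi$ on $T$ living in positions $2,3$. These four ideals together account for $3 + 3\cdot 15 = 48$ dimensions, exhausting $\mathfrak{V}(\mathcal{S})_1$; and the unique candidate of dimension $15$, rank $3$, with $h^\vee = k$ is $A_{3,4}$, giving the desired decomposition $(A_{3,4})^3 A_{1,2}$.

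The main obstacle I anticipate is verifying that the three analogues of Lemma \ref{LIL6} obtained by coordinate-permutation produce genuinely commuting simple ideals with pairwise trivial intersection, rather than collapsing into a larger simple summand. This reduces to the hypothesis of Lemma \ref{LVS}(2): for $[M_1]$ in one of the candidate ideals and $[M_2]$ in another, one must show $(M_1 \boxtimes M_2)_1 = 0$. The point is that the three coordinate-pair structures $\{P\text{ in }(1,2)\}$, $\{Q\text{ in }(1,3)\}$, and $\{\varphi(T)\text{ in }(2,3)\}$ are disjoint in the sense that the fusion of a module from one with a module from another lands on a triple $(a_1,a_2,a_3)$ with all three coordinates non-zero and non-singular, which by Lemma \ref{Lw2} has trivial weight-one part; the required ideal property then follows exactly as in the proof of Lemma \ref{LIL6}.
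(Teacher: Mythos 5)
Your proposal is correct and follows essentially the same route as the paper: compute $\dim\mathfrak{V}(\mathcal{S})_1=48$ so that $h_i^\vee/k_i=1$, exhibit the $3$-dimensional ideal coming from $\Span_{\F_2}\{(y,y,0),(y,0,y)\}$, and produce three $15$-dimensional rank-$3$ ideals from $P$, $Q$ and $T$ via Lemma \ref{LIL6} and its coordinate-permuted analogues, forcing the decomposition $(A_{3,4})^3A_{1,2}$. The only quibble is that your stated list of candidate simple components with $h^\vee=k$ is not complete (it omits, e.g., $C_{2,3}$, $G_{2,4}$, $B_{3,5}$, $D_{4,6}$), but this does not affect the argument since a $3$-dimensional ideal must be $A_{1,2}$ and a $15$-dimensional rank-$3$ ideal must be $A_{3,4}$ in any case.
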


\noindent {\bf Case: $\mathcal{S}=\mathcal{S}(5,2,0)$. }
By Lemma \ref{LVS2} and Corollary \ref{Ex}, $\dim
\mathfrak{V}(\mathcal{S})_1=120$. By Proposition \ref{PDM1}, each simple
component $\mathfrak{g}_i$ of $\mathfrak{V}(\mathcal{S})_1$ satisfies
$h_i/k_i=4$. Hence by Proposition \ref{PDM2}, $\mathfrak{g}_i$ is one of the
following.
\[
\begin{array} {l|c|c|c|c|c|c|c|c}
\text{Type} &A_{3,1} & A_{7,2} & C_{3,1} & C_{7,2} & D_{5,2} & D_{7,3} & E_{6,3} &  G_{2,1}     \\     \hline
\ \ h^\vee      &  4 & 8 &4 & 8 &8 & 12 &12 &4 \\ \hline
\text{Dimension}& 15 & 63 & 21 & 105 & 45 & 91 & 78  & 14
\end{array}
\]
By Lemma \ref{LIL6}, $\mathfrak{V}(\mathcal{S})_1$ has a $63$-dimensional
ideal. Hence $\mathfrak{V}(\mathcal{S})_1$ must be isomorphic to
$A_{7,2}(C_{3,1})^2A_{3,1}$ or $A_{7,2}A_{3,1}(G_{2,1})^3$. By Lemma
\ref{Lrank2}, the $57$-dimensional ideal of $\mathfrak{V}(\mathcal{S})_1$
contains $(A_{1,1})^8$. Hence $\mathfrak{V}(\mathcal{S})_1$ must be isomorphic
to $A_{7,2}(C_{3,1})^2A_{3,1}$.

\begin{proposition} The Lie algebra structure of $\mathfrak{V}(\mathcal{S}(5,2,0))_1$ is $A_{7,2}(C_{3,1})^2A_{3,1}$.
\end{proposition}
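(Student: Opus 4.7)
The plan is to mimic the argument used just above for $\mathcal{S}(5,2,1,+)$, adapted for the present odd-parity case $m-k_1-k_2 = 3$. First, Lemma \ref{LVS2} and Corollary \ref{Ex} give $\dim \mathfrak{V}(\mathcal{S})_1 = 120$, and Proposition \ref{PDM1} forces each simple summand $\mathfrak{g}_{i,k_i}$ of $\mathfrak{V}(\mathcal{S})_1$ to satisfy $h_i^\vee/k_i = (120-24)/24 = 4$. Combined with the integrality of levels from Proposition \ref{PDM2}, the only admissible simple summands are those listed in the table: $A_{3,1}, A_{7,2}, C_{3,1}, C_{7,2}, D_{5,2}, D_{7,3}, E_{6,3}, G_{2,1}$.

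Next, I would apply Lemma \ref{LIL6} to $\mathcal{S}(5,2,0)$. With $m=5$, $k_1=2$, $k_2=0$, formula (\ref{Eq:Dim2}) gives $\dim T = 6$, so Lemma \ref{LIL6}(2) yields an ideal of rank $2^3 - 1 = 7$. To pin down its dimension one counts the contribution to $d(\mathcal{T})$ coming from the description in Lemma \ref{LIL6}: $T$ is non-singular of plus type and dimension $6$, so by Lemma \ref{LNum}(1) it contains $28$ non-singular vectors; shifting by the non-singular vector $y$ contributes $35$ further non-singular vectors in $(T+y)\setminus\{y\}$, giving a total of $63$ weight-one contributions via the $d=1$ count. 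Hence this ideal has dimension $63$ and rank $7$, which among the candidates is realized only by $A_{7,2}$.

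The residual ideal then has dimension $120 - 63 = 57$ and rank at most $9$; searching through sums of the admissible summands, the only ways to realize this residual semisimple part are $(C_{3,1})^2 A_{3,1}$ (dimensions $21+21+15=57$, rank $9$) or $A_{3,1}(G_{2,1})^3$ (dimensions $15+14+14+14 = 57$, rank $9$). To separate these, I would invoke Lemma \ref{Lrank2}(1) applied with $k_1 = 2$, which gives a Lie subalgebra of type $(A_{1,1})^8$ inside $\mathfrak{V}(\mathcal{S})_1$. Since $A_{7,2}$ contains at most one commuting $A_{1,1}$ factor (the ambient level is $2$, not $1$), the eight level-one $\mathfrak{sl}_2$'s must fit into the residual ideal. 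Exactly as in the $\mathcal{S}(5,2,1,+)$ analysis, the option $A_{3,1}(G_{2,1})^3$ cannot accommodate eight commuting $A_{1,1}$'s, while $(C_{3,1})^2 A_{3,1}$ naturally does. This forces $\mathfrak{V}(\mathcal{S}(5,2,0))_1 \cong A_{7,2}(C_{3,1})^2 A_{3,1}$.

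The main obstacle is the dimension count for the Lemma \ref{LIL6} ideal, since the definition mixes elements of $T$ and of $T + y$, and one must correctly track which shifts become singular or non-singular (using $q_V(t+y) = q_V(t) + 1$ because $y$ is non-singular and orthogonal to $T$). Once this count reliably produces $63$, the identification with $A_{7,2}$ is forced by the candidate list, and the $(A_{1,1})^8$ test from Lemma \ref{Lrank2} is the same discrimination step that has already succeeded in the analogous earlier case.
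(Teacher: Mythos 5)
Your argument is correct and follows the paper's proof essentially verbatim: the same dimension count and $h_i^\vee/k_i=4$ candidate table, the same $63$-dimensional rank-$7$ ideal from Lemma \ref{LIL6} forcing an $A_{7,2}$ component, and the same $(A_{1,1})^8$ test from Lemma \ref{Lrank2} to eliminate $A_{3,1}(G_{2,1})^3$ for the residual $57$-dimensional ideal. (The only cosmetic slip is that $A_{7,2}$ in fact contains \emph{no} $A_{1,1}$ at all, since any $A_1$-subalgebra inherits level $2j$ with Dynkin index $j\ge1$; this only strengthens your conclusion that all eight level-one factors lie in the residual ideal.)
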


\noindent {\bf Case: $\mathcal{S}=\mathcal{S}(5,4,0)$. }
By Lemma \ref{LVS2} and Corollary \ref{Ex}, $\dim
\mathfrak{V}(\mathcal{S})_1=408$. By Proposition \ref{PDM1}, each simple
component $\mathfrak{g}_i$ of $\mathfrak{V}(\mathcal{S})_1$ satisfies
$h_i/k_i=16$. Hence by Proposition \ref{PDM2}, $\mathfrak{g}_i$ is of type
$A_{15,1}$ or  $D_{9,1}$. Note that their dimensions are $255$ and $153$.
Hence $\mathfrak{V}(\mathcal{S})_1$ is isomorphic to $A_{15,1}D_{9,1}$.

\begin{proposition}\label{P44} The VOA $\mathfrak{V}(\mathcal{S}(5,4,0))$ is isomorphic to the lattice VOA associated with the Niemeier lattice $N(A_{15}D_{9})$.
\end{proposition}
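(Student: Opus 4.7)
The plan is to combine our Lie algebra computation with standard lattice VOA theory. First, by the constraint $h_i^\vee/k_i = (\dim V_1 - 24)/24 = 16$ from Proposition \ref{PDM1} together with the level-positivity of Proposition \ref{PDM2}, one checks that the only simple ideals possibly contributing to $\mathfrak{V}(\mathcal{S}(5,4,0))_1$ are $A_{15,1}$ (dimension $255$) and $D_{9,1}$ (dimension $153$), and $408 = 255 + 153$ forces the unique decomposition $\mathfrak{V}(\mathcal{S}(5,4,0))_1 \cong A_{15,1}D_{9,1}$.

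Now comes the crucial observation: the Lie rank $15 + 9 = 24$ equals the central charge. Hence any Cartan subalgebra $\mathfrak{h} \subseteq V_1$ generates a Heisenberg vertex subalgebra $M(1,\mathfrak{h})$ whose Virasoro element $\tfrac{1}{2}\sum_i (h_i)_{(-1)}^2$ (for an orthonormal basis of $\mathfrak{h}$) is already the conformal element of $\mathfrak{V}(\mathcal{S}(5,4,0))$, so $M(1,\mathfrak{h})$ is a full conformal subVOA. By the standard theory of lattice VOAs (cf.\ \cite{FLM} and the arguments underlying \cite{DMb}), a holomorphic $C_2$-cofinite CFT-type VOA containing a full-rank Heisenberg subVOA is isomorphic to $V_L$ for some positive-definite even lattice $L$, and holomorphicity forces $L$ to be unimodular of rank $24$.

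Finally, since $V_1$ sits at level $1$, the norm-$2$ vectors of $L$ form exactly the root system of $V_1$, namely $A_{15} \oplus D_9$. By Niemeier's classification of the $24$ even unimodular lattices of rank $24$ by their root systems, $L$ is uniquely identified as the Niemeier lattice $N(A_{15}D_9)$, yielding $\mathfrak{V}(\mathcal{S}(5,4,0)) \cong V_{N(A_{15}D_9)}$.

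The main obstacle is the passage from ``full-rank Heisenberg subVOA'' to ``lattice VOA'' in the second paragraph; this is classical, but a precise citation within the framework of the present paper is needed. A more constructive route, paralleling the treatment of $\mathcal{S}(5,5,0,+)$ via Lemma \ref{Lholo16}, would attempt to exhibit two commuting sublattice VOAs inside $\mathfrak{V}(\mathcal{S}(5,4,0))$, but the mixed generators $(y,y,0)$, $(y,0,y)$, and $(z,z,z)$ entangle all three tensor factors and block any clean direct-sum decomposition of $\mathcal{S}(5,4,0)$; accordingly the abstract Lie-rank argument appears to be the cleanest path.
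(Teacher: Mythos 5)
Your proposal is correct and follows essentially the same route as the paper: the authors likewise compute $\dim\mathfrak{V}(\mathcal{S}(5,4,0))_1=408$, use Propositions \ref{PDM1} and \ref{PDM2} to force $h_i^\vee/k_i=16$ and hence the unique decomposition $255+153$ giving $A_{15,1}D_{9,1}$, and then pass (implicitly) to the lattice VOA via the fact that a holomorphic $C_2$-cofinite VOA of central charge $24$ whose weight-one Lie algebra has rank $24$ is a lattice VOA. The precise citation you are missing for that last step is Dong--Mason \cite{DMb} (the same reference underlying Proposition \ref{PDM1}), after which Niemeier's classification identifies $N(A_{15}D_9)$ exactly as you argue.
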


\noindent {\bf Case: $\mathcal{S}=\mathcal{S}(5,1,1)$.}
By Lemma \ref{LVS2} and Corollary \ref{Ex}, $\dim
\mathfrak{V}(\mathcal{S})_1=96$. By Propositions \ref{PDM1}, each simple
component $\mathfrak{g}_i$ of $\mathfrak{V}(\mathcal{S})_1$ satisfies
$h_i/k_i=3$. Hence by Proposition \ref{PDM2}, $\mathfrak{g}_i$ is one of  the
following.
\[
\begin{array} {l|c|c|c|c|c|c|c|c|c|c}
\text{Type} & A_{2,1}&A_{5,2}&A_{8,3}&C_{2,1}&B_{5,3}&C_{5,2}&D_{4,2}& D_{7,4}&E_{6,4}&F_{4,3}     \\     \hline
\ \ h^\vee      &  3 & 6 &9 & 3& 9 & 6& 6& 12& 12& 9 \\ \hline
\text{Dimension}& 8&35&80&10&55&55&28&91&78&52
\end{array}
\]
Take non-zero $t_i\in S_i$. Then by Lemma \ref{Lrank},
$H=\mathfrak{V}(\{(t_1,0,0),(0,t_2,0)\})_1$ is a Cartan subalgebra of
$\mathfrak{V}(\mathcal{S})_1$, and the rank is $16$. Consider the root space
decomposition with respect to $H$.  Then it is easy to see that
\begin{eqnarray*}
&&\mathfrak{V}(\{(y+s_1,y+s_2,0), (y+s_1,0,y),(0,y+s_2,y)\mid s_i\in S_i\})_1,\\
&&\mathfrak{V}(\{(y+a+s_1,y+a+s_2,0), (b+s_1,b+s_2,0)\mid s_i\in S_i,\ a\in S(P),\ b\in\overline{S(P)}\})_1,\\
&&\mathfrak{V}(\{(y+a+s_1,0, y+a), (b+s_1,0,b)\mid s_1\in S_1,\ a\in S(Q),\ b\in\overline{S(Q)}\})_1,\\
&&\mathfrak{V}(\{(0,y+a+s_2,y+\varphi(a)), (0,b+s_2,\varphi(b))\mid s_2\in S_2,\ a\in S(T),\ b\in\overline{S(T)}\})_1
\end{eqnarray*} are mutually orthogonal root spaces. Here we use the same notations as in Theorem
\ref{TClassify2}. Note that their dimensions are $8$, $12$, $30$, $30$. Hence
the $8$-dimensional and $12$-dimensional root spaces are $C_{2,1}$ and
$(A_{2,1})^2$, respectively. Note that the $30$-dimensional root space is
$(A_{2,1})^5$, $D_{4,2}A_{2,1}$ or $A_{5,2}$. Since the rank of
$\mathfrak{V}(\mathcal{S})_1$ is $16$, $\mathfrak{V}(\mathcal{S})_1$ must be
isomorphic to  $(A_{5,2})^2C_{2,1}(A_{2,1})^2$.

\begin{proposition}
The Lie algebra structure of $\mathfrak{V}(\mathcal{S}(5,1,1))_1$ is
 $(A_{5,2})^2C_{2,1}(A_{2,1})^2$.
\end{proposition}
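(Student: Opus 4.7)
The plan is to follow the same template used in the earlier cases (e.g.\ $\mathcal{S}(5,1,0,+)$ and $\mathcal{S}(5,1,1)$'s siblings), namely: (i) compute $\dim \mathfrak{V}(\mathcal{S})_1$ from Lemma \ref{LVS2} and Corollary \ref{Ex}, which gives $96$; (ii) invoke Proposition \ref{PDM1} to obtain the ratio $h_i^\vee/k_i$ constraining each simple component; (iii) use Proposition \ref{PDM2} to list the possible $\mathfrak{g}_{i,k_i}$; (iv) appeal to the rank and ideal/root-space lemmas from the previous subsection to narrow down the combination to a single semisimple type.

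First I would compute that $h_i^\vee/k_i = (96-24)/24 = 3$, and then enumerate the simple summands with this ratio using Proposition \ref{PDM2}; the candidates are exactly those listed in the analogous tables above, namely $A_{2,1},A_{5,2},A_{8,3},C_{2,1},B_{5,3},C_{5,2},D_{4,2},D_{7,4},E_{6,4},F_{4,3}$ (with respective dimensions $8,35,80,10,55,55,28,91,78,52$). Next, I would pick non-zero $t_i\in S_i$ for $i=1,2$ and apply Lemma \ref{Lrank} to conclude that $H=\mathfrak{V}(\{(t_1,0,0),(0,t_2,0)\})_1$ is a Cartan subalgebra of rank $16$.

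The heart of the argument is to carry out the root space decomposition with respect to $H$ using the explicit description of $\mathcal{S}(5,1,1)$ from Theorem \ref{TClassify2}. I would distinguish the four $H$-invariant subspaces supported on the following types of module indices: the three ``two-coordinate $y$'' families involving $(y+s_1,y+s_2,0)$, $(y+s_1,0,y)$, $(0,y+s_2,y)$, and the three ``mixed with $P$, $Q$, $T$'' families indexed by $S(P)/\overline{S(P)}$, $S(Q)/\overline{S(Q)}$, and $S(T)/\overline{S(T)}$ respectively. A short count (using that $|S_1|=|S_2|=2$ and $\dim P = \dim Q = \dim T = 2$) shows these four mutually orthogonal $H$-stable subspaces of $\mathfrak{V}(\mathcal{S})_1$ have dimensions $8$, $12$, $30$, $30$, and each one must be a sum of full simple ideals by an argument analogous to Lemma \ref{LIL5}.

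The main obstacle, and the step I expect to require care, is matching these dimensions to specific types from the candidate list while respecting the total rank $16$: the $8$-dimensional piece can only be $C_{2,1}$; the $12$-dimensional piece (rank $\leq 4$) must be $(A_{2,1})^2$; and for each $30$-dimensional piece the candidates are $(A_{2,1})^5$, $D_{4,2}A_{2,1}$, or $A_{5,2}$. Summing ranks gives $2 + 4 + r_3 + r_3 = 16$ with $r_3\in\{10,6,5\}$, which forces $r_3=5$ and hence each $30$-dimensional component is $A_{5,2}$. Thus $\mathfrak{V}(\mathcal{S}(5,1,1))_1 \cong (A_{5,2})^2 C_{2,1}(A_{2,1})^2$, completing the proof.
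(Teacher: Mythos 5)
Your proposal follows the paper's proof essentially verbatim: dimension $96$ from Lemma \ref{LVS2} and Corollary \ref{Ex}, the ratio $h_i^\vee/k_i=3$ and the same candidate list, the Cartan subalgebra $H=\mathfrak{V}(\{(t_1,0,0),(0,t_2,0)\})_1$ of rank $16$ from Lemma \ref{Lrank}, the same four mutually orthogonal root spaces of dimensions $8,12,30,30$, and the same rank count forcing $(A_{5,2})^2C_{2,1}(A_{2,1})^2$. One numerical slip to fix: for $\mathcal{S}(5,1,1)$ Theorem \ref{TClassify2} gives $\dim P=2$ but $\dim Q=\dim T=m-k_1+k_2-1=m+k_1-k_2-1=4$, not $2$ as you assert; the ``short count'' as you describe it would yield the wrong sizes for the last two pieces (one needs $|S(Q)^\times|=9$ and $|\overline{S(Q)}|=6$ to get $9\cdot 2+6\cdot 2=30$), so although your stated dimensions $8,12,30,30$ are correct and agree with the paper, the justification you give for them is internally inconsistent and should be redone with the correct dimensions of $Q$ and $T$.
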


\noindent {\bf Case: $\mathcal{S}=\mathcal{S}(5,3,1)$. }   By Lemma \ref{LVS2}
and Corollary \ref{Ex}, $\dim \mathfrak{V}(\mathcal{S})_1=240$. By Proposition
\ref{PDM1}, each simple component $\mathfrak{g}_i$ of
$\mathfrak{V}(\mathcal{S})_1$ satisfies $h_i/k_i=9$. Hence by Proposition
\ref{PDM2}, $\mathfrak{g}_i$ is one of  the following.
\[
\begin{array} {l|c|c|c|c|c|c}
\text{Type} & A_{8,1}&B_{5,1}&C_{8,1}&D_{10,2}&E_{7,2}&F_{4,1}   \\     \hline
\ \ h^\vee      &  9 & 9 & 9 & 18 &18 &9 \\ \hline
\text{Dimension}& 80&55&136&190&133&52
\end{array}
\]

Take non-zero $t_i\in S_i$.
Then by Lemma \ref{Lrank}, $\mathfrak{V}(\{(t_1,0,0),(0,t_2,0)\})_1$ is a Cartan subalgebra of $\mathfrak{V}(\mathcal{S})_1$, and the rank is $16$.
By the similar arguments as in Lemma \ref{LIL5}, $\mathfrak{V}(\mathcal{S})_1$
has rank $16$ and  has an ideal
$$\mathfrak{V}(\{(0,y+a+s_2,y+\varphi(a)), (0,b+s_2,\varphi(b))\mid s_2\in S_2,\ a\in S(T),\ b\in\overline{S(T)}\})_1$$
with $126$-dimensional root space.  Hence $\mathfrak{V}(\mathcal{S})_1$
contains $E_{7,2}$ and the Lie algebra structure of
$\mathfrak{V}(\mathcal{S})_1$ is isomorphic to  $E_{7,2}B_{5,1}F_{4,1}$.

\begin{proposition}
The Lie algebra structure of $\mathfrak{V}(\mathcal{S}(5,3,1))_1$ is
$E_{7,2}B_{5,1}F_{4,1}$.
\end{proposition}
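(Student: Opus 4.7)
I follow the template used in the preceding cases of this section. First, by Lemma \ref{LVS2} and Corollary \ref{Ex}, $\dim \mathfrak{V}(\mathcal{S})_1 = 240$. Proposition \ref{PDM1} then forces every simple component $\mathfrak{g}_{i,k_i}$ of $\mathfrak{V}(\mathcal{S})_1$ to satisfy $h_i^\vee / k_i = (240-24)/24 = 9$, and positivity of $k_i$ (Proposition \ref{PDM2}) restricts $\mathfrak{g}_i$ to one of $A_{8,1}$, $B_{5,1}$, $C_{8,1}$, $D_{10,2}$, $E_{7,2}$, $F_{4,1}$. Lemma \ref{Lrank} pins the rank down to $16$, since $k_1 = 3 \ge k_2 = 1 \ge 1$.

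A straightforward combinatorial search on this list produces two semisimple types with total dimension $240$ and total rank $16$: $E_{7,2}B_{5,1}F_{4,1}$ ($133+55+52=240$, $7+5+4=16$) and $C_{8,1}(F_{4,1})^2$ ($136+52+52=240$, $8+4+4=16$). To rule out the latter, the plan is to exhibit an ideal of $\mathfrak{V}(\mathcal{S})_1$ whose sum of root spaces has dimension exactly $126$, which is achieved only by $E_{7,2}$ (of dimension $133$ and rank $7$) among the allowed components. Using the explicit decomposition from Theorem \ref{TClassify2}, define
\[
\mathcal{T} = \bigl\{(0,\ y+a+s_2,\ y+\varphi(a)),\ (0,\ b+s_2,\ \varphi(b)) \,\big|\, s_2\in S_2,\ a\in S(T)^\times,\ b\in\overline{S(T)} \bigr\}.
\]
This couples the ``$S_2$-coset shift'' idea of Lemma \ref{LIL5} with the ``$y$-insertion'' trick of Lemma \ref{LIL6} that accounts for the $B$-summand appearing when $m-k_1-k_2$ is odd. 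One verifies via Proposition \ref{fusion} that $(M\boxtimes M')_1=0$ whenever $[M]\in\mathcal{T}$, $[M']\in\mathcal{S}\setminus\mathcal{T}$, and both $M_1,M'_1\ne0$, so that $\mathfrak{V}(\mathcal{T})_1$ is an ideal by Lemma \ref{LVS}(2). The dimension count then uses Lemma \ref{LNum} (with $T$ of plus type and dimension $6$, yielding $|S(T)^\times|=35$ and $|\overline{S(T)}|=28$), together with $|S_2|=2$ and the explicit form of $d$ from Lemma \ref{LVS2}, giving $2\cdot 35+2\cdot 28=126$.

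Since only $E_{7,2}$ among the candidates has a $126$-dimensional root space, $E_{7,2}$ must appear as a simple summand, ruling out $C_{8,1}(F_{4,1})^2$. The remaining components then have total dimension $107 = 240-133$ and total rank $9 = 16-7$, and inspection of the list shows $B_{5,1}F_{4,1}$ is the unique combination meeting both constraints. Hence $\mathfrak{V}(\mathcal{S}(5,3,1))_1 \cong E_{7,2}B_{5,1}F_{4,1}$. The chief obstacle is verifying the ideal property of $\mathfrak{V}(\mathcal{T})_1$: one must track, for each $[M'] \in \mathcal{S}\setminus\mathcal{T}$ with $M'_1\ne0$, that the fusion product $M\boxtimes M'$ fails to contribute to the weight one space. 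This is a routine but tedious case analysis given the structure of $\mathcal{S}$ prescribed by Theorem \ref{TClassify2}, and it parallels the analogous verifications in Lemmas \ref{LIL5} and \ref{LIL6}.
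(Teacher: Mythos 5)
Your proposal follows the same route as the paper's own proof and reaches the correct answer: dimension $240$ from Corollary \ref{Ex}, the candidate list from Propositions \ref{PDM1} and \ref{PDM2}, rank $16$ from Lemma \ref{Lrank}, and then the exhibition of a $126$-dimensional sum of root spaces to force an $E_{7,2}$ component. One point in your favor: your set uses $a\in S(T)^\times$, and that exclusion of $a=0$ (i.e.\ of the two classes $(0,y+s_2,y)$, $s_2\in S_2$) is exactly what makes the count $2\cdot 35+2\cdot 28=126$; the set as printed in the paper, with $a\in S(T)$, would contribute $2\cdot 36+2\cdot 28=128$, which does not match the $126$ roots of $E_7$, so your reading is the one that makes the argument work.

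The one step that does not survive as you state it is the justification of the ideal property. Lemma \ref{LVS} (2) requires $\mathcal{U}$ to be a \emph{subspace}, which your $\mathcal{T}$ is not, and the vanishing condition you propose to verify is in fact false: taking $[M']=(0,s_2,0)\in\mathcal{S}\setminus\mathcal{T}$ with $s_2\in S_2\setminus\{0\}$ (so $M'_1\neq 0$, of dimension $8$) and $[M]=(0,y+a+s_2',y+\varphi(a))\in\mathcal{T}$, the fusion product is $(0,y+a+s_2+s_2',y+\varphi(a))$, which lies in $\mathcal{T}$ and has nonzero weight-one part. The correct formulation — and the one the paper invokes via ``the similar arguments as in Lemma \ref{LIL5}'' — is that for every $[M]\in\mathcal{S}$ with $M_1\neq0$, one has $(M\boxtimes M')_1\neq0$ for some $[M']\in\mathcal{T}$ only if $[M]\in\mathcal{T}\cup\{(0,s_2,0)\mid s_2\in S_2\}$; since $\mathcal{T}$ is stable under fusion with the Cartan classes $(0,s_2,0)$ and under passing to the class carrying the opposite root, $\mathfrak{V}(\mathcal{T})_1$ is a sum of root spaces of a union of irreducible components relative to the Cartan subalgebra $H=\mathfrak{V}(\{(t_1,0,0),(0,t_2,0)\})_1$ of Lemma \ref{Lrank}, rather than literally an ideal. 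With that repair, the union has $126$ roots, which among the admissible components is achieved only by $E_7$, and your closing step (remaining dimension $107$ and rank $9$ forcing $B_{5,1}F_{4,1}$) coincides with the paper's conclusion.
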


\noindent {\bf Case: $\mathcal{S}=\mathcal{S}(5,2,2)$.}
By Lemma \ref{LVS2} and Corollary \ref{Ex}, $\dim
\mathfrak{V}(\mathcal{S})_1=192$. By Proposition \ref{PDM1}, each simple
component $\mathfrak{g}_i$ of $\mathfrak{V}(\mathcal{S})_1$ satisfies
$h_i/k_i=7$. Hence by Proposition \ref{PDM2}, $\mathfrak{g}_i$ is one of  the
following.
\[
\begin{array} {l|c|c|c|c}
\text{Type} & A_{6,1}&B_{4,1}&C_{6,1}&D_{8,2}     \\     \hline
\ \ h^\vee      &  7 & 7 &7 & 14 \\ \hline
\text{Dimension}& 48&36&78&120
\end{array}
\]
By Lemma \ref{Lrank}, the rank is $16$. Hence $\mathfrak{V}(\mathcal{S})_1$
must be isomorphic to $D_{8,2}(B_{4,1})^2$ or $(C_{6,1})^2B_{4,1}$. Moreover,
by Lemma \ref{Lrank2}, $\mathfrak{V}(\mathcal{S})_1$ has a subalgebra of type
$(A_{1,1})^{16}$. Hence it must be isomorphic to $(C_{6,1})^2B_{4,1}$.

\begin{proposition} The Lie algebra structure of $\mathfrak{V}(\mathcal{S}(5,2,2))_1$ is $(C_{6,1})^2B_{4,1}$.
\end{proposition}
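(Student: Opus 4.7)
The plan is to proceed as in the preceding cases of this section: fix the dimension of $\mathfrak{V}(\mathcal{S})_1$, restrict the possible simple summands using Propositions \ref{PDM1} and \ref{PDM2}, and then use Lemmas \ref{Lrank} and \ref{Lrank2} to single out the correct isomorphism type. By Corollary \ref{Ex}, $\dim\mathfrak{V}(\mathcal{S})_1=192$, so Proposition \ref{PDM1} forces every simple component $\mathfrak{g}_{i,k_i}$ to satisfy $h_i^\vee/k_i=(192-24)/24=7$. Combined with the integrality of $k_i$ from Proposition \ref{PDM2} and the dimension bound $\dim \mathfrak{g}_i \le 192$, the only admissible simple factors are $A_{6,1}$, $B_{4,1}$, $C_{6,1}$, and $D_{8,2}$, of dimensions $48$, $36$, $78$ and $120$ respectively.

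I would then enumerate the multisets of these factors whose dimensions total $192$, together with their ranks: $A_{6,1}^4$ (rank $24$), $A_{6,1}B_{4,1}^4$ (rank $22$), $C_{6,1}^2 B_{4,1}$ (rank $16$), and $D_{8,2}B_{4,1}^2$ (rank $16$). Applying Lemma \ref{Lrank}(2) with $k_1=k_2=2$ forces $\mathrm{rank}\,\mathfrak{V}(\mathcal{S})_1=16$, which eliminates the first two candidates.

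To distinguish the two remaining candidates I would invoke Lemma \ref{Lrank2}(2), again with $k_1=k_2=2$, producing a semisimple subalgebra of type $(A_{1,1})^{16}$ inside $\mathfrak{V}(\mathcal{S})_1$. The target $C_{6,1}^2 B_{4,1}$ accommodates such a subalgebra: each $C_{6,1}$ factor contains a long-root $(A_{1,1})^6$, and $B_{4,1}$ contains $(A_{1,1})^4$ via the inclusion $D_4\subset B_4$. In contrast, $D_{8,2}B_{4,1}^2$ cannot contain $(A_{1,1})^{16}$: every $\mathfrak{sl}_2$ generated by a root of $D_{8,2}$ has Dynkin index $1$ (all roots of $D_8$ have the same squared length) and hence embeds at level at least $2$, so $D_{8,2}B_{4,1}^2$ contains at best $(A_{1,2})^8(A_{1,1})^8$ rather than $(A_{1,1})^{16}$. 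I expect the main obstacle to be precisely this last level-tracking step; the cleanest rigorous route is the Dynkin-index/level product identity applied to the simply-laced factor $D_{8,2}$. Once this is justified, the Lie algebra structure of $\mathfrak{V}(\mathcal{S}(5,2,2))_1$ is forced to be $C_{6,1}^2 B_{4,1}$.
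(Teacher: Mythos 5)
Your proposal is correct and follows essentially the same route as the paper: compute $\dim\mathfrak{V}(\mathcal{S})_1=192$ and the ratio $h^\vee/k=7$, pin the rank to $16$ via Lemma \ref{Lrank}, and then use the $(A_{1,1})^{16}$ subalgebra from Lemma \ref{Lrank2} to exclude $D_{8,2}(B_{4,1})^2$. The only difference is that you spell out the Dynkin-index/level argument for why $D_{8,2}$ admits no level-one $A_1$ subalgebra, a step the paper leaves implicit.
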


\subsection{Isomorphism type of the VOA $\mathfrak{V}(\mathcal{S}(5,3,0,+))$}
In this subsection, we will show that the VOA $\mathfrak{V}(\mathcal{S}(5,3,0,+))$ is isomorphic to $\tilde{V}_{N(A_{15}D_{9})}$.
In order to do it, we use the $\Z_2$-orbifolds of VOAs associated to maximal totally singular subspaces of $R(V)^3$.

Let $\mathcal{S}$ be a maximal totally
singular subspace of $R(V)^3$. Then $\mathfrak{V}(\mathcal{S})$ is a holomorphic
VOA. Let $W\in R(V)^3\setminus \mathcal{S}$ with $q_V^3(W)=0$. Let
$\chi_{W}:\mathcal{S}\to\Z_2$ be the character of $\mathcal{S}$ defined by
$\chi_W(W')=\langle W,W'\rangle$.
Then $\chi_W$ induces the automorphism $g_W$ of $\mathfrak{V}(\mathcal{S})$ acting
on $M'$ by $(-1)^{\chi_W(W')}$ for $W'=[M']\in\mathcal{S}$.

\begin{proposition}\label{PZ2} The $\Z_2$-orbifold of $\mathfrak{V}(\mathcal{S})$ associated to $g_W$ is given by $\mathfrak{V}(\Span_{\F_2}\{W, \mathcal{S}\cap W^\perp\})$.
\end{proposition}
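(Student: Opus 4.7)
The plan is to realize both VOAs in question as holomorphic $\Z_2$-extensions of the common subalgebra $\mathfrak{V}(\mathcal{S}_0)$ with $\mathcal{S}_0:=\mathcal{S}\cap W^\perp$, and then to show that the orbifold must be the one other than $\mathfrak{V}(\mathcal{S})$ itself.

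First I would establish the basic structural facts. Since $\mathcal{S}$ is maximal totally singular in the non-singular quadratic space $(R(V)^3,q_V^3)$, one has $\mathcal{S}^\perp=\mathcal{S}$, so the hypothesis $W\notin\mathcal{S}$ forces $\chi_W|_{\mathcal{S}}$ to be a nontrivial character of $\mathcal{S}$. Hence $\mathcal{S}_0$ has codimension one in $\mathcal{S}$, and the fixed-point subalgebra of $g_W$ equals $\mathfrak{V}(\mathcal{S}_0)$. A dimension count yields $\dim(\mathcal{S}_0^\perp/\mathcal{S}_0)=2$, and $q_V^3$ descends to this quotient. For any $s\in\mathcal{S}\setminus\mathcal{S}_0$, the three non-zero cosets are represented by $s$, $W$ and $s+W$, with
\[
q_V^3(s)=q_V^3(W)=0,\qquad q_V^3(s+W)=\langle s,W\rangle=1.
\]
Thus the induced form is of plus type, and its two totally singular lines are exactly $\mathcal{S}/\mathcal{S}_0$ and $\Span_{\F_2}\{W,\mathcal{S}_0\}/\mathcal{S}_0$. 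In particular, $\Span_{\F_2}\{W,\mathcal{S}_0\}$ is itself maximal totally singular in $R(V)^3$.

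Second, by Proposition \ref{PS1} both $\mathfrak{V}(\mathcal{S})$ and $\mathfrak{V}(\Span_{\F_2}\{W,\mathcal{S}_0\})$ are holomorphic VOAs containing $\mathfrak{V}(\mathcal{S}_0)$ as a full subVOA of index $2$. By the theory of simple current extensions (cf.~\cite{SY}), every irreducible $\mathfrak{V}(\mathcal{S}_0)$-module is a simple current parametrized by $\mathcal{S}_0^\perp/\mathcal{S}_0$, and the two VOAs just displayed are the only holomorphic $\Z_2$-extensions of $\mathfrak{V}(\mathcal{S}_0)$, corresponding precisely to the two totally singular lines found above.

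Finally, the $\Z_2$-orbifold $\widetilde{\mathfrak{V}(\mathcal{S})}$ is itself a holomorphic VOA of CFT type containing $\mathfrak{V}(\mathcal{S})^{g_W}=\mathfrak{V}(\mathcal{S}_0)$ as a full subVOA of index $2$, so it must coincide with one of the two VOAs above. Its nontrivial $\mathfrak{V}(\mathcal{S}_0)$-summand is the $g_W$-fixed part of the unique irreducible $g_W$-twisted $\mathfrak{V}(\mathcal{S})$-module, whose conformal weights lie in a shifted coset of $\Z$ and hence cannot agree with those of the untwisted summand $\mathfrak{V}(s+\mathcal{S}_0)$ of $\mathfrak{V}(\mathcal{S})$. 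Therefore $\widetilde{\mathfrak{V}(\mathcal{S})}\not\cong \mathfrak{V}(\mathcal{S})$, and we conclude $\widetilde{\mathfrak{V}(\mathcal{S})}\cong \mathfrak{V}(\Span_{\F_2}\{W,\mathcal{S}_0\})$. The main obstacle is this last exclusion step: one has to rule out that the twisted summand is isomorphic as a $\mathfrak{V}(\mathcal{S}_0)$-module to $\mathfrak{V}(s+\mathcal{S}_0)$, which requires a careful conformal-weight analysis of the $g_W$-twisted sector (or, equivalently, an appeal to the involutivity of the $\Z_2$-orbifold construction).
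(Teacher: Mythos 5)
Your argument has the same architecture as the paper's proof: identify the fixed-point subalgebra as $\mathfrak{V}(\mathcal{S}\cap W^{\perp})$, observe that the quotient $(\mathcal{S}\cap W^{\perp})^{\perp}/(\mathcal{S}\cap W^{\perp})$ is a non-singular $2$-dimensional space of plus type whose singular cosets yield exactly the three integral-weight irreducible $\mathfrak{V}(\mathcal{S}\cap W^{\perp})$-modules $\mathfrak{V}(\mathcal{S}\cap W^{\perp})$, $\mathfrak{V}(\mathcal{S}\setminus W^{\perp})$ and $\mathfrak{V}(W+(\mathcal{S}\cap W^{\perp}))$, and then conclude that the orbifold must be assembled from the third. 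The paper's proof is precisely this, stated more tersely.

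However, the justification you give for the decisive exclusion step is wrong as stated. You argue that the twisted summand of $\widetilde{\mathfrak{V}(\mathcal{S})}$ has conformal weights in a \emph{shifted} coset of $\Z$ and therefore cannot coincide with the untwisted summand $\mathfrak{V}(\mathcal{S}\setminus W^{\perp})$. But the summand of the $g_W$-twisted module that enters the orbifold necessarily has \emph{integral} weights --- otherwise $\widetilde{\mathfrak{V}(\mathcal{S})}$ would not be a $\Z_{\ge0}$-graded VOA at all --- so integrality of weights cannot separate it from $\mathfrak{V}(\mathcal{S}\setminus W^{\perp})$. What actually does the work is that the two $g_W$-eigenspaces of the irreducible $g_W$-twisted module are irreducible modules for the fixed subalgebra that are \emph{not isomorphic} to either untwisted constituent of $\mathfrak{V}(\mathcal{S})$ (this is the standard fact underlying the paper's assertion that the three modules listed exhaust the integral-weight irreducibles, the twisted eigenspace of integral weight being the third one). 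Granting that, the twisted summand has no choice but to be $\mathfrak{V}(W+(\mathcal{S}\cap W^{\perp}))$, which is your conclusion. So the skeleton of your proof is sound and matches the paper; only the stated reason for $\widetilde{\mathfrak{V}(\mathcal{S})}\neq\mathfrak{V}(\mathcal{S})$ as an extension of $\mathfrak{V}(\mathcal{S}\cap W^{\perp})$ needs to be replaced by the non-isomorphism of twisted and untwisted sectors over the fixed subalgebra.
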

\begin{proof}
The subspace fixed by $g_W$ is $\mathfrak{V}(\mathcal{S}\cap W^\perp)$. By the
maximality of $\mathcal{S}$, the irreducible modules for
$\mathfrak{V}(\mathcal{S}\cap W^\perp)$ with integral weights are $\mathfrak{V}(\mathcal{S}\cap W^\perp)$,
$\mathfrak{V}(\mathcal{S}\setminus W^\perp)$ and
$\mathfrak{V}(W+(\mathcal{S}\cap W^\perp))$. Hence the $\Z_2$-orbifold of
$\mathfrak{V}(\mathcal{S})$ associated to $g_W$ is
$$\mathfrak{V}(\mathcal{S}\cap W^\perp)\oplus\mathfrak{V}(W+(\mathcal{S}\cap
W^\perp))=\mathfrak{V}(\Span_{\F_2}\{W, \mathcal{S}\cap W^\perp\})$$ as desired.
\end{proof}

Let us consider $\mathcal{S}=\mathcal{S}(5,4,0)$.
Then $\mathcal{S}=\Span_{\F_2}\{(s,0,0),(0,t,t),(y,y,0),(y,0,y),(z,z,z)\mid s\in S_1,t\in T\}$, where $T$ is an $8$-dimensional non-singular quadratic subspace of $R(V)$, $\Span_{\F_2} \{y,z\}$ is the orthogonal complement of $T$ in $R(V)$, and $S_1$ is a maximal totally singular subspace of $T$ (see Theorem \ref{TClassify2}).
Note that $q_V(y)=1$, $q_V(z)=0$ and $\langle y,z\rangle=1$.
Take $s_0\in S_1$ and $t_0\in T$ with $q_V(t_0)=0$ and $\langle s_0,t_0\rangle=1$.
Set $W=(t_0,0,z)$.
Then $\Span_{\F_2}\{ W,\mathcal{S}\cap W^\perp\}=\Span_{\F_2}\{(s,0,0),(0,t,t),(y,y,0),(t_0,0,z),(t_0+z,z,0),(s_0+y,0,y)\mid s\in S_1\cap t_0^\perp, t\in T\}$.
Since for $i=1,2$ and $j=1,3$, $\rho_i(\Span_{\F_2}\{(y,y,0),(t_0+z,z,0)\})$ and $\rho_j(\Span_{\F_2}\{(t_0,0,z),(s_0+y,0,y)\})$ are non-singular $2$-dimensional quadratic subspaces of plus type, $\Span_{\F_2}\{W,\mathcal{S}\cap W^\perp\}$ is conjugate to $\mathcal{S}(5,3,0,+)$ under $\Aut(V)\wr\Sym_3$.
Hence by Proposition \ref{PZ2}, $\mathfrak{V}(\mathcal{S}(5,3,0,+))$ is obtained by the $\Z_2$-orbifold of $\mathfrak{V}(\mathcal{S}(5,4,0))$ associated to $g_W$.

Recall from Proposition \ref{P44} that $\mathfrak{V}(\mathcal{S}(5,4,0))$ is isomorphic to the VOA associated to the Niemeier lattice $N(A_{15,1}D_{9,1})$.
Let us show that $g_W$ is conjugate to a lift of the $-1$-isometry of the lattice $N(A_{15,1}D_{9,1})$.
By \cite[Appendix D]{DGH}, it suffices to show that $g_W$ acts by $-1$ on a Cartan subalgebra of $\mathfrak{V}(\mathcal{S}(5,4,0))_1$.
Consider the subspace $\mathfrak{V}(\{(s_0,0,0),(0,s+y,s+y)\mid s\in S_1\})_1$ of $\mathfrak{V}(\mathcal{S}(5,4,0))_1$.
Then by Lemmas \ref{Lsemi} and \ref{LIL6}, it is a $24$-dimensional toral abelian subalgebra, that is, a Cartan subalgebra.
Since $\{(s_0,0,0),(0,s+y,s+y)\mid s\in S_1\}\cap W^\perp=\emptyset$, $g_W$ acts by $-1$ on this Cartan subalgebra.
Thus we obtain the following proposition.

\begin{proposition} The VOA $\mathfrak{V}(\mathcal{S}(5,3,0,+))$ is isomorphic to $\tilde{V}_{N(A_{15}D_{9})}$.
\end{proposition}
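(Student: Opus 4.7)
The plan is to realize $\mathfrak{V}(\mathcal{S}(5,3,0,+))$ as a $\Z_2$-orbifold of the lattice VOA $\mathfrak{V}(\mathcal{S}(5,4,0))\cong V_{N(A_{15}D_9)}$ (Proposition \ref{P44}) and then to identify the orbifolding automorphism with a lift of the $-1$-isometry of $N(A_{15}D_9)$. Proposition \ref{PZ2} reduces the first task to choosing an appropriate singular vector $W\in R(V)^3\setminus \mathcal{S}(5,4,0)$ so that $\Span_{\F_2}\{W,\mathcal{S}(5,4,0)\cap W^\perp\}$ lies in the $\Aut(V)\wr\Sym_3$-orbit of $\mathcal{S}(5,3,0,+)$.

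For the orbit identification, I would use the explicit generators $\mathcal{S}(5,4,0) = \Span_{\F_2}\{(s,0,0),(0,t,t),(y,y,0),(y,0,y),(z,z,z)\mid s\in S_1,\ t\in T\}$ supplied by Theorem \ref{TClassify2}, and try $W=(t_0,0,z)$ for a singular $t_0\in T$ pairing nontrivially with $S_1$. Intersecting with $W^\perp$ removes one generator from $S_1$ and from the diagonal $(0,t,t)$ block, while adjoining $W$ introduces two non-singular plus-type $2$-dimensional subspaces in the first and third coordinates: these are precisely the $P$ and $Q$ components required by the $\mathcal{S}(5,3,0,+)$ pattern of Theorem \ref{TClassify}. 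Verifying the projection $\rho_i$ onto each coordinate produces subspaces of the correct dimension and type then lets me invoke Theorem \ref{TClassify}(3) to conclude conjugacy.

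Having established $\mathfrak{V}(\mathcal{S}(5,3,0,+))$ as the $g_W$-orbifold of $V_{N(A_{15}D_9)}$, I would appeal to the classification of holomorphic $\Z_2$-orbifolds from \cite[Appendix D]{DGH}: it suffices to show $g_W$ is conjugate in $\Aut(V_{N(A_{15}D_9)})$ to a lift of the $-1$-isometry, which reduces to verifying that $g_W$ acts by $-1$ on some Cartan subalgebra of the rank $24$ Lie algebra $(V_{N(A_{15}D_9)})_1$. By Lemma \ref{Lrank}, the subspace $\mathfrak{V}(\{(s_0,0,0),(0,s+y,s+y)\mid s\in S_1\})_1$ is $24$-dimensional, toral and abelian, hence a Cartan subalgebra of $\mathfrak{V}(\mathcal{S}(5,4,0))_1$; from the defining relation $g_W$ acts on the $[M']$-isotypic component of $\mathfrak{V}(\mathcal{S}(5,4,0))$ by $(-1)^{\langle W,[M']\rangle}$, I would then check that every generating label in this Cartan has odd pairing with $W=(t_0,0,z)$.

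The main obstacle is the second step: tracking the symplectic pairing of $W$ against the labels of the Cartan subalgebra requires care, since the choice of $W$ must be simultaneously compatible with the orbit identification in the first step and with the eigenvalue $-1$ on the full $24$-dimensional toral subalgebra. The orbit identification itself is more mechanical, amounting to bookkeeping on the quadratic-form types of the coordinate projections under $\Aut(V)\wr\Sym_3$, but it too relies on choosing $W$ to split cleanly against the block structure of $\mathcal{S}(5,4,0)$ dictated by Theorem \ref{TClassify2}.
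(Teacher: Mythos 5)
Your proposal follows essentially the same route as the paper: the same choice $W=(t_0,0,z)$, the same use of Proposition \ref{PZ2} to realize $\mathfrak{V}(\mathcal{S}(5,3,0,+))$ as the $g_W$-orbifold of $\mathfrak{V}(\mathcal{S}(5,4,0))\cong V_{N(A_{15}D_9)}$, and the same verification via \cite[Appendix D]{DGH} that $g_W$ acts by $-1$ on the Cartan subalgebra $\mathfrak{V}(\{(s_0,0,0),(0,s+y,s+y)\mid s\in S_1\})_1$. The argument is correct as outlined.
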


\paragraph{\bf Classification of Lie algebra structures }
By Lemmas \ref{LSY} and \ref{Lconj}, Theorem \ref{TC}, Proposition \ref{PS1} and Section 4.6, we
obtain the following theorem.

\begin{theorem}\label{e8+} Let $U$ be a holomorphic simple current extension of $(V_{\sqrt2E_8}^+)^{\otimes 3}$.
Then one of the following holds:
\begin{enumerate}
\item $U$ is isomorphic to a lattice VOA $V_N$ or its $\Z_2$-orbifold $\tilde{V}_N$ for some even unimodular lattice $N$;
\item The weight one subspace $U_1$ is isomorphic to one of the Lie algebras in Table \ref{Ta8}.
\end{enumerate}
\end{theorem}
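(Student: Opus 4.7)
The plan is to assemble the theorem directly from the structural results already established in Sections 4.1--4.6. Write $V=V_{\sqrt2E_8}^+$. By Proposition \ref{PS1}, any holomorphic simple current extension $U$ of $V^{\otimes 3}$ is isomorphic, as a $V^3$-module, to $\mathfrak{V}(\mathcal{S})$ for some maximal totally singular subspace $\mathcal{S}$ of the quadratic space $(R(V)^3,q_V^3)$. By Lemma \ref{LSVk}, $\Aut(V^3)\cong\Aut(V)\wr\Sym_3$ acts on $R(V)^3$ preserving $q_V^3$, and by Lemma \ref{Lconj} (which relies on Lemma \ref{LSY}) the VOA isomorphism class of $\mathfrak{V}(\mathcal{S})$ depends only on the $\Aut(V^3)$-orbit of $\mathcal{S}$. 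Therefore it suffices to run through a set of orbit representatives.

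By Theorem \ref{TC}, every maximal totally singular subspace of $R(V)^3$ is, up to $\Aut(V)\wr\Sym_3$, of one of the following four types: (i) it satisfies \eqref{Cond1}; (ii) it satisfies \eqref{Cond2}; (iii) it is of the form $\mathcal{S}(5,k_1,k_2,\varepsilon)$ from Theorem \ref{TClassify}; or (iv) it is of the form $\mathcal{S}(5,k_1,k_2)$ from Theorem \ref{TClassify2}. In cases (i) and (ii), Lemma \ref{PS2} immediately yields that $\mathfrak{V}(\mathcal{S})$ is isomorphic to $V_N$ or $\tilde V_N$ for some even unimodular lattice $N$ of rank $24$, so such $U$ falls into conclusion (1) of the theorem.

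For cases (iii) and (iv), Corollary \ref{Ex} lists the fifteen orbit representatives. Each corresponding weight-one dimension is computed there, and the individual propositions in Section 4.6 identify the Lie algebra structure of $\mathfrak{V}(\mathcal{S})_1$ in every case, using the numerical constraints from Propositions \ref{PDM1} and \ref{PDM2} together with the auxiliary Lemmas \ref{Lrank}, \ref{Lrank2}, \ref{LIL3}, \ref{LIL5}, and \ref{LIL6}. Three of those fifteen cases, namely $\mathcal{S}(5,5,0,+)$, $\mathcal{S}(5,4,0)$, and $\mathcal{S}(5,3,0,+)$, are shown in Section 4.6 and the subsection on $\Z_2$-orbifolds to produce the lattice VOAs $V_{N(E_8D_{16})}$ and $V_{N(A_{15}D_9)}$ and the orbifold $\tilde V_{N(A_{15}D_9)}$; these also fall into conclusion (1). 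The remaining twelve representatives yield precisely the Lie algebras listed in Table \ref{Ta8}, which establishes conclusion (2).

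The main obstacle, which has already been dispatched in Section 4.6, is the case-by-case determination of the Lie algebra structures: the numerical equation \eqref{rl} from Proposition \ref{PDM1} combined with the positive-integer level condition of Proposition \ref{PDM2} typically leaves several candidates, and one must use the auxiliary structural results (presence of a toral subalgebra of controlled rank via Lemma \ref{Lrank}, presence of specific semisimple subalgebras of types $(A_{1,1})^8$, $(D_{4,1})^2$, $D_{8,1}$, or $(A_{1,1})^{16}$ via Lemma \ref{Lrank2}, and identification of ideals with prescribed root-space dimension via Lemmas \ref{LIL3}, \ref{LIL5}, \ref{LIL6}) to eliminate the false candidates. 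Once the individual propositions of Section 4.6 are invoked, assembling the final statement is only a matter of reading off the tabulated outcomes and grouping the lattice and orbifold cases into conclusion (1), leaving Table \ref{Ta8} for conclusion (2). Thus the theorem follows.
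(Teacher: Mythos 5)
Your proposal is correct and follows essentially the same route as the paper, which derives the theorem by combining Proposition \ref{PS1}, Lemmas \ref{LSY} and \ref{Lconj}, Theorem \ref{TC}, Lemma \ref{PS2}, and the case-by-case Lie algebra determinations of Section 4.6. The only minor imprecision is that Table \ref{Ta8} actually records all fifteen orbit representatives (including the three lattice/orbifold cases), not just the remaining twelve, but this does not affect the validity of the argument.
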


\begin{table}[bht]\caption{Lie algebra structure of $\mathfrak{V}(\mathcal{S})_1$ for $\mathcal{S}\subset R(V)^3$}\label{Ta8}
$$\begin{array}{|c|c|c|c|c|}
\hline
{\mathcal{S}} & {\dim\mathfrak{V}(\mathcal{S})_1} &  \hbox{Lie algebra }&\hbox{No. in \cite{Sc93}}& \hbox{Ref.} \\ \hline
\mathcal{S}(5,1,0,+)& 60 & D_{4,4}(A_{2,2})^4&13& \hbox{New} \\ \hline
\mathcal{S}(5,1,0,-)&84 & C_{4,2}(A_{4,2})^2&22& \hbox{New} \\ \hline
\mathcal{S}(5,3,0,+)&192& D_{8,2}(B_{4,1})^2& 47& \tilde{V}_{N(A_{15}D_{9})} \\ \hline
\mathcal{S}(5,3,0,-)&240& C_{8,1}(F_{4,1})^2&52& \hbox{New} \\ \hline
\mathcal{S}(5,5,0,+)&744& D_{16,1}E_{8,1}& 69& V_{N(D_{16}E_{8})}\\ \hline
\mathcal{S}(5,2,1,+)&120& A_{7,2}(C_{3,1})^2A_{3,1}&33& \hbox{\cite{Lam}}\\ \hline
\mathcal{S}(5,2,1,-)&168& E_{6,2}C_{5,1}A_{5,1}&44&\hbox{New}\\ \hline
\mathcal{S}(5,4,1,+)&384& E_{8,2}B_{8,1}&62&\hbox{New}\\ \hline
\mathcal{S}(5,3,2,+)&240& C_{8,1}(F_{4,1})^2&52&\hbox{New}\\ \hline
\mathcal{S}(5,0,0)&48& (A_{3,4})^3A_{1,2}&7&\hbox{\cite{Lam}}\\ \hline
\mathcal{S}(5,2,0)&120& A_{7,2}(C_{3,1})^2A_{3,1}& 33&\hbox{\cite{Lam}}\\ \hline
\mathcal{S}(5,4,0)&408& A_{15,1}D_{9,1}& 63&V_{N(A_{15}D_{9})}\\ \hline
\mathcal{S}(5,1,1)&96&(A_{5,2})^2C_{2,1}(A_{2,1})^2 &26 &\hbox{\cite{Lam}}\\ \hline
\mathcal{S}(5,3,1)&240&E_{7,2}B_{5,1}F_{4,1}&53&\hbox{New}\\ \hline
\mathcal{S}(5,2,2)&192& (C_{6,1})^2B_{4,1}&48&\hbox{\cite{Lam}}\\ \hline
\end{array}$$
\end{table}

\section{Framed VOAs associated to subcodes of $\EuD(e_8)\oplus \EuD(d_{16}^+)$}

Recall that $\EuD(d_{16}^+)^\perp =\mathrm{Span}_{\Z_2} \{
d(\mathcal{E}_{16}), \ell(d_{16}^+)\}$ and the corresponding binary code VOA
is isomorphic to $V_{\sqrt{2}D_{16}^+}^+$. Note also that
$\EuD(e_8)^\perp \cong {\rm RM}(2,4)$ and $M_{{\rm RM}(2,4)}\cong V_{\sqrt{2}E_8}^+$.

Throughout this section, let $V=V_{\sqrt2E_8}^+$ and
$X=V_{\sqrt2D_{16}^+}^+$. For the detail of $V$ and $X$, see Sections 3.1 and
3.2, respectively. In this section, we study holomorphic VOAs associated to
maximal totally singular subspaces of $(R(X)\oplus R(V),q_X+q_V)$, which are
holomorphic simple current extensions of $X\otimes V$, and classify such VOAs.

\subsection{Simple current extensions of $V_{\sqrt2D_{16}^+}^+\otimes V_{\sqrt2E_8}^+$}
In this section, we study relations between simple current extensions of $X\otimes V$ and totally singular subspaces of $R(X)\oplus R(V)$.

We identify $R(X\otimes V)$ with $R(X)\oplus R(V)$ by Lemma \ref{LemFHL}.
By Lemmas \ref{POVE} (2) and \ref{POVE2} (2), $(R(X)\oplus R(V),q_X+q_V)$ is a non-singular $28$-dimensional quadratic space of plus type over $\F_2$.

\begin{notation}
Let $\mathcal{T}$ be a subset of $R(X)\oplus R(V)$.  Define
$\mathfrak{V}(\mathcal{T})=\oplus_{[M]\in\mathcal{T}}M$.
\end{notation}
The following proposition can be  shown by the same argument in \cite{Sh6} (cf.
Proposition \ref{PS1}).

\begin{proposition}\label{PS16} Let $V=V_{\sqrt2E_8}^+$ and $X=V_{\sqrt2D_{16}^+}^+$.
Then the $X\otimes V$-module
$\mathfrak{V}(\mathcal{T})=\oplus_{[M]\in\mathcal{T}}M$ has a simple VOA
structure which extends its $X\otimes V$-module structure if and only if
$\mathcal{T}$ is a totally singular subspace of $R(X)\oplus R(V)$. Moreover,
$\mathfrak{V}(\mathcal{T})$ is holomorphic if and only if $\mathcal{T}$ is
maximal.
\end{proposition}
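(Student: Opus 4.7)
The plan is to mirror the proof of Proposition \ref{PS1} from \cite[Proposition 4.4]{Sh6}; the only change is that the two tensor factors now come from different lattice-type VOAs rather than identical ones. By Lemmas \ref{POVE} and \ref{POVE2}, both $V$ and $X$ are simple, rational, $C_2$-cofinite, self-dual VOAs of CFT type whose irreducible modules are self-dual simple currents (Proposition \ref{fusion}) carrying symmetric invariant bilinear forms. By Lemma \ref{LemFHL}, $X\otimes V$ inherits these properties, and its irreducible modules are exactly the tensor products $M_X\otimes M_V$ with fusion rules given componentwise. In particular, $R(X\otimes V)\cong R(X)\oplus R(V)$ is an elementary abelian $2$-group under the fusion product.

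First I would link the quadratic form $q_X+q_V$ to the integrality of weights. A tensor-product module $M_X\otimes M_V$ is $\Z$-graded if and only if the lowest weights of $M_X$ and $M_V$ have matching parities modulo $\Z$, which is exactly the condition $q_X([M_X])+q_V([M_V])=0$. Hence for any subset $\mathcal{T}\subset R(X)\oplus R(V)$, every summand of $\mathfrak{V}(\mathcal{T})$ is integer-graded precisely when $\mathcal{T}$ consists of singular vectors.

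Next I would address the two directions of the first assertion. For necessity, if $\mathfrak{V}(\mathcal{T})$ carries a simple VOA structure extending $X\otimes V$, then $\mathcal{T}$ must contain the class of the vacuum, be closed under the fusion product (so that operator product expansions stay inside $\mathfrak{V}(\mathcal{T})$), and consist of $\Z$-graded modules; these three conditions together say precisely that $\mathcal{T}$ is a totally singular subspace of $(R(X)\oplus R(V), q_X+q_V)$. For sufficiency, given any totally singular $\mathcal{T}$, the hypotheses required by the general existence theorem for simple current extensions are in hand: self-dual simple current modules, symmetric invariant bilinear forms, and integer weight grading along $\mathcal{T}$. The construction used in \cite[Proposition 4.4]{Sh6} (and the references therein) then produces a unique simple VOA structure on $\mathfrak{V}(\mathcal{T})$ extending that of $X\otimes V$.

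Finally, the holomorphicity statement follows from the standard fact that for a simple current extension by $\mathcal{T}$, the irreducible $\mathfrak{V}(\mathcal{T})$-modules are parametrized by cosets of $\mathcal{T}$ in $\mathcal{T}^\perp$, where the orthogonal complement is taken with respect to the symplectic form associated to $q_X+q_V$. Since $(R(X)\oplus R(V), q_X+q_V)$ is non-singular, the holomorphicity condition $\mathcal{T}^\perp=\mathcal{T}$ is equivalent to $\mathcal{T}$ being maximal totally singular. The main obstacle is to ensure that the black-box simple current extension machinery applies as cleanly to $X\otimes V$ as it does to $V^k$ in \cite{Sh6}; this reduces to verifying the prerequisites (symmetric invariant forms and self-dual simple current fusion) which are furnished by Lemmas \ref{POVE} and \ref{POVE2}, so no new difficulty arises.
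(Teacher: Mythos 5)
Your proposal is correct and follows essentially the same route as the paper, which gives no independent argument for this proposition and simply remarks that it "can be shown by the same argument in \cite{Sh6} (cf.\ Proposition \ref{PS1})"; you have fleshed out precisely that argument, correctly identifying that the only new input needed is that $X=V_{\sqrt2D_{16}^+}^+$ satisfies the same prerequisites as $V_{\sqrt2E_8}^+$ (self-dual simple current irreducibles, symmetric invariant forms, and the quadratic form detecting integrality of weights), which is exactly what Lemma \ref{POVE2} supplies.
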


\begin{remark} Let $\mathcal{T}$ be a totally singular subspace of $R(X)\oplus R(V)$.
Since $V$ and $X$ are framed, so is $\mathfrak{V}(\mathcal{T})$.
Hence $\mathfrak{V}(\mathcal{T})$ is simple, rational, $C_2$-cofinite and of CFT type (\cite{DGH}).
\end{remark}

Clearly $\Aut(X\otimes V)$ contains $\Aut(X)\times \Aut(V)$. By Lemma
\ref{LSY}, conjugates of $\mathfrak{V}(\mathcal{T})$ under $\Aut(X\otimes V)$
give isomorphic VOAs.

\begin{lemma}\label{Lknown} Let $\mathcal{S}$ be a maximal totally singular subspace of $R(X)\oplus R(V)$.
\begin{enumerate}
\item If $\mathcal{S}$ contains $(a_1,0), (0,a_2)$ for some $a_1\in\{[0]^-,[\alpha_1]^\pm\}\subset R(X)$ and $a_2\in S(R(V))^\times$ then $\mathfrak{V}(\mathcal{S})$ is isomorphic to a lattice VOA $V_L$.
\item If $\mathcal{S}$ contains $(a_1,a_2)$ for some $a_1\in\{[0]^-,[\alpha_1]^\pm\}\subset R(X)$ and $a_2\in S(R(V))^\times$ then $\mathfrak{V}(\mathcal{S})$ is isomorphic to $V_L$ or its $\Z_2$-orbifold $\tilde{V}_L$.
\item If $\mathcal{S}$ contains $([\alpha_c/2]^\varepsilon,0)$ with $\wt(c)=8$ then $\mathfrak{V}(\mathcal{S})$ contains a full subVOA isomorphic to $(V_{\sqrt2E_8}^+)^{\otimes 3}$.
\end{enumerate}
\end{lemma}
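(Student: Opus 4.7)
The plan is to exploit transitivity of the relevant automorphism group actions on $R(X)$ and $R(V)$ to reduce $(a_1,a_2)$ to a canonical form, and then to identify a familiar lattice-type full subVOA of $\mathfrak{V}(\mathcal{S})$. This closely parallels the strategy of Lemma \ref{PS2}.

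For (1), Lemma \ref{POVE2} (4) (cf.\ Table \ref{Orbit}) shows that $\{[0]^-,[\alpha_1]^\pm\}$ is a single $\Aut(X)$-orbit, while Lemmas \ref{POVE} (3) and \ref{LO} (1) imply that $\Aut(V)$ acts transitively on $S(R(V))^\times$. Applying Lemma \ref{LSY} to $\Aut(X)\times\Aut(V)\subset\Aut(X\otimes V)$, we may replace $\mathcal{S}$ by a conjugate with $a_1=[0]^-$ and $a_2=[0]^-$. The rank-two totally singular subspace spanned by $(a_1,0)$ and $(0,a_2)$ then gives the full lattice subVOA $V_{\sqrt{2}D_{16}^+}\otimes V_{\sqrt{2}E_8}=V_{\sqrt{2}D_{16}^+\oplus\sqrt{2}E_8}$ of $\mathfrak{V}(\mathcal{S})$, so $\mathfrak{V}(\mathcal{S})$ becomes a holomorphic simple current extension of this rank-$24$ lattice VOA, and is therefore itself a lattice VOA $V_L$ for some even unimodular overlattice $L$.

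For (2), the same normalization yields $(a_1,a_2)=([0]^-,[0]^-)$. Setting $N=\sqrt{2}D_{16}^+\oplus\sqrt{2}E_8$ and taking the diagonal lift of $-1$ on $N$, the $+1$-eigenspace in $V_N=(X\oplus V_{\sqrt{2}D_{16}^+}^-)\otimes(V\oplus V_{\sqrt{2}E_8}^-)$ is exactly
\[
\mathfrak{V}(\{(0,0),(a_1,a_2)\})=(X\otimes V)\oplus(V_{\sqrt{2}D_{16}^+}^-\otimes V_{\sqrt{2}E_8}^-)=V_N^+,
\]
which thus sits in $\mathfrak{V}(\mathcal{S})$ as a full subVOA. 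Then $\mathfrak{V}(\mathcal{S})$ is a holomorphic simple current extension of $V_N^+$, and by the argument in the proof of Lemma \ref{PS2} (2) (cf.\ Theorem \ref{VN}) it must be isomorphic to $V_L$ or $\tilde{V}_L$ for some even unimodular overlattice $L$ of $N$.

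For (3), Lemma \ref{wt8} gives a sublattice $M\cong\sqrt{2}E_8^{\oplus 2}$ of $L=\sqrt{2}D_{16}^++\Z\alpha_c/2$. Since $\alpha_c\in\sqrt{2}D_{16}^+$ when $\wt(c)=8$, the decomposition $V_L=V_{\sqrt{2}D_{16}^+}\oplus V_{\alpha_c/2+\sqrt{2}D_{16}^+}$ is preserved by any lift of $-1$ on $L$, and by choosing the lift appropriately we realize $V_L^+=X\oplus V_{\alpha_c/2+\sqrt{2}D_{16}^+}^\varepsilon$ with the prescribed sign $\varepsilon$. Hence $\mathfrak{V}(\{(0,0),([\alpha_c/2]^\varepsilon,0)\})=V_L^+\otimes V$ is a full subVOA of $\mathfrak{V}(\mathcal{S})$. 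Because $M$ and $L$ span the same real vector space, the inclusions $(V_{\sqrt{2}E_8}^+)^{\otimes 2}\subset V_M^+\subset V_L^+$ are full subVOA inclusions, and tensoring with the remaining factor $V=V_{\sqrt{2}E_8}^+$ yields $(V_{\sqrt{2}E_8}^+)^{\otimes 3}$ as a full subVOA of $\mathfrak{V}(\mathcal{S})$. The main obstacle is (2): correctly identifying the diagonal summand with $V_N^+$ for the appropriate lift of $-1$, and then invoking the classification of holomorphic simple current extensions of $V_N^+$ to conclude that the extension is a lattice VOA or its $\Z_2$-orbifold; for (3), matching the sign $\varepsilon$ to the chosen lift of $-1$ on $L$ is an analogous but simpler technicality.
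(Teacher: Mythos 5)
Your proposal is correct and follows essentially the same route as the paper: parts (1) and (2) are handled by conjugating $a_1$ into $[0]^-$ via the $\Aut(X)$-orbit from Table \ref{Orbit} and $a_2$ into $[0]^-$ via transitivity of $O(R(V),q_V)$ on singular vectors, then recognizing the resulting full lattice subVOA $V_{\sqrt2D_{16}^+\oplus\sqrt2E_8}$ (resp.\ $V_{\sqrt2D_{16}^+\oplus\sqrt2E_8}^+$) exactly as in Lemma \ref{PS2}, and part (3) invokes Lemma \ref{wt8} to embed $V_{\sqrt2E_8^{\oplus2}}^+\otimes V_{\sqrt2E_8}^+$ as a full subVOA. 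The only cosmetic difference is that you absorb the sign $\varepsilon$ in (3) by adjusting the lift of $-1$, whereas the paper conjugates $[\alpha_c/2]^\varepsilon$ to $[\alpha_c/2]^+$ within its $\Aut(X)$-orbit; both are valid.
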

\begin{proof} Recall that the orbit of $[0]^-$ in $\Aut(X)$ is $\{[0]^-,[\alpha_1]^\pm\}$ (see Table \ref{Orbit}).
Hence (1) and (2) are shown by the same argument in Lemma \ref{PS2}.

If $\mathcal{S}$ contains $[\alpha_c/2]^\varepsilon$ with $\wt(c)=8$ then up to conjugation, we may assume that $([\alpha_c/2]^+,0)\in\mathcal{S}$.
Hence $\mathfrak{V}(\mathcal{S})$ contains a full subVOA isomorphic to $V_{\sqrt2E_8\oplus\sqrt2E_8}^+\otimes V_{\sqrt2E_8}^+$ by Lemma \ref{wt8}, which proves (3).
\end{proof}

\begin{lemma}\label{Lsemi2} Let $\mathcal{T}$ be a totally singular subspace of $R(X)\oplus R(V)$ and $[M]$ an element in $\mathcal{T}$ with $M_1\neq0$.
Then for any $a\in M_1$, $a_{(0)}$ is semisimple on $\mathfrak{V}(\mathcal{T})_1$.
\end{lemma}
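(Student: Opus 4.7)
The proof follows the same strategy as Lemma~\ref{Lsemi}, splitting into cases according to $\dim M_1$. By Proposition~\ref{fusion}, every irreducible $X\otimes V$-module is a self-dual simple current. Combining Lemma~\ref{POVE}(5) with Table~\ref{Orbit}, together with the fact that $M_1\neq 0$ forces the lowest weight of $M$ to be at most $1$, one checks that the possible values of $\dim M_1$ for irreducible $X\otimes V$-modules with $M_1\neq 0$ are exactly $1$, $4$, $8$, and $16$.

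If $\dim M_1=1$, then $a$ spans $M_1$ up to scalar. The simple current property organizes the summands $(M')_1$ of $\mathfrak{V}(\mathcal{T})_1$ into orbits of length one or two under fusion by $[M]$, and $a_{(0)}$ interchanges the two subspaces on each length-two orbit. An analog of Lemma~\ref{L1} for $X\otimes V$-modules, obtained from the explicit intertwining operators for lattice-type VOAs, shows that on each length-two orbit the composition $a_{(0)}^2$ acts by a non-zero scalar, whence $a_{(0)}$ is diagonalizable on $\mathfrak{V}(\mathcal{T})_1$.

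If $\dim M_1>1$, then using Lemma~\ref{POVE}(5), Table~\ref{Orbit}, the $\Aut(X)\times\Aut(V)$-action on $R(X)\oplus R(V)$, and Lemma~\ref{LSY}, I reduce $[M]$ to one of three cases: (a) $[M_X]=[X]$ and $[M_V]$ is in the orbit of $[V_{\sqrt{2}E_8}^-]$, with $\dim M_1=8$; (b) $[M_X]$ is in the orbit of $[V_{\sqrt{2}D_{16}^+}^-]$ and $[M_V]=[V]$, with $\dim M_1=16$; (c) $[M_X]=[\alpha_c/2]^\pm$ with $\wt(c)=4$ and $[M_V]=[V]$, with $\dim M_1=4$. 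In cases (a) and (b), $M_1$ coincides with the Cartan subalgebra of the lattice VOA $V_{\sqrt{2}E_8}$ or $V_{\sqrt{2}D_{16}^+}$, whose elements act semisimply on every module by the explicit vertex operator construction in \cite{FLM}, and restriction to $\mathfrak{V}(\mathcal{T})$ gives the claim. In case (c), set $L=\sqrt{2}D_{16}^++\Z\alpha_c/2$; this is an even lattice of rank $16$, and $X\oplus M_X\cong V_L^+$ sits in $\mathfrak{V}(\mathcal{T})$ as a full subVOA. The space $(M_X)_1$ is spanned by four elements of the form $e^v+e^{-v}$, where $v$ ranges over the four $\theta$-orbits of norm-$2$ vectors in the coset $\alpha_c/2+\sqrt{2}D_{16}^+$; a direct computation shows these representatives are pairwise orthogonal, so the four generators commute in $(V_L)_1$ and each lies in an $\mathfrak{sl}_2$-triple as the semisimple $e+f$-type element. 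Hence $(M_X)_1$ is a toral subalgebra of the reductive Lie algebra $(V_L)_1$, and its elements act semisimply on every $V_L^+$-module, in particular on $\mathfrak{V}(\mathcal{T})$.

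The main obstacle is case (c): an arbitrary element of $(M_X)_1$ is a linear combination of semisimple Lie algebra elements, and such combinations are not automatically semisimple. The explicit orthogonality of the norm-$2$ representatives in the coset $\alpha_c/2+\sqrt{2}D_{16}^+$ is precisely what forces $(M_X)_1$ to be abelian and toral, ensuring that every element acts semisimply.
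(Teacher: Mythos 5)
Your treatment of the cases with $\dim M_1>1$ is an expanded version of what the paper actually does: the paper's entire proof consists of conjugating $[M]$ by $\Aut(X\otimes V)$ so that both tensor factors are of untwisted type $[\lambda]^\pm$ (every orbit in Table \ref{Orbit} contains such a class, and likewise for $R(V)$ by Lemma \ref{LO}~(1)), and then invoking the explicit vertex operators of \cite{FLM}. Your cases (a)--(c) -- and in particular the observation in case (c) that the norm-$2$ representatives $(\pm\alpha_{i_1}\pm\alpha_{i_2}\pm\alpha_{i_3}\pm\alpha_{i_4})/2$ of the coset $\alpha_c/2+\sqrt2D_{16}^+$ are pairwise orthogonal, so that $(M_X)_1$ is spanned by commuting elements $e^v\pm e^{-v}$, each semisimple in its $\mathfrak{sl}_2$ -- are exactly the content hiding behind the paper's citation of \cite{FLM}, and that part of your argument is sound.

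The step that fails is in your case $\dim M_1=1$. You claim that fusion by $[M]$ pairs the summands of $\mathfrak{V}(\mathcal{T})_1$ into length-two orbits on each of which $a_{(0)}^2$ acts by a non-zero scalar. But such an orbit can pair weight-one spaces of different dimensions. For instance, take $[M]=([\alpha_c/2]^+,[\mu]^+)$ with $\wt(c)=2$ and $\mu$ of norm $1$ (both factors have lowest weight $1/2$, so $q([M])=0$ and $\dim M_1=1$) and $[M']=([0]^+,[0]^-)$; by Lemma \ref{Inner} these are orthogonal singular vectors, hence lie in a common maximal totally singular subspace, and by Proposition \ref{fusion} one has $[M\boxtimes M']=([\alpha_c/2]^+,[\mu]^-)$, whose weight-one space is one-dimensional while $(M')_1$ is eight-dimensional. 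Then $a_{(0)}^2\vert_{(M')_1}$ factors through a one-dimensional space and has rank at most one, so it cannot be a non-zero scalar. Semisimplicity still holds, but the argument must instead show that $a_{(0)}$ vanishes on a codimension-one subspace of $(M')_1$ and that the composite $(M'')_1\to(M')_1\to(M'')_1$ is non-zero, which Lemma \ref{L1} alone does not provide. The cleanest repair is to fold this case into the same untwisted-type reduction you use elsewhere: after conjugation, $a$ is a combination of $e^{(\lambda,\mu)}\pm e^{-(\lambda,\mu)}$ and $e^{(\lambda,-\mu)}\pm e^{-(\lambda,-\mu)}$ with $\langle(\lambda,\mu),(\lambda,-\mu)\rangle=0$, i.e.\ again a sum of commuting semisimple elements -- which is precisely the paper's uniform two-line argument.
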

\begin{proof} By the action of $\aut(X\otimes V) $ (cf. Table \ref{Orbit}), we may assume that $[M]$ is the tensor product of irreducible modules of untwisted type for $X$ and $V$.
Hence this lemma follows from \cite{FLM}.
\end{proof}

\subsection{Classification of maximal totally singular subspaces of $R(X)\oplus R(Y)$}
In this subsection, we study maximal totally singular subspaces of $R(X)\oplus R(V)$.
Let $\rho_1$ and $\rho_2$ denote the projections from $R(X)\oplus R(V)$ to $R(X)$ and to $R(V)$, respectively.
For a subset $\mathcal{S}$ of $R(X)\oplus R(V)$, let $\mathcal{S}^{(i)}=\{W\in \mathcal{S}\mid \rho_i(W)=0\}$.

\begin{lemma}\label{Lholo} Let $\mathcal{S}$ be a maximal totally singular subspace of $R(X)\oplus R(V)$.
Then the following hold:
\begin{enumerate}
\item For $\{i,j\}=\{1,2\}$, $\rho_i(\mathcal{S})=\rho_i(\mathcal{S}^{(j)})^\perp$;
\item There is a bijection from $\rho_1(\mathcal{S})/\rho_1(\mathcal{S}^{(2)})$ to $\rho_2(\mathcal{S})/\rho_2(\mathcal{S}^{(1)})$;
\item $\dim\rho_1(\mathcal{S}^{(2)})\ge 4$.
\end{enumerate}
\end{lemma}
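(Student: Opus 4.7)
The plan is to exploit the self-duality $\mathcal{S}=\mathcal{S}^\perp$, which holds because $\mathcal{S}$ is maximal totally singular in the non-singular $28$-dimensional plus-type quadratic space $R(X)\oplus R(V)$: indeed then $\dim\mathcal{S}=14$, $\dim\mathcal{S}^\perp=28-14=14$, and $\mathcal{S}\subseteq\mathcal{S}^\perp$ by total singularity, forcing equality.

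For (1), I will prove the equivalent dual identity $\rho_1(\mathcal{S})^\perp=\rho_1(\mathcal{S}^{(2)})$ inside the non-singular $R(X)$, and then (1) follows by taking perpendicular complements. The inclusion $\rho_1(\mathcal{S}^{(2)})\subseteq\rho_1(\mathcal{S})^\perp$ is immediate since for $(W',0),(W,v)\in\mathcal{S}$ the pairing $\la (W',0),(W,v)\ra=\la W',W\ra_X$ vanishes by total singularity. Conversely, if $W\in\rho_1(\mathcal{S})^\perp$, then $(W,0)$ is orthogonal to every $(W',v')\in\mathcal{S}$ since the pairing reduces to $\la W,W'\ra_X=0$; by $\mathcal{S}^\perp=\mathcal{S}$, we get $(W,0)\in\mathcal{S}$, and hence $W\in\rho_1(\mathcal{S}^{(2)})$. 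The same argument handles the other coordinate.

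For (2), I will define $\varphi:\rho_1(\mathcal{S})/\rho_1(\mathcal{S}^{(2)})\to \rho_2(\mathcal{S})/\rho_2(\mathcal{S}^{(1)})$ by sending $W+\rho_1(\mathcal{S}^{(2)})$ to $v+\rho_2(\mathcal{S}^{(1)})$, where $(W,v)$ is any element of $\mathcal{S}$ with first coordinate $W$. Well-definedness and injectivity both reduce to the observation that $(W,v),(W',v')\in\mathcal{S}$ with $W-W'\in\rho_1(\mathcal{S}^{(2)})$ imply $(0,v-v')\in\mathcal{S}$, i.e.\ $v-v'\in\rho_2(\mathcal{S}^{(1)})$; surjectivity is dual. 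Finally, for (3), I will combine (1) with $\dim\mathcal{S}=14$ and the rank-nullity identities $\dim\rho_i(\mathcal{S})+\dim\rho_j(\mathcal{S}^{(i)})=\dim\mathcal{S}=14$ for $\{i,j\}=\{1,2\}$. Using (1), $\dim\rho_1(\mathcal{S})=\dim R(X)-\dim\rho_1(\mathcal{S}^{(2)})=18-\dim\rho_1(\mathcal{S}^{(2)})$, and substituting gives
\[\dim\rho_1(\mathcal{S}^{(2)})-\dim\rho_2(\mathcal{S}^{(1)})=18-14=4,\]
from which $\dim\rho_1(\mathcal{S}^{(2)})\ge 4$ is immediate.

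The only subtle point is the use of self-duality $\mathcal{S}=\mathcal{S}^\perp$ to extract singular vectors lying in $\mathcal{S}$ from merely knowing orthogonality to $\rho_1(\mathcal{S})$; once that is in place, everything else is linear-algebraic bookkeeping with the dimensions $\dim R(X)=18$ and $\dim R(V)=10$ given by Lemmas \ref{POVE} and \ref{POVE2}.
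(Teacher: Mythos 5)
Your proposal is correct and follows essentially the same route as the paper: (1) is the self-duality $\mathcal{S}=\mathcal{S}^\perp$ consequence of maximality, (2) is the same induced bijection on quotients, and (3) is the same dimension count (the paper gets $\dim\rho_1(\mathcal{S}^{(2)})=\dim\mathcal{S}^{(2)}\ge 14-\dim R(V)=4$ directly, while you route through (1) and $\dim R(X)=18$ to get the equivalent, slightly sharper identity $\dim\rho_1(\mathcal{S}^{(2)})-\dim\rho_2(\mathcal{S}^{(1)})=4$). Your write-up just supplies the details the paper leaves implicit.
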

\begin{proof} (1) follows from the maximality of $\mathcal{S}$.

Let $W^1\in \rho_1(\mathcal{S})$.
Then there is $W^2\in \rho_2(\mathcal{S})$ such that $(W^1,W^2)\in \mathcal{S}$.
By the definition of $\rho_i$, the map $W_1+\rho_1(\mathcal{S}^{(2)})\mapsto W_2+\rho_2(\mathcal{S}^{(1)})$ is a well-defined bijection, which proves (2).

Since $\mathcal{S}$ is maximal, $\dim\mathcal{S}=14$.
(3) follows from $\dim R(V)=10$.
\end{proof}

\begin{proposition}\label{PCl5} Let $\mathcal{S}$ be a maximal totally singular subspace of $R(X)\oplus R(V)$.
Assume that $\dim\rho_1(\mathcal{S}^{(2)})\ge 5$.
Then one of the following holds:
\begin{enumerate}
\item $\mathfrak{V}(\mathcal{S})$ contains a full subVOA isomorphic to $(V_{\sqrt2E_8}^+)^{\otimes 3}$;
\item $\mathfrak{V}(\mathcal{S})$ is isomorphic to a lattice VOA or its $\Z_2$-orbifold;
\item $\rho_1(\mathcal{S}^{(2)})$ is conjugate to $${\rm Span}_{\F_2}\{[\alpha_{(1^40^{12})}/2]^+, [\alpha_{(1^20^21^20^{10})}/2]^+, [\alpha_{((10)^40^8)}/2]^+, [\alpha_{(1^{16})}/4]^+,[\chi_0]^+\}.$$
\end{enumerate}
\end{proposition}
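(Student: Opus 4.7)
The strategy is to analyze $W := \rho_1(\mathcal{S}^{(2)}) \subseteq R(X)$, a totally singular subspace of dimension $\ge 5$, together with a companion element in $\mathcal{S}^{(1)}$, and to case-split according to which $\Aut(X)$-orbits of Table \ref{Orbit} meet $W$. Using Lemma \ref{Lholo}(1)--(2) together with $\dim R(X)=18$, $\dim R(V)=10$, and $\dim\mathcal{S}=14$, I would first derive
\[
\dim\rho_2(\mathcal{S}^{(1)})=\dim\rho_1(\mathcal{S}^{(2)})-4\ge 1,
\]
so $\mathcal{S}^{(1)}$ contains some $(0,a_2)$ with $a_2\in S(R(V))^\times$. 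By Lemma \ref{LO}(1) and Lemma \ref{POVE}(3), such $a_2$ is $\Aut(V)$-conjugate to $[0]^-$.

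From Table \ref{Orbit}, the non-zero singular vectors of $R(X)$ fall into exactly four $\Aut(X)$-orbits: $\{[0]^-,[\alpha_1]^\pm\}$, the $\wt(c)=4$ orbit, the $\wt(c)=8$ orbit, and the ``triality'' orbit $\{[\alpha_{(1^{16})}/4-\alpha_c/2]^\pm\}\cup\{[\chi_\lambda]^+\}$. If $W$ meets the first, then together with the above $(0,a_2)$, Lemma \ref{Lknown}(1) yields conclusion (2). If $W$ meets the $\wt(c)=8$ orbit, Lemma \ref{Lknown}(3) yields conclusion (1). Otherwise every non-zero element of $W$ lies in the $\wt(c)=4$ orbit or the triality orbit, and it remains to show that $W$ is $\Aut(X)$-conjugate to the five-dimensional subspace given in (3).

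To treat this remaining case, I would split $W$ via the $\Z_2$-grading from Proposition \ref{fusion}: let $W^0$ be the ``untwisted'' part (spanned by $[\lambda]^\pm$-type modules in $W$) and $W^1$ the ``twisted'' coset. Using Lemma \ref{Inner} to compute symplectic products among $\wt(c)=4$ representatives, and the constraint that every pairwise sum in $W^0$ must avoid the non-singular orbits $\wt(\cdot)=2,6$ as well as the already-excluded $\wt(\cdot)=8$, I would show that the codewords $c\in\F_2^{16}$ appearing in $W^0$ (modulo $c\sim c+(1^{16})$) form a doubly even subcode of $\F_2^{16}$ all of whose non-zero elements have weight exactly $4$. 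The main obstacle is then to prove that any such code is at most three-dimensional and, up to the $\Sym_{16}$-action sitting inside $\Aut(X)\cong 2^{15}.(2^{14}.\Sym_{16}).\Sym_3$, is conjugate to $\langle c_1,c_2,c_3\rangle$ of (3); this is the Reed--Muller-type structure of weight-$4$ codes, which forces the support to be an $8$-element subset.

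Finally I would adjoin the triality generators: the element $[\alpha_{(1^{16})}/4]^+$ is pinned down (up to the $2^{14}$-elementary-abelian part of the stabilizer) by the orthogonality relations of Lemma \ref{Inner} with $[\alpha_{c_i}/2]^+$; and the coset $W^1$, which must lie in the triality orbit and be orthogonal to $W^0$, is then forced by the residual stabilizer (including the $\Sym_3$-``triality'' factor of $\Aut(X)$) to reduce to the single coset representative $[\chi_0]^+$. Combining these steps yields $\dim W=5$ and the explicit conjugacy, completing the trichotomy. The most delicate point throughout is controlling how the mixed nature of the triality orbit (which contains both $[\lambda]^+$-type and $[\chi_\lambda]^+$-type modules) interacts with the weight-$4$ code classification; once the code is rigidified by $\Sym_{16}$, the triality $\Sym_3$-factor together with the translations by $2^{14+15}$ must be used to normalize the remaining two triality generators simultaneously.
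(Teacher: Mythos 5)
Your proposal is correct and follows essentially the same route as the paper: both derive $\dim\mathcal{S}^{(1)}=\dim\rho_1(\mathcal{S}^{(2)})-4\ge 1$ to get a companion $(0,[0]^-)$, split off the cases where $\rho_1(\mathcal{S}^{(2)})$ meets the orbit of $[0]^-$ (giving a lattice VOA via Lemma \ref{Lknown}(1)) or the $\wt(c)=8$ orbit (giving case (1) via Lemma \ref{Lknown}(3)), and then reduce the remaining case to classifying a doubly even code without weight-$8$ words together with a normalization of the two ``triality'' generators. The paper packages the untwisted part as an even overlattice $L\supset L_1\supset\sqrt2D_{16}^+$ with $L_1$ described by the doubly even code $C$, which is just a lattice-theoretic rephrasing of your code-of-weight-$4$-words argument, and it likewise leaves the classification of that code and the final normalization of $[\chi_\lambda]^+$ (via $\mathrm{Hom}(\sqrt2D_{16}^+,\Z_2)\subset\Aut(X)$) at the same level of detail you indicate.
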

\begin{proof} Set $T=\rho_1(\mathcal{S}^{(2)})$ and $d=\dim T$.
Note that $T$ is a totally singular subspace of $R(X)$.
By the assumption, $\dim\rho_2(\mathcal{S})=14-d\le9$.
Hence $\dim\mathcal{S}^{(1)}\ge1$.
Up to the action of $\Aut(V_{\sqrt2E_8}^+)$ on the second coordinate, we may assume that $\mathcal{S}$ contains $([0]^+,[0]^-)$.

Set $T_1=\{[\lambda]^\pm\mid \lambda\in (\sqrt2D_{16}^+)^*/\sqrt2D_{16}^+\}\cap T$.
Then by Proposition \ref{fusion}, $T_1$ is a subspace of $T$, and $\dim T/T_1\le 1$, that is, $\dim T_1\ge d-1$.
If $T$ contains $[0]^-$ then (2) holds by the first paragraph and Lemma \ref{Lknown} (1).
Hence, we may assume $T$ does not contain $[0]^-$.

Let $L$ be the overlattice of ${\sqrt2D_{16}^+}$ such that $T_1=\{[\lambda]^{\varepsilon}\mid \lambda\in L/{\sqrt2D_{16}^+}\}$.
Since $T_1$ is totally singular, $L$ is even.
It follows from $\dim T_1\ge d-1$ that $|L/\sqrt2D_{16}^+|\ge 2^{d-1}$.
We now use the descriptions of $\sqrt2D_{16}^+$ and its dual lattice given in Section 3.2.
Then $L$ contains a sublattice $$L_1=\sum_{1\le i,j\le 16}\Z(\alpha_i+\alpha_j)+\sum_{c\in C}\Z(\frac{\alpha_c}{2}-\delta_c\alpha_1)$$ such that $L/L_1\subset{\rm Span}_{\F_2}\{\alpha_1,\alpha_{(1^{16})}/4-\alpha_d/2\}$, where $\delta_c\in\{0,1\}$, $d\in\mathcal{E}_{16}$ and $C$ is a doubly even code.
If $L/\sqrt2D_{16}^+$ contains $\alpha_1+\sqrt2D_{16}^+$ then up to the action of $\Aut(V_{\sqrt2D_{16}^+}^+)$ on $T$, $T$ contains $[0]^-$ by Table \ref{Orbit}, and (2) holds.
Hence we may assume that $|L/L_1|\le 2$, which implies $|L_1/\sqrt2D_{16}^+|\ge 2^{d-2}$, namely,
 $\dim C\ge d-1$.
If $\dim C\ge 5$ then $C$ has a weight $8$ codeword, and by Lemma \ref{Lknown} (3), (1) holds.
In particular, if $d\ge 6$ then (1) or (3) holds.

Assume $d=5$.
If $|T/T_1|=1$ or $|L/L_1|=1$ then $\dim C\ge6$, and (1) holds.
Hence we may assume that $T/T_1=\{[0]^+,[\chi_\lambda]^+\}$ and that $L/L_1=\{0,\alpha_{(1^{16})}/4-\alpha_d/2\}$.
By Lemma \ref{Lknown}, we also may assume that $C$ does not have weight $8$ codewords.
Hence $C$ is equivalent to ${\rm Span}_{\F_2}\{(1^40^{14}),(1^20^21^20^{10}),((10)^40^{8}), (1^{16})\}$.
Up to the action of lifts of $\Aut(\sqrt2D_{16}^+)$ to $\Aut(V_{\sqrt2D_{16}^+}^+)$,
$T$ is conjugate to $${\rm Span}_{\F_2}\{[\alpha_{(1^40^{12})}/2]^+,\ [\alpha_{(1^20^21^20^{10})}/2]^+,\ [\alpha_{((10)^40^8)}/2],^+\ [\alpha_{(1^{16})}/4]^+,\ [\chi_\lambda]^+\}.$$
Since $T$ is totally singular, $\langle \lambda,\lambda\rangle+\langle
v+\lambda,v+\lambda\rangle\in2\Z$ for all $v\in L$ by Proposition \ref{fusion}.
Hence $\langle\lambda,v\rangle\in\Z$ for all $v\in L$. Up to the action of
$\langle\lambda,\cdot \rangle\in{\rm Hom}(\sqrt2D_{16}^+,\Z_2) \subset
\Aut(V_{\sqrt2D_{16}^+}^+)$, we may assume that $\lambda=0$. Hence (3)
holds.
\end{proof}

\begin{proposition}\label{PCl4} Let $\mathcal{S}$ be a maximal totally singular subspace of $R(X)\oplus R(V)$.
Assume that $\dim\rho_1(\mathcal{S}^{(2)})=4$.
Then one of the following holds:
\begin{enumerate}
\item $V$ contains a full subVOA isomorphic to $(V_{\sqrt2E_8}^+)^{\otimes 3}$;
\item $V$ is isomorphic to a lattice VOA or its $\Z_2$-orbifold;
\item $\rho_1(\mathcal{S}^{(2)})$ is conjugate to ${\rm Span}_{\F_2}\{ [0]^-,[\alpha_1]^+, [\alpha_{(1^40^{12})}/2]^+,\ [\alpha_{(1^20^21^20^{10})}/2]^+\}$;
\item $\rho_1(\mathcal{S}^{(2)})$ is conjugate to ${\rm Span}_{\F_2}\{ [0]^-, [\alpha_{(1^40^{12})}/2]^+,\ [\alpha_{(1^20^21^20^{10})}/2]^+,\ [\alpha_{((10)^40^8)}/2]^+\}$;
\item $\rho_1(\mathcal{S}^{(2)})$ is conjugate to ${\rm Span}_{\F_2}\{ [0]^-, [\alpha_{(1^40^{12})}/2]^+,\ [\alpha_{(1^20^21^20^{10})}/2]^+,\ [\alpha_{(1^{16})}/4]^+\}$;
\item $\rho_1(\mathcal{S}^{(2)})$ is conjugate to ${\rm Span}_{\F_2}\{ [\alpha_{(1^40^{12})}/2]^+,\ [\alpha_{(1^20^21^20^{10})}/2]^+,\ [\alpha_{(1^{16})}/4]^+, [\chi_0]^+\}$.

\end{enumerate}
\end{proposition}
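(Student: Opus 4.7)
The plan is to follow the same strategy as the proof of Proposition \ref{PCl5} with $d := \dim \rho_1(\mathcal{S}^{(2)}) = 4$. The key new feature is the dimension count: since $\dim \mathcal{S} = 14$ and $\dim R(V) = 10$, one has $\dim \rho_2(\mathcal{S}) = 14 - 4 = 10$, hence $\rho_2(\mathcal{S}) = R(V)$ and $\mathcal{S}^{(1)} = 0$ by Lemma \ref{Lholo}(1). In particular $\mathcal{S}$ contains no element of the form $(0, a_2)$ with $a_2 \in S(R(V))^\times$, so Lemma \ref{Lknown}(1) is no longer available to force case (2) from $[0]^- \in T$ alone—indeed cases (3)--(5) explicitly permit $[0]^- \in T$. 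To obtain case (2) one must instead apply Lemma \ref{Lknown}(2), which requires an element $(a_1, a_2) \in \mathcal{S}$ with $a_1 \in \{[0]^-, [\alpha_1]^\pm\}$ and $a_2 \in S(R(V))^\times$.

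First I would set $T = \rho_1(\mathcal{S}^{(2)})$ and $T_1 = T \cap \{[\lambda]^\pm \mid \lambda \in (\sqrt2 D_{16}^+)^*/\sqrt2 D_{16}^+\}$, so $\dim T_1 \ge 3$. Let $L$ be the even overlattice of $\sqrt2 D_{16}^+$ with $T_1 \cong L/\sqrt2 D_{16}^+$, and, as in Proposition \ref{PCl5}, write
\[
L_1 = \sum_{1 \le i,j \le 16}\Z(\alpha_i + \alpha_j) + \sum_{c \in C}\Z\bigl(\tfrac{\alpha_c}{2} - \delta_c \alpha_1\bigr),
\]
with $L/L_1 \subset \Span_{\F_2}\{\alpha_1,\ \tfrac{\alpha_{(1^{16})}}{4} - \tfrac{\alpha_d}{2}\}$ and $C$ a doubly even code of length $16$. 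The triple $(\dim C, |L/L_1|, |T/T_1|)$ then ranges over a finite list constrained by $|T_1| = |L/L_1| \cdot 2^{\dim C} \in \{8, 16\}$. If $C$ contains a weight-$8$ codeword, Lemma \ref{Lknown}(3) gives case (1); otherwise the non-zero weights of $C$ lie in $\{4, 12, 16\}$, so up to code equivalence $C$ is spanned by a small subset of $\{(1^4 0^{12}),\ (1^2 0^2 1^2 0^{10}),\ ((10)^4 0^8)\}$.

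For each resulting configuration, I would decide whether case (2) is forced: since $\rho_2(\mathcal{S}) = R(V)$, for any $a_2 \in S(R(V))^\times$ there exists $w \in R(X)$ with $(w, a_2) \in \mathcal{S}$, and if for some $t \in T$ the translate $w + t$ lies in the $\Aut(X)$-orbit $\{[0]^-, [\alpha_1]^\pm\}$, then Lemma \ref{Lknown}(2) yields case (2). The configurations in which this fails are exactly those giving (3)--(6), the four normal forms being distinguished by whether $L/L_1$ contains $\alpha_1$ only, both $\alpha_1$ and $\tfrac{\alpha_{(1^{16})}}{4} - \tfrac{\alpha_d}{2}$, only the latter, or is combined with a non-trivial $T/T_1 = \{[0]^+, [\chi_0]^+\}$. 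The main obstacle is this final step: verifying in each non-exceptional configuration that a suitable translate $w + t$ actually exists. This will require combining the $\Aut(X)$-orbit data of Table \ref{Orbit}, the freedom to act by $\Aut(V)$ on the second coordinate, and a careful analysis of the coset structure of $\mathcal{S}/\mathcal{S}^{(2)}$ inside $\rho_1(\mathcal{S}) \oplus R(V)$, and it is precisely the four residual configurations that produce the normal forms listed in (3)--(6).
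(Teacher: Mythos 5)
There is a genuine gap, and it sits exactly where you flag ``the main obstacle.'' Your plan transplants the lattice/code analysis of Proposition \ref{PCl5} (overlattice $L$, sublattice $L_1$, doubly even code $C$) and then proposes to decide, configuration by configuration, whether a translate $w+t$ landing in $\{[0]^-,[\alpha_1]^\pm\}$ exists so that Lemma \ref{Lknown}(2) applies; you leave that decision unresolved. The paper does not use the $L$/$C$ machinery here at all. Its key observation, which your proposal is missing, is a one-line consequence of Lemma \ref{Inner}: every untwisted class $[\lambda]^\varepsilon$ is orthogonal to $[0]^-$, while $\langle[0]^-,[\chi_\lambda]^\pm\rangle=1$. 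Hence if $T=\rho_1(\mathcal{S}^{(2)})$ contains neither $[0]^-$ nor any $[\chi_\lambda]^+$ (and no $[\alpha_c/2]^\pm$ with $\wt(c)=8$, which would give case (1)), then $T$ consists of untwisted classes only, so $[0]^-\in T^\perp=\rho_1(\mathcal{S})$ by Lemma \ref{Lholo}(1); since $[0]^-\notin T$, the element of $\mathcal{S}$ above it has nonzero singular second coordinate, and Lemma \ref{Lknown}(2) gives case (2). This cleanly reduces the problem to classifying totally singular $T$ containing $[0]^-$ or $[\chi_\lambda]^+$, which the paper does directly from Lemma \ref{Inner} and Table \ref{Orbit}: orthogonality to $[0]^-$ excludes the twisted classes, total singularity excludes the $(\Z+\tfrac12)$-graded orbits, and the relation $\langle[\alpha_1]^\pm,[\alpha_{(1^{16})}/4-\alpha_d/2]^\pm\rangle=1$ forces the split into (3), (4), (5), with (6) arising from the $[\chi_0]^+$ branch. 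Without this orthogonality criterion your argument has no mechanism for establishing case (2), and the proof does not close.

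A secondary point: your assertion that the configurations where the translate fails to exist are \emph{exactly} those giving (3)--(6) is not correct, because the alternatives in the proposition overlap. For instance, in case (4) one checks via Lemma \ref{Inner} that $[\alpha_1]^+\in T^\perp\setminus T$, so Lemma \ref{Lknown}(2) \emph{does} apply there as well (consistently with the paper later identifying that VOA as $\tilde{V}_{N(A_{17}E_7)}$). This does not invalidate a disjunctive conclusion, but it means your intended strategy of characterizing (3)--(6) as the residual set where (2) fails would not match the actual normal forms, and the case analysis would go astray. Your dimension count $\rho_2(\mathcal{S})=R(V)$, $\mathcal{S}^{(1)}=0$, and the observation that Lemma \ref{Lknown}(1) is unavailable, are all correct and consistent with the paper.
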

\begin{proof} Set $T=\rho_1(\mathcal{S}^{(2)})$.
If $T$ contains $[\alpha_c/2]^\pm$ with $\wt(c)=8$ then (1) holds by Lemma \ref{Lknown} (3).
Hence we may assume that $T$ does not contain such elements.
If $T$ contains neither $[0]^-$ nor $[\chi_\lambda]^+$ then $T^\perp\setminus T$ contains $[0]^-$ by Lemmas \ref{Inner} and \ref{Lholo} (1).
In this case, (2) holds up to conjugation.
Hence we may assume that $T$ contains $[0]^-$ or $[\chi_\lambda]^+$.

Assume that $T$ contains $[0]^-$.
Then by Lemma \ref{Inner} $T$ is a subset of $$\{[0]^\pm,[\alpha_1]^\pm,[\alpha_c/2]^\pm,[\alpha_c/2-\alpha_i]^\pm,[\alpha_{(1^{16})}/4-\alpha_d/2]^\pm\mid \wt(c)=4,\ d\in\mathcal{E}_{16}\}.$$
By Lemma \ref{Inner}, $T$ can not contain both $[\alpha_1]^\pm$ and $[\alpha_{(1^{16})}/4-\alpha_d/2]^\pm$.
Thus we obtain one of (3), (4) and (5) by Proposition \ref{fusion} and Table \ref{Orbit}.

Assume that $T$ contains $[\chi_\lambda]^+$.
Up to the action of $\Aut(V_{\sqrt2D_{16}^+}^+)$, we may assume that $\lambda=0$.
If $T$ does not contain $[\alpha_{(1^{16})}/4-\alpha_d/2]^\pm$ then $g\circ T$ does not contain $[\chi_\lambda]^+$ for some $g\in\Aut(V_{\sqrt2D_{16}^+}^+)$ by Proposition \ref{fusion} and Table \ref{Orbit}, which is the case in the previous paragraph.
Hence we may assume that $T$ contains $[\alpha_{(1^{16})}/4-\alpha_d/2]^+$.
By the action of automorphisms induced from $\Aut(\sqrt2D_{16}^+)$, we may assume $d=0$.
By Lemma \ref{Inner}, $T$ does not contain $[\alpha_1]^+$.
Thus we have (6).
\end{proof}

Let us calculate the dimension of the weight $1$ subspace of $\mathfrak{V}(\mathcal{S})$.

\begin{proposition}\label{Pdim16} Let $\mathcal{S}$ be a maximal totally singular subspace of $R(X)\oplus R(V)$.
Then the dimension of $\mathfrak{V}(\mathcal{S})_1$ is given by
\begin{eqnarray}
 16\times|\rho_1(\mathcal{S}^{(2)})\cap\{[0]^-,[\alpha_1]^\pm\}|+4\times|\rho_1(\mathcal{S}^{(2)})\cap\{[\alpha_c/2]^\pm,\ [\alpha_c/2-\alpha_1]^\pm\mid \wt(c)=4\}|\notag &&\\
+|\rho_1(\mathcal{S}^{(2)})\cap\{[\alpha_{(1^{16})}/4-\alpha_d/2]^\pm,\ [\chi_\lambda]^+\mid d\in\mathcal{E}_{16}\}|+8\times(|\rho_2(\mathcal{S}^{(1)})|-1)&&\label{Eq:Pdim}\\
+|\rho_1(\mathcal{S})\cap\{[\alpha_c/2-\alpha_1]^\pm,\ [\alpha_c/2]^\pm\mid \wt(c)=2\}|\times|\rho_2(\mathcal{S}^{(1)})|.&&\notag
\end{eqnarray}
\end{proposition}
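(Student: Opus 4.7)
The plan is to compute $\dim \mathfrak{V}(\mathcal{S})_1 = \sum_{W \in \mathcal{S}} \dim(M_W)_1$, where $M_W = M_a \otimes M_b$ for $W = (a,b) \in R(X) \oplus R(V)$, via the identity
\[
\dim (M_a \otimes M_b)_1 = \sum_{i+j=1} \dim(M_a)_i \, \dim(M_b)_j,
\]
together with the lowest weights and lowest weight space dimensions recorded in Table \ref{Orbit} and Lemma \ref{POVE}(5). A useful preliminary observation is that $V_1 = 0 = X_1$, because neither $\sqrt{2}E_8$ nor $\sqrt{2}D_{16}^+$ contains vectors of norm $2$.

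I would then partition $\mathcal{S}$ into three disjoint pieces and treat each separately. First, for $W = (a,0) \in \mathcal{S}^{(2)}$, total singularity forces $a$ singular in $R(X)$, so $M_a$ is $\Z$-graded; since $V_0 = \C$ and $V_1 = 0$, we get $\dim(M_W)_1 = \dim(M_a)_1$. Reading off Table \ref{Orbit}, this equals $16$ for $a \in \{[0]^-,[\alpha_1]^\pm\}$, equals $4$ for the $\wt(c)=4$ orbits, equals $1$ for the $[\alpha_{(1^{16})}/4-\alpha_d/2]^\pm$ and $[\chi_\lambda]^+$ orbits, and equals $0$ for $a = [0]^+$ (lowest weight $0$, combined with $X_1 = 0$) and for the $\wt(c)=8$ orbits (lowest weight $2$). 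Summing over $\rho_1(\mathcal{S}^{(2)})$ yields the first three summands of \eqref{Eq:Pdim}.

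Second, for $W = (0,b) \in \mathcal{S}^{(1)} \setminus \{0\}$, total singularity again forces $b$ singular in $R(V)$, and since $b \neq [0]^+$, Lemma \ref{POVE}(5) shows $b$ has lowest weight $1$ with $\dim(M_b)_1 = 8$; combined with $X_1 = 0$ this produces the fourth summand $8(|\rho_2(\mathcal{S}^{(1)})|-1)$. Third, for $W = (a,b)$ with both $a,b \neq 0$, the vanishings $X_1 = V_1 = 0$ together with $(M_a)_0 = 0$ for $a \neq [0]^+$ and $(M_b)_0 = 0$ for $b \neq [0]^+$ reduce $(M_a \otimes M_b)_1$ to $(M_a)_{1/2} \otimes (M_b)_{1/2}$. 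This forces both $a$ and $b$ to be non-singular in their respective quadratic spaces (so that $M_a$ and $M_b$ are $(\Z+1/2)$-graded) with lowest weight exactly $1/2$; by Table \ref{Orbit} this singles out the $\wt(c)=2$ orbits for $a$ (while $\wt(c)=6$ and the twisted orbits $[\chi_\lambda]^-$, $[\alpha_{(-31^{15})}/4-\alpha_c/2]^\pm$ have lowest weight $3/2$ and thus contribute $0$), and for $V$ the unique non-singular type with lowest weight $1/2$ has $\dim = 1$. Hence each such pair contributes $1\cdot 1 = 1$, and by Lemma \ref{Lholo}(2) the fibre $\{b : (a,b) \in \mathcal{S}\}$ over each contributing $a \in \rho_1(\mathcal{S})$ is a coset of $\rho_2(\mathcal{S}^{(1)})$ of size $|\rho_2(\mathcal{S}^{(1)})|$, producing the final summand.

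The main (and essentially only) obstacle is the case analysis itself: one must match each orbit in Table \ref{Orbit} to its correct weight-$1$ contribution and verify that every orbit omitted from the stated formula --- namely $[0]^+$, the $\wt(c)=8$ orbits, the $\wt(c)=6$ orbits, the $[\chi_\lambda]^-$ orbits, and the $[\alpha_{(-31^{15})}/4-\alpha_c/2]^\pm$ orbits --- indeed contributes zero by the lowest-weight bound $h_a + h_b > 1$. No deeper input beyond the orbit data and the K\"unneth identity is required.
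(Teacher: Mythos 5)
Your proposal is correct and follows essentially the same route as the paper: the paper's (much terser) proof likewise observes that the lowest weights of $W^1$ and $W^2$ are non-negative half-integers summing to an integer, so that $\mathfrak{V}(\{(W^1,W^2)\})_1\neq 0$ only when both lowest weights are $1/2$ or one module has lowest weight $1$ and the other is $[0]^+$, and then reads off the contributions from Table \ref{Orbit}, Lemma \ref{POVE}, and Lemma \ref{Lholo}. Your version merely makes explicit the case analysis and the coset count for the last summand, which the paper leaves implicit.
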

\begin{proof} Let $(W^1,W^2)\in\mathcal{S}$.
Then the lowest weights of $W^i$ are non-negative half integers, and the sum of the lowest weights is an integer.
Hence $\mathfrak{V}(\{(W^1,W^2)\})_1\neq0$ if and only if the lowest weights of both $W^1$ and $W^2$ are $1/2$, or the lowest weight of $W^i$ is $1$ and $W^j=[0]^+$, where $\{i,j\}=\{1,2\}$.
By Table \ref{Orbit}, Lemmas \ref{POVE} (2) and \ref{Lholo}, the dimension of the weight $1$ subspace of $\mathfrak{V}(\mathcal{S})$ is given by (\ref{Eq:Pdim}).
\end{proof}

\subsection{Determination of the Lie algebra structure of $\mathfrak{V}(\mathcal{S})_1$ for $\mathcal{S}\subset R(X)\oplus R(V)$}
In this subsection, we determine the Lie algebra structure  of
$\mathfrak{V}(\mathcal{S})_1$ for a maximal totally singular subspace
$\mathcal{S}$ of $R(X)\oplus R(V)$ described in Propositions \ref{PCl5} (3) and
\ref{PCl4} (3)--(6).

\noindent {\bf Case: $\mathcal{S}$ as in Propositions \ref{PCl5} (3).}

\begin{proposition} Let $\mathcal{S}$ be a maximal totally singular subspace of $R(X)\oplus R(V)$.
Assume that $\rho_1(\mathcal{S}^{(2)})={\rm Span}_{\F_2}\{[\alpha_{(1^40^{12})}/2]^+,\ [\alpha_{(1^20^21^20^{10})}/2]^+,\ [\alpha_{((10)^40^8)}/2]^+,\ [\alpha_{(1^{16})}/4]^+,\ [\chi_0]^+\}$.
Set $D={\rm Span}_{\F_2}\{(1^40^{12}), (1^20^21^20^{10}), ((10)^40^8)\}$.
Then the following hold:
\begin{enumerate}
\item
$\rho_1(\mathcal{S})\cap\{[\alpha_c/2-\alpha_1]^\pm,\ [\alpha_c/2]^\pm \mid \wt(c)=2\}=\{[\alpha_{c}/2-\alpha_1]^+\mid \wt(c)=2,\ c\in D^\perp \}$ and its size is $36$;
\item $\dim \mathfrak{V}(\mathcal{S})_1=132$, and the Lie algebra structure of $\mathfrak{V}(\mathcal{S})_1$ is $A_{8,2}F_{4,2}$.
\end{enumerate}
\end{proposition}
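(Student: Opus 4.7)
The plan is to handle the two claims in sequence.

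\textbf{For (1):} By Lemma \ref{Lholo}(1), a vector $W_1 \in R(X)$ lies in $\rho_1(\mathcal{S})$ if and only if $\langle W_1, t\rangle = 0$ for every $t$ in the given generating set of $T := \rho_1(\mathcal{S}^{(2)})$. I would test the four candidate families $[\alpha_c/2]^\pm$ and $[\alpha_c/2 - \alpha_1]^\pm$ with $\wt(c) = 2$ against the five listed generators, using Lemma \ref{Inner}. The pairing with $[\chi_0]^+$ forces the sign to be $+$. The pairing with $[\alpha_{(1^{16})}/4]^+$ evaluates to $2\langle \alpha_c/2, \alpha_{(1^{16})}/4\rangle = \wt(c)/2 \equiv 1 \pmod{2}$, ruling out $[\alpha_c/2]^\pm$; for $[\alpha_c/2 - \alpha_1]^\pm$ the extra term $2\langle \alpha_1, \alpha_{(1^{16})}/4\rangle = 1$ cancels this obstruction. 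The pairing with each $[\alpha_{d_i}/2]^+$ reduces modulo $2$ to $\wt(c \cap d_i) \equiv 0$, i.e.\ $c \in D^\perp$. To count weight-$2$ elements $c = e_i + e_j$ of $D^\perp$, note that the condition amounts to $d_{k,i} = d_{k,j}$ for $k = 1,2,3$, i.e.\ coordinates $i$ and $j$ share the same ``signature'' $(d_{1,*}, d_{2,*}, d_{3,*}) \in \F_2^3$. From the explicit form of $d_1, d_2, d_3$, coordinates $1,\ldots,7$ realize the seven nonzero signatures exactly once each, while coordinates $8, 9, \ldots, 16$ all carry signature $(0,0,0)$; thus there are $\binom{9}{2} = 36$ matching pairs, as claimed.

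\textbf{For (2):} I would apply Proposition \ref{Pdim16}. Using Proposition \ref{fusion}, and the fact that $\wt(c \cap c')$ is even for all $c, c' \in D$ (since $D$ is doubly even, so the intermediate terms $\alpha_{c \cap c'}$ land in the lattice), the $32$ elements of $T$ enumerate cleanly as
\[
\{[\alpha_c/2]^+,\ [\alpha_{(1^{16})}/4 - \alpha_c/2]^+,\ [\chi_{\alpha_c/2}]^+,\ [\chi_{\alpha_{(1^{16})}/4 - \alpha_c/2}]^+ : c \in D\},
\]
with all signs $+$ and all $\nu$-factors $+$. Excluding $[0]^+$, this contributes $0$ to the first count, $7$ (the weight-$4$ elements of $D$) to the second, and $24$ to the third. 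Lemma \ref{Lholo} yields $\dim \rho_2(\mathcal{S}^{(1)}) = 10 - (14 - 5) = 1$, so $|\rho_2(\mathcal{S}^{(1)})| = 2$. Combined with part (1), the formula in Proposition \ref{Pdim16} evaluates to
\[
16 \cdot 0 + 4 \cdot 7 + 24 + 8\cdot 1 + 36\cdot 2 = 132.
\]

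Finally, for the Lie algebra structure, $\dim \mathfrak{V}(\mathcal{S})_1 = 132 > 24$ forces $\mathfrak{V}(\mathcal{S})_1$ to be semisimple by Proposition \ref{PDM1}(a), and each simple component $\mathfrak{g}_{i,k_i}$ satisfies $h^\vee_i/k_i = (132-24)/24 = 9/2$ by Proposition \ref{PDM1}(b), with $k_i \in \Z_{>0}$ by Proposition \ref{PDM2}. The pairs $(\mathfrak{g},k)$ with $h^\vee/k = 9/2$ and dimension at most $132$ are $A_{8,2}$ (dim $80$), $B_{5,2}$ (dim $55$), $F_{4,2}$ (dim $52$); the remaining candidates $C_{8,2}$ (dim $136$) and $E_{7,4}$ (dim $133$) already exceed $132$. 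The unique combination summing to $132$ is $A_{8,2}F_{4,2}$, proving the claim. The main obstacle is the enumeration of $T$: one must verify that every fusion step keeps $\nu = +$ and lands in an expected orbit of $R(X)$; this relies crucially on the doubly-even property of $D$ keeping all $\wt(c \cap c')$ even, after which the remaining arithmetic is routine.
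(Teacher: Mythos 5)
Your proof is correct and follows essentially the same route as the paper: part (1) via Lemmas \ref{Inner} and \ref{Lholo}(1) together with the signature count giving $\binom{9}{2}=36$, part (2) via Proposition \ref{Pdim16} with the same term-by-term evaluation $16\cdot0+4\cdot7+24+8\cdot1+36\cdot2=132$, and the Lie algebra identified through Propositions \ref{PDM1} and \ref{PDM2} with $h^\vee/k=9/2$ and the unique dimension decomposition $80+52$. The only difference is that you spell out details the paper leaves implicit (the explicit enumeration of the $32$ elements of $\rho_1(\mathcal{S}^{(2)})$ with their $\nu$-signs, and the exclusion of $C_{8,2}$ and $E_{7,4}$ by dimension), all of which check out.
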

\begin{proof} Note that the number of codewords in $D^\perp$ with weight $2$ is $\binom{9}{2}=36$.
(1) is easily calculated by Lemmas \ref{Inner} and \ref{Lholo} (1).

By Proposition \ref{Pdim16}, we obtain $$\dim
\mathfrak{V}(\mathcal{S})_1=16\times 0+4\times 7+24+8\times 1+36\times
2=132.$$ By Proposition \ref{PDM1}, each simple component $\mathfrak{g}_i$
of $\mathfrak{V}(\mathcal{S})_1$ satisfies $h_i/k_i=9/2$. Hence by Proposition
\ref{PDM2} , $\mathfrak{g}_i$ is one of
\[
\begin{array} {l|c|c|c}
\text{Type} & A_{8,2}&B_{5,2} & F_{4,2}    \\     \hline
\ \ h^\vee      &  9 & 9 &9 \\ \hline
\text{Dimension}& 80&55&52
\end{array}
\]
Hence the Lie algebra structure is $A_{8,2}F_{4,2}$.
\end{proof}

\noindent {\bf Case:  $\mathcal{S}$ as in Proposition \ref{PCl4} (3).}
\begin{proposition} Let $\mathcal{S}$ be a maximal totally singular subspace of $R(X)\oplus R(V)$.
Assume that $\rho_1(\mathcal{S}^{(2)})={\rm Span}_{\F_2}\{ [0]^-,\ [\alpha_1]^+,\ [\alpha_{(1^40^{12})}/2]^+,\ [\alpha_{(1^20^21^20^{10})}/2]^+\}$ holds.
Set $D={\rm Span}_{\F_2}\{(1^40^{12}), (1^20^21^20^{10})\}$.
Then the following hold:
\begin{enumerate}
\item $\rho_1(\mathcal{S})\cap\{[\alpha_c/2-\alpha_1]^\pm,\ [\alpha_c/2]^\pm\mid \wt(c)=2\}=\{[\alpha_c/2]^\pm,\ [\alpha_{c}/2-\alpha_1]^\pm\mid \wt(c)=2,\ c\in D^\perp\}$ and its size is $192$;
\item $\dim \mathfrak{V}(\mathcal{S})_1=288$, and the Lie algebra structure of $\mathfrak{V}(\mathcal{S})_1$ is $C_{10,1}B_{6,1}$.
\end{enumerate}
\end{proposition}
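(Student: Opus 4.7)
The plan is to reduce (1) to a small combinatorial count on $\F_2^{16}$, and then feed the answer into Proposition \ref{Pdim16} term by term to obtain (2); the Lie algebra type then follows from the standard argument via Propositions \ref{PDM1} and \ref{PDM2}.

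For (1), Lemma \ref{Lholo}(1) gives $\rho_1(\mathcal{S})=\rho_1(\mathcal{S}^{(2)})^\perp$ in $(R(X),q_X)$. For any $c\in\F_2^{16}$ with $\wt(c)=2$ and $\varepsilon\in\{\pm\}$, Lemma \ref{Inner}(1) makes the pairings $\langle[\alpha_c/2]^\varepsilon,[0]^-\rangle=0$ and $\langle[\alpha_c/2]^\varepsilon,[\alpha_1]^+\rangle=2c_1\equiv 0\pmod 2$ automatic, and the analogous computation works for $[\alpha_c/2-\alpha_1]^\varepsilon$. Hence the only genuine orthogonality constraints come from pairing with $[\alpha_{(1^40^{12})}/2]^+$ and $[\alpha_{(1^20^21^20^{10})}/2]^+$, which reduce to $|c\cap(1^40^{12})|\equiv 0$ and $|c\cap(1^20^21^20^{10})|\equiv 0\pmod 2$, i.e.\ $c\in D^\perp$. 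Writing $P=\{1,2,3,4\}$ and $Q=\{1,2,5,6\}$ and partitioning $\{1,\dots,16\}$ into $P\cap Q$, $P\setminus Q$, $Q\setminus P$ and the remainder, a weight-$2$ codeword with support $\{i,j\}$ lies in $D^\perp$ iff both indices lie in a common cell. The cells have sizes $2,2,2,10$, so the count is $1+1+1+\binom{10}{2}=48$. Since the four classes $[\alpha_c/2]^\pm$ and $[\alpha_c/2-\alpha_1]^\pm$ are all distinct (as $\alpha_1\notin\sqrt2 D_{16}^+$), multiplying by $4$ yields $192$.

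For (2), I expand Proposition \ref{Pdim16}. Among the $16$ elements of $\rho_1(\mathcal{S}^{(2)})$: the classes $[0]^-,[\alpha_1]^+,[\alpha_1]^-=[0]^-\boxtimes[\alpha_1]^+$ contribute $16\cdot 3=48$; each of the three nonzero codewords $c\in D$ has $\wt(c)=4$ and furnishes exactly four classes $[\alpha_c/2]^\pm,[\alpha_c/2-\alpha_1]^\pm$ via Proposition \ref{fusion}, using the identity $\alpha_c/2+\alpha_1\equiv\alpha_c/2-\alpha_1\pmod{\sqrt2 D_{16}^+}$ since $2\alpha_1\in\sqrt2 D_{16}^+$, contributing $4\cdot 12=48$; no $[\chi_\lambda]^+$ nor $[\alpha_{(1^{16})}/4-\alpha_d/2]^\pm$ lies in the span (the quarter-coefficients never arise from half-integer generators), so the third summand is $0$. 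The projection $\rho_2\colon\mathcal{S}\to R(V)$ has kernel $\mathcal{S}^{(2)}$ of dimension $4$, so $\dim\rho_2(\mathcal{S})=14-4=10=\dim R(V)$, whence $\rho_2(\mathcal{S})=R(V)$ and $\rho_2(\mathcal{S}^{(1)})=R(V)^\perp=0$; thus the fourth summand is $0$ and $|\rho_2(\mathcal{S}^{(1)})|=1$. Combined with (1), the fifth summand contributes $192\cdot 1=192$, giving $\dim\mathfrak{V}(\mathcal{S})_1=48+48+192=288$.

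Finally, by Proposition \ref{PDM1} each simple summand $\mathfrak{g}_i$ of $\mathfrak{V}(\mathcal{S})_1$ satisfies $h_i^\vee/k_i=(288-24)/24=11$, and by Proposition \ref{PDM2} the $k_i$ are positive integers. Running through the simple Lie types admitting such a level, the candidates are $A_{10,1},B_{6,1},C_{10,1},D_{12,2}$ of dimensions $120,78,210,276$, and among multisets drawn from this list the only one summing to $288$ is $\{C_{10,1},B_{6,1}\}$. The main obstacle is the bookkeeping in the dimension count: one must identify correctly which of the $16$ elements of $\rho_1(\mathcal{S}^{(2)})$ falls in which orbit of Table \ref{Orbit}, crucially using the class identification $[\alpha_c/2+\alpha_1]^\varepsilon=[\alpha_c/2-\alpha_1]^\varepsilon$, since a miscount of even one module would derail both the dimension and the Lie-algebra identification.
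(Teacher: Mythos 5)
Your proposal is correct and follows essentially the same route as the paper: Lemmas \ref{Inner} and \ref{Lholo}(1) reduce part (1) to counting weight-$2$ codewords in $D^\perp$ (the paper's $\binom{10}{2}+3=48$ agrees with your cell-partition count), Proposition \ref{Pdim16} gives $16\cdot3+4\cdot12+0+8\cdot0+192\cdot1=288$, and Propositions \ref{PDM1} and \ref{PDM2} pin down the Lie algebra by the dimension count alone. Your only deviation is to list $D_{12,2}$ among the level-ratio-$11$ candidates, which the paper's table omits; since $288-276=12$ is not realizable this does not affect the conclusion and is in fact a slight gain in completeness.
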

\begin{proof}
Note that the number of codewords in $D^\perp$ with weight $2$ is $\binom{10}{2}+3=48$.
(1) is easily calculated by Lemmas \ref{Inner} and \ref{Lholo} (1).

By Proposition \ref{Pdim16}, we have $$\dim
\mathfrak{V}(\mathcal{S})_1=16\times 3+4\times 12+0+8\times0+192\times 1=288.$$
By Proposition \ref{PDM1}, each simple component $\mathfrak{g}_i$ of
$\mathfrak{V}(\mathcal{S})_1$ satisfies $h_i/k_i=11$. Hence by Proposition
\ref{PDM2}, $\mathfrak{g}_i$ is one of the following.
\[
\begin{array} {l|c|c|c}
\text{Type} & A_{10,1}&B_{6,1} & C_{10,1}    \\     \hline
\ \ h^\vee      &  11 & 11 &11 \\ \hline
\text{Dimension}& 120&78&210
\end{array}
\]
Hence the Lie algebra structure is $C_{10,1}B_{6,1}$.
\end{proof}

\noindent {\bf Case:  $\mathcal{S}$ as in Proposition \ref{PCl4} (4).}
\begin{proposition}\label{PCl44} Let $\mathcal{S}$ be a maximal totally singular subspace of $R(X)\oplus R(V)$.
Assume that $\rho_1(\mathcal{S}^{(2)})={\rm Span}_{\F_2}\{ [0]^-, [\alpha_{(1^40^{12})}/2]^+,\ [\alpha_{(1^20^21^20^{10})}/2]^+,\ [\alpha_{((10)^40^8)}/2]^+\}$.
Set $D={\rm Span}_{\F_2}\{(1^40^{12}), (1^20^21^20^{10}), (10)^40^8)\}$.
Then the following hold:
\begin{enumerate}
\item
$\rho_1(\mathcal{S})\cap\{[\alpha_c/2-\alpha_1]^\pm,\ [\alpha_c/2]^\pm\mid \wt(c)=2\}=\{[\alpha_c/2]^\pm,\ [\alpha_{c}/2-\alpha_1]^\pm\mid \wt(c)=2,\ c\in D^\perp \}$ and its size is $144$;
\item $\dim \mathfrak{V}(\mathcal{S})_1=216$, and the Lie algebra structure of $\mathfrak{V}(\mathcal{S})_1$ is $D_{9,2}A_{7,1}$.
\end{enumerate}
\end{proposition}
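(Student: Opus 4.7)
The plan is to establish (1) by a direct orthogonality calculation using Lemmas~\ref{Inner} and~\ref{Lholo}(1), then compute the weight-one dimension via Proposition~\ref{Pdim16}, narrow down the Lie algebra candidates using Propositions~\ref{PDM1} and~\ref{PDM2}, and finally disambiguate the two surviving possibilities by a rank argument in the style of Lemma~\ref{Lrank}.

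For (1), Lemma~\ref{Lholo}(1) identifies $\rho_1(\mathcal{S})$ with $\rho_1(\mathcal{S}^{(2)})^\perp$, so membership of a weight-$2$ element $[\alpha_c/2]^\pm$ or $[\alpha_c/2-\alpha_1]^\pm$ in $\rho_1(\mathcal{S})$ reduces to orthogonality with each generator of $\rho_1(\mathcal{S}^{(2)})$. Using Lemma~\ref{Inner}(1), the pairing with $[0]^-$ vanishes identically, while the pairing with $[\alpha_{c_i}/2]^+$ equals $|c\cap c_i|\bmod 2$; hence singularity is equivalent to $c\in D^\perp$. Tabulating the triples $(c_{1,j},c_{2,j},c_{3,j})$ for the sixteen coordinates, only the positions $j=8,\ldots,16$ share a common type (namely $(0,0,0)$), so the number of weight-$2$ codewords in $D^\perp$ is $\binom{9}{2}=36$. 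Each such $c$ contributes the four orbits $[\alpha_c/2]^\pm,[\alpha_c/2-\alpha_1]^\pm$, giving the stated total of $144$.

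For the dimension in (2), first observe that $\dim\mathcal{S}=14$ and $\dim\rho_1(\mathcal{S})=18-4=14$ by Lemma~\ref{Lholo}(1), whence $\mathcal{S}^{(1)}=0$ and $|\rho_2(\mathcal{S}^{(1)})|=1$. Since every nonzero codeword of $D$ has weight $4$, $\rho_1(\mathcal{S}^{(2)})$ contributes the single element $[0]^-$ to the first term of Proposition~\ref{Pdim16} and the fourteen elements $[\alpha_c/2]^\pm$ for $c\in D\setminus\{0\}$ to the second, with no contribution from the twisted term. Combining with (1), Proposition~\ref{Pdim16} yields $\dim\mathfrak{V}(\mathcal{S})_1=16\times 1+4\times 14+0+8\times 0+144\times 1=216$. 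Proposition~\ref{PDM1} then forces $h_i^\vee/k_i=8$ for each simple component, and Proposition~\ref{PDM2} restricts the candidates to $A_{7,1},C_{7,1},D_{5,1},D_{9,2}$ of dimensions $63,105,45,153$. An elementary check shows that the only decompositions of $216$ from this list are $D_{9,2}A_{7,1}$ (rank $16$) and $(A_{7,1})^2(D_{5,1})^2$ (rank $24$).

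To rule out the second possibility I would show that the rank of $\mathfrak{V}(\mathcal{S})_1$ is exactly $16$, mirroring the argument of Lemma~\ref{Lrank}(2). The element $([0]^-,[0]^+)\in\mathcal{S}^{(2)}$ contributes a $16$-dimensional subspace of $\mathfrak{V}(\mathcal{S})_1$, namely the Heisenberg part of $V_{\sqrt2D_{16}^+}^-\otimes V_{\sqrt2E_8}^+$ (neither $\sqrt2D_{16}^+$ nor $\sqrt2E_8$ has norm-$2$ vectors), which is abelian and toral by Lemma~\ref{Lsemi2}. The main obstacle is maximality: for each $[M]\in\mathcal{S}$ with $M_1\neq 0$ not in this subspace, one must verify via the intertwining-operator formula of Lemma~\ref{L1} that some element of the Heisenberg subspace acts non-trivially on $M_1$, exactly as in Lemma~\ref{Lrank}(2). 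Granting this, the rank equals $16$ and the Lie algebra structure must be $D_{9,2}A_{7,1}$.
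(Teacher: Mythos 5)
Your proof is correct and follows essentially the same route as the paper: the orthogonality computation for (1) via Lemmas \ref{Inner} and \ref{Lholo}(1), the dimension count $16+56+144=216$ via Proposition \ref{Pdim16}, the reduction to $D_{9,2}A_{7,1}$ versus $(A_{7,1})^2(D_{5,1})^2$ via Propositions \ref{PDM1} and \ref{PDM2}, and the rank-$16$ argument using the toral abelian subalgebra $\mathfrak{V}(\{([0]^-,[0]^+)\})_1$, whose maximality the paper disposes of in one line from the fusion rule $[0]^-\boxtimes[\lambda]^{\pm}=[\lambda]^{\mp}$ together with $\mathcal{S}^{(1)}=0$ --- exactly the verification you defer. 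Note that your count of $\binom{9}{2}=36$ weight-two codewords in $D^\perp$ is the correct one consistent with the stated size $144$; the figure $\binom{10}{2}+3=48$ printed in the paper's proof is a slip carried over from the neighbouring propositions where $D$ is only two-dimensional.
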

\begin{proof}
Note that the number of codewords in $D^\perp$ with weight $2$ is $\binom{10}{2}+3=48$.
(1) is easily calculated by Lemmas \ref{Inner} and \ref{Lholo} (1).

By Proposition \ref{Pdim16}, we have $$\dim
\mathfrak{V}(\mathcal{S})_1=16\times 1+4\times 14+0+8\times0+144\times 1=216.$$
By Proposition \ref{PDM1}, each simple component $\mathfrak{g}_i$ of
$\mathfrak{V}(\mathcal{S})_1$ satisfies $h_i/k_i=8$. Hence by Proposition
\ref{PDM2}, $\mathfrak{g}_i$ is one of the following.
\[
\begin{array} {l|c|c|c|c}
\text{Type} & A_{7,1}&C_{7,1}&D_{5,1}&D_{9,2} \\     \hline
\ \ h^\vee      &  8 & 8 &8 & 16 \\ \hline
\text{Dimension}& 63&105&45&153
\end{array}
\]
Hence $\mathfrak{V}(\mathcal{S})_1$ is $(A_{7,1})^2(D_{5,1})^2$ or
$D_{9,2}A_{7,1}$.
Clearly, $\mathfrak{V}(\{([0]^-,[0]^+)\})_1$ is an abelian subalgebra of $\mathfrak{V}(\mathcal{S})$.
By the fusion rules $[0]^-\times[\lambda]^\pm=[\lambda]^\mp$ and $\mathcal{S}^{(1)}=0$, it is maximal abelian.
By Lemma \ref{Lsemi2}, it is toral. Hence the rank of
$\mathfrak{V}(\mathcal{S})_1$ is $16$. Thus the Lie algebra structure of
$\mathfrak{V}(\mathcal{S})_1$ is $D_{9,2}A_{7,1}$.
\end{proof}
\noindent {\bf Case:  $\mathcal{S}$ as in Proposition \ref{PCl4} (5).}
\begin{proposition} Let $\mathcal{S}$ be a maximal totally singular subspace of $R(X)\oplus R(V)$.
Assume that $\rho_1(\mathcal{S}^{(2)})={\rm Span}_{\F_2}\{ [0]^-, [\alpha_{(1^40^{12})}/2]^+,\ [\alpha_{(1^20^21^20^{10})}/2]^+,\ [\alpha_{(1^{16})}/4]^+\}$.
Set $D={\rm Span}_{\F_2}\{(1^40^{12}), (1^20^21^20^{10})\}$.
Then the following hold:
\begin{enumerate}
\item
$\rho_1(\mathcal{S})\cap\{[\alpha_c/2-\alpha_1]^\pm,\ [\alpha_c/2]^\pm\mid \wt(c)=2\}=\{[\alpha_{c}/2-\alpha_1]^\pm\mid \wt(c)=2,\ c\in D^\perp\}$ and its size is $96$;
\item $\dim \mathfrak{V}(\mathcal{S})_1=144$, and the Lie algebra structure is $A_{9,2}A_{4,1}B_{3,1}$.
\end{enumerate}
\end{proposition}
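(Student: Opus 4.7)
For (1), I would start by enumerating weight-$2$ codewords in $D^\perp$. Partitioning the sixteen coordinates by the pair $\bigl(\langle e_i,(1^40^{12})\rangle,\langle e_i,(1^20^21^20^{10})\rangle\bigr)\in\F_2^2$ yields groups of sizes $(2,2,2,10)$; a weight-$2$ element of $D^\perp$ is exactly a pair of positions with matching profile, giving $\binom{10}{2}+3\binom{2}{2}=48$ such codewords. Using $\rho_1(\mathcal{S})=\rho_1(\mathcal{S}^{(2)})^\perp$ (Lemma~\ref{Lholo}(1)) together with Lemma~\ref{Inner}(1), the decisive computation is that $\langle[\alpha_c/2]^\varepsilon,[\alpha_{(1^{16})}/4]^+\rangle\equiv\wt(c)/2\equiv 1\pmod 2$ when $\wt(c)=2$, so $[\alpha_c/2]^\pm$ drops out, whereas $\langle[\alpha_c/2-\alpha_1]^\varepsilon,[\alpha_{(1^{16})}/4]^+\rangle\equiv\wt(c)/2-1\equiv 0$. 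Orthogonality against the other two generators of $\rho_1(\mathcal{S}^{(2)})$ is automatic for $c\in D^\perp$ by Lemma~\ref{Inner}(1)--(3). This accounts for the $2\cdot 48=96$ surviving elements $[\alpha_c/2-\alpha_1]^\pm$.

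For the dimension in (2), I evaluate Proposition~\ref{Pdim16}. Among the $16$ elements of $\rho_1(\mathcal{S}^{(2)})$, Table~\ref{Orbit} distributes them as one element in the orbit of $[0]^-$, six elements of weight-$4$ lattice type $[\alpha_c/2]^\pm$ with $c\in D\setminus\{0\}$, and eight elements $[\alpha_{(1^{16})}/4-\alpha_c/2]^\pm$ with $c\in D$ in the orbit of lowest weight $1$ listed as the twisted-type row of the table. Lemma~\ref{Lholo}(2) combined with $\dim\mathcal{S}^{(2)}=4$ forces $\rho_2(\mathcal{S}^{(1)})=0$. Plugging in gives $\dim\mathfrak{V}(\mathcal{S})_1=16\cdot 1+4\cdot 6+1\cdot 8+8\cdot 0+96\cdot 1=144$. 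Propositions~\ref{PDM1} and \ref{PDM2} then give $h^\vee/k=(144-24)/24=5$, restricting each simple component to the list $A_{4,1}, A_{9,2}, A_{14,3}, B_{3,1}, B_{8,3}, C_{4,1}, C_{9,2}, D_{6,2}, D_{11,4}, E_{8,6}$ of respective dimensions $24, 99, 224, 21, 136, 36, 171, 66, 231, 248$.

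Following the strategy of Proposition~\ref{PCl44}, I would next show that the rank is $16$. The subspace $\mathfrak{h}:=\mathfrak{V}(\{([0]^-,[0]^+)\})_1$ is $16$-dimensional, abelian (since $[0]^-\boxtimes[0]^-=[0]^+$ and $(X\otimes V)_1=0$), and toral by Lemma~\ref{Lsemi2}. Maximality uses that $[\chi_\lambda]^\pm\notin\rho_1(\mathcal{S})$ (because Lemma~\ref{Inner}(4) gives $\langle[\chi_\lambda]^\pm,[0]^-\rangle=1$), so every $W^1$ occurring in $\mathcal{S}$ is untwisted; for such $W^1$ the fusion rule $[0]^-\boxtimes W^1\neq W^1$ moves the bracket $[h,v_W]$ into a distinct summand, and Lemma~\ref{L1} applied component-by-component forces each piece of a centraliser of $\mathfrak{h}$ outside $\mathfrak{h}$ to vanish. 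The main obstacle is then to select the Lie algebra among the three rank-$16$, dimension-$144$ numerical candidates $A_{9,2}A_{4,1}B_{3,1}$, $D_{6,2}C_{4,1}(B_{3,1})^2$, and $(C_{4,1})^4$. The cleanest resolution is to invoke Schellekens' classification \cite{Sc93}, whose unique dimension-$144$ entry is No.~$40$ ($A_{9,2}A_{4,1}B_{3,1}$); if a direct argument is preferred, one would analyze the sub-VOA $V_{\sqrt2 D_{16}^+}\otimes V_{\sqrt2 E_8}^+\subset\mathfrak{V}(\mathcal{S})$ obtained by adjoining $([0]^-,[0]^+)$ and decompose $\mathfrak{V}(\mathcal{S})$ as a module over it to exhibit a simple ideal of dimension $99$, forcing the $A_{9,2}$ factor and ruling out the other two candidates.
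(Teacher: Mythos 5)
Your part (1), the evaluation of Proposition \ref{Pdim16} giving $\dim\mathfrak{V}(\mathcal{S})_1=144$, the reduction via Propositions \ref{PDM1} and \ref{PDM2} to components with $h^\vee/k=5$, and the rank-$16$ argument via the toral maximal abelian subalgebra $\mathfrak{V}(\{([0]^-,[0]^+)\})_1$ all match the paper's proof and are correct. The enumeration of the three rank-$16$, dimension-$144$ candidates $A_{9,2}A_{4,1}B_{3,1}$, $D_{6,2}C_{4,1}(B_{3,1})^2$ and $(C_{4,1})^4$ is also complete.

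The gap is in the final, decisive step. Your ``cleanest resolution'' rests on a false premise: No.~40 is \emph{not} the unique dimension-$144$ entry of Schellekens' list. That list contains several Lie algebras with $\dim=144$ (e.g.\ $(A_{4,1})^6$, coming from the Niemeier lattice with root system $A_4^6$, as well as entries matching your other two candidates), so quoting \cite{Sc93} does not select $A_{9,2}A_{4,1}B_{3,1}$ from your list --- and in any case the paper deliberately avoids using Schellekens' table as an input, since its purpose is to \emph{verify} entries of that table. Your fallback (``exhibit a simple ideal of dimension $99$'') is exactly the hard part and is left undone. The paper closes this gap concretely: decomposing into root spaces for $H$, it exhibits the ideal
$$\mathfrak{V}(\{([\alpha_{c}/2-\alpha_1]^\varepsilon,W(c,\varepsilon))\mid \varepsilon\in\{\pm\},\ c\in D^\perp,\ \wt(c)=2,\ {\rm supp}(c)\cap{\rm supp}(D)=\emptyset\})_1,$$
whose root space has dimension $2\binom{10}{2}=90$; the resulting $90+38$ partition of the $128$ roots, combined with rank $16$, forces $A_{9,2}$ (the only way to realize $90$ roots within the allowed components without overshooting the rank) and then $A_{4,1}B_{3,1}$ for the remaining $38$ roots of rank $7$. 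Without some such explicit ideal (or an equivalent structural input), your argument cannot distinguish $A_{9,2}A_{4,1}B_{3,1}$ from $D_{6,2}C_{4,1}(B_{3,1})^2$ or $(C_{4,1})^4$.
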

\begin{proof}
Note that the number of codewords in $D^\perp$ with weight $2$ is $\binom{10}{2}+3=48$.
(1) is easily calculated by Lemmas \ref{Inner} and \ref{Lholo} (1).

By Proposition \ref{Pdim16}, we have $$\dim
\mathfrak{V}(\mathcal{S})_1=16\times 1+4\times 6+8+8\times0+96\times1=144.$$ By
Proposition \ref{PDM1}, each simple component $\mathfrak{g}_i$ of
$\mathfrak{V}(\mathcal{S})_1$ satisfies $h_i/k_i=5$. Hence by Proposition
\ref{PDM2} $\mathfrak{g}_i$ is one of the following.
\[
\begin{array} {l|c|c|c|c|c|c}
\text{Type} & A_{4,1}&A_{9,2}&B_{3,1}&B_{8,3} & C_{4,1}&D_{6,2}   \\     \hline
\ \ h^\vee  &  5& 10& 5 &15 &5 & 10   \\ \hline
\text{Dimension}& 24&99&21&136&36 & 66
\end{array}
\]
By the same arguments as in Proposition \ref{PCl44}, $H=\mathfrak{V}(\{([0]^-,[0]^+)\})_1$ is a Cartan subalgebra of $\mathfrak{V}(\mathcal{S})_1$.
We decomposes $\mathfrak{V}(\mathcal{S})_1$ into a direct sum of root spaces for $H$.
Then $\mathfrak{V}(\mathcal{S})_1$ has an ideal with $90$-dimensional root space $$\mathfrak{V}(\{([\alpha_{c}/2-\alpha_1]^\varepsilon,W(c,\varepsilon))\mid \varepsilon\in\{\pm\},\ c\in D^\perp,\ \wt(c)=2,\ {\rm supp}(c)\cap{\rm supp}(D)=\emptyset\})_1,$$
where $W(c,\varepsilon)$ is a unique element in $R(V)$ such that $([\alpha_{c}/2-\alpha_1]^\varepsilon,W(c,\varepsilon))\in\mathcal{S}$.
Hence the root space of $\mathfrak{V}(\mathcal{S})_1$ has the decomposition $90+38$, and the Lie algebra structure is $A_{9,2}A_{4,1}B_{3,1}$ or $A_{4,1}(B_{3,1})^6$ or $A_{4,1}^3(B_{3,1})^2C_{4,1}$.
Since the rank of $\mathfrak{V}(\mathcal{S})_1$ is $16$, it must be $A_{9,2}A_{4,1}B_{3,1}$.
\end{proof}
\noindent {\bf Case: Proposition \ref{PCl4} (6).}
\begin{proposition} Let $\mathcal{S}$ be a maximal totally singular subspace of $R(X)\oplus R(V)$.
Assume that $\rho_1(\mathcal{S}^{(2)})={\rm Span}_{\F_2}\{ [\alpha_{(1^40^{12})}/2]^+,\ [\alpha_{(1^20^21^20^{10})}/2]^+,\ [\alpha_{(1^{16})}/4]^+, [\chi_0]^+\}$.
Set $D={\rm Span}_{\F_2}\{(1^40^{12}), (1^20^21^20^{10})\}$.
Then the following hold:
\begin{enumerate}
\item
$\rho_1(\mathcal{S})\cap\{[\alpha_c/2-\alpha_1]^\pm,\ [\alpha_c/2]^\pm\mid \wt(c)=2\}=\{[\alpha_{c}/2-\alpha_1]^+\mid \wt(c)=2,\ c\in D^\perp\}$ and its size is $48$;
\item $\dim \mathfrak{V}(\mathcal{S})_1=72$, and the Lie algebra structure is $D_{5,4}C_{3,2}(A_{1,1})^2$.
\end{enumerate}
\end{proposition}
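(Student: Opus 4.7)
The plan is to follow the blueprint of the preceding propositions in this subsection: establish (1) by an orthogonality computation with $\rho_1(\mathcal{S}^{(2)})$, deduce (2) by substituting counts into Proposition~\ref{Pdim16}, and then identify the Lie algebra via Propositions~\ref{PDM1}--\ref{PDM2} together with a rank calculation and a subalgebra argument.

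For (1), by Lemma~\ref{Lholo}(1) an element $[\alpha_c/2]^\pm$ or $[\alpha_c/2-\alpha_1]^\pm$ with $\wt(c)=2$ lies in $\rho_1(\mathcal{S})$ if and only if it is orthogonal to each of the four generators $u_1=[\alpha_{(1^40^{12})}/2]^+$, $u_2=[\alpha_{(1^20^21^20^{10})}/2]^+$, $u_3=[\alpha_{(1^{16})}/4]^+$, $u_4=[\chi_0]^+$ of $\rho_1(\mathcal{S}^{(2)})$. Using Lemma~\ref{Inner} one checks that pairing with $u_4$ detects the $\pm$-sign and so forces the sign to be $+$; pairing with $u_3$ equals $\wt(c)/2\bmod 2$ on $[\alpha_c/2]^\varepsilon$ and $\wt(c)/2-1\bmod 2$ on $[\alpha_c/2-\alpha_1]^\varepsilon$, so for $\wt(c)=2$ only the $-\alpha_1$-shifted form survives; pairings with $u_1$ and $u_2$ reduce to the parity conditions $|\mathrm{supp}(c)\cap\{1,2,3,4\}|\equiv |\mathrm{supp}(c)\cap\{1,2,5,6\}|\equiv 0\pmod 2$, i.e.\ $c\in D^\perp$. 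A short case analysis over the partition $\{1,\dots,16\}=\{1,2\}\sqcup\{3,4\}\sqcup\{5,6\}\sqcup\{7,\dots,16\}$ produces $1+1+1+\binom{10}{2}=48$ weight-two codewords, giving (1).

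For (2), Lemma~\ref{Lholo}(1) gives $\dim\rho_1(\mathcal{S})=18-4=14=\dim\mathcal{S}$, so $\mathcal{S}^{(1)}=0$ and $|\rho_2(\mathcal{S}^{(1)})|=1$. Consulting Table~\ref{Orbit}, the sixteen elements of $\rho_1(\mathcal{S}^{(2)})$ consist of $[0]^+$, the three weight-$4$ elements $u_1,u_2,u_1+u_2$, and the twelve remaining nonzero combinations, each of which lies in orbit~$7$ since it involves $u_3$ or $u_4$. Substituting these counts together with (1) into Proposition~\ref{Pdim16} gives $\dim\mathfrak{V}(\mathcal{S})_1=0+4\cdot 3+12+0+48\cdot 1=72$.

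Proposition~\ref{PDM1} then forces $h_i^\vee/k_i=(72-24)/24=2$, and Proposition~\ref{PDM2} restricts the admissible simple components to $A_{2k-1,k}$, $C_{2k-1,k}$, $D_{k+1,k}$, $E_{6,6}$, $E_{7,9}$, $E_{8,15}$, and $G_{2,2}$. Among dimension-$72$ decompositions of this type, the two of rank $10$ are $D_{5,4}C_{3,2}(A_{1,1})^2$ and $A_{7,4}(A_{1,1})^3$, and this is where the main obstacle lies. The plan is first to build a $10$-dimensional Cartan out of $\mathcal{S}^{(2)}$: the weight-$1$ part of $\mathfrak{V}(\{0,u_1\})\cong V_L^+$, with $L=\sqrt{2}D_{16}^++\Z\alpha_{(1^40^{12})}/2$, is a $4$-dimensional toral abelian subalgebra because the $8$ norm-$2$ vectors of $L$ split into $4$ pairwise orthogonal $\pm$-pairs and hence $[e^v+e^{-v},e^w+e^{-w}]=0$ for $v\ne\pm w$; this torus is then enlarged by $1$-dimensional toral contributions coming from $u_3$, $u_4$, and suitable orbit-$7$ combinations, which commute with one another and with the first block by Lemma~\ref{Lsemi2} and direct fusion-rule checks. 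Once rank~$10$ is established, the final step is to exhibit a simple Lie subalgebra of type $D_5$: since $\mathfrak{so}(10)$ has no nontrivial representation of dimension less than $10$, it cannot embed as a Lie subalgebra of $\mathfrak{sl}(8)$, so no $D_5$ subalgebra can appear inside $A_{7,4}(A_{1,1})^3$. This forces $\mathfrak{V}(\mathcal{S})_1\cong D_{5,4}C_{3,2}(A_{1,1})^2$. The required $D_{5}$ summand should emerge by decomposing the $48$ diagonal roots of (1) with respect to the constructed Cartan and identifying a $D_5$ root subsystem indexed by the weight-$2$ codewords $c\in D^\perp$ whose support avoids $\{1,2\}$; verifying that this subsystem closes correctly, along with pinning down the $C_{3,2}$ and two $A_{1,1}$ summands from the remaining roots, is the most delicate part of the argument.
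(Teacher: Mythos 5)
Your part (1), the substitution into Proposition \ref{Pdim16} giving $\dim\mathfrak{V}(\mathcal{S})_1=72$, and the table of admissible simple components with $h_i^\vee/k_i=2$ all agree with the paper, and your observation that $D_{5,4}C_{3,2}(A_{1,1})^2$ and $A_{7,4}(A_{1,1})^3$ are the only dimension-$72$ decompositions of total rank $10$ is correct. The difficulty is that the two inputs this reduction needs --- that the rank equals $10$, and that a $D_5$ subalgebra exists --- are exactly the steps you defer, and both are broken as proposed. Your Cartan is to be assembled entirely inside $\mathfrak{V}(\mathcal{S}^{(2)})_1$, but the commutation you assert there fails: for any two distinct nonzero $W,W'\in\rho_1(\mathcal{S}^{(2)})$ the fusion product $W\boxtimes W'=W+W'$ is again a nonzero element of $\rho_1(\mathcal{S}^{(2)})$, and every such element has nonzero weight-one space, so the brackets among the one-dimensional orbit-$7$ contributions, and between them and the $4$-dimensional block from $u_1$, do not vanish in general. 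Concretely, the norm-$2$ vector $w=\alpha_{(1^{16})}/4$ representing $u_3$ has $\langle\lambda_1,w\rangle=1$ for $\lambda_1=\alpha_{(1^40^{12})}/2$, so $e^{w}+e^{-w}$ does not commute with $e^{\lambda_1}+e^{-\lambda_1}$. Indeed, in the paper's eventual decomposition the whole $24$-dimensional subalgebra $\mathfrak{V}(\mathcal{S}^{(2)})_1$ sits inside a proper ideal of rank $5$, so no $10$-dimensional toral subalgebra can be found there. The $D_5$ step is likewise unsubstantiated: your proposed index set (weight-$2$ codewords of $D^\perp$ with support avoiding $\{1,2\}$) has $47$ elements, which cannot be a $D_5$ root system together with its Cartan, and ``verifying that this subsystem closes correctly'' is precisely the content that would have to be supplied.

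For comparison, the paper avoids computing the rank altogether. Using Proposition \ref{fusion} it splits $\mathfrak{V}(\mathcal{S})_1$ into two ideals of dimensions $45$ and $27$, the $45$-dimensional one spanned by the modules $([\alpha_c/2-\alpha_1]^+,W(c))$ with $\supp(c)\cap\supp(D)=\emptyset$ ($45$ codewords, not $47$). That ideal is identified as $D_{5,4}$ by recognizing it as the fixed points of the $A_{9,2}$ from the case of Proposition \ref{PCl4} (5) under an involution acting by $-1$ on its Cartan subalgebra, i.e.\ $\mathfrak{so}_{10}\subset\mathfrak{sl}_{10}$. The $27$-dimensional complement is then $(A_{1,1})^9$, $A_{3,2}(A_{1,1})^4$ or $C_{3,2}(A_{1,1})^2$ by the level table, and the last is singled out because $\mathfrak{V}({\rm Span}_{\F_2}\{u_1,u_2,u_3\})\cong V_{A_3\oplus A_4}^+$ supplies a $C_{2,2}(A_{1,2})^2$ subalgebra. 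If you wish to keep your rank-based route, the Cartan must draw on the $48$ ``diagonal'' modules of part (1), not only on $\mathcal{S}^{(2)}$, and justifying that essentially forces you back to the paper's ideal analysis.
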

\begin{proof}
Note that the number of codewords in $D^\perp$ with weight $2$ is $\binom{10}{2}+3=48$.
(1) is easily calculated by Lemmas \ref{Inner} and \ref{Lholo} (1).

By Proposition \ref{Pdim16}, we have $$\dim
\mathfrak{V}(\mathcal{S})_1=16\times0+4\times 3+12+8\times0+48\times 1=72.$$ By
Proposition \ref{PDM1}, each simple component $\mathfrak{g}_i$ of
$\mathfrak{V}(\mathcal{S})_1$ satisfies $h_i/k_i=2$. Hence by Proposition
\ref{PDM2}, $\mathfrak{g}_i$ is one of the following.
\[
\begin{array} {l|c|c|c|c|c|c|c|c|c|c}
\text{Type} & A_{1,1}&A_{3,2}&A_{5,3}&A_{7,4}&C_{3,2}&C_{5,3}
              &D_{4,3}&D_{5,4}&D_{6,5}&G_{2,2} \\     \hline
\ \ h^\vee      &  2 & 4 &6 & 8&4& 6 &6 &8 &10 &4 \\ \hline
\text{Dimension}& 3&15&35&63 & 21&55&28&45&66&14
\end{array}
\]
By Proposition \ref{fusion},  both
$$\mathfrak{V}(\{([\alpha_{c}/2-\alpha_1]^+,W(c))\mid \wt(c)=2,\ c\in D^\perp,\
{\rm supp}(c)\cap{\rm supp}(D)=\emptyset\})_1$$ and
$$\mathfrak{V}(\mathcal{S}^{(2)}\cup\{([\alpha_{c}/2-\alpha_1]^+,W(c))\mid
\wt(c)=2, c\in D^\perp,\ {\rm supp}(c)\subset{\rm supp}(D)\})_1$$ are ideals,
where $W(c)$ is a unique element in $R(V)$ such that
$([\alpha_{c}/2-\alpha_1]^+,W(c))\in\mathcal{S}$. Hence
$\mathfrak{V}(\mathcal{S})$ has the decomposition $45+27$.
Moreover, by the previous case, the first subspace is the fixed points of the Lie algebra of type $A_{9,2}$ by an order $2$ automorphism acting by $-1$ on the Cartan subalgebra.
Hence $\mathfrak{V}(\mathcal{S})_1$ contains $D_{5}$ as an ideal, and the Lie algebra structure is $D_{5,4}(A_{1,1})^9$, $D_{5,4}A_{3,2}(A_{1,1})^4$ or $D_{5,4}C_{3,2}(A_{1,1})^2$.
Since $\mathfrak{V}({\rm Span}_{\F_2}\{ [\alpha_{(1^40^{12})}/2]^+,\ [\alpha_{(1^20^21^20^{10})}/2]^+,\ [\alpha_{(1^{16})}/4]^+\})$ is isomorphic to $V_{A_3\oplus A_4}^+$, $\mathfrak{V}(\mathcal{S})_1$ contains a Lie subalgebra $C_{2,2} (A_{1,2})^2$.
Hence the Lie algebra structure is $D_{5,4}C_{3,2}(A_{1,1})^2$.
\end{proof}

\subsection{Isomorphism type of  $\mathfrak{V}(\mathcal{S})$ for $S$ in Proposition \ref{PCl4} (4)}
In this subsection, we show that the VOA associated to the maximal totally
singular space given in Proposition \ref{PCl4} (4) is isomorphic to
$\tilde{V}_{N(A_{17}E_{7})}$ as a VOA.

First, we construct a lattice VOA as a simple current extension of $X\otimes V$.

\begin{lemma} Let $\mathcal{S}$ be a maximal totally singular subspace of $R(X)\oplus R(V)$
such that $$\rho_1(\mathcal{S}^{(2)})={\rm Span}_{\F_2}\{ [0]^-,
[\alpha_1]^+,[\alpha_{c}/2]^+\mid c\in D\},$$ where $D={\rm Span}_{\F_2}\{(1^40^{12}), (1^20^21^20^{10}), ((10)^40^8)\}$. Then we have the following:
\begin{enumerate}
\item $\rho_1(\mathcal{S})\cap\{[\alpha_c/2-\alpha_1]^\pm,\ [\alpha_c/2]^\pm\mid \wt(c)=2\}=\{[\alpha_c/2]^\pm,\ [\alpha_{c}/2-\alpha_1]^\pm\mid \wt(c)=2,\ c\in D^\perp \}$ and its size is $144$;
\item $\mathfrak{V}(\mathcal{S})$ is isomorphic to a lattice VOA associated to $N(A_{17}E_{7})$ or $N(D_{10}E_{7}^2)$.
\end{enumerate}
\end{lemma}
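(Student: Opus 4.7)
The plan for (1) is an orthogonality computation using Lemma \ref{Lholo} (1): since $\rho_1(\mathcal{S})=\rho_1(\mathcal{S}^{(2)})^\perp$, a candidate $W=[\alpha_c/2]^\varepsilon$ or $[\alpha_c/2-\alpha_1]^\varepsilon$ with $\wt(c)=2$ lies in $\rho_1(\mathcal{S})$ iff it pairs trivially with each of the five generators of $\rho_1(\mathcal{S}^{(2)})$. By Lemma \ref{Inner} (1), the pairings with $[0]^-$ and $[\alpha_1]^+$ equal $2\langle \alpha_c/2,0\rangle=0$ and $2\langle \alpha_c/2,\alpha_1\rangle=2[1\in c]$ (with analogous even values after the $-\alpha_1$ shift), so they vanish modulo $2$ automatically. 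The remaining pairings $\langle W,[\alpha_{c'}/2]^+\rangle=|c\cap c'|\pmod 2$ for $c'\in D$ reduce the orthogonality condition to $c\in D^\perp$.

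For the count I would use a character argument. The three generators of $D$ are supported in $\{1,\ldots,7\}$, and the map $i\mapsto(\mathbf{1}_{i\in d_1},\mathbf{1}_{i\in d_2},\mathbf{1}_{i\in d_3})\in\F_2^3$ sends the positions $1,\ldots,7$ bijectively onto the seven non-zero vectors of $\F_2^3$ and sends positions $8,9,\ldots,16$ all to $(0,0,0)$. A weight-$2$ word $e_i+e_j$ lies in $D^\perp$ iff $i$ and $j$ share a character, so the weight-$2$ codewords in $D^\perp$ are exactly the pairs inside the single non-trivial class $\{8,9,\ldots,16\}$, giving $\binom{9}{2}=36$ codewords. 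Each contributes the four elements $[\alpha_c/2]^\pm,[\alpha_c/2-\alpha_1]^\pm$, for a total of $144$, proving (1).

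For (2), I would feed (1) into the dimension formula of Proposition \ref{Pdim16}. Direct inspection of $\rho_1(\mathcal{S}^{(2)})$ gives $|\rho_1(\mathcal{S}^{(2)})\cap\{[0]^-,[\alpha_1]^\pm\}|=3$ and $|\rho_1(\mathcal{S}^{(2)})\cap\{[\alpha_c/2]^\pm,[\alpha_c/2-\alpha_1]^\pm:\wt(c)=4\}|=7\cdot 4=28$, with no contribution from the $[\chi_\lambda]^+$ or $[\alpha_{(1^{16})}/4-\alpha_d/2]^\pm$ orbits; applying rank--nullity to $\rho_1\colon\mathcal{S}\to \rho_1(\mathcal{S}^{(2)})^\perp$ yields $\dim\mathcal{S}^{(1)}=14-13=1$, so $|\rho_2(\mathcal{S}^{(1)})|=2$. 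Substitution gives
\[
\dim\mathfrak{V}(\mathcal{S})_1=16\cdot 3+4\cdot 28+0+8\cdot 1+144\cdot 2=456.
\]
Since $([0]^-,0)\in\mathcal{S}^{(2)}$ and the unique non-zero element of $\mathcal{S}^{(1)}$ is $(0,a_2)$ with $a_2\in S(R(V))^\times$ (singularity forced by $\mathcal{S}$ being totally singular), Lemma \ref{Lknown} (1) yields $\mathfrak{V}(\mathcal{S})\cong V_L$ for an even unimodular lattice $L$ of rank $24$. Proposition \ref{PDM1} then forces $h^\vee/k=(456-24)/24=18$ on each simple factor of $V_{L,1}$; using Proposition \ref{PDM2} and inspecting dual Coxeter numbers at positive integer levels, the only simple factors compatible with these constraints and with $\dim V_{L,1}=456$ are $A_{17,1}$, $D_{10,1}$, $E_{7,1}$ with root counts $306,180,126$. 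The only decompositions of $432$ as $306a+180b+126c$ are $306+126$ and $180+2\cdot 126$, so by Niemeier's classification $L\in\{N(A_{17}E_7),N(D_{10}E_7^2)\}$. The main obstacle I anticipate is the bookkeeping in Proposition \ref{Pdim16} (ensuring the correct multiplicities of each orbit of Table \ref{Orbit}); once (1) supplies the value $144$, the remaining identification of $L$ is a routine appeal to Niemeier's list.
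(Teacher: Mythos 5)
Your proof is correct and follows essentially the same route as the paper: orthogonality against the generators of $\rho_1(\mathcal{S}^{(2)})$ via Lemmas \ref{Inner} and \ref{Lholo} (1) for part (1), then Proposition \ref{Pdim16} giving $\dim\mathfrak{V}(\mathcal{S})_1=456$ and the constraint $h^\vee/k=18$ from Propositions \ref{PDM1} and \ref{PDM2} for part (2). Two small points in your favour: your count of $\binom{9}{2}=36$ weight-two codewords in $D^\perp$ is the correct one (the paper's proof misquotes $\binom{10}{2}+3=48$, the figure belonging to the two-dimensional-$D$ cases, although its total of $144$ agrees with yours), and your explicit appeal to Lemma \ref{Lknown} (1) to conclude that $\mathfrak{V}(\mathcal{S})$ is a lattice VOA before invoking the Niemeier classification makes precise a step the paper's proof leaves implicit.
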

\begin{proof} Note that the number of codewords in $D^\perp$ with weight $2$ is $\binom{10}{2}+3=48$.
(1) is easily calculated by Lemmas \ref{Inner} and \ref{Lholo} (1).

It follows from $\dim\rho_1(\mathcal{S}^{(2)})=5$ that $\dim\rho_2(\mathcal{S}^{(1)})=1$.
Up to conjugation by $\Aut(X\otimes V)$, we may assume that $\rho_2(\mathcal{S}^{(1)})=\{[0]^+,[0]^-\}$.
By Proposition \ref{Pdim16}, we have $$\dim \mathfrak{V}(\mathcal{S})_1=16\times 3+4\times 28+0+8\times 1+144\times 2=456.$$
By Proposition \ref{PDM1}, each simple component $\mathfrak{g}_i$ of $\mathfrak{V}(\mathcal{S})_1$ satisfies $h_i/k_i=18$.
Hence by Proposition \ref{PDM2}, $\mathfrak{g}_i$ is one of
\[
\begin{array} {l|c|c|c}
\text{Type} & A_{17,1}&D_{10,1}&E_{7,1} \\     \hline
\ \ h^\vee      &  18 & 18 &18 \\ \hline
\text{Dimension}& 323&190&133
\end{array}
\]
Hence $\mathfrak{V}(\mathcal{S})_1$ is a Lie algebra of type $A_{17,1}E_{7,1}$ or $D_{10,1}(E_{7,1})^2$.
\end{proof}

Let $\mathcal{S}$ be a maximal totally singular subspace of $R(X)\oplus R(V)$ in the lemma above and let $W=([\alpha_{(1^{16})}/4-\alpha_i]^+,[\chi_0]^+)$.
Then $\Span_{\F_2}\{W,\mathcal{S}\cap W^\perp\}$ satisfies Proposition \ref{PCl4} (4).
Hence by the same arguments as in Proposition \ref{PZ2}, $\mathfrak{V}(\Span_{\F_2}\{W,\mathcal{S}\cap W^\perp\})$ is obtained by the $\Z_2$-orbifold of $\mathfrak{V}(\mathcal{S})$ associated to $g_W$.

By the lemma above, $\mathfrak{V}(\mathcal{S})$ is isomorphic to the VOA associated to some even unimodular lattice of rank $24$.
Let us show that $g_W$ is conjugate to a lift of the $-1$-isometry of the lattice.
By \cite[Appendix D]{DGH}, it suffices to show that $g_W$ acts by $-1$ on a Cartan subalgebra of $\mathfrak{V}(\mathcal{S})_1$.
Consider the subspace $\mathfrak{V}(\{([\alpha_1]^+,[0]^+),([0]^+,[0]^-)\})_1$ of $\mathfrak{V}(\mathcal{S})_1$.
Then by Lemma \ref{Lsemi2}, it is a $24$-dimensional toral abelian subalgebra, that is, a Cartan subalgebra.
Since $\{([\alpha_1]^+,[0]^+),([0]^+,[0]^-)\}\cap W^\perp=\emptyset$, $g_W$ acts by $-1$ on this Cartan subalgebra.
Recall that $\tilde{V}_{N(D_{10}E_{7}^2)}\cong V_{N(D_5^2A_7^2)}$.
Hence $\mathfrak{V}(\mathcal{S})$ must be isomorphic to ${\tilde{V}}_{N(A_{17}E_{7})}$.
Thus we obtain the following proposition.

\begin{proposition} The VOA $\mathfrak{V}(\mathcal{S})$ associated to a maximal totally singular subspace $\mathcal{S}$ of $R(X)\oplus R(V)$ satisfying Proposition \ref{PCl4} (4) is isomorphic to $\tilde{V}_{N(A_{17}E_{7})}$.
\end{proposition}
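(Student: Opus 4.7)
The plan is to realize $\mathfrak{V}(\mathcal{S})$ as a $\Z_2$-orbifold of a known lattice VOA and then use the recognition criterion of \cite[Appendix D]{DGH} to identify the twisting automorphism with a lift of the $-1$-isometry. Concretely, I would first invoke the lemma immediately preceding the proposition to obtain a maximal totally singular subspace $\mathcal{S}'\subset R(X)\oplus R(V)$ with $\rho_1((\mathcal{S}')^{(2)})=\Span_{\F_2}\{[0]^-,[\alpha_1]^+,[\alpha_c/2]^+\mid c\in D\}$, where $D=\Span_{\F_2}\{(1^40^{12}),(1^20^21^20^{10}),((10)^40^8)\}$. That lemma narrows $\mathfrak{V}(\mathcal{S}')$ to $V_{N(A_{17}E_7)}$ or $V_{N(D_{10}E_7^2)}$.

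Next, I would choose $W=([\alpha_{(1^{16})}/4-\alpha_i]^+,[\chi_0]^+)\in R(X)\oplus R(V)$ and verify, via Lemma \ref{Inner} and Proposition \ref{fusion}, that $\Span_{\F_2}\{W,\mathcal{S}'\cap W^\perp\}$ is exactly, up to the action of $\Aut(X\otimes V)$, the maximal totally singular subspace $\mathcal{S}$ described in Proposition \ref{PCl4} (4). Applying Proposition \ref{PZ2}, $\mathfrak{V}(\mathcal{S})$ is then the $\Z_2$-orbifold of $\mathfrak{V}(\mathcal{S}')$ by the character automorphism $g_W$.

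To promote this to a lattice-theoretic $\Z_2$-orbifold, I would check that $g_W$ is conjugate (inside $\Aut(\mathfrak{V}(\mathcal{S}')$)) to a lift of the $-1$-isometry of the underlying even unimodular lattice. By \cite[Appendix D]{DGH} this reduces to exhibiting a Cartan subalgebra of $\mathfrak{V}(\mathcal{S}')_1$ on which $g_W$ acts by $-1$. The subspace $H=\mathfrak{V}(\{([\alpha_1]^+,[0]^+),([0]^+,[0]^-)\})_1$ is $24$-dimensional, and Lemma \ref{Lsemi2} combined with the simple-current fusion rules shows it is toral and maximal abelian, hence Cartan. Lemma \ref{Inner} then implies that neither generator is orthogonal to $W$, so $g_W$ acts by $-1$ on $H$, and therefore $\mathfrak{V}(\mathcal{S})\cong\tilde V_N$ for $N=N(A_{17}E_7)$ or $N(D_{10}E_7^2)$.

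Finally, I would eliminate the second possibility using the known isomorphism $\tilde V_{N(D_{10}E_7^2)}\cong V_{N(D_5^2A_7^2)}$: the weight-one Lie algebra of $\mathfrak{V}(\mathcal{S})$ was already computed to be $D_{9,2}A_{7,1}$, which is incompatible with the Niemeier root system $D_5^2A_7^2$. Hence $\mathfrak{V}(\mathcal{S})\cong\tilde V_{N(A_{17}E_7)}$, as claimed. The main obstacle I anticipate is the conjugacy verification in the second step — namely, checking carefully, via the orbit description in Table \ref{Orbit} and the inner product formulas of Lemma \ref{Inner}, that the 14-dimensional subspace $\Span_{\F_2}\{W,\mathcal{S}'\cap W^\perp\}$ really falls into the specific $\Aut(X\otimes V)$-orbit singled out in Proposition \ref{PCl4} (4) rather than some other orbit with the same weight-one dimension.
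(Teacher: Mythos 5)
Your proposal follows the paper's own proof essentially step for step: the same preliminary lemma identifying $\mathfrak{V}(\mathcal{S}')$ as $V_{N(A_{17}E_7)}$ or $V_{N(D_{10}E_7^2)}$, the same vector $W=([\alpha_{(1^{16})}/4-\alpha_i]^+,[\chi_0]^+)$ and $\Z_2$-orbifold realization via Proposition \ref{PZ2}, the same Cartan subalgebra $\mathfrak{V}(\{([\alpha_1]^+,[0]^+),([0]^+,[0]^-)\})_1$ together with \cite[Appendix D]{DGH} to recognize $g_W$ as a lift of the $-1$-isometry, and the same elimination of $N(D_{10}E_7^2)$ via $\tilde V_{N(D_{10}E_7^2)}\cong V_{N(D_5^2A_7^2)}$ against the computed Lie algebra $D_{9,2}A_{7,1}$. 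The argument is correct and matches the paper's.
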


\paragraph{\bf Classification of the Lie algebra structures}
As a summary of this section, we obtain the following theorem.

\begin{theorem}\label{d16+} Let $U$ be a holomorphic simple current extension of $V_{\sqrt2D_{16}^+}^+\otimes V_{\sqrt2E_{8}}^+$.
Then one of the following holds:
\begin{enumerate}
\item $U$ is isomorphic to a lattice VOA $V_N$ or its $\Z_2$-orbifold $\tilde{V}_N$ for some even unimodular lattice $N$;
\item $U$ contains $(V_{\sqrt2E_8}^+)^{\otimes 3}$ as a full subVOA;
\item The weight one space $U_1$ is isomorphic to one of the Lie algebras in Table \ref{Ta16}.
\end{enumerate}
\end{theorem}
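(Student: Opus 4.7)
The plan is to reduce the classification of holomorphic simple current extensions of $X\otimes V$ to the classification of maximal totally singular subspaces $\mathcal{S}$ of $R(X)\oplus R(V)$ via Proposition \ref{PS16}, and then exhaust such $\mathcal{S}$ up to the action of $\Aut(X)\times\Aut(V)$ using the structural results proved earlier in Section 5. Concretely, if $U$ is a holomorphic simple current extension of $X\otimes V$, then Proposition \ref{PS16} yields a maximal totally singular $\mathcal{S}\subset R(X)\oplus R(V)$ with $U\cong\mathfrak{V}(\mathcal{S})$, and by Lemma \ref{LSY} we are free to conjugate $\mathcal{S}$ by $\Aut(X)\times\Aut(V)\subset\Aut(X\otimes V)$ without changing the isomorphism type of $U$.

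Next, I would stratify the possibilities according to the dimension $d=\dim\rho_1(\mathcal{S}^{(2)})$, which by Lemma \ref{Lholo}(3) satisfies $d\ge 4$. When $d\ge 5$, Proposition \ref{PCl5} already shows that one of the following occurs: either $\mathfrak{V}(\mathcal{S})$ contains a full subVOA isomorphic to $(V_{\sqrt2E_8}^+)^{\otimes 3}$ (case (2) of the theorem), or it is isomorphic to a lattice VOA $V_N$ or its $\Z_2$-orbifold $\tilde V_N$ (case (1)), or $\rho_1(\mathcal{S}^{(2)})$ is conjugate to the unique exceptional $5$-dimensional totally singular subspace listed in Proposition \ref{PCl5}(3). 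The latter case is handled by the first proposition of Section 5.3, which identifies $\mathfrak{V}(\mathcal{S})_1\cong A_{8,2}F_{4,2}$, entry in Table \ref{Ta16}. When $d=4$, Proposition \ref{PCl4} similarly reduces to four explicit conjugacy classes of $\rho_1(\mathcal{S}^{(2)})$, each giving rise to one of the remaining Lie algebras $C_{10,1}B_{6,1}$, $D_{9,2}A_{7,1}$, $A_{9,2}A_{4,1}B_{3,1}$, $D_{5,4}C_{3,2}(A_{1,1})^2$ in Table \ref{Ta16}, as computed in the propositions of Section 5.3.

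Thus the main task is simply to assemble these pieces: for each maximal totally singular $\mathcal{S}$ not falling into cases (1) or (2) of the theorem, the preceding classification guarantees that $\rho_1(\mathcal{S}^{(2)})$ is conjugate to exactly one of the five explicit subspaces treated in Section 5.3, and the associated Lie algebra structure of $\mathfrak{V}(\mathcal{S})_1$ coincides with one of the entries of Table \ref{Ta16}. Conjugation by $\Aut(X\otimes V)$ preserves the isomorphism type of $\mathfrak{V}(\mathcal{S})$ by Lemma \ref{LSY}, so the list is independent of representative choice.

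The hard part of this argument is hidden in the propositions of Section 5.3: once the combinatorial data of $\rho_1(\mathcal{S}^{(2)})$ is fixed, one must compute $\dim\mathfrak{V}(\mathcal{S})_1$ from Proposition \ref{Pdim16} and then use Propositions \ref{PDM1}, \ref{PDM2} together with rank bounds coming from explicit toral subalgebras (Lemma \ref{Lsemi2}) and fusion-rule constraints to pin down the Lie algebra uniquely; the main obstacle, as in Section 4, is distinguishing among several candidate semisimple types (for instance $A_{9,2}A_{4,1}B_{3,1}$ versus $A_{4,1}(B_{3,1})^6$, or $D_{9,2}A_{7,1}$ versus $(A_{7,1})^2(D_{5,1})^2$) by producing sufficiently many independent ideals and Cartan subalgebras inside $\mathfrak{V}(\mathcal{S})_1$. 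Once these identifications are in place, the three alternatives of the theorem are mutually exhaustive by Propositions \ref{PCl5} and \ref{PCl4}, and Table \ref{Ta16} gives exactly the list appearing in case (3).
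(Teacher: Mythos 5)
Your proposal is correct and follows essentially the same route as the paper, which states Theorem \ref{d16+} as a summary of Section 5: the reduction via Proposition \ref{PS16}, the stratification by $\dim\rho_1(\mathcal{S}^{(2)})\ge 4$ using Lemma \ref{Lholo} and Propositions \ref{PCl5}--\ref{PCl4}, and the Lie algebra identifications from the propositions of Section 5.3 are exactly the ingredients the paper assembles. Your remarks on where the real work lies (the case-by-case determination of the semisimple type) accurately reflect the structure of the paper's argument.
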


\begin{table}[bht]
\caption{Lie algebra structure of $\mathfrak{V}(\mathcal{S})_1$ for $\mathcal{S}\subset R(X)\oplus R(V)$}\label{Ta16}
$$\begin{array}{|c|c|c|c|c|}
\hline
{\mathcal{S}} & {\dim\mathfrak{V}(\mathcal{S})} &  \hbox{Lie algebra }&\hbox{No. in \cite{Sc93}}&\hbox{Ref.} \\ \hline
\text{Proposition }\ref{PCl5} (3)& 132 & A_{8,2}F_{4,2}&36& \hbox{New} \\ \hline
\text{Proposition } \ref{PCl4} (3)&288 & C_{10,1}B_{6,1}&56& \hbox{\cite{Lam}} \\ \hline
\text{Proposition } \ref{PCl4} (4)&216& D_{9,2}A_{7,1}& 50&\tilde{V}_{N(A_{17}E_{7})} \\ \hline
\text{Proposition } \ref{PCl4} (5)&144& A_{9,2}A_{4,1}B_{3,1}&40& \hbox{\cite{Lam}} \\ \hline
\text{Proposition } \ref{PCl4} (6)&72& D_{5,4}C_{3,2}(A_{1,1})^2&19& \hbox{\cite{Lam}}\\ \hline
\end{array}$$
\end{table}

Finally, by combining Theorems \ref{VN}, \ref{Dex}, \ref{e8+} and \ref{d16+}, we
obtain our main theorem --Theorem \ref{thm:Lieframed}.

\begin{remark}
In \cite{HS}, it is announced that holomorphic framed VOAs having Lie algebra $A_{8,2}F_{4,2}$, $C_{4,2}A_{4,2}^2$ and $D_{4,4}A_{2,2}^4$ would be constructed as simple current extensions of $V_{\sqrt2E_8\oplus\sqrt2D_4}\otimes V_{\sqrt2D_{12}^+}^+$.
\end{remark}

\paragraph{\bf Acknowledgements.} The authors thanks to Professor Koichi Betsumiya and Professor Akihiro Munemasa for useful discussions on triply even codes.
Part of the work was done when the second author was visiting Academia Sinica in 2010 and Imperial College London in 2010.
He thanks the staff for their help.

\end{document}